\documentclass[11pt]{amsart}
\usepackage[top=1in,
 bottom=1in,
 left=1.25in,
 right=1.25in, includehead]{geometry}
\usepackage{amsmath, amssymb, amsthm}
\usepackage{mathrsfs}
\usepackage{enumerate}
\usepackage{hyperref}
\usepackage{xcolor}
\usepackage{bbm}
\usepackage[capitalize]{cleveref}
\usepackage{enumerate}
\usepackage{cancel}
\usepackage[backend=biber, style=alphabetic, giveninits=true, maxalphanames=4,minalphanames=4, maxnames=99]{biblatex}
\usepackage{marginnote}
\usepackage[normalem]{ulem}
\usepackage{comment}
\newtheorem{thm}{Theorem}[section]
\newtheorem{cor}[thm]{Corollary}
\newtheorem{prop}[thm]{Proposition}
\newtheorem{lem}[thm]{Lemma}

\newtheorem{ques}[thm]{Question}

\usepackage{cancel}
\usepackage{chngcntr}
\theoremstyle{definition}
\newtheorem{defn}[thm]{Definition}

\newtheorem{exmp}[thm]{Example}

\newtheorem{prob}[thm]{Problem}

\numberwithin{equation}{section}

\counterwithin{Question}{section}
\theoremstyle{remark}
\newtheorem{rem}[thm]{Remark}

\newcommand{\setone}{\mathbbm{1}}
\newcommand{\setA}{\mathbb{A}}
\newcommand{\setB}{\mathbbm{B}}
\newcommand{\setT}{{\mathbb{T}}}
\newcommand{\setQ}{\mathbb{Q}}
\newcommand{\setR}{{\mathbb{R}}}

\newcommand{\setZ}{{\mathbb{Z}}}
\newcommand{\setN}{{\mathbb{N}}}

\allowdisplaybreaks

\title{Weighted ergodic averages along subpolynomials in Hardy fields and applications}
\author{Vitaly Bergelson}
\address{Department of Mathematics, The Ohio State University, Columbus, OH, 43210, USA}
\email{vitaly@math.ohio-state.edu}

\author{Sovanlal Mondal}
\address{Department of Mathematics, Oregon State University, Corvallis, OR, 97331, USA}
\email{sovanlal.mondal01@gmail.com}

\author{Younghwan Son}
\address{Department of Mathematics, POSTECH, 77 Cheongam-Ro, Nam-Gu, Pohang, Gyeongbuk, Korea 37673 }
\email{yhson@postech.ac.kr}
\begin{document}
\begin{abstract}
 We establish new pointwise convergence results for weighted ergodic averages along sequences of the form
\(
(\lfloor a(n) \rfloor)_{n \in \mathbb{N}},
\)
where $a(x)$ is a subpolynomial function in a Hardy field. 
 For example, we establish pointwise convergence of logarithmic averages along sequences of the form $(\lfloor n^k + \log^{c} n \rfloor)_{n \in \mathbb{N}}$, where $k \in \mathbb{N} \cup \{0\}$ and $c > 0$. This result should be juxtaposed with the fact that either for $k=0$ or for $k \geq 2$ and for sufficiently small $c>0$ (depending on $k$), the standard ergodic averages along these sequences fail to converge pointwise.  
 
We also obtain pointwise joint ergodicity results for multiple weighted ergodic averages along slow Hardy field functions. For example, it follows from our results that 
for $c> 0$ and for any $f, g \in L^{\infty} (\lambda)$,
\begin{equation*}
\lim_{N \rightarrow \infty} \frac{1}{\log N } \sum_{n=1}^{N} \frac{1}{n} f(T_b^{\lfloor \log^c n \rfloor}x)  \, g(T_G^{\lfloor \log^c n \rfloor} x) = \int f \, d \lambda \cdot \int g \, d \mu_G
\quad \text{for almost every } x \in [0,1],
 \end{equation*}
where $T_b:[0,1] \rightarrow [0,1]$ is the times-$b$ map defined by $T_b x = bx \,  \bmod \, 1 $ and $T_G:[0,1] \rightarrow [0,1]$ is the Gauss map defined by $T_G(x) = \frac{1}{x} \bmod \, 1$ for $x \ne 0$ and $T_G (0) =0$.
Here $\lambda$ is the Lebesgue measure on $[0,1]$ and $\mu_G$ is the Gauss measure on $[0,1]$ given by $\mu_G (A) = \frac{1}{ \log 2} \int_A \frac{1}{1+x} dx$ for any measurable set $A \subset [0,1]$.
\end{abstract}
\maketitle
\date{\today}
\markright{WEIGHTED AVERAGES ALONG SUBPOLYNOMIALS}
\tableofcontents

\section{Introduction}\label{sec:intro}

A real sequence $(x_n)_{n \in \mathbb{N}}$ is said to be uniformly distributed $\bmod \, 1$ if for any continuous function $f$ on the interval $[0,1]$,
\begin{equation*} 
\lim_{N \rightarrow \infty} \frac{1}{N} \sum_{n=1}^N  f (\{x_n\}) = \int_0^1 f(x) \, dx.
\end{equation*}
Here, $\{ \cdot \}$ denotes the fractional part.
It is well known that the sequence $(a(\log n)^{1+\epsilon})_{n \in \mathbb{N}}$, where $a \in \mathbb{R}$ with $ a \ne 0 $, and $ \epsilon >0$, is uniformly distributed $\bmod \, 1$. 
This fact implies, via the spectral theorem, that for any measure preserving transformation $T$ on a probability space $(X, \mathcal{B}, \mu)$ and for any $f \in L^2(\mu)$, the averages  
\begin{equation}
\label{eq1.1:int:log}
 \frac{1}{ N} \sum_{n=1}^N f(T^{\lfloor (\log n)^{1+ \epsilon} \rfloor} x ) 
\end{equation}
converge in $L^2$-norm, and if $T$ is ergodic, the limit is equal to $\int f \, d \mu$. 
Somewhat surprisingly, even for $f \in L^{\infty} (\mu)$, the averages in \eqref{eq1.1:int:log} do not necessarily converge almost everywhere. As a matter of fact,  it is known that for any aperiodic measure preserving system $(X, \mathcal{B}, \mu, T)$, where $(X, \mathcal{B}, \mu)$ is a standard Lebesgue space (that is, it is measure theoretically isomorphic to $[0,1]$ with the Lebesgue measure), the averages \eqref{eq1.1:int:log} fail to converge almost everywhere for functions of the form $\setone_A$ for a generic Borel set $A$ (see \cite[Theorem 1.3]{JR} and \cite[Theorem 1.5]{LM}).
On the other hand, one can show that for any $f \in L^1(\mu)$, the logarithmic averages  
\begin{equation}
\label{eq:1.2:log-int}
 \frac{1}{ \log N} \sum_{n=1}^N \frac{1}{n} f(T^{\lfloor (\log n)^{1+ \epsilon} \rfloor} x ) 
\end{equation}
converge for almost every $x$ {(see Theorem \ref{thm:sufficient:int} below).}

The situation with the sequence $(\log n)_{n \in \mathbb{N}}$ is even more interesting. It is not hard to show that despite the fact that the sequence $( \{\log n \})_{n \in \mathbb{N}}$ is dense in the interval $[0,1]$, one can find a function $f \in C ([0,1])$ such that the averages
\[ \frac{1}{N} \sum_{n=1}^N f (\{\log n\})\]
do not converge as $N \rightarrow \infty$, and so $(\log n)_{n \in \mathbb{N}}$ is not uniformly distributed $\bmod \, 1$. (See Example 2.4 in \cite{KNbook}.) 
However, the sequence $(a \log n)_{n \in \mathbb{N}}$, where $a \in \mathbb{R}$ with $ a \ne 0 $, is uniformly distributed with respect to the method of the logarithmic averages which we already encountered in \eqref{eq:1.2:log-int}. 
Indeed one can show that for any $f \in C ([0,1])$,
\begin{equation} \label{eq:log:average:intro}
\lim_{N \rightarrow \infty} \frac{1}{\log N} \sum_{n=1}^N \frac{1}{n} f (\{a \log n\}) = \int_0^1 f(x) \, dx.
\end{equation}
(See \cite{Tsuji}.)

Let $T$ be a measure preserving transformation on a probability space $(X, \mathcal{B}, \mu)$.
Applying the spectral theorem, one obtains from \eqref{eq:log:average:intro} that for any $f \in L^2(\mu)$, the averages  
\begin{equation}
\label{eq1:intro:ex:mean1}
 \frac{1}{\log N} \sum_{n=1}^N \frac{1}{n} f(T^{\lfloor \log n \rfloor} x ) 
\end{equation}
converge in $L^2(\mu)$ and if $T$ is ergodic, the limit is equal to $\int f \, d \mu$.

In light of formula \eqref{eq1:intro:ex:mean1}, it is natural to inquire if  for $f \in L^1(\mu)$, the averages in \eqref{eq1:intro:ex:mean1}
converge almost everywhere. The answer to this question is positive; this fact is a rather special case of a more general theorem (Theorem \ref{thm:sufficient:int}), which deals with pointwise convergence along {\it regular} slowly growing functions.

In order to describe the results obtained in this paper, we have to introduce several pertinent notions.  
First of all, we briefly recall some basic facts about Hardy fields. 
(See \cite{BOS3} and some references therein for more information about Hardy fields.)

Let $\mathbf B$ be the set of germs at $+ \infty$ of continuous real functions on $\mathbb{R}$.
Note that $\mathbf B$ forms a ring with respect to pointwise addition and multiplication.
A {\em Hardy field} is any subfield of $\mathbf B$ that is closed under differentiation, and we denote by ${\mathbf U}$  the union of all Hardy fields.
A classical example of a Hardy field is the class $\mathbf{L}$ of logarithmico-exponential functions introduced by Hardy in \cite{Har1, Har2}, that is, the collection of all germs at $+ \infty$ of real-valued functions that can be constructed using the real constants, the functions $\exp x$ and $\log x$ and the operations of addition, multiplication, division and composition of functions. 

For any $u \in {\mathbf U}$, $\lim\limits_{x \rightarrow \infty} u(x)$ exists in $\mathbb{R} \cup \{- \infty, \infty\}$; thus periodic functions such as $\sin x$ and $\cos x$ do not belong to ${\mathbf U}$. 
If $u_1$ and $u_2$ belong to the same Hardy field, then the limit $\lim\limits_{x \rightarrow \infty} (u_1(x) - u_2(x))$ and the limit $\lim\limits_{x \rightarrow \infty} \frac{u_1(x)}{u_2(x)}$ exist (they may be infinite). 

 For two functions $u_1(x), u_2(x)$ with $u_2(x) \geq 0$, we write $u_1(x) \ll u_2(x)$ when there exists $C > 0$ such that $|u_1(x)| \leq C u_2(x)$ for all large enough $x$. Also we write $u_1(x) \prec u_2(x)$ when $\lim\limits_{x \rightarrow \infty} \frac{u_1(x)}{u_2(x)} = 0$.
A function $f \in \mathbf{U}$ is said to be subpolynomial if  $f(x) \ll x^n$ for some positive integer $n$.  

If a Hardy field is not a proper subfield of a larger Hardy field, then it is called a maximal Hardy field. 
Let $\mathbf{E}$ be the intersection of all maximal Hardy fields. 
It is known that $\mathbf{E}$ is closed under integration and composition, and contains the Hardy class $\mathbf{L}$. 
In fact, $\mathbf{E}$ is strictly larger than $\mathbf{L}$, as $\sin (1/x) \in \mathbf{E} \setminus \mathbf{L}$. Moreover, while $\mathbf{L}$ is not closed under integration, $\mathbf{E}$ is closed under integration. 
 (See Corollary 8.3 in \cite{Bos4} and Introduction in \cite{Bos1982}.)  

\begin{defn}
Let $a(x) \in \mathbf{U}$. Let $W(x) \in \mathbf{E}$ be such that $w(x):= W'(x)$ is positive and non-increasing and $ \lim_{x \rightarrow \infty} W(x) = \infty$. Given $p \geq 1$,
we will say that the sequence $\mathsf{a} := (\lfloor a(n) \rfloor)_{n \in \mathbb{N}}$ is {\em pointwise $L^p$-good for $W$-averages}, if for any invertible probability measure preserving system $(X, \mathcal{B}, \mu, T)$ and $f \in L^p(\mu)$, the averages 
\begin{equation}
\label{def:ergodic}
\setA_N^{\mathsf{a}, W} f(x) := \frac{1}{W(N)} \sum_{n=1}^N w(n) f(T^{\lfloor a(n) \rfloor} x)
\end{equation}
converge for almost every $x$.  

We say that the sequence $(\lfloor a(n) \rfloor)_{n \in \mathbb{N}}$ is {\em norm good for $W$-averages} if for any invertible probability measure preserving system $(X, \mathcal{B}, \mu, T)$ and for $f \in L^2(\mu)$, 
\begin{equation*} 
\lim_{N \rightarrow \infty} \setA_N^{\mathsf{a}, W}f(x)  \text{ converges in } L^2 (\mu)
\end{equation*}
and is {\em ergodic for $W$-averages} if for any invertible ergodic probability measure preserving system $(X, \mathcal{B}, \mu, T)$ and $f \in L^2(\mu)$,  
$$\lim_{N \rightarrow \infty} \setA_N^{\mathsf{a}, W}f(x) = \int f \, d \mu \text{ in } L^2 (\mu).$$
(Clearly, by the ergodic decomposition theorem, any sequence which is ergodic for $W$-averages is norm good for $W$-averages.)

We also say that the sequence $(\lfloor a(n) \rfloor)_{n \in \mathbb{N}}$ is {\em pointwise $L^p$-bad} for $W$-averages if for every aperiodic probability measure preserving system $(X, \mathcal{B}, \mu, T)$, there exists a function $f \in L^{p}(\mu)$ such that the corresponding weighted averages $\setA_N^{\mathsf{a}, W} f(x)$
fail to converge for almost every $x$. If $W(N) =N$ (so that $w(n) =1$), we simply say that $(\lfloor a(n) \rfloor)_{n \in \mathbb{N}}$ is pointwise $L^p$-good, norm good, ergodic, and pointwise $L^p$-bad respectively, omitting the phrase ``for $W$-averages". If $W(N) = \log N$ (respectively, $W(N) = \log \log N$), we refer to the corresponding averages as $\log$-averages (respectively, $\log\log$-averages).
\end{defn}

\begin{rem}
Note that in the formula \eqref{def:ergodic}, we use the sequence $W(N), N \in \mathbb{N},$ for the normalization, rather than the sequence of discrete sums $\sum_{n=1}^N w(n)$. 
In light of the fact
$ \lim\limits_{N \rightarrow \infty} \frac{\sum_{n=1}^N w(n)}{W(N)} = 1$, this choice will not affect any of our results.  
\end{rem}

The following theorem establishes the pointwise convergence of the weighted averages for a large class of slowly growing functions.
Note that the averages in \eqref{eq1:intro:ex:mean1} is a very special instance of applicability of this theorem. 
\begin{thm}[Theorem \ref{thm:sufficient}]
\label{thm:sufficient:int}
Let $a(x) \in \mathbf{U}$ satisfy $1 \prec a(x) \prec x$. 
Let $ W(x) \in \mathbf{E}$ be such that $w(x):= W'(x)$ is positive and non-increasing and $\lim_{x \rightarrow \infty} W(x) = \infty$.
Suppose that there exists $c > 0$ such that for all large enough $x$,  
\begin{equation*}
W(x) \leq |a(x)|^c. 
\end{equation*}
Then $(\lfloor a(n) \rfloor)_{n \in \mathbb{N}}$ is pointwise $L^1$-good and ergodic for $W$-averages. 
\end{thm}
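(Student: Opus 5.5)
Since $1 \prec a(x) \prec x$, the sequence $\lfloor a(n)\rfloor$ is eventually monotone with increments $0$ or $1$. So the weighted average along it can be rewritten as a weighted ordinary ergodic average: each value $m$ is attained on a block $I_m=\{n:\lfloor a(n)\rfloor = m\}$. Without loss of generality $a$ is eventually increasing and positive; otherwise replace $T$ by $T^{-1}$, which is allowed since the system is invertible. Writing $M=\lfloor a(N)\rfloor$, we get
\begin{equation*}
\setA_N^{\mathsf{a},W} f(x) = \frac{1}{W(N)}\sum_{m\le M} v_m\, f(T^m x) + \text{(boundary term)}, \qquad v_m:=\sum_{n\in I_m} w(n)\approx W(a^{-1}(m+1))-W(a^{-1}(m)).
\end{equation*}
The boundary term involves at most the single incomplete block $I_M$. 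The plan is to show that the weights $(v_m)$, normalized by $V_M=\sum_{m\le M}v_m\approx W(a^{-1}(M+1))$, are \emph{regular} in the sense needed for a weighted pointwise ergodic theorem. Concretely, I would set $\phi(y)=W(a^{-1}(y))$, which is in a Hardy field since $a^{-1}$ and compositions are, so that $v_m\approx \phi'(m)$.

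The key step uses the hypothesis $W\le |a|^c$, which gives $\phi(y)\le y^c$ for large $y$. Because $\phi$ is a Hardy field function, this polynomial growth bound implies $y\phi'(y)\ll \phi(y)$ (l'Hôpital-type comparison in Hardy fields: $\log\phi(y)\ll\log y$ forces $\phi'/\phi\ll 1/y$). From this one gets $\max_{m\le M} m\,v_m \ll V_M$ and $\sum_{m\le M}|v_{m+1}-v_m|\,m \ll V_M$, using that $\phi'$ is eventually monotone. Abel summation then expresses $\frac1{V_M}\sum_{m\le M} v_m f(T^mx)$ as
\begin{equation*}
\sum_{m\le M} \frac{(v_m-v_{m+1})\,m}{V_M}\,A_m f(x) + \frac{v_M(M+1)}{V_M}A_M f(x),
\end{equation*}
where $A_m f = \frac1{m}\sum_{j\le m} f\circ T^j$ are the Cesàro averages. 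This is a bounded-variation summation method (a Toeplitz matrix with uniformly bounded row $\ell^1$-norms and vanishing columns) applied to $A_m f$. Birkhoff's theorem gives $A_mf(x)\to \mathbb{E}(f\mid\mathcal I)(x)$ a.e. for $f\in L^1$, so the weighted averages converge a.e. to the same limit. For the ergodic claim, the same argument gives convergence to $\int f\,d\mu$ a.e. when $T$ is ergodic. $L^2$ convergence then follows by the maximal inequality (the averages are dominated by $C\sup_m A_m|f|$) and dominated convergence, or directly by the same Abel summation in norm.

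It remains to control the normalization mismatch and the boundary block. Since $W(N)$ and $V_M$ differ by at most $v_M+O(w(1))$, we need $v_M/V_M\to 0$, which follows from $M v_M\ll V_M$. The incomplete block contributes at most $\frac{v_M}{V_M}|f(T^Mx)|$. For $f\in L^1$ this tends to $0$ a.e. because $f(T^Mx)/M\to 0$ a.e. (a standard Borel–Cantelli consequence of integrability). Together with $v_M\ll V_M/M$, this kills the term. The repetition of $M$ across consecutive $N$ is harmless, since we only need convergence along all $N$ and the expression is controlled by the $M$-indexed one plus a vanishing error.

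I expect the main obstacle to be the regularity estimate $y\phi'(y)\ll\phi(y)$, together with the discrete approximation $v_m\approx\phi'(m)$ and the bound on total variation. These require care when $a$ grows extremely slowly, e.g. iterated logarithms, where $a^{-1}$ is huge. I would first try to derive $\phi'/\phi\ll 1/y$ from the Hardy-field fact that $\lim y\phi'(y)/\phi(y)$ exists in $[0,\infty]$ and must be finite given $\phi\le y^c$. Then I would handle the discretization via the mean value theorem and the monotonicity of $w$. That monotonicity makes $v_m$ comparable to the integral of $w$ over $[a^{-1}(m),a^{-1}(m+1)]$, with error at most $w(a^{-1}(m))\le w(1)$, which is negligible against $V_M\to\infty$.
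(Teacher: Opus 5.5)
\textbf{The gap.} The step that fails is the claim $\sum_{m\le M}|v_{m+1}-v_m|\,m\ll V_M$ for the actual block weights $v_m=\sum_{n\in I_m}w(n)$.

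Monotonicity of $\phi'$ only controls the smooth part $\beta_m:=\int_{a^{-1}(m)}^{a^{-1}(m+1)}w=\phi(m+1)-\phi(m)$. The discretization error $e_m:=v_m-\beta_m$ has size up to $w(a^{-1}(m))$ and oscillates, and knowing $v_m\approx\phi'(m)$ does not transfer a total-variation bound.

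For a concrete counterexample, take $W(x)=x$ and $a(x)=x/\log x$, which is admissible with $c=2$. Then $v_m=\#\bigl(\mathbb{Z}\cap[a^{-1}(m),a^{-1}(m+1))\bigr)$ is integer-valued and equals $\lfloor\beta_m\rfloor$ or $\lfloor\beta_m\rfloor+1$, depending on the fractional part of $a^{-1}(m)$. It switches value for a positive proportion of $m$. Hence $\sum_{m<M}|v_{m+1}-v_m|\,m\asymp M^2$, whereas $V_M\asymp M\log M$. Your Toeplitz matrix therefore has unbounded row $\ell^1$-norms, and Silverman--Toeplitz cannot be invoked.

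Your closing remark, that each error is at most $w(a^{-1}(m))\le w(1)$ and hence negligible, does not repair this, for two reasons. First, $M$ such errors sum to $Mw(1)$, which need not be $o(V_M)$: for $W(x)=\log x$ and $a(x)=\sqrt{x}$ one has $V_M\approx 2\log M$. Second, $f\in L^1$ is unbounded, so a perturbation of the weights that is small in $\ell^1$ is not automatically negligible pointwise.

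\textbf{The fix.} The missing idea is to split the weights and treat the two parts differently.
\begin{itemize}
\item Run your Abel/Toeplitz argument only with the weights $\beta_m$. These are eventually monotone and satisfy $m\beta_m\ll\sum_{k\le m}\beta_k$ by your estimate $y\phi'(y)\ll\phi(y)$.
\item Handle the error $e_m$ by positivity. Since $|e_m|\le w(a^{-1}(m))$ and this bound is non-increasing in $m$, Abel summation gives $\sum_{m\le M}w(a^{-1}(m))\,|f|(T^mx)\le\bigl(\sup_k A_k|f|(x)\bigr)\sum_{m\le M}w(a^{-1}(m))$, and the supremum is finite a.e.
\item Show $\sum_{m\le M}w(a^{-1}(m))=o(V_M)$. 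This holds because $w(a^{-1}(m))\le\beta_{m-1}/\bigl(a^{-1}(m)-a^{-1}(m-1)\bigr)$ and $a^{-1}(m)-a^{-1}(m-1)\to\infty$ (as $a'\to0$).
\end{itemize}

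\textbf{Comparison with the paper.} This error estimate is exactly the core of the paper's proof. The paper first applies the comparison Lemma \ref{lem:comp:weights} (using $\log W\le c\log|a|$) to reduce to $W=a$. Then $\beta_m\equiv1$, so the main term is a plain Ces\`aro average. It then proves $\sum_{k\le K}w(a^{-1}(k))\,b_k=o(K)$ by this same summation-by-parts bound, together with $W(a(N))/W(N)\to0$. Your direct treatment of general $W$ through $\phi=W\circ a^{-1}$ amounts to reproving that comparison lemma inline. It works once the discretization error is split off as above.
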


In contrast to Theorem \ref{thm:sufficient:int}, the following theorem provides a condition when a sequence $(\lfloor a(n) \rfloor)_{n \in \mathbb{N}}$ is pointwise $L^{\infty}$-bad for $W$-averages. 
\begin{thm}[Theorem \ref{thm:necessary}]
\label{thm:necessary:int}
Let $a(x) \in \mathbf{U}$ such that $1 \prec a(x) \prec x$. 
Let $W(x) \in \mathbf{E}$ be such that $w(x):= W'(x)$ is positive and non-increasing and $\lim_{x \rightarrow \infty} W(x) = \infty$. 
Suppose that there exists $c>0$ such that 
\begin{equation*}
W(x) \geq \exp (|a(x)|^{c})
\end{equation*}
for all sufficiently large $x$.
Then $(\lfloor a(n) \rfloor)_{n \in \mathbb{N}}$ is pointwise $L^{\infty}$-bad for $W$-averages. 
\end{thm}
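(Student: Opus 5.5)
We will derive this from the known "strong sweeping out"/bad-averages criteria (e.g. \cite{JR}, \cite{LM}): by the Conze principle and standard Rokhlin-tower transference, it suffices to exhibit a property of the weights that forces failure of the maximal inequality, or more concretely to show that the weighted averages $\setA_N^{\mathsf{a},W}$ behave, along a subsequence of times $N_k$, like averages of $f$ over \emph{a single orbit point} $T^{m}x$, or over a very small set of points. The key observation is that when $W$ grows much faster than $\exp(|a|^c)$, most of the weight up to time $N$ is carried by those $n$ where $\lfloor a(n)\rfloor$ equals its top value. Without loss of generality $a(x)\to+\infty$, so $a$ is eventually increasing, and we introduce the level sets $I_m=\{n: \lfloor a(n)\rfloor = m\}=[a^{-1}(m),a^{-1}(m+1))$. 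Then
\[
\setA_N^{\mathsf{a},W} f(x)=\frac{1}{W(N)}\sum_{m} \Big(\sum_{n\in I_m, n\le N} w(n)\Big) f(T^m x),
\]
a weighted average of $f$ along the orbit with weights $c_m\approx W(a^{-1}(m+1))-W(a^{-1}(m))$.

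The first step is the growth estimate. Put $V(y)=W(a^{-1}(y))$, so $c_m\approx V(m+1)-V(m)$. The hypothesis gives $V(y)\ge \exp(y^c)$, and $V\in$ a Hardy field (by closure of $\mathbf E$ under composition/inverse, or using $\mathbf U$ for $a$ together with regularity; we will be careful here and, if needed, use only the monotonicity and limit properties). From $\log V(y)\ge y^c$ and Hardy-field regularity one gets $\liminf_{y\to\infty} (V(m+1)-V(m))/V(m+1)$ bounded away from zero along a sequence, or at least along infinitely many $m$: otherwise $\log V(m+1)-\log V(m)\to 0$ would force $\log V(y)=o(y)$. That alone does not contradict $y^c$ for $c<1$, so the real mechanism is instead: choose $N_k=\lfloor a^{-1}(m_k+\theta)\rfloor$ for $\theta\in(0,1)$. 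Then the ratio $\big(V(m+\theta)-V(m)\big)/V(m+\theta)$ is what matters, and for fast-growing $V$ this tends to $1$ when $c\ge1$; for $c<1$ we instead compare windows of length $L_m$ in $m$, with $L_m\to\infty$ but $L_m=o(m)$, such that $V(m-L_m)/V(m)\to 0$ (possible since $(\log V)'\gg y^{c-1}$, so take $L_m=m^{1-c}\log m$). Thus at times $N$ with $a(N)\approx m$, the average is essentially a weighted average of $f(T^jx)$ over $j\in[m-L_m,m]$ with weights concentrated on the top, and the profile of the weights can be tuned by choosing $\theta$.

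The second step is to extract badness. We now have, for each $m$, two admissible averaging times whose averages are (up to $o(1)$ error uniform in $\|f\|_\infty$) weighted averages over the short window $[m-L_m,m]$ with weight profiles differing substantially — e.g. one nearly a point mass at $m$ (or small block near $m$) and another spread over $[m-L_m,m-1]$. Since $L_m=o(m)$, these are ``moving averages'' along windows whose length is negligible relative to position, and by the Bellow–Jones–Rosenblatt theory of moving averages (the cone condition fails), together with the Conze principle and Rokhlin towers in an aperiodic system, we build a set $A$ (indeed generic) with $\limsup \setA_N\mathbf 1_A=1$, $\liminf=0$ a.e. This is where the cited results \cite{JR}, \cite{LM} are used as black boxes.

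The main obstacle is the first step for small $c$: showing rigorously, using only the Hardy-field structure of $a$ and $W$, that the weight mass genuinely concentrates on short windows in $m$ of length $o(m)$, and that one can vary the profile enough for the moving-average criterion to apply; I would first try to handle this via l'Hôpital-type comparison of $(\log V)'$ with $y^{c-1}$, which is where membership in a Hardy field is essential.
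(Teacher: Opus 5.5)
There is a genuine gap. Your Step~1 contains the right key input, and it is the one the paper uses: with $V=W\circ a^{-1}$, almost all of the mass $\sum_{n\le N}w(n)$ sits on levels $k\in[m-L_m,m]$, $m\approx a(N)$, with $L_m=o(m)$. Two things then go wrong.

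The first problem is the divergence mechanism you describe. Comparing two averaging times with the same top level $m$ and different $\theta$ only helps when $V$ grows super-exponentially. The hypothesis allows, e.g., $a(x)=\log x$, $W(x)=\exp((\log x)^{c})$ with $c<1$, so that $V(y)=e^{y^c}$. In that case $V(m+1)/V(m)\to 1$, and the top level carries a vanishing fraction of the mass. Consequently all $N$ with $\lfloor a(N)\rfloor=m$ give the same average up to $o(1)$: there is no near point mass and no substantial change of profile. In this regime divergence can only come from the \emph{position} of the window $[m-L_m,m]$ sliding across a sparse set. The profile inside the window is irrelevant.

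The second and more serious problem is that this sliding-window step, which is the heart of the proof, is only asserted. You attribute it to ``Bellow--Jones--Rosenblatt, cone condition fails'' together with \cite{JR}, \cite{LM} as black boxes, but none of these applies as stated.
\begin{itemize}
\item The cone-condition theory concerns \emph{uniform} moving averages. Your weights $V(j+1)-V(j)$ vary by an unbounded factor across $[m-L_m,m]$ (about $m^{c}$ for $V=e^{y^c}$ with your $L_m=m^{1-c}\log m$), and they have tails below the window.
\item The results of \cite{JR}, \cite{LM} cited in the paper concern unweighted Ces\`aro averages along specific sequences such as $\lfloor(\log n)^{1+\epsilon}\rfloor$, not these weighted window averages.
\item $L^\infty$-badness requires a sweeping-out statement for an indicator $\setone_E$. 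Failure of a maximal inequality alone does not give that.
\end{itemize}

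The paper supplies exactly the missing step as Proposition~\ref{prop:2}. Suppose $L<N_1<\dots<N_r$ satisfy
\[
\sum_{n=N_i}^{N_{i+1}}\beta(n)\ge(1-\epsilon)\sum_{n=1}^{N_{i+1}}\beta(n)
\quad\text{and}\quad N_r/M>C,
\]
where $M=\max_i(N_i-N_{i-1})$ and $N_0=0$. Then the periodic set $A=\bigcup_k[kN_r-2M,kN_r]$ has density $2M/N_r$. At the same time, $\sup_{N_1\le N\le N_r}\mathbbm{B}^\beta_N\setone_A\ge 1-\epsilon$ on a set of density $1-4M/N_r$. Lemma~\ref{lem:Rohlin} (via \cite[Theorem 2.3]{SSO}) then transfers this to strong sweeping out in every aperiodic system.

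Your estimate $V(m-L_m)/V(m)\to0$ is precisely what verifies these hypotheses: take $N_{i+1}-N_i\approx L_{N_{i+1}}$ and $r$ large. So the repair is to prove such a criterion rather than cite moving-average theory.

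Finally, the Hardy-field obstacle you flag disappears if you first reduce as the paper does. By Lemma~\ref{lem:comp:weights}, divergence for $W_1=e^{a^\tau}$ with $\tau\le c$ forces divergence for $W$. So one may take $V(y)=e^{y^\tau}$ exactly. Replacing the discrete level masses by $e^{(k+1)^\tau}-e^{k^\tau}$ then costs only $o(1)$.
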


Before providing some illustrative examples, we formulate a special case of Theorem 4.10 in \cite{BKS2} which provides a characterization of {\em norm} convergence along $(\lfloor a(n) \rfloor)_{n \in \mathbb{N}}$ when $a(x) \in \mathbf{U}$ with $1 \prec a(x) \prec x$. 
\begin{thm}[cf. Theorem 4.10 in \cite{BKS2}]
\label{thm4.10:BKS2}
Let $a(x) \in \mathbf{U}$ such that $1 \prec a(x) \prec x$. 
Let $W(x) \in \mathbf{E}$ be such that $w(x):= W'(x)$ is positive and non-increasing and $\lim\limits_{x \rightarrow \infty} W(x) = \infty$. 
Then the sequence $(\lfloor a(n) \rfloor)_{n \in \mathbb{N}}$ is norm-good for $W$-averages if and only if 
\begin{equation*}
\log W(x) \prec  a(x).
\end{equation*}
Moreover, if  $(\lfloor a(n) \rfloor)_{n \in \mathbb{N}}$ is norm-good for $W$-averages, then it is ergodic for $W$-averages.
\end{thm}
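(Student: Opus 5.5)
The plan is to use the spectral theorem to reduce the statement to a problem about exponential sums. For an invertible system $(X,\mathcal B,\mu,T)$ and $f\in L^2(\mu)$, let $\sigma_f$ be the spectral measure of $f$ on $\setT$. Then $\setA^{\mathsf a,W}_N f$ converges in $L^2$ for all such $f$ if and only if, for every $\theta\in\setT$, the scalar averages
\[
S_N(\theta):=\frac{1}{W(N)}\sum_{n=1}^N w(n)\, e(\lfloor a(n)\rfloor\theta)
\]
converge. Since $|S_N|\le 1$, dominated convergence then gives convergence in $L^2(\sigma_f)$. Ergodicity for $W$-averages amounts to $S_N(\theta)\to 0$ for every $\theta\neq 0$; the point mass at $0$ is the projection onto invariant functions, which is constant in the ergodic case. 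We may assume $a(x)\to+\infty$, since the case $a\to-\infty$ is symmetric.

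Next we regroup by level sets. Put $n_m=\min\{n:\ a(n)\ge m\}$, so that $\lfloor a(n)\rfloor=m$ exactly for $n_m\le n<n_{m+1}$. Because $a\prec x$ and $a$ is eventually monotone with $a'\to 0$, the block lengths $n_{m+1}-n_m$ tend to infinity. Since $w$ is non-increasing, $\sum_{n_m\le n<n_{m+1}}w(n)=W(n_{m+1})-W(n_m)+O(w(n_m))$. Moreover $w(n_m)$ is at most $1/(n_m-n_{m-1})$ times the total weight of the previous block, so these errors sum to $o(W(N))$.

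Set $b(t)=W(a^{-1}(t))$, where $a^{-1}$ is the compositional inverse. Then $b$ is a Hardy-field germ, increasing to $\infty$, and $W(n_m)=b(m)+o(\cdot)$ with the same negligible error. Along $N=n_{M+1}-1$ we therefore get, up to $o(1)$,
\[
S_N(\theta)=\frac{1}{b(M)}\sum_{m\le M} e(m\theta)\,\Delta b(m),\qquad \Delta b(m)=b(m+1)-b(m).
\]
For $N$ inside a block, $S_N$ is a convex-type interpolation between consecutive block endpoints. The condition $\log W\prec a$ is equivalent, after substituting $x=a^{-1}(t)$, to $\log b(t)\prec t$. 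Because $b$ lies in a Hardy field, this is in turn equivalent to $(\log b)'\to 0$, and hence to $\Delta b(M)/b(M)\to 0$.

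For the direction ``$\Leftarrow$'' together with ergodicity, take $\theta\ne 0$ and apply Abel summation with the bounded partial sums $\sum_{m\le k}e(m\theta)$. This gives
\[
|S_N(\theta)|\ll_\theta \frac{\Delta b(M)+\sum_{m\le M}|\Delta^2 b(m)|}{b(M)}.
\]
Since $b'$ is eventually monotone (Hardy field), $\Delta b$ is eventually monotone, so $\sum_{m\le M}|\Delta^2 b(m)|\le \Delta b(M)+O(1)$. Hence $S_N(\theta)\to 0$ at block endpoints. The intermediate $N$ are handled by the same interpolation, because one block carries a vanishing proportion of the total weight.

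For the direction ``$\Rightarrow$'', suppose $\log b(t)\not\prec t$. By Hardy comparability, $\lim \log b(t)/t\in(0,\infty]$, so $c_M:=\Delta b(M)/b(M+1)\ge c>0$ eventually. Let $\Sigma_M$ denote the block-endpoint averages. They satisfy
\[
\Sigma_M=(1-c_M)\,\Sigma_{M-1}+c_M\, e(M\theta).
\]
If $\Sigma_M\to L$, then $c_M\bigl(e(M\theta)-L\bigr)\to 0$, so $e(M\theta)\to L$. This is impossible for irrational $\theta$. Thus the irrational rotation, which is an aperiodic invertible system, is a counterexample.

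I expect the main obstacle to be the bookkeeping in the discretization step: showing rigorously that $b=W\circ a^{-1}$ lies in a Hardy field with the required monotonicity of $b'$, and that the block-boundary errors and within-block interpolation really are $o(1)$. This uses $w$ non-increasing and the fact that block lengths tend to infinity, which follows from $a\prec x$. I would first try to establish these error bounds in the concrete case $W=\log$, $a=\log^c$, and then abstract from it.
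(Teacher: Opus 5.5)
Your argument is correct, and it takes a genuinely different route from the paper, which gives no proof of this statement and simply imports it from \cite{BKS2}.

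\textbf{Comparison with the paper's toolkit.} The tools the paper places beside this statement are the weighted equidistribution criterion (Theorem~\ref{thm:w-ud:mixedindex}) and Lemma~\ref{lem:w-ud}. Via the spectral theorem and Weyl's criterion, these give sufficiency quickly and beyond the slow case: use $w$-equidistribution of $(\theta a(n),a(n))$ for irrational $\theta$, and of $a(n)/q$ for $\theta=j/q$. However, failure of equidistribution does not by itself show that $S_N(\theta)$ diverges, so necessity needs an extra argument.

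\textbf{What your route uses.} You exploit the fact that for $1\prec a\prec x$ the value $\lfloor a(n)\rfloor$ is constant on long consecutive blocks. This is precisely the device in the proofs of Theorems~\ref{thm:slow-weight} and \ref{thm:necessary}. There, $\beta(k)=\int_{\gamma(k)}^{\gamma(k+1)}w(x)\,dx$ is your $\Delta b(k)$, with the same error term $w(a^{-1}(k))$.

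\textbf{What it buys.} You get a self-contained proof and a transparent reason for the threshold. Under $\log W\prec a$ a single level set carries a vanishing share of the total weight; otherwise it carries a share bounded below. Your recursion $\Sigma_M=(1-c_M)\Sigma_{M-1}+c_M e(M\theta)$ then gives necessity directly, with the irrational rotation as witness.

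\textbf{Small points to fix.}
\begin{enumerate}
\item The displayed block-endpoint formula should be normalised by $b(M+1)$, not $b(M)$. This is harmless when $\log W\prec a$. In the necessity direction, however, $b(M+1)/b(M)\to e^{\lambda}\in(1,\infty]$, where $\lambda=\lim \log W(x)/a(x)>0$. Your recursion is in fact the one for normalisation $b(M+1)$.
\item The obstacle you anticipate, that $b=W\circ a^{-1}$ must lie in a Hardy field, can be avoided. We have $b'(t)=(w/a')(a^{-1}(t))$ and $(\log b)'(t)=(w/(Wa'))(a^{-1}(t))$. Both $w/a'$ and $w/(Wa')$ lie in a maximal Hardy field containing $a$ and $W$. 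Composing with the increasing function $a^{-1}$ preserves eventual monotonicity and limits, which is all you use. L'H\^opital in that field gives $\lim w/(Wa')=\lim \log W/a$.
\item The error bookkeeping needs no special case. Let $B_m$ be the weight of the $m$-th block. Then $0\le B_m-(W(n_{m+1})-W(n_m))\le w(n_m)$ and $(n_m-n_{m-1})\,w(n_m)\le B_{m-1}$. These already give the $o(W(N))$ bound in general.
\end{enumerate}
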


We list now some examples which present ``positive" and ``negative" results pertaining to pointwise and norm convergence which can be derived from Theorems \ref{thm:sufficient:int}, \ref{thm:necessary:int} and \ref{thm4.10:BKS2}.
\begin{exmp} \label{ex1:sec1}
\begin{enumerate}
\item Let $c>0$. The sequence $( \lfloor (\log n)^c \rfloor)_{n \in \mathbb{N}}$ is pointwise $L^{\infty}$-bad, but pointwise $L^1$-good for $\log$-averages. On the other hand, it is norm good if and only if $c>1$, while for $0<c\leq 1$ it is norm good for $\log$-averages.
\item The sequence $( \lfloor \log n \log\log n \rfloor)_{n \geq 2}$ is pointwise $L^{\infty}$-bad, but pointwise $L^1$-good for $\log$-averages. Moreover, it is norm good.
\item The sequence $( \lfloor (\log\log n)^2 \rfloor)_{n \geq 2}$ is pointwise $L^{\infty}$-bad for $\log$-averages, but pointwise $L^1$-good for $\log\log$-averages. Moreover, it is norm good for $\log$-averages.
\end{enumerate}
\end{exmp}

In light of Theorems~\ref{thm:sufficient:int} and \ref{thm:necessary:int}, it is natural to consider the following problem.
\begin{prob}\label{problem1}
Let $W(x) \in \mathbf{E}$ be such that $w(x):= W'(x)$ is positive and non-increasing and $\lim\limits_{x \rightarrow \infty} W(x) = \infty$.
 Suppose that $a(x) \in \mathbf{U}$ with $1 \prec a(x) \prec x$ and assume that for any $c_1, c_2 > 0$,
 \begin{equation}
 \label{eq:prob1.7:intro}
 |a(x)|^{c_1} \leq  W(x) \leq \exp (|a(x)|^{c_2}) 
 \end{equation}
 for all large enough $x$.
What can be said about pointwise convergence/divergence of $\setA_N^{\mathsf{a}, W} f(x)$, when $a(x)$ and $W(x)$ satisfy \eqref{eq:prob1.7:intro}?

Consider, for example, $a(x) = \log x$ and $W(x) = \exp \left((\log \log x)^2\right)$.
Clearly, formula \eqref{eq:prob1.7:intro} is satisfied. 
In this case, by Theorem 4.10 in \cite{BKS2}, for every ergodic measure preserving system $(X, \mathcal{B}, \mu, T)$ and every $f \in L^2(\mu)$, we have 
\begin{equation} 
\label{eq:prob1.7:conv:intro}
\lim_{N \rightarrow \infty} \setA_N^{\mathsf{a}, W} f(x) = \int f \,  d\mu \text{ in } L^2(\mu).
\end{equation}
But it is unknown if one has almost everywhere convergence in \eqref{eq:prob1.7:conv:intro}.
\end{prob}

Next, we consider the averages \eqref{def:ergodic} for general subpolynomial functions $a(x) \in \mathbf{U}$. 
In the formulations of theorems below, we will be invoking the {\em canonical representation} of a subpolynomial function which states that a subpolynomial function $a(x)\in \mathbf{U}$ can be written as  
\begin{equation*} \label{eq:canonical-decomposition:def}
a(x) = p(x)+r(x),
\end{equation*}
where $p(x)$ is a polynomial and $r(x)$ is a non-polynomial\footnote{ A function $r(x)$ in a Hardy field is called {\em non-polynomial} if it is subpolynomial and for all $k \in \mathbb{N} \cup \{0\}$, $r(x) \prec x^k$ or $x^k \prec r(x)$.} and satisfies $r(x)\prec x^s$ for each non-zero term $c_s x^s$ of $p(x)$. (See  Lemma 3.1 in \cite{BKQW}.)
For example, if $a(x)=x^3+x^2+x^{3/2}+x$, then $p(x)=x^3+x^2$ and $r(x)=x^{3/2}+x$, whereas if $a(x)=x^{5/2}+ x^2 +x$, then $p(x)=0$ and $r(x)=x^{5/2}+x^2 + x$. 

We first state the following general sufficient condition for pointwise $L^2$-goodness.
\begin{thm}
\label{sparse2:intro}
Let $a(x) \in \mathbf{U}$ be a subpolynomial function with the canonical decomposition $a(x) = p(x) + r(x)$.
Suppose that $r(x) \succ x^{\epsilon}$ for some $\epsilon > 0$. Then $(\lfloor a(n) \rfloor)_{n \in \mathbb{N}}$ is pointwise $L^2$-good.
\end{thm}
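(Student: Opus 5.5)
We would follow Bourgain's approach to pointwise theorems along sparse sequences, which needs two inputs: a maximal inequality in $L^2$, and an oscillation (or variational) estimate on a lacunary sequence of times. By Calderón's transference principle it suffices to work on $\mathbb{Z}$ with the shift and the averaging operators $A_N f(x)=\frac1N\sum_{n\le N} f(x+\lfloor a(n)\rfloor)$. On the Fourier side these are multipliers
\[
m_N(\theta)=\frac1N\sum_{n=1}^N e(\lfloor a(n)\rfloor\theta).
\]
Since $e(\lfloor y\rfloor\theta)$ is a function of $(y\theta, \{y\})$, we expand in $\{a(n)\}$ via a smooth approximation of the sawtooth function. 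Up to an error of size $O(\delta)$, this reduces $m_N$ to finitely many exponential sums $\frac1N\sum_{n\le N} e(h a(n)+\theta \lfloor a(n)\rfloor)$, which are essentially $\frac1N\sum e((\theta+h)a(n))$ with $|h|\le H(\delta)$.

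The main analytic input is a Weyl/van der Corput estimate for sums $\frac1N\sum_{n\le N} e(\xi a(n))$ with $a=p+r$ and $r\succ x^\epsilon$. The condition $r\succ x^{\epsilon}$, together with $r$ being non-polynomial, guarantees that some derivative $r^{(k)}$ behaves like $N^{-\gamma}$ with $\gamma$ non-integral relative to the degrees of $p$, uniformly in $\xi$ away from the major arcs. Repeated van der Corput differencing, as in Lemma 3.1 of BKQW-type arguments, then gives the following dichotomy.

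\emph{Minor arcs.} For $\theta$ outside a small neighbourhood (of width $\approx N^{-\deg a+\eta}$) of rationals $b/q$ with $q\le N^{\eta}$, we have $|m_N(\theta)|\ll N^{-\sigma}$ for some $\sigma>0$.

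\emph{Major arcs.} Near $b/q$ the sum factorizes. If $p$ has rational structure, the factors are a complete Gauss-type sum $G(b/q)$ and a continuous integral $\int_0^1 e(\beta a(Nt))\,dt$. If $p$ is absent, the case is "purely irrational", and only the neighbourhood of $0$ survives, because $r$ is non-polynomial and $\frac1N\sum e(\tfrac bq\lfloor a(n)\rfloor)\to0$ for $q\ge2$. This is the equidistribution of $a(n)/q$ together with the floor function.

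The continuous pieces are handled by the standard argument for lacunary $N_j=\lfloor\rho^j\rfloor$. The $N^{-\sigma}$ minor-arc decay is summable over $j$, so the minor-arc part of both the maximal function and the oscillation is controlled in $L^2$ by square functions. The major-arc parts are handled by Bourgain's multi-frequency lemma (the $\log^2$ bound for $Q$ frequencies) combined with the decay $|G(b/q)|\ll q^{-\tau}$, summed over dyadic blocks of $q$. This yields
\[
\Big\|\sup_N|A_Nf|\Big\|_{2}\ll\|f\|_2
\]
and a bound on the oscillation $\sum_{j}\sup_{N_j\le N<N_{j+1}}|A_N f-A_{N_j}f|^2$ along lacunary sequences.

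Passing from lacunary $N_j$ to all $N$ uses that $a$ is eventually monotone and slowly varying: $A_N$ and $A_{N_j}$ differ by $O(\rho-1)$ in sup norm for $N_j\le N<N_{j+1}$, after letting $\rho\downarrow1$. The standard argument then gives a.e. convergence on a dense class and hence, by the maximal inequality, on all of $L^2$. We expect the main obstacle to be the uniform exponential sum estimate. It must hold uniformly in the frequency and in the polynomial/non-polynomial interplay, including the floor function and the case when $r$ is only slightly larger than $x^\epsilon$. We would first try differencing $\deg p+1$ times, then using the Hardy-field growth of $r^{(k)}$ to apply the Kusmin–Landau or van der Corput $k$-th derivative test.
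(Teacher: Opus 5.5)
The step that fails is your major-arc model. Under the hypothesis $r\succ x^{\epsilon}$, the nonzero rationals are not major arcs at all. Your approximation of $m_N$ near $b/q$ by $G(b/q)\int_0^1 e(\beta a(Nt))\,dt$ is therefore false.

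Take $a(x)=x^2+x^{1/2}$, so $p(x)=x^2$, $r(x)=x^{1/2}$ and $\lfloor a(n)\rfloor=n^2+\lfloor\sqrt n\rfloor$. Sum over the blocks $k^2\le n<(k+1)^2$, on which $\lfloor\sqrt n\rfloor=k$:
\[
m_N(1/4)=\frac{G(1/4)}{N}\sum_{k\le\sqrt N}(2k+1)\,e(k/4)+O(N^{-1/2})=O(N^{-1/2}).
\]
Your model instead predicts $m_N(1/4)\approx G(1/4)=\frac14\sum_{j=0}^{3}e(j^2/4)=\frac{1+i}{2}$.

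The same happens in general. For every $h\neq0$ and $P\in\setQ[x]$ one has $|ha(x)/q-P(x)|\succ\log x$; check the cases where $hp/q-P$ vanishes or not, using that $r$ is non-polynomial with $r\succ x^{\epsilon}$. So by Theorem~\ref{thm:w-ud:mixedindex} with $W(t)=t$, $a(n)/q$ is uniformly distributed mod $1$. Hence $m_N(b/q)\to0$ whenever $q\nmid b$, whether or not $p$ has rational coefficients; the irrational-coefficient case is also missing from your dichotomy. This is exactly the mechanism you invoke when $p$ is absent, and it applies to every $p$. Consequently your major/minor decomposition leaves an error of size $|G(b/q)|$ at such rationals that does not tend to $0$. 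The maximal and oscillation bounds you build on it via the multi-frequency lemma do not follow.

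Once this is corrected, your approach becomes the comparison argument the paper uses in Section~\ref{deg:1}, adapted from \cite{BKQW}. Only the arc at $0$ survives. There $m_N(\theta)$ is modelled by $\hat{\setB}_N(\theta)=\frac1N\sum_{m\le a(N)}F'(m)e(m\theta)$ with $F=a^{-1}$. This is the multiplier of a weighted Birkhoff average, which converges a.e. by the ergodic theorem and partial summation (Lemma~\ref{lem:HW-2}(b) covers increasing $F'$). No maximal inequality, oscillation inequality or multi-frequency lemma is needed. It suffices to have a uniform bound $\sup_\theta|m_N(\theta)-\hat{\setB}_N(\theta)|\ll(\log N)^{-1/2-\tau}$, square-summed over $N_j=\lfloor\rho^j\rfloor$, followed by the positivity-based lacunary trick.

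The paper proves the present theorem simply by splitting on $\deg p$:
\begin{itemize}
\item $\deg p\ge2$: Theorem 3.5 of \cite{BKQW}, since $x^{\epsilon}\succ(\log x)^{2^{d+1}-1+\epsilon}$;
\item $\deg p=1$: Theorem~\ref{thm:deg1}, since $x^{\epsilon}\succ\log^{2+\epsilon}x$;
\item $p=0$: Theorem B of \cite{BW1996}.
\end{itemize}
Finally, the uniform exponential-sum estimate you defer as ``the main obstacle'' is the real content in every case. The paper takes it from \cite{BKQW} when $\deg p\ge2$ and proves it in Lemmas~\ref{lem:6} and \ref{lem:7} when $\deg p=1$. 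Your proposal only sketches it.
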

\begin{rem}
By Theorem 4.10 in \cite{BKS2}, the sequence $(\lfloor a(n) \rfloor)_{n \in \mathbb{N}}$ in Theorem \ref{sparse2:intro} is ergodic.
\end{rem}

Theorem \ref{sparse2:intro} follows by considering separately three different cases according to the degree of the polynomial part $p(x)$.
First, when $\deg(p) \geq 2$, Theorem \ref{sparse2:intro} is a special case of the following result.
\begin{thm}[Theorem 3.5 in \cite{BKQW}]
\label{sparse:intro:prev}
Let $a(x) \in \mathbf{U}$ be a subpolynomial function with the canonical decomposition $a(x) = p(x) + r(x)$.
Set $d := \deg (p)$. Suppose that $d \geq 2$ and $r(x) \succ (\log x)^{2^{d+1} - 1 + \epsilon}$ for some $\epsilon > 0$. Then $(\lfloor a(n) \rfloor)_{n \in \mathbb{N}}$ is pointwise $L^2$-good.
\end{thm}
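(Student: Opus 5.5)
This is Theorem 3.5 of \cite{BKQW}; here is how I would prove it directly. Pointwise convergence will follow from two ingredients: (i) a maximal inequality in $L^2$ for $\sup_N |\setA_N^{\mathsf{a}} f|$, and (ii) convergence on a dense class, which I would obtain through the oscillation/variational method of Bourgain. Concretely, I would work on the lacunary subsequence $N_k=\lfloor \rho^k\rfloor$ with $\rho>1$ close to $1$. Because the summands are bounded for $f\in L^\infty$, convergence along $N_k$ for all $\rho$ (and then letting $\rho\to 1$) gives full convergence. Along $N_k$ I would aim for an oscillation estimate
\begin{equation*}
\sum_{j}\Bigl\| \sup_{N_{k_j}\le N_k< N_{k_{j+1}}} |\setA_{N_k} f - \setA_{N_{k_j}} f| \Bigr\|_2^2 = o(J)\,\|f\|_2^2 .
\end{equation*}
By Calderón transference it suffices to prove all of this on $\mathbb{Z}$ with the shift.

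On the Fourier side, the averaging operator has multiplier
\begin{equation*}
m_N(\theta)=\frac1N\sum_{n\le N} e(\lfloor a(n)\rfloor\theta).
\end{equation*}
I would first remove the floor function. Writing $e(\lfloor y\rfloor\theta)$ through the Fourier expansion of $\{y\}\mapsto e(-\{y\}\theta)$ reduces the problem to exponential sums $\sum_{n\le N} e(h a(n)+\theta a(n))$, which are then handled with the equidistribution of $(a(n))$ provided by the growth condition $r\succ(\log x)^{2^{d+1}-1+\epsilon}$. Next I would run the circle method. For $\theta$ near a rational $b/q$ with small $q$, we have $m_N(\theta)\approx G(b/q)\,v_N(\theta-b/q)$. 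Here $G$ is a Gauss-type sum, which depends only on $p$, because $\lfloor a(n)\rfloor \bmod q$ is governed by $p(n)\bmod q$ together with the equidistribution of $r(n)$. The function $v_N$ is a continuous integral multiplier. On the minor arcs I would use Weyl differencing $d$ times (van der Corput) to remove $p$, leaving sums involving the $d$-th differences of $r$. These are controlled by Fejér/van der Corput-type estimates for Hardy functions, and the growth condition yields a saving of a power of $\log N$.

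The main obstacle is this minor-arc estimate. The saving must beat the $\log N$ losses that come from summing over the lacunary sequence and over the Rademacher–Menshov bound for the major-arc pieces. Each of the $d$ differencing steps squares the loss, which forces the exponent threshold $2^{d+1}-1$ on $\log x$. Given the minor-arc bound, the major-arc part is handled as in Bourgain's polynomial theorem: the maximal and oscillation inequalities for the continuous multipliers $v_N$ are combined with the decay of $G(b/q)$, and the pieces are summed dyadically in $q$. Finally I would assemble the pieces. The maximal inequality together with pointwise convergence for the functions in the dense class yields pointwise $L^2$-goodness.
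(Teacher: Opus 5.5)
\textbf{The gap is your major-arc model.} You claim that near $b/q$ with small $q$ one has $m_N(\theta)\approx G(b/q)\,v_N(\theta-b/q)$, with $G$ a Gauss sum attached to $p$ whose decay in $q$ is then summed dyadically. Under the hypotheses this is false whenever $q\nmid b$.

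Because $r$ is non-polynomial and $r(x)\succ\log x$, you have $|a(x)/q-P(x)|\succ\log x$ for every $P\in\setQ[x]$. So $(a(n)/q)$ is uniformly distributed mod $1$, $\lfloor a(n)\rfloor$ is equidistributed mod $q$, and $m_N(b/q)\to 0$. For instance, take $a(n)=n^2+(\log n)^{10}$ (admissible, since $10>2^3-1$) and $\theta=1/3$. The Gauss sum is $\frac13\sum_{j=0}^{2}e(j^2/3)=i/\sqrt3$. But $\frac1N\sum_{n\le N}e(n^2/3)\,e(\lfloor(\log n)^{10}\rfloor/3)=O((\log N)^{-9})$, because $\lfloor(\log n)^{10}\rfloor$ is constant on blocks of length $\asymp n(\log n)^{-9}$ and the block phases $e(m/3)$ cancel.

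Your own minor-arc tool already shows this. Differencing $d$ times removes $p$ completely, so the resulting bound depends on $\theta$ only through $|\theta|$; compare Case (II) of Lemma~\ref{lem:largeregimereal}, and Proposition~\ref{prop:integerpart} after the floor is removed. That bound is small at every $b/q\notin\setZ$. So there are no major arcs away from the integers, and the Gauss-sum decay, the dyadic summation in $q$, and the Rademacher--Menshov/oscillation machinery have nothing to act on.

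\textbf{What the corrected argument looks like.} Once this is fixed, your plan reduces to the argument of \cite{BKQW}, which the paper quotes and whose mechanism Section~\ref{sec:sparse1} reproduces in weighted form.
\begin{itemize}
\item There is a single model operator, $\frac1t\sum_{n\le a(t)}F'(n)f(T^nx)$ with $F=a^{-1}$. (The paper instead compares with the averages along $\lfloor p(n)+\log^s n\rfloor$.) Since $F'$ is monotone, this model converges a.e.\ for $f\in L^1$ by Birkhoff's theorem and summation by parts.
\item One proves the uniform estimate $\sup_{|\theta|\le1/2}\bigl|m_t(\theta)-\frac1t\sum_{n\le a(t)}F'(n)e(n\theta)\bigr|\ll(\log t)^{-1/2-\epsilon}$. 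This is square-summable along $t_k=\rho^k$.
\item The spectral theorem then gives a.e.\ convergence of the difference to $0$ along $t_k$. The sandwich trick for $f\ge0$ passes this to all $t$. No maximal or oscillation inequality is needed.
\end{itemize}

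\textbf{The source of the threshold.} The exponent $2^{d+1}-1$ does not come from beating Rademacher--Menshov losses. Lemma~\ref{lem:integer part} turns a bound $1/r$ into only $\sqrt{\log r/r}$. So one needs $\frac1t|\sum_{n\le t}e(a(n)\beta)|\ll(\log t)^{-1-\epsilon}$ on the range $t^{-(d-1/2)}\le|\beta|\le t^{\epsilon}$.
\begin{itemize}
\item For small $|\beta|$, the van der Corput test with $l=d$ (using the leading coefficient of $p$) gives $|\beta|^{1/(2^d-2)}$.
\item For large $|\beta|$, the test with $l=d+1$ (using $|r^{(d+1)}(x)|\succ(\log x)^{\eta-1}/x^{d+1}$) gives $(|\beta|(\log t)^{\eta-1})^{-1/2^d}$.
\end{itemize}
These two ranges overlap exactly when $\eta-1>2^{d+1}-2$, i.e.\ $\eta>2^{d+1}-1$.
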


It is instructive to compare the above ``positive" result in Theorem \ref{sparse:intro:prev}  with the following ``negative" results, which were also proved in \cite{BKQW}. 
\begin{thm}
\label{thm:sparse:epsilon:int}
Let $p(x)$ be a polynomial in $ C \setQ (x)$\footnote{$ C \setQ (x)$ denotes the set of all real constant multiples of polynomials in $\mathbb{Q}[x]$.}. 
\begin{enumerate}
\item (Theorem 3.6 of \cite{BKQW}) If $\deg(p) \geq 2$ and $1 \prec r(x) \prec \log x \exp ((\log \log x)^m)$ for some $0 \leq m < 1$, then $(\lfloor p(n)+ r(n)\rfloor)$ is pointwise $L^2$-bad.
\item (Theorem 3.7 of \cite{BKQW})  Suppose that the lowest order term of $p(x)$ is of degree $k\geq 3$. If $1\prec r(x) \prec (\log x)^m $, where $m< 2k/(k+2)$, then $(\lfloor p(n)+ r(n)\rfloor)$ is pointwise $L^2$-bad.
\end{enumerate}
\end{thm}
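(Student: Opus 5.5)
The plan is to deduce both parts from a divergence criterion of Bourgain type. Concretely, I would use Bourgain's entropy criterion: if $\sup_N |\frac1N\sum_{n\le N} f(T^{b_n}x)|$ were a.e. finite and the averages converged a.e. for every $f\in L^2$, then for every $\delta>0$ the $L^2$-$\delta$-entropy of $\{\frac1N\sum_{n\le N}f\circ T^{b_n}\}_N$ would be bounded uniformly over $f$ in the unit ball. Combined with the Conze/Rosenblatt transference this reduces pointwise $L^2$-badness, for every aperiodic system, to a purely Fourier-analytic statement. For $b_n=\lfloor p(n)+r(n)\rfloor$, one must find, for each $J$, frequencies and scales $N_1<\dots<N_J$ together with a function whose spectral measure lives near chosen frequencies $\theta$, such that the multipliers
\[
m_{N}(\theta)=\frac1N\sum_{n\le N}e\bigl(\theta\lfloor p(n)+r(n)\rfloor\bigr)
\]
at the different scales are pairwise $\delta$-separated in $L^2$ of that spectral measure. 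Equivalently, I would use the Rosenblatt--Wierdl form: for $\theta$ ranging over a set of positive measure, the sequence $(m_{N_i}(\theta))_i$ behaves like an orthonormal-ish (independent) system.

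The mechanism is that $r$ is slowly growing and $p\in C\mathbb Q[x]$. Write $p=c\,\tilde p$ with $\tilde p\in\mathbb Q[x]$ of denominator $Q$. For $\theta$ in a small neighbourhood of $a/(cq)$-type rationals (major arcs adapted to $c$), the phase $e(\theta p(n))$ is $qQ$-periodic up to a slowly varying factor. On an interval where $r$ varies by $O(1)$, partial summation then gives
\[
m_N(\theta)\approx G(a,q)\,\frac1N\sum_{n\le N} e(\beta p(n)+\theta r(n))
\]
with a normalized Gauss-type sum $G(a,q)$, and the floor function handled through its Fourier expansion. Choosing $\theta$ so that the polynomial part contributes a fixed nonzero constant $G(a,q)$ while $\theta r(n)$ still rotates, the averages reduce to averages of $e(\theta r(n))$ along a sequence with $1\prec r\prec \log x\cdot(\text{small})$. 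For such slow $r$, the Ces\`aro averages $\frac1N\sum e(\theta r(n))\approx e(\theta r(N))$ oscillate in $N$. Picking lacunary scales $N_i$ with $r(N_{i+1})-r(N_i)$ large, the vectors $(e(\theta r(N_i)))_i$ are almost independent in $\theta$. This is exactly the unbounded-entropy configuration, in the same way that $\lfloor(\log n)^c\rfloor$ is bad.

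The main obstacle is controlling the error terms, and this is where the growth hypotheses enter. One must show that for $\theta$ on the chosen arcs the replacement of $e(\theta p(n))$ by its periodic model incurs an error smaller than the main oscillation. One must also show that contributions from $\theta$ off the major arcs (Weyl sums) do not destroy the separation.

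\begin{itemize}
\item In part (1) I would take $\theta$ extremely close to rationals with bounded denominator. The allowed deviation $\beta$ must satisfy $\beta N^{d}\ll 1$, while $\theta r(N)$ still moves by $\gg 1$ between scales. This forces $r(N_{i+1})-r(N_i)\gg1$ with $\log N_{i+1}/\log N_i$ controlled, and the condition $r\prec \log x\exp((\log\log x)^m)$, $m<1$, is what makes enough scales fit.
\item In part (2) the lowest-degree term $x^k$ governs how fast $e(\beta p(n))$ decorrelates near $n\approx N$. Via a van der Corput/stationary-phase estimate, $\sum_{n\le N} e(\beta n^k)\approx \beta^{-1/k}$-type behaviour. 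Balancing this against the oscillation of $r$ produces the threshold $r\prec(\log x)^m$ with $m<2k/(k+2)$. I would first try to obtain this exponent by optimizing the width of the arcs against the number of scales $J$, and I expect this optimization to be the delicate step.
\end{itemize}
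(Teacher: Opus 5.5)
The paper gives no proof of this statement. It is quoted from \cite{BKQW} (Theorems 3.6 and 3.7) for comparison, so your sketch has to stand on its own. There is a genuine gap: your central step fails in exactly the regime that gives the theorem its content.

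\textbf{The oscillation step fails once $r\succ\log x$.} Your reduction needs $\frac1N\sum_{n\le N}e(\theta r(n))\approx e(\theta r(N))$ for $\theta$ near rationals with bounded denominator. This holds only when $Nr'(N)$ stays bounded, i.e.\ $r\ll\log x$. Both parts of the statement allow $r\succ\log x$:
\begin{enumerate}
\item In (1), the factor $\exp((\log\log x)^m)$ tends to infinity for $m>0$; it is not ``small''. For example, $r(x)=\log x\,\log\log x$ is admissible.
\item In (2), $2k/(k+2)>1$ for $k\ge 3$. For example, $p(x)=x^3$, $r(x)=(\log x)^{1.1}$ is admissible.
\end{enumerate}
For $r\succ\log x$, $r$ grows by $\asymp Nr'(N)\to\infty$ on $[\varepsilon N,N]$. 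Consequently:
\begin{enumerate}
\item $\theta r(n)$ is u.d.\ mod $1$, so the averages of $e(\theta r(n))$ tend to $0$.
\item In fact $m_N(\theta)\to 0$ for every fixed $\theta\notin\mathbb Z$, so the averages even converge in $L^2$.
\item For each fixed $q\ge 2$, $\sup_\beta |m_N(a/q+\beta)|\to 0$. The term $\beta p(n)$ can cancel the rotation of $(a/q)r(n)$ only on a range of relative length about $(Nr'(N))^{-1/2}\to 0$.
\end{enumerate}
The regime $r\ll\log x$, where your oscillation picture is correct, is the easy one: there the multipliers at suitable rationals keep oscillating, so norm convergence already fails.

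\textbf{Consequence for the entropy criterion.} In the hard regime, non-negligible multipliers can only sit at denominators growing with $N$. There the Gauss-sum factor is $\ll q^{-1/d+\varepsilon}\to 0$. Since $\|A_{N_i}f-A_{N_j}f\|_2\le \sup_{\operatorname{supp}\mu_f}|m_{N_i}-m_{N_j}|\,\|f\|_2$, no fixed-$\delta$ entropy or independence configuration can be built from such multipliers. So Bourgain's criterion, as you set it up, cannot detect the divergence. Separately, an entropy argument by itself gives divergence only on a set of positive measure, not the a.e.\ divergence in every aperiodic system required by the paper's definition.

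\textbf{The role of the lowest-order term.} Your explanation here is also off. Near $n\approx N$, the archimedean behaviour of $e(\beta p(n))$ is governed by the top-degree term. So a heuristic like $\sum e(\beta n^k)\approx\beta^{-1/k}$ cannot produce $2k/(k+2)$.

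What is missing is an arithmetic mechanism: the maximal function accumulates many individually small contributions at moduli $Q$ that grow with the scale. The hypotheses look tailored to this. Say $p\in\mathbb Z[x]$.
\begin{enumerate}
\item For (1), take $Q$ with many prime factors, so that $p(\mathbb Z)\bmod Q$ has density roughly $2^{-\omega(Q)}$.
\item For (2), take $Q=q^k$. Because the lowest-order term has degree $k$, $p(n)\equiv 0\pmod{q^k}$ whenever $q\mid n$. This divisibility, not stationary phase, is why that term appears.
\end{enumerate}
Let $E$ be that structured residue set thickened by a window of about $Nr'(N)$ consecutive residues. Since $\lfloor r(n)\rfloor$ moves by only about $Nr'(N)$ on $[\varepsilon N,N]$, the following holds for $x$ in a suitable translate of the window:
\begin{enumerate}
\item The points $x+\lfloor p(n)+r(n)\rfloor$ lie in $E$ for a proportion of $n\le N$ far exceeding $\mu(E)$. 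This proportion is almost all $n$ in case (1), and the $n\equiv 0\pmod q$ in case (2).
\item As $N$ runs through successive scales, the window sweeps through all residues mod $Q$.
\end{enumerate}
Hence $\sup_N A_N\setone_E$ is large everywhere while $\|\setone_E\|_2$ is small. This either violates the weak-$(2,2)$ maximal inequality (via Stein's principle plus transference) or yields sweeping out.

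The thresholds come from balancing the window length $Nr'(N)$ against the arithmetic gain ($2^{\omega(Q)}$, resp.\ a power of $q$), under the constraint that $r$ must reach size $\gtrsim Q$. For instance, with $r=\log x\exp((\log\log x)^m)$ and $Q$ a primorial, this closes precisely when $m<1$. For $Q=q^k$ it requires $k\ge 3$. None of this appears in your plan, and without it neither threshold can emerge.
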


Next, in the case $\deg(p)=1$, Theorem \ref{sparse2:intro} follows from the following result, which is proved in Section \ref{deg:1}.
\begin{thm}[Theorem \ref{thm:deg1}]\label{thm:deg1:int}
Let $a(x)\in \mathbf{U}$  have the canonical decomposition $a(x)=cx+r(x)$ with $c\neq 0$.
If either, $r(x) \succ \log^ {2+\epsilon} x$ for some $\epsilon>0$, or $c=\frac{1}{m}$ for some $m\in \setZ$,  then $(\lfloor a(n) \rfloor)_{n\in \setN}$ is pointwise $L^2$-good.
\end{thm}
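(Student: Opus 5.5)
The plan treats the two hypotheses separately. In both cases we pass to an arbitrary invertible system via Calderón's transference principle, so that everything reduces to estimates on $\mathbb{Z}$. Throughout, $r(x)\prec x$ since $r$ is the non-polynomial part, and replacing $T$ by $T^{-1}$ lets us assume $c>0$.

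\emph{Case $c=\frac1m$.} Split $n=mk+j$ with $0\le j<m$. Then
\[
\lfloor a(mk+j)\rfloor = k+\lfloor b_j(k)\rfloor,\qquad b_j(k):=\tfrac jm+r(mk+j).
\]
Here $b_j$ is a Hardy field function with $b_j'(x)\to 0$. Hence for large $k$ the map $\phi_j(k)=k+\lfloor b_j(k)\rfloor$ is monotone with increments in $\{0,1\}$ or $\{1,2\}$, according to the sign of $r'$. It follows that each integer is hit by $\phi_j$ at most twice (up to finitely many initial terms), which gives the pointwise domination
\[
\sup_N \frac1N\sum_{k\le N}|f|(T^{\phi_j(k)}x)\le C\,\mathcal{M}|f|(x),
\]
where $\mathcal M$ is the two-sided Hardy--Littlewood ergodic maximal function. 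This maximal inequality already holds in $L^1$. Next, for bounded $f$, the average along $\phi_j$ differs from the ordinary ergodic average $\frac1{\phi_j(N)}\sum_{\ell\le \phi_j(N)} f(T^\ell x)$ by $O(\|f\|_\infty\,|b_j(N)|/N)=o(1)$: the repeated or skipped integers number $O(|b_j(N)|)$, and $\phi_j(N)/N\to1$. Birkhoff's theorem then gives a.e.\ convergence on $L^\infty$, which is dense in $L^2$. The Banach principle finishes this case, and averaging over $j$ gives the result for the full sequence.

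\emph{Case $r\succ\log^{2+\epsilon}x$.} Here we use Bourgain's $L^2$ method. By a standard argument using the slow variation of the averages, it suffices to prove a.e.\ convergence along a lacunary sequence $N_k=\lfloor\rho^k\rfloor$ for every $\rho>1$. Write $m_N(\theta)=\frac1N\sum_{n\le N}e(\theta\lfloor a(n)\rfloor)$ for the Fourier multiplier. Via the spectral theorem, the goal is the square-function bound
\[
\sup_{\theta\in\mathbb{T}}\sum_k\bigl|m_{N_k}(\theta)-\mathbf 1_{\theta=0}\cdot m_{N_k}(0)\bigr|^2\ \text{ for }\theta\text{ away from }0,
\]
together with a standard oscillation (variational) estimate in the regime $|\theta|\lesssim 1/N_k$. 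In that regime $m_{N_k}(\theta)$ is close to the multiplier of an ordinary average and is handled by comparison with $\frac1N\sum_{n\le N}e(\theta a(n))$, using $a(N)\asymp N$.

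For $|\theta|\gg 1/N$, write $\lfloor y\rfloor=y-\{y\}$ and Fourier-expand $e(-\theta\{y\})$ (after smoothing). This reduces $m_N(\theta)$ to a weighted sum of Weyl sums
\[
\frac1N\sum_{n\le N}e\bigl(h(cn+r(n))\bigr),\qquad h\in\theta+\mathbb{Z},
\]
with summable coefficients in $h$. Split $[1,N]$ into dyadic blocks and apply van der Corput's second-derivative test to the phase $hr(n)+hcn$ on each block. Since $r''(x)\asymp r(x)/x^2$ for a non-polynomial Hardy function, this should give a bound of order $r(N)^{-1/2}$, up to $\log$ losses and uniformly in $h$ not too close to a point where $hc\in\mathbb{Z}$. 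The hypothesis $r\succ\log^{2+\epsilon}$ then gives $|m_{N_k}(\theta)|\ll (\log N_k)^{-1-\epsilon/2}\asymp k^{-1-\epsilon/2}$, so the sum over $k$ of the squares converges uniformly in $\theta$.

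The main obstacle is the last step, namely making the van der Corput bound uniform in $\theta$ and $h$. The delicate points are where $hc$ is close to an integer, which are the major arcs arising from the linear part $cx$ (for irrational $c$ these accumulate). Near such points I would compare $m_N(\theta)$ with the corresponding Beatty-sequence multiplier $\frac1N\sum e(\theta\lfloor cn+r(n)\rfloor)$, treating $r(n)$ as a slowly varying shift. There I would use the first-derivative test (Kusmin--Landau) on blocks where $|hr'(n)|$ is small, so the bound degrades only to $O(1/(N\|hc\|))$, which is summable in the lacunary scale. I expect the exponent $2+\epsilon$ to be exactly what makes the two regimes match.
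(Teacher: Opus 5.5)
Your argument for $c=\frac1m$ is correct and differs from the paper's. The paper first uses Case (I) to reduce to $r(x)\prec\log^4x$. Then $\{\lfloor a(mn+l)\rfloor: n\le N\}$ differs from $[N]$ in only $O(\log^4N)$ places, and the paper handles this exceptional set with a Borel--Cantelli argument ($N^{-1-\epsilon}\sum_{n\le N}f(T^{a_n}x)\to0$ a.e.). Your argument uses the multiplicity bound, the resulting $L^1$ maximal inequality, the comparison with Birkhoff averages for bounded $f$, and the Banach principle. All of these need only $r(x)\prec x$, so in your version this case does not depend on Case (I).

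For $r\succ\log^{2+\epsilon}x$, your overall scheme is viable: lacunary times, an $L^2$ square function, comparison with a Ces\`aro model of length $\approx cN$, and removal of the integer part by Fourier expansion. It differs from the paper's route, which compares with the $F'$-weighted average $\setB_tf$ and removes the integer part with Lemma~\ref{lem:integer part}.

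However, your key estimate rests on a false claim: $r''\asymp r/x^2$ fails exactly in the relevant range. For $r=\log^{2+\epsilon}x$ one has $r''\asymp r/(x^2\log x)$, and in general Lemma~\ref{lem:est-lopital-slow2}(a) only gives $|r''(x)|\gg\log^{1+\epsilon}x/x^2$. So the second-derivative test gives $\frac1N\bigl|\sum_{n\le N}e(ha(n))\bigr|\ll|h|^{-1/2}(\log N)^{-(1+\epsilon)/2}$, not $(\log N)^{-1-\epsilon/2}$. This bound is sharp: if $h(c+r'(x_0))=q\in\mathbb{Z}$ for some $x_0\in[N/2,N]$, the phase $ha(x)-qx$ is stationary at $x_0$, and the sum has size $\asymp|hr''(x_0)|^{-1/2}$.

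The corrected rate is still square-summable along $N_k=\rho^k$. For $r\asymp\log^\eta x$ the exponent is $(\eta-1)/2$, which exceeds $\frac12$ exactly when $\eta>2$; this, not a matching of two regimes, is where $2+\epsilon$ comes from. But there is now no room to spare, so passing from $e(ha(n))$ to $e(\theta\lfloor a(n)\rfloor)$ may lose at most $(\log N)^{o(1)}$. You need to spell this step out. The coefficients of $e(-\theta\{y\})$ are $\asymp|\theta|/|k|$, which are not summable. Instead, smooth at scale $\delta=1/\log N$ and control the $n$ with $\|a(n)\|<\delta$ by the Erd\H{o}s--Tur\'an inequality. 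This yields $|m_N(\theta)|\ll\min\bigl(1,(N|\theta|)^{-1}\bigr)+(\log N)^{-(1+\epsilon)/2}\log\log N$.

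This is precisely where your route improves on the paper's. Lemma~\ref{lem:integer part} turns a Weyl bound $1/R$ into $\sqrt{\log R/R}$, so the paper needs Weyl sums $\ll(\log t)^{-1-\tau}$. The bound $(\log t)^{-(\eta-1-\gamma)/2}$ obtained in Subcase (b) of the proof of Lemma~\ref{lem:7} is that small only for $\eta>3$. By the stationary-phase example, no sharper exponential-sum estimate can close that gap.

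Finally, the ``main obstacle'' you single out is not one, and your proposed fix would fail if it were needed. The second-derivative test only sees $hr''$, so it is uniform in $|h|\ge\frac12$ however close $hc$ is to an integer. A bound $O(1/(N\|hc\|))$, by contrast, is not uniform in $h$ when $c$ is irrational. The only genuinely small frequency is $h=\theta$ with $|\theta|$ small. There Kusmin--Landau gives $\ll(N|\theta|)^{-1}$, as in Subcase (a) of the proof of Lemma~\ref{lem:7}, which is harmless in the square function.

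Nor do you need a separate oscillation estimate. Compare $m_{N_k}$, for all $\theta$, with the multiplier $\mu_{N_k}$ of the Ces\`aro average of length $\lfloor cN_k\rfloor$. The difference is $\ll(1+|r(N_k)|)/N_k$ for $|\theta|\le N_k^{-1}$. Otherwise both multipliers are $\ll(N_k|\theta|)^{-1}+(\log N_k)^{-(1+\epsilon)/2}\log\log N_k$. Hence $\sup_\theta\sum_k|m_{N_k}(\theta)-\mu_{N_k}(\theta)|^2<\infty$, and Birkhoff's theorem applied to $\mu_{N_k}$ finishes the argument.
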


\begin{rem}
In view of the condition $r(x) \succ (\log x)^{2^{d+1} - 1 + \epsilon}$ in Theorem \ref{sparse:intro:prev} for $d = \deg (p) \geq 2$, one might expect that the corresponding condition for $\deg (p) =1$ in Theorem \ref{thm:deg1:int} would be $r(x) \succ \log^{3+\epsilon} x$. However, Theorem \ref{thm:deg1:int} shows that the weaker condition $r(x) \succ \log^ {2+\epsilon} x$ is already sufficient for the sequence $(\lfloor a(n) \rfloor)_{n\in \setN}$ to be pointwise $L^2$-good.
\end{rem}

\begin{prob}
Let $a(x) \in \mathbf{U}$ have the canonical decomposition $a(x) = cx + r(x)$, where $c \ne 0$ and $1/c \notin \mathbb{Z}$.
If $1\prec |r(x)| \ll \log  x$ then by \cite[Theorem 3.2]{BKQW}, the sequence $(\lfloor a(n) \rfloor)_{n \in \mathbb{N}}$ is bad for mean convergence, and hence it is pointwise $L^2$-bad.

This observation, together with Theorem \ref{thm:deg1:int}, leads to the following question.
What can be said about the pointwise convergence or divergence of 
$$\setA_N^{\mathsf{a}} f(x) = \frac{1}{N} \sum_{n=1}^N f(T^{\lfloor a(n) \rfloor} x), \text{ when } \log x \prec |r(x)| \ll \log^{2+\epsilon} x \text{ for every } \epsilon >0?$$
\end{prob}

Finally, when $\deg(p) =0$, Theorem \ref{sparse2:intro} is a consequence of the next result. 
\begin{thm}[Theorem B in \cite{BW1996}]
Let $a(x) \in \mathbf{U}$ be a subpolynomial. 
\begin{enumerate}
\item If $x^{k-1} \prec a(x) \prec x^k$ for some $k \geq 2$, then $(\lfloor a(n) \rfloor)_{n \in \mathbb{N}}$ is pointwise $L^p$-good for any $p>1$.
\item If $x^{\delta} \prec a(x) \prec x$ for some $\delta >0$, then $(\lfloor a(n) \rfloor)_{n \in \mathbb{N}}$ is pointwise $L^1$-good.  
\end{enumerate}
\end{thm}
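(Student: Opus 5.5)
We follow the strategy of Bourgain and Wierdl: a maximal inequality via transference, comparison with a random-like model sequence, and an oscillation estimate on lacunary subsequences. By Calderón's transference principle, it suffices to work on $\mathbb{Z}$ with the shift. There we need two things. The first is a maximal inequality $\|\sup_N |\setA_N^{\mathsf{a}} f|\|_{\ell^p} \ll \|f\|_{\ell^p}$; in case (2) we instead need a weak-type $(1,1)$ bound. The second is pointwise convergence on a dense class of functions. For the dense class we use the spectral theorem together with the Weyl-type exponential sum estimates available for Hardy field functions. Since $a(x)$ is a non-polynomial subpolynomial function with $x^{k-1}\prec a(x)\prec x^k$, the sequence $(a(n)\theta)$ is uniformly distributed mod $1$ for every $\theta\neq 0$ (Boshernitzan's criterion, or van der Corput differencing $k$ times). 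This gives the identification of the limit and mean convergence. The a.e. statement then reduces to the maximal and oscillation inequalities, restricted to a lacunary sequence $N_j=\lfloor \rho^j\rfloor$, and we use the monotonicity of the averages of $|f|$ between consecutive lacunary times.

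The key step is a comparison. Write the averaging operator as convolution with $K_N=\frac1N\sum_{n\le N}\delta_{\lfloor a(n)\rfloor}$, and compare $K_N$ with the smooth kernel $L_N$ that puts mass $\frac{1}{N}(a^{-1})'(m)$ on each $m\in[a(1),a(N)]$. This is the density of the range of $\lfloor a\rfloor$ weighted by multiplicities. For case (2), $a$ is sublinear, so $\lfloor a(n)\rfloor$ takes each value roughly $(a^{-1})'(m)$ times. The difference $\widehat{K_N}-\widehat{L_N}$ is then controlled by exponential sums $\sum_{n\le N} e(\theta a(n))$ minus their integral counterpart, and also by the error coming from the fractional parts $\{a(n)\}$. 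For that error we use a Fourier expansion of the sawtooth function, in the usual Vaaler style. Van der Corput estimates give $\|\widehat{K_N}-\widehat{L_N}\|_\infty \ll N^{-\gamma}$ for some $\gamma>0$ depending on $\delta$ (respectively on $k$). Summing over the lacunary $N_j$ yields a square-function bound in $\ell^2$. The smooth kernels $L_N$ are dominated by the Hardy–Littlewood maximal function because they are positive and essentially decreasing in $m$, so their maximal function is weak $(1,1)$ and strong $(p,p)$.

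To go beyond $L^2$, we interpolate. In case (1), $p>1$ is obtained by interpolating the $\ell^2$ error bound (with its power decay) against a trivial $\ell^1\to\ell^\infty$ or $\ell^1$ bound on $K_N-L_N$. In case (2), to reach $L^1$, we use Calderón–Zygmund theory as in Urban–Zienkiewicz/Wierdl for $x^{c}$ with $c<1$ close to $1$. Since $a(x)\prec x$, consecutive values $\lfloor a(n)\rfloor$ are hit many times, and $K_N$ is $\ell^\infty$-close to $L_N$ with a power saving. So we split $f=g+b$ via a Calderón–Zygmund decomposition at height $\lambda$. The good part is handled by the $\ell^2$ estimate. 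For the bad part we need a $\ell^1\to\ell^{1,\infty}$ bound on $\sup_j|(K_{N_j}-L_{N_j})*b|$. This follows from the pointwise bound $\|K_N-L_N\|_\infty\ll N^{-1-\gamma'}\cdot$(support size), combined with the mean-zero property of the atoms, using the smoothness of $L_N$.

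The main obstacle is the uniform-in-$\theta$ exponential sum estimate near rationals when $k\ge2$, that is, near the major arcs. There the polynomial-like behaviour of $a$ could in principle cause losses. Our plan is to exploit the non-polynomial hypothesis $x^{k-1}\prec a\prec x^k$: after $k-1$ van der Corput differencings, the phase $\theta a^{(k-1)}$-type derivative is monotone and sublinear. Kusmin–Landau then gives a saving uniformly in $\theta$ bounded away from $0$, and the region near $\theta=0$ is handled by the smooth model $L_N$. In case (2), the corresponding difficulty is obtaining the $\ell^\infty$ kernel bound needed for the weak $(1,1)$ Calderón–Zygmund argument. We would first try to reduce to the region $\delta$ close to $1$, where this bound is available.
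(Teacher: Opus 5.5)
The paper does not prove this statement; it quotes it from \cite{BW1996}, so there is no in-paper proof to compare against.

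\textbf{Part (1).} Your outline is the standard one and is sound in substance. You compare $K_N$ with the model $L_N$, whose weights are $F'(m)$ with $F=a^{-1}$; these weights are \emph{decreasing} here. The steps are then:
\begin{enumerate}
\item get a power saving $\sup_\theta|\widehat{K_N}(\theta)-\widehat{L_N}(\theta)|\ll N^{-\gamma}$;
\item interpolate against the trivial $L^1$ bound to obtain $\sum_j\|(K_{N_j}-L_{N_j})f\|_p^p<\infty$ for $1<p\le 2$;
\item combine with a.e.\ convergence of the model (Lemma \ref{lem:HW-2}(a)) and the lacunary trick for $f\ge0$ (as in Lemma \ref{lem:reduction2}).
\end{enumerate}
This is the same mechanism the paper runs in Sections 3--5. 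Two corrections. First, no oscillation inequality or ``dense class'' is needed, and the spectral theorem gives only norm convergence anyway. Second, there is no major-arc issue at nonzero rationals. The hypothesis $x^{k-1}\prec a\prec x^k$ forces $a$ to have no polynomial part, so the only delicate frequencies are $|\theta|\lesssim a(N)^{-1+\epsilon}$.

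\textbf{Part (2) has a genuine gap.} For sublinear $a$ the inverse $F$ is convex, so the model weights $F'(m)$ are \emph{increasing}. Your claim that $L_N$ is ``essentially decreasing'', and hence dominated by the Hardy--Littlewood maximal function, is therefore false here. An increasing kernel $\frac1N F'(m)$, $m\le a(N)$, is dominated by the one-sided maximal function only when $\sup_M MF'(M)/F(M)<\infty$. In a Hardy field $MF'(M)/F(M)\to\lim_{x\to\infty}\log x/\log a(x)$, so this condition is exactly the hypothesis $x^\delta\prec a(x)$. Your argument never uses that hypothesis at this point, and it cannot be avoided. For $a(x)=(\log x)^2$ one still has $\|K_N-L_N\|_{\ell^1}\to0$, so the comparison step is harmless. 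Yet $\lfloor(\log n)^2\rfloor$ is pointwise $L^\infty$-bad by Theorem \ref{thm:necessary} with $W(x)=x$, so the entire obstruction sits in the model $L_N$.

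The Calder\'on--Zygmund route you fall back on does not repair this. Urban--Zienkiewicz-type weak $(1,1)$ bounds concern $\lfloor n^c\rfloor$ with $c$ slightly \emph{larger} than $1$. Moreover, there is no reduction of a general $x^\delta\prec a\prec x$ to the case ``$\delta$ close to $1$''.

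\textbf{The missing idea.} The sublinear case is elementary and needs no exponential sums. If $a$ is eventually increasing, then $\lfloor a(n)\rfloor=m$ iff $F(m)\le n<F(m+1)$, so each $m<\lfloor a(N)\rfloor$ is hit $F(m+1)-F(m)+O(1)$ times.
\begin{enumerate}
\item The $O(1)$ errors contribute at most $\frac{a(N)}{N}\cdot\frac{1}{a(N)}\sum_{m\le a(N)}|f(T^mx)|$, which tends to $0$ a.e.\ for $f\in L^1$ by Birkhoff.
\item The last, partial block is $O\bigl(|f(T^{\lfloor a(N)\rfloor}x)|/a(N)\bigr)$, which also tends to $0$ a.e.
\item What remains is a weighted Birkhoff average with increasing weights $F(m+1)-F(m)$. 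It converges a.e.\ by Lemma \ref{lem:HW-2}(b), precisely because $\sup_M MF'(M)/F(M)<\infty$.
\end{enumerate}
In the paper's own terms, part (2) is simply the case $W(x)=x$ of Theorem \ref{thm:sufficient}. There the hypothesis $W(x)\le|a(x)|^c$ becomes $x\le a(x)^{1/\delta}$, and $\delta>0$ enters through the weight comparison in Lemma \ref{lem:comp:weights}.
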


\begin{rem}
In light of Theorems \ref{sparse:intro:prev} and \ref{thm:deg1:int}, one may expect that if $\deg (p) =0$, there exists $c>0$ such that if $r(x) \succ (\log x)^{c + \epsilon}$ for some $\epsilon > 0$, then $(\lfloor a(n) \rfloor)_{n \in \mathbb{N}}$ is pointwise $L^2$-good. 
However, by Example \ref{ex1:sec1} (1) which is based on Theorem \ref{thm:necessary:int}, this is not the case. 
\end{rem}

In Sections \ref{sec:sparse1} and \ref{sec:sparse2}, we study weighted averages for subpolynomial functions and, in particular, derive the following result for logarithmic averages version of Theorem \ref{thm:sparse:epsilon:int}. 
\begin{thm} [cf. Theorem \ref{thm:sparse2'}] 
\label{sparse2':intro}
Let $a(x) \in \mathbf{U}$ be a subpolynomial function with canonical decomposition $a(x) = p(x) + r(x)$. Suppose that $|r(x)| \gg (\log x)^{\eta}$ for some $\eta > 1$. Then $(\lfloor a(n) \rfloor)_{n \in \mathbb{N}}$ is pointwise $L^2$-good and ergodic for $\log$-averages.  
\end{thm}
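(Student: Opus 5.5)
The plan is to convert the $\log$-averages into ordinary Cesàro averages along an exponentially reparametrized sequence, and then treat the result as a (blocked) $\lim$ of the earlier pointwise machinery. Write $n=e^t$. Since $\frac{1}{\log N}\sum_{n\le N}\frac1n F(n)$ is asymptotically a Cesàro average in $t=\log n$, it suffices to control
\[
\frac{1}{L}\sum_{\ell=1}^{L} \Big(\frac{1}{e^{\ell}}\sum_{e^{\ell-1}< n\le e^{\ell}} f(T^{\lfloor a(n)\rfloor}x)\Big)
\]
up to weights that differ from $1/n$ by a bounded factor on each dyadic-type block. A cleaner route, which I will actually follow, is the standard reduction to lacunary subsequences: pointwise convergence of $\log$-averages follows once we prove (i) a maximal inequality $\|\sup_N |\setA_N^{\mathsf a,\log} f|\|_2\ll\|f\|_2$, and (ii) convergence for a dense class, or alternatively a variational/oscillation inequality along $N_j=\lfloor \rho^{j}\rfloor$ combined with the fact that for $\log$-averages consecutive $N$ in $[N_j,N_{j+1}]$ change the average by $O(\log\rho/\log N_j)\|f\|_\infty$. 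Ergodicity (the identification of the limit) follows from \cref{thm4.10:BKS2}: if $\deg p\ge1$ it is the $W$-analogue of the known ergodicity, and if $p$ is constant we have $\log\log x\prec (\log x)^\eta\ll |r(x)|$, i.e.\ $\log W\prec a$.

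The cases split as in the paper's treatment of \cref{sparse2:intro}. If $p$ is constant, then $(\log x)^\eta\ll |a(x)|$ with $\eta>1$ and, when $a(x)\prec x$, this gives $W(x)=\log x\le |a(x)|^{1/\eta}$, so \cref{thm:sufficient:int} applies directly and even yields $L^1$-goodness; if $a\succ x^{\epsilon}$ one uses the Cesàro result of \cite{BW1996} (Theorem B) and the fact that pointwise Cesàro convergence implies pointwise $\log$-convergence by summation by parts. If $\deg p\ge1$ and $r\succ x^{\epsilon}$, again \cref{sparse2:intro} plus summation by parts suffices. So the real content is $\deg p\ge1$ with $(\log x)^\eta\ll |r(x)|\prec x^{\epsilon}$ for every $\epsilon$.

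For that case I would follow the Fourier-analytic method behind \cref{sparse:intro:prev}/\cref{thm:deg1:int}: by the Calderón transference principle and the spectral theorem, reduce to $\ell^2(\setZ)$ estimates for the multipliers $m_N(\theta)=\frac1{\log N}\sum_{n\le N}\frac1n e(\lfloor a(n)\rfloor\theta)$. On a block $n\in(e^{k-1},e^k]$ the function $r$ is essentially the slowly varying quantity $r(e^k)$ plus a small correction, and $r$ changes by $\asymp k^{\eta-1}\to\infty$ across the block. The key estimate is then a Weyl/van der Corput bound on each block showing that, away from the major arcs of $p$, the block sums are $O(k^{-\delta})$, and near a rational $b/q$ the phase $e(r(n)\theta)$ behaves like the sequence $(\lfloor r(n)\rfloor)$, i.e.\ like the $\deg p=0$ case, which the first paragraph handles via \cref{thm:sufficient:int}. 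Combining, $m_{N}$ along $N=e^{\ell}$ decomposes into a major-arc part controlled by the Bourgain-type arithmetic maximal/variational inequality for the polynomial $p$ (with $\log$-weights) tensored with a good slow sequence, and a minor-arc error summable in $\ell^2$ along lacunary $\ell$ because of the decay $k^{-\delta}$ averaged over $k\le \ell$, giving $\ell^{-\delta'}$.

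The main obstacle is this minor-arc/transition estimate: we only have $r\gg(\log x)^{\eta}$ with $\eta$ just above $1$, so the gain per exponential block is only a small power of $k=\log n$, and we need it to beat the number of lacunary scales. I expect to use lacunary scales $N=\exp(\ell^{\gamma})$ with $\gamma$ chosen depending on $\eta$, so that $\sum_\ell \ell^{-\gamma\delta'}<\infty$ while intermediate $N$ are still controlled by the $\log$-average smoothness; tuning $\gamma$ against the van der Corput exponent coming from $\eta-1>0$ is the delicate point.
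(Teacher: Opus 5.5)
Your handling of $\deg p=0$ and of $r\succ x^{\epsilon}$ agrees with the paper, but the remaining case is not proved. The step where the major-arc part is ``controlled by the Bourgain-type arithmetic maximal/variational inequality for $p$ (with $\log$-weights) tensored with a good slow sequence'' has no mechanism behind it.

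The identity $\lfloor p(n)+r(n)\rfloor=p(n)+\lfloor r(n)\rfloor$ holds only for integer-valued $p$. Even then, the major-arc multiplier $\frac{1}{\log N}\sum_{n\le N}\frac1n e(p(n)\xi)\,e(\lfloor r(n)\rfloor\theta)$ is not a product of the polynomial major-arc multiplier and the slow-sequence multiplier. So neither Bourgain's inequality nor Theorem~\ref{thm:sufficient:int} controls it; the latter is a pointwise convergence statement, not a maximal or multiplier bound. For $p$ with irrational coefficients, such as $\sqrt2\,n$, there is no arithmetic major-arc structure for Bourgain's theory to act on at all.

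The heuristic is also misdirected. Since $|r|\gg(\log x)^{\eta}$ with $\eta>1$, the non-polynomial part kills every major arc of $p$ except the one at $\beta=0$. For instance, $\frac{1}{\log t}\sum_{n\le t}\frac1n e(\lfloor r(n)\rfloor b/q)\to0$ when $q\nmid b$. So away from $0$ there is nothing ``like the $\deg p=0$ case'' to handle.

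The missing idea is to compare with a reference sequence that has the same polynomial part and is already known to be pointwise good. Then no maximal inequality for $(\lfloor a(n)\rfloor)$ is ever needed.
\begin{itemize}
\item For $\deg p=1$, first use Theorem~\ref{thm:deg1:int} to reduce to $|r|\ll\log^3x$, and take $\lfloor p(n)+\log^3n\rfloor$.
\item For $\deg p\ge2$, take $\lfloor p(n)+\log^{2^{d+1}}n\rfloor$, which is good by Theorem~\ref{sparse:intro:prev}.
\end{itemize}
In both cases Ces\`aro goodness passes to $\log$-averages by the weight-comparison lemma.

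You then show that the two $\log$-weighted multipliers differ by $O\big((\log\log t)^{-1/2-\epsilon}\big)$ uniformly in $|\beta|\le\frac12$.
\begin{itemize}
\item For $|\beta|\le(\log t)^{-C}$, use $|e(x)-e(y)|\le2\pi|x-y|$.
\item For larger $|\beta|$, both multipliers are individually small. This comes from Kusmin--Landau and van der Corput bounds for $\sum_{n\le t}e(a(n)\alpha)$ over $t^{-1/2}\le|\alpha|\le t^{\epsilon}$; the second-derivative test uses $|r''(x)|\gg(\log x)^{\eta-1}/x^2$. One then applies the transfer lemma from $e(a(n)\alpha)$ to $e(\lfloor a(n)\rfloor\beta)$, and summation by parts from Ces\`aro to $1/n$ weights.
\end{itemize}

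Next apply the spectral theorem along $N_i=\exp(\rho^i)$. Note that $\lfloor\rho^i\rfloor$ is the wrong lacunary scale for $\log$-averages. This gives $\sum_i\|\setA^{\mathsf{a},L}_{N_i}f-\setB^{\mathsf{a},L}_{N_i}f\|_2^2\ll\sum_i i^{-1-2\epsilon}<\infty$. The sandwich argument for $f\ge0$ finishes the proof, and the limit, hence ergodicity, is inherited from the reference sequence.

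In particular, the ``obstacle'' in your last paragraph is not one. There are only $\asymp\log\log N$ lacunary scales up to $N$, so a $\log\log$-type uniform bound suffices. No power gain in $\log n$ and no tuning of $\gamma$ is needed.
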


Question 10.1 in \cite{BKQW} asks whether the sequence $(\lfloor n^2 + \log^2 n \rfloor)_{n \in \mathbb{N}}$ is pointwise $L^2$-good. To the best of our knowledge, this question is still open. 
More generally, it is natural to inquire about the almost everywhere convergence for the sequences $(\lfloor n^k + \log^c n \rfloor)_{n \in \mathbb{N}}$, where $k \in \mathbb{N} \cup \{0\}$ and $c > 0$.
Theorems formulated above throw some light on this question.  
The following example provides a summary of the current state of the art.
\begin{exmp} \label{ex-new:sec1}
Let $a(x) = x^k + \log^c x$, where $k \in \mathbb{N} \cup \{0\}$ and $c > 0$.
\begin{itemize}
\item If $k=0$, then $(\lfloor a(n) \rfloor)_{n \in \mathbb{N}}$ is pointwise $L^{\infty}$-bad, but it is pointwise $L^1$-good for log-averages (Theorems \ref{thm:sufficient:int} and \ref{thm:necessary:int}). (See also Example \ref{ex1:sec1}.)
\item If $k=1$, then $(\lfloor a(n) \rfloor)_{n \in \mathbb{N}}$ is pointwise $L^2$-good (Theorem \ref{thm:deg1:int}).
\item If $k = 2$, then $(\lfloor a(n) \rfloor)_{n \in \mathbb{N}}$ is  pointwise $L^2$-good if $c > 7$ (Theorem 3.5 of \cite{BKQW}) and pointwise $L^2$-bad if $0 < c < 2$ (Theorem 3.6 of \cite{BKQW}). It is pointwise $L^2$-good for log-averages if $c >1$ (Theorem \ref{sparse2':intro}).
\item If $k \geq 3$, then $(\lfloor a(n) \rfloor)_{n \in \mathbb{N}}$ is  pointwise $L^2$-good if $c> 2^{k+1} -1$ (Theorem 3.5 of \cite{BKQW}), and pointwise $L^2$-bad if $0< c < \frac{2k}{k+2}$ (Theorem 3.7 of \cite{BKQW}).  Moreover, if $c>1$, then it is pointwise $L^2$-good for log-averages (Theorem \ref{sparse2':intro}). 
\end{itemize}
\end{exmp}

It is worth mentioning that the only previously known example which demonstrates a phenomenon similar to that exhibited for ``sparse" sequences by Theorems 3.6, 3.7 of \cite{BKQW} and Theorem \ref{sparse2':intro} (pointwise $L^2$-bad for conventional Ces\`aro averages but pointwise $L^2$-good for weighted averages) involves arithmetic functions such as $\Omega(n)$ (the total number of prime divisors of a positive integer $n$ including their multiplicities), which has oscillating behavior (see, for example, Theorems 1.1 - 1.3 in \cite{LM} and Theorem 1.7 in \cite{BRR}).

Example \ref{ex-new:sec1} leads to the following open questions. 
\begin{ques}
\begin{enumerate}
\item For what values of $c\in [2,7]$, is $(\lfloor n^2 + \log^{c} n \rfloor)_{n \in \mathbb{N}}$ pointwise $L^2$-good?
\item For what values of $c \in \left[\frac{2k}{k+2}, 2^{k+1}-1 \right]$, is $(\lfloor n^k + \log^{c} n \rfloor)_{n \in \mathbb{N}}$ pointwise $L^2$-good? 
\item If $k \geq 2$ and $0< c \leq 1$, then is $ \left(\lfloor n^k+\log^c n\rfloor\right)$ pointwise $L^2$-good for $\log$-averages? or, say, for $\log \log$-averages?
\end{enumerate}
\end{ques}

In Section \ref{sec:JE}, we will utilize almost everywhere convergence results formulated above to obtain new theorems concerning pointwise joint ergodicity in weighted setting.
Let $T_1, \dots, T_k$ be measurable transformations on a probability space $(X, \mathcal{B}, \mu)$ such that for each $i=1, \dots, k$, there exists $T_i$-invariant probability measure $\mu_i$, which is equivalent to $\mu$.  
Recall that $T_1, \dots, T_k$ are said to be {\em pointwise jointly ergodic} if for any bounded measurable functions $f_1, \dots, f_k$ on $X$, 
\[ \lim_{N \rightarrow \infty} \frac{1}{N} \sum_{n=1}^N \prod_{i=1}^k f_i (T_i^n x) = \prod_{i=1}^k \int f_i \, d \mu_i \quad \text{for a.e. } x.\]

The following proposition is an extension of Theorem \ref{thm:sufficient:int} to pointwise joint ergodicity when the invovled functions belong to $L^{\infty}$.
\begin{prop}[Proposition \ref{thm:je1}]
\label{thm:je1:int}
Let $a(t) \in \mathbf{U}$ such that $1 \prec a(t) \prec t$. 
Let $W(t) \in \mathbf{E}$ be such that $w(t):= W'(t)$ is positive and non-increasing and $\lim\limits_{t \rightarrow \infty} W(t) = \infty$.
Suppose that there exists $c > 0$ such that for all large enough $t$,  
\begin{equation*}
W(t) \leq |a(t)|^c. 
\end{equation*} 
If $T_1, \dots, T_k$ are pointwise jointly ergodic, then for any bounded measurable functions $f_1, \dots, f_k$ on $X$, 
\begin{equation*}
\label{eq:thm:je1}
\lim_{N \rightarrow \infty} \frac{1}{W(N)} \sum_{n=1}^N w(n) \prod_{i=1}^k f_i (T_i^{\lfloor a(n) \rfloor} x) = \prod_{i=1}^k \int f_i \, d \mu_i \quad \text{for a.e. } x.
\end{equation*}
\end{prop}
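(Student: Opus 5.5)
The plan is to reduce the weighted average along $\lfloor a(n)\rfloor$ to a weighted average along the full sequence of integers $m$, exactly as one expects in the proof of Theorem~\ref{thm:sufficient:int}. Write $F(m)(x) := \prod_{i=1}^k f_i(T_i^m x)$; this is bounded by $\prod_i \|f_i\|_\infty$. We may assume $a(t)\to+\infty$ (otherwise replace $a$ by $-a$ and work with the inverses, or treat the symmetric case identically). Since $1\prec a(t)\prec t$ and $a\in\mathbf U$, $a$ is eventually increasing with $a'(t)\to 0$. For each $m$ set $I_m := \{n : \lfloor a(n)\rfloor = m\}$, an interval of integers $[a^{-1}(m), a^{-1}(m+1))$ up to boundary terms. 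Grouping,
\[
\sum_{n=1}^N w(n) F(\lfloor a(n)\rfloor) = \sum_{m \le a(N)} v_m F(m) + O(\text{boundary}), \qquad v_m := \sum_{n\in I_m} w(n) \approx W(a^{-1}(m+1)) - W(a^{-1}(m)).
\]
Thus the problem becomes a weighted average $\frac{1}{V(M)}\sum_{m\le M} v_m F(m)$ with $V(M)=\sum_{m\le M}v_m \approx W(a^{-1}(M))$ and $M=a(N)$. The boundary error comes from the last incomplete block, which contributes at most $v_{\lfloor a(N)\rfloor}\cdot\sup|F|$. One has to check that this is $o(W(N))$.

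The key step is to show that the weights $v_m$ are ``regular'' enough that pointwise Cesàro convergence of $\frac1M\sum_{m\le M}F(m)(x)$ implies convergence of the $v$-weighted averages. The hypothesis $W(t)\le |a(t)|^c$ means $\phi(m) := W(a^{-1}(m)) \le m^c$. Moreover $\phi$ lies in a Hardy field (composition of $W\in\mathbf E$ with $a^{-1}$; I would justify this via the closure properties of $\mathbf E$/$\mathbf U$ quoted earlier, as in Theorem~\ref{thm:sufficient}). Hence $\phi'(m)\approx v_m$ is eventually monotone, and $\phi$ has at most polynomial growth. Using Hardy field comparison, $m\phi'(m)/\phi(m)$ has a limit in $[0,c]$; in particular $v_M \ll \phi(M)/M$, so the boundary term is $o(V(M))$. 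By summation by parts,
\[
\frac{1}{V(M)}\sum_{m\le M} v_m F(m) = \frac{1}{V(M)}\Big( v_M S_M - \sum_{m<M} (v_{m+1}-v_m) S_m\Big), \qquad S_m = \sum_{j\le m}F(j).
\]
Pointwise joint ergodicity gives $S_m = mL + o(m)$ a.e., where $L=\prod_i\int f_i\,d\mu_i$. Since $v_m$ is eventually monotone and $\sum_{m<M} |v_{m+1}-v_m|\, m \ll M v_M + V(M) \ll V(M)$, the $o(m)$ error contributes $o(V(M))$. The same computation with $F\equiv L$ shows that the main terms recombine to $L$. Monotonicity of $v_m$ handles the first estimate, while the polynomial bound gives $Mv_M\ll V(M)$; that is exactly where $W\le |a|^c$ enters.

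I expect the main obstacle to be the justification of regularity of $v_m$. This requires that $v_m$ is eventually monotone and that $Mv_M \ll \phi(M)$; the latter is the Hardy-field consequence of $\phi(m)\le m^c$ together with $\phi\in$ a Hardy field, and it is presumably already established in the proof of Theorem~\ref{thm:sufficient}, which I would cite. One also needs that replacing the discrete sum $v_m$ by $\phi(m+1)-\phi(m)$ incurs errors at most $O(w(a^{-1}(m)))$, whose sum over $m\le M$ is $o(\phi(M))$ because $w$ is nonincreasing and $W\to\infty$. Finally, the exceptional null set is the single null set on which $S_m/m\not\to L$, so the conclusion holds for a.e.\ $x$.
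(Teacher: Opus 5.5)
Your argument is correct and follows the paper's strategy. The paper proves this proposition in two lines: it applies Theorem~\ref{thm:slow-weight} to $b_n=\prod_{i=1}^k f_i(T_i^n x)$, first for nonnegative $f_i$ and then by linearity. Those $b_n$ have Ces\`aro means converging a.e.\ to $\prod_i\int f_i\,d\mu_i$ by pointwise joint ergodicity. What you do is re-derive that transference theorem inline, and your derivation differs from the paper's proof of Theorem~\ref{thm:slow-weight}.

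The paper first uses Lemma~\ref{lem:comp:weights}, where $W\le |a|^c$ enters as $\log W\ll\log a$, to replace $W$ by $a$ itself. Then each block $\{n:\lfloor a(n)\rfloor=k\}$ has weight $1+O(w(a^{-1}(k)))$, so the main term is literally a Ces\`aro sum. Only $\sum_k w(a^{-1}(k))\,b_k$ remains to be controlled, which the paper does by summation by parts using $b_k\ge 0$. That nonnegativity is what lets the same theorem handle $L^1$ functions in Theorem~\ref{thm:sufficient}.

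You instead keep the original $W$ and run Abel summation against the block weights $u_m=\phi(m+1)-\phi(m)$, with $\phi=W\circ a^{-1}$. Here $W\le|a|^c$ enters as $Mu_M\ll\phi(M)$, so in effect you prove Lemma~\ref{lem:HW-2} for the weights $u_m$ by hand. Since $F$ is bounded, you need neither nonnegativity nor the linearity step.

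Two justifications in your sketch should be repaired.

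First, the closure properties quoted in the paper do not put $W\circ a^{-1}$ in a Hardy field, since $a^{-1}$ is only in $\mathbf U$. You do not need this. Writing $t=a^{-1}(m)$, one has $\phi'(m)=(w/a')(t)$ and $m\phi'(m)/\phi(m)=(\log W)'(t)/(\log a)'(t)$. Both expressions live in a Hardy field containing $a$ and $W$. Hence $\phi'$ is eventually monotone, and by L'H\^opital $m\phi'(m)/\phi(m)\to\lim \log W/\log a\le c$.

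Second, $\sum_{m\le M}w(a^{-1}(m))=o(\phi(M))$ does not follow from $w$ nonincreasing and $W\to\infty$ alone; it fails for $a(x)=x$. The missing input is $(a^{-1})'\to\infty$, i.e.\ $a\prec x$. Indeed $\sum_{m\le M}w(a^{-1}(m))\le C+\int_1^M w(a^{-1}(y))\,dy$, while $\phi(M)=\int_1^M w(a^{-1}(y))\,(a^{-1})'(y)\,dy+O(1)$.

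You also cannot borrow the paper's cruder bound $\sum_{m\le a(N)}w(m)\approx W(a(N))$ here. It is $o(W(N))$ only after the reduction to $W=a$; for $W(x)=\log\log x$ and $a(x)=\sqrt{x}$ the ratio $W(a(N))/W(N)$ tends to $1$.

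Finally, your treatment of eventually negative $a$ mirrors the paper's. Both tacitly assume the $T_i$ are invertible with $T_1^{-1},\dots,T_k^{-1}$ pointwise jointly ergodic, which is not among the hypotheses.
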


In  \cite{BerSon2025}, the phenomenon of joint ergodicity of piecewise monotone maps on the unit interval $[0,1]$ was investigated. 
In particular, the paper establishes the pointwise joint ergodicity of the following piecewise monotone maps.
Let $T_{b} x = bx \, \bmod \, 1$, ($b \in \mathbb{N}$, $b \geq 2$), $T_{\beta} x = \beta x \, \bmod \, 1$, ($\beta > 1$, $\beta \notin \mathbb{N}$), and let $T_G$ be the Gauss map defined by $T_G(x) = \frac{1}{x} \, \bmod \,1$. 
It is well-known that $T_{b}$ is $\lambda$-preserving, where $\lambda$ denotes the Lebesgue measure  on the unit interval $[0,1]$, and $T_G$ is $\mu_G$-preserving, where $\mu_G(A) = \frac{1}{\log 2} \int_A \frac{dx}{1+x}$. 
Moreover, by a theorem of R{\'e}nyi (\cite{Ren}), there exists a unique $T_{\beta}$-ergodic probability measure $\mu_{\beta}$, which satisfies $1 - 1/\beta \leq \frac{d \mu_{\beta}}{d \lambda} \leq \frac{1}{1- 1/\beta}$. 
It was shown in \cite{BerSon2025} that $T_b, T_{\beta}$ and $T_G$ are pointwise jointly ergodic if $\log \beta \ne \frac{\pi^2}{ 6 \log 2}$. 

Combining Proposition \ref{thm:je1:int} with the pointwise joint ergodicity result from \cite{BerSon2025}, we obtain the following theorem.
\begin{thm}
Let $a(t) \in \mathbf{U}$ such that $1 \prec a(t) \prec t$. 
Let $W(t) \in \mathbf{E}$ be such that $w(t):= W'(t)$ is positive and non-increasing and $\lim_{t \rightarrow \infty} W(t) = \infty$.
Suppose that there exists $c > 0$ such that for all large enough $t$,  
\begin{equation*}
W(t) \leq |a(t)|^c. 
\end{equation*} 
If $\log \beta \ne \frac{\pi^2}{ 6 \log 2}$, then for any bounded measurable functions $f_1, f_2, f_3$ on $[0,1]$, 
\[  \lim_{N \rightarrow \infty} \frac{1}{W(N)} \sum_{n=1}^N w(n) f_1 (T_b^{\lfloor a(n) \rfloor} x) f_2 (T_G^{\lfloor a(n) \rfloor} x) f_3 (T_{\beta}^{\lfloor a(n) \rfloor} x) =  \int f_1 \, d \lambda \int f_2 \, d \mu_G \int f_3 \, d \mu_{\beta}\] 
for almost every  $x$.
\end{thm}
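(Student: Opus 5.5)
This theorem follows from Proposition \ref{thm:je1:int} once we check its hypotheses for the maps $T_1=T_b$, $T_2=T_G$, $T_3=T_\beta$ on $X=[0,1]$. We take $\mu=\lambda$ as the reference measure, with $\mu_1=\lambda$, $\mu_2=\mu_G$ and $\mu_3=\mu_\beta$.

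First we check that each $\mu_i$ is a $T_i$-invariant probability measure equivalent to $\lambda$. For $\lambda$ this is trivial. For $\mu_G$ it holds because its density $\frac{1}{\log 2}\frac{1}{1+x}$ is bounded above and below by positive constants on $[0,1]$. For $\mu_\beta$ it follows from R\'enyi's bound $1-1/\beta\le \frac{d\mu_\beta}{d\lambda}\le \frac{1}{1-1/\beta}$. In particular, ``almost every $x$'' has the same meaning for all three measures, so the conclusion of Proposition \ref{thm:je1:int} is unambiguous here.

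Second, we invoke the result of \cite{BerSon2025}. When $\log\beta\ne \frac{\pi^2}{6\log 2}$, it gives exactly the defining property of pointwise joint ergodicity: for all bounded measurable $f_1,f_2,f_3$,
\[\frac1N\sum_{n=1}^N f_1(T_b^nx)f_2(T_G^nx)f_3(T_\beta^nx)\to \int f_1\,d\lambda\int f_2\,d\mu_G\int f_3\,d\mu_\beta\]
for a.e. $x$.

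Third, the assumptions on $a$ and $W$ are verbatim those of Proposition \ref{thm:je1:int}, with $k=3$. Applying that proposition yields the stated limit of the weighted averages along $\lfloor a(n)\rfloor$.

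There is no real obstacle. The only points needing care are that the result of \cite{BerSon2025} is stated for all bounded measurable functions, not just continuous ones, and that the equivalence of the three measures is actually used. If \cite{BerSon2025} only treats continuous $f_i$, we would first extend to bounded measurable functions. To do this, we approximate each $f_i$ in $L^1(\lambda)$ by continuous functions, bound the error term using the individual pointwise ergodic theorems for each $T_i$ applied to $|f_i-g_i|$, and use uniform boundedness together with the equivalence of the measures to control the remaining factors.
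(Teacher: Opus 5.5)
Your proposal is correct and is exactly the paper's argument: the theorem is obtained by feeding the pointwise joint ergodicity of $T_b, T_G, T_\beta$ from \cite{BerSon2025} into Proposition~\ref{thm:je1:int}, and your check that $\lambda$, $\mu_G$, $\mu_\beta$ are mutually equivalent supplies a detail the paper leaves implicit. One small point: $T_b, T_G, T_\beta$ are not invertible, so the averages only make sense when $a(t)$ is eventually positive; hence only that case of Proposition~\ref{thm:je1:int} (the one handled directly by Theorem~\ref{thm:slow-weight}) is actually needed.
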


We next turn to another class of results concerning pointwise joint ergodicity, which will be established in Section \ref{sec:JE}. These results concern rotations on the torus along sequences arising from Hardy fields.
For a finite set of functions $\mathcal{F} = \{u_1, \dots, u_k\}$, we define 
\[\text{span}_{\mathbb{Z}}^{*} \, \mathcal{F} = \left\{ \sum_{i=1}^k h_i u_i: (h_1, \dots, h_k) \in \mathbb{Z}^k \setminus \{(0, \dots, 0)\} \right\}.\]

\begin{thm}[Theorem \ref{thm:multiconv}]
\label{thm:multiconv:int}
Let $W(t) \in \mathbf{E}$ be such that $w(t):= W'(t)$ is positive and non-increasing and $\lim\limits_{t \rightarrow \infty} W(t) = \infty$.
Let $(p_n)_{n \in \mathbb{N}}$ be the sequence of prime numbers.
Let $a_1(t), \dots, a_l(t), b_1(t), \dots, b_m(t) \in \mathbf{U}$, and let $\alpha_1, \dots, \alpha_l, \beta_1, \dots, \beta_m \in \mathbb{R}$ be irrational numbers.
Define
$$\mathcal{F} = \{ c_i a_i(t): c_i = 1 \text{ or } \alpha_i, \, 1 \leq i \leq l\} \cup \{ d_j b_j(t \log t): d_j = 1 \text{ or } \beta_j, \, 1 \leq j \leq m \}.$$
Assume that 
\begin{enumerate}[(i)]
\item for all $i=1, 2, \dots, l$, $1 \prec a_i(t) \prec t$, 
\item for all $j=1, 2, \dots, m$, $1 \prec b_j(t) \prec \log t$, 
\item $a_1(t), \dots, a_l(t), b_1(t \log t), \dots, b_m(t \log t) \in \mathbf{H}$ for some Hardy field $\mathbf{H}$ and there exists $c > 0$ such that for all sufficiently large $t$,  
\begin{equation*}
W(t) \leq |a_i(t)|^c \quad \text{and} \quad W(t) \leq |b_j(t)|^c \text{ for any } i, j.
\end{equation*} 
\item for any $u(t) \in \text{span}_{\mathbb{Z}}^* \, \mathcal{F}$,
\begin{equation*}
\lim_{t \rightarrow \infty}  \frac{|u(t)|}{\log W(t)} = \infty.  
\end{equation*}
\end{enumerate}
Then, for any bounded measurable functions $f_1, \dots, f_l, g_1, \dots, g_m$ on $\mathbb{T} \, (:= \mathbb{R} / \mathbb{Z})$, 
\begin{equation*}
\label{eq3.4}
\lim_{N \rightarrow \infty} \frac{1}{W(N)} \sum_{n=1}^N w(n) \prod_{i=1}^l f_i (x + \lfloor a_i(n) \rfloor \alpha_i) \prod_{j=1}^m g_j( x+ \lfloor b_j(p_n) \rfloor \beta_j) = \prod_{i=1}^l \int f_i \, d\lambda \prod_{j=1}^m \int g_j \, d \lambda
\end{equation*}  
for $\lambda$-almost every $x$.
\end{thm}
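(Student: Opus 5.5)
Here is the plan: reduce to trigonometric polynomials via a maximal inequality, and then prove joint equidistribution of the relevant sequence in $\mathbb{T}^{l+m}$ for $W$-averages.

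\textbf{Step 1: reduction to characters.} By density of trigonometric polynomials in $L^1(\lambda)$ and a standard approximation argument, it suffices to do two things.

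First, establish a maximal inequality. Since all functions are bounded, we may bound
\[
\Bigl|\prod_i f_i \prod_j g_j - \prod_i f_i' \prod_j g_j'\Bigr|
\]
by a telescoping sum in which one factor is a difference and the others are bounded. Each resulting term is controlled by the one-parameter maximal function
\[
\sup_N \frac{1}{W(N)}\sum_{n\le N} w(n)\,|h|\bigl(x+\lfloor a_i(n)\rfloor\alpha_i\bigr),
\]
or the analogous one along $\lfloor b_j(p_n)\rfloor$. The first is handled by the maximal inequality underlying Theorem~\ref{thm:sufficient:int}, since $W\le |a_i|^c$. For the second, I would use $p_n\sim n\log n$ and write $b_j(p_n)$ as $\tilde b_j(n)+o(1)$ with $\tilde b_j(t)=b_j(t\log t)$. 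Up to a negligible error from the $o(1)$, handled by a small shift and regularity of rounding, this gives a sequence $\lfloor \tilde b_j(n)\rfloor$ with $1\prec\tilde b_j\prec t$. Moreover $W(t)\le |b_j(t)|^c\le |\tilde b_j(t)|^{c}$ for large $t$, so Theorem~\ref{thm:sufficient:int} again supplies the maximal bound.

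Second, prove convergence for characters $f_i=e(h_i\cdot)$ and $g_j=e(k_j\cdot)$. Then the summand is $e\bigl((\sum h_i+\sum k_j)x\bigr)$ times
\[
e\Bigl(\sum_i h_i\lfloor a_i(n)\rfloor\alpha_i+\sum_j k_j\lfloor b_j(p_n)\rfloor\beta_j\Bigr).
\]

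\textbf{Step 2: joint equidistribution.} For almost every $x$ the claim follows once we show that the $W$-weighted averages of
\[
e\Bigl(\sum_i h_i\lfloor a_i(n)\rfloor\alpha_i+\sum_j k_j\lfloor b_j(p_n)\rfloor\beta_j\Bigr)
\]
tend to $0$ whenever $(h,k)\neq 0$. When $\sum h_i+\sum k_j\ne0$ and all the other coefficients vanish, the average is just a constant times $e(\text{nonzero}\cdot x)$, which is fine because the limit is $0$ for $x$-integrals (the zero total frequency is the only case contributing a constant). I would handle the floors in the usual way: write $\lfloor y\rfloor\alpha = y\alpha-\{y\}\alpha$. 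The tuple
\[
\bigl(a_i(n)\alpha_i,\ a_i(n),\ \tilde b_j(n)\beta_j,\ \tilde b_j(n)\bigr)
\]
mod 1 is jointly equidistributed for $W$-averages, and then Weyl's criterion applied to the continuous-after-smoothing function $e(h\alpha\,\cdot)$ of $\{y\}$ reduces the claim to showing that every nonzero integer combination $u\in\operatorname{span}^*_{\mathbb{Z}}\mathcal{F}$ gives
\[
\frac{1}{W(N)}\sum_{n\le N} w(n)\,e(u(n))\to 0.
\]

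\textbf{Step 3: the key estimate.} For a single Hardy-field function $u$ with $\log W\prec |u|$ and $u\prec t$, this $W$-weighted Weyl sum vanishes. This is exactly the criterion behind Theorem~\ref{thm4.10:BKS2}, via van der Corput or Fejér-type summation by parts, exploiting $w$ non-increasing and $u'$ monotone. Condition (iv) supplies this hypothesis for each $u$.

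\textbf{Main obstacle.} The hardest step is the passage from $b_j(p_n)$ to $b_j(n\log n)$. We need $b_j(p_n)-b_j(n\log n)\to0$ sufficiently often in $W$-density, and for that it suffices that $b_j'(t)\,|p_n-n\log n|\to0$. Using $p_n-n\log n=O(n\log\log n)$ and $b_j\prec\log t$, we get $b_j'(t)\ll 1/t$ (Hardy-field derivative comparison), so the difference is $O(\log\log n/\log n)\to0$. Near-integer values of $b_j$ cause the floors to disagree only on a set of vanishing $W$-density, again by the equidistribution of $\tilde b_j(n)$ mod 1 from Step 3.
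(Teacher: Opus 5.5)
Your proposal is correct and follows essentially the same route as the paper: a.e.\ convergence of each single average via Theorem~\ref{thm:sufficient:int} (for $a_i$ and for $b_j(t\log t)$), joint $w(n)$-uniform distribution of the floored sequences by writing $\lfloor y\rfloor\alpha=y\alpha-\{y\}\alpha$ as in Lemma~\ref{lem:w-ud} and Theorem~\ref{thm:w-ud:mixedindex}, and replacement of $p_n$ by $n\log n$ off a set of zero $W$-density (your mean-value-theorem sketch is a correct proof of what the paper cites as Lemma~\ref{lem:comp:slow:prime}). The only difference is minor: you close the density argument with a maximal inequality, which is derivable but not stated in the paper and needs uniformly bounded approximants such as Fej\'er means; the paper's Theorem~\ref{multi-conv:thm:cond} instead applies the already-known a.e.\ convergence of the single averages to $|f_i-g_{i,m}|$ for countably many continuous $g_{i,m}$ with $\|g_{i,m}\|_\infty\le 1$, so no maximal inequality is needed.
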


By applying Theorem \ref{sparse2:intro} and Theorem \ref{sparse2':intro}, we obtain the following result.
\begin{thm}[Theorem \ref{thm:multiconv2}]
\label{thm:multiconv2:int}
Let $a_1(t), \dots, a_l(t)$ be subpolynomial functions in a Hardy field $\mathbf{H}$. 
Let $\alpha_1, \dots, \alpha_l$ be irrational numbers.
Define 
$$\mathcal{F} = \{ a_1(t), \dots, a_l(t),  \alpha_1 a_1(t), \dots, \alpha_l a_l(t) \}.$$
For $i= 1, 2 \dots, l$, let $a_i(t) = p_i(t) +r_i(t)$ be the canonical decomposition. 
\begin{enumerate}[(1)]
\item Assume that for some $\eta > 0$, $|r_i(t)| \gg t^{\eta}$ for every $i$, and for any $u(t) \in \text{span}_{\mathbb{Z}}^* \, \mathcal{F}$,
\begin{equation*}
\label{eq:ud:je:last}
\lim_{t \rightarrow \infty}  \frac{|u(t) - q(t)|}{\log t} = \infty \quad \text{for any } q(t) \in \mathbb{Q}[t]. 
\end{equation*}
Then, for any bounded measurable functions $f_1, \dots, f_l$ on $\mathbb{T}$, 
\begin{equation*}
\lim_{N \rightarrow \infty} \frac{1}{N} \sum_{n=1}^N \prod_{i=1}^l f_i (x + \lfloor a_i(n) \rfloor \alpha_i) = \prod_{i=1}^l \int f_i \, d \lambda
\end{equation*}  
for $\lambda$-almost every $x$.
\item Assume that for some $\eta > 1$, $|r_i(t)| \gg (\log t)^{\eta}$ for all $i$ and  for any $u(t) \in \text{span}_{\mathbb{Z}}^* \, \mathcal{F} $,
\begin{equation*}
\lim_{t \rightarrow \infty}  \frac{|u(t) -q(t)|}{ \log \log t} = \infty \quad \text{for any } q(t) \in \mathbb{Q}[t]. 
\end{equation*}
Then, for any bounded measurable functions $f_1, \dots, f_l$ on $\mathbb{T}$, 
\begin{equation*}
\lim_{N \rightarrow \infty} \frac{1}{\log N} \sum_{n=1}^N \frac{1}{n} \prod_{i=1}^l f_i (x + \lfloor a_i(n) \rfloor \alpha_i) = \prod_{i=1}^l \int f_i \, d \lambda 
\end{equation*}  
for $\lambda$-almost every $x$.
\end{enumerate}
\end{thm}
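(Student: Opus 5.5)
The plan follows the standard two-step scheme for pointwise joint ergodicity on $\mathbb{T}$: first a maximal/pointwise convergence principle, then identification of the limit via Weyl sums.

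\textbf{Step 1: reduction to characters.} Since the $f_i$ are bounded, we may approximate in $L^2(\lambda)$. Each average
\[
\frac{1}{N}\sum_{n=1}^N |f_i(x+\lfloor a_i(n)\rfloor\alpha_i)-g_i(x+\lfloor a_i(n)\rfloor \alpha_i)|^2
\]
is controlled pointwise through the pointwise $L^1$-goodness of the single sequence $(\lfloor a_i(n)\rfloor)$ acting on the rotation by $\alpha_i$. This is Theorem \ref{sparse2:intro} in case (1) and Theorem \ref{sparse2':intro} in case (2); pointwise $L^2$-goodness applied to $|f-g|^2\in L^\infty\subset L^2$ suffices. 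The limsup is then bounded by $\|f_i-g_i\|_2^2$ almost everywhere, and the argument also needs a maximal-inequality type bound, which follows from a.e. convergence via the Banach principle. Combining this with Cauchy--Schwarz and the boundedness of the remaining factors lets us replace each $f_i$ by a trigonometric polynomial, and by multilinearity by characters $e(h_i x)$.

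\textbf{Step 2: characters.} For characters the average becomes
\[
e\Big(\big(\textstyle\sum_i h_i\big)x\Big)\,\frac{1}{W(N)}\sum_{n\le N} w(n)\, e\Big(\sum_i h_i\alpha_i\lfloor a_i(n)\rfloor\Big).
\]
If all $h_i=0$ the claim is trivial. Otherwise we must show that this weighted exponential sum tends to $0$; the target value is $0$ unless all $h_i=0$. Write $\lfloor a_i(n)\rfloor = a_i(n)-\{a_i(n)\}$. The standard trick expands $e(-h_i\alpha_i\{a_i(n)\})$ as a function of $\{a_i(n)\}$ and reduces the problem to the joint equidistribution, with weights $w$, of the vector
\[
\big(a_i(n)\alpha_i,\ a_i(n)\big)_{i:\,h_i\neq 0} \pmod 1 .
\]
Concretely, it suffices to show that for every nonzero integer combination $u\in\mathrm{span}^*_{\mathbb{Z}}\mathcal{F}$ the sequence $u(n)$ is equidistributed with respect to the $W$-averages. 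The approximation of $e(\cdot\{\cdot\})$ is done with a smooth cutoff near the discontinuity, and the boundary contribution is controlled by this same equidistribution.

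\textbf{Step 3: equidistribution of Hardy sequences.} For $W(N)=N$ we use Boshernitzan's criterion: for $u$ in a Hardy field of polynomial growth, $(u(n))$ is u.d. mod 1 iff $|u(t)-q(t)|/\log t\to\infty$ for all $q\in\mathbb{Q}[t]$, which is exactly hypothesis (1). For logarithmic averages we use the analogous criterion, in the spirit of Theorem \ref{thm4.10:BKS2} and its weighted Weyl version in \cite{BKS2}: $(u(n))$ is u.d. for $\log$-averages iff $|u(t)-q(t)|/\log\log t\to\infty$ for all rational polynomials $q$. This is hypothesis (2), via $\log W=\log\log t$. All combinations lie in a single Hardy field $\mathbf{H}$, so the criterion applies. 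Weyl's criterion then gives that the weighted exponential sums tend to $0$, and hence the limit is $\prod\int f_i\,d\lambda$.

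The main obstacle I anticipate is Step 2's handling of the floor function: the fractional parts $\{a_i(n)\}$ are not smooth functions on the torus. Making the Fourier expansion rigorous requires the joint equidistribution of $(\alpha_i a_i(n), a_i(n))$ to hold strongly enough to absorb the discontinuity sets, which have measure zero. Joint u.d. of the full vector handles this via Riemann-integrable functions. I would first verify that the hypotheses indeed give joint u.d. of the $2l$-tuple, by checking that every nonzero integer combination of $a_i$ and $\alpha_i a_i$ is in $\mathrm{span}^*_{\mathbb{Z}}\mathcal{F}$, which holds by definition.
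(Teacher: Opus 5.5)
Your proposal is correct and follows essentially the same route as the paper. The ingredients match: single-sequence a.e.\ convergence to the integral comes from Theorems~\ref{sparse2:intro} and \ref{sparse2':intro}, and weighted equidistribution of $(\alpha_i\lfloor a_i(n)\rfloor)_i$ is obtained exactly as in Lemma~\ref{lem:w-ud}, i.e.\ the Riemann-integrable function $\prod_j e(h_j(x_j-\alpha_j\{y_j\}))$ combined with joint u.d.\ of $(\alpha_i a_i(n), a_i(n))_i$ from the Hardy-field criterion (Theorem~\ref{thm:w-ud:mixedindex}); these are glued together by an approximation argument as in Theorem~\ref{multi-conv:thm:cond}.

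The differences are cosmetic. You approximate in $L^2$ by uniformly bounded (e.g.\ Fej\'er) trigonometric polynomials and test on characters, whereas the paper approximates in $L^1$ by continuous functions, and the maximal inequality you mention is not needed. You should, however, say explicitly that identifying the a.e.\ limit as $\int|f_i-g_i|^2\,d\lambda$ uses ergodicity of each single sequence; this follows from your Step~3 with $l=1$.
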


We conclude this introduction by presenting several consequences of Theorems \ref{thm:multiconv:int} and \ref{thm:multiconv2:int}. 
\begin{enumerate}
\item 
\begin{enumerate}
\item For any irrational numbers $\alpha$ and  $\beta$, and for any bounded $1$-periodic measurable functions $f$ and $g$,
\begin{equation*}
\lim_{N \rightarrow \infty} \frac{1}{\log \log  N} \sum_{n=2}^N \frac{1}{n \log n} f (x + \lfloor \log n \rfloor \alpha) \cdot g (x + \lfloor \log p_n \rfloor \beta) = \int f(t) \, dt  \int g(t) \, dt 
\end{equation*}  
for almost every $x$.
\item For any irrational numbers $\beta_1, \dots, \beta_m$, and any bounded $1$-periodic measurable functions $g_1, \dots, g_m$,
\begin{equation*}
\lim_{N \rightarrow \infty} \frac{1}{\log^{(m+1)} N} \sum_{n=A_m}^N \frac{1}{n \log n \log^{(2)} n \cdots \log^{(m)} n} \prod_{j=1}^m g_j (x + \lfloor \log^{(j)} p_n \rfloor \beta_j)  = \prod_{j=1}^m \int g_j(t) \, dt 
\end{equation*}  
for almost every $x$. (Here, $\log^{(k)}$ denotes the $k$-fold iteration of the logarithm and $A_m$ is the smallest integer such that $\log^{(m)} y > 0 $ if $y \geq A_m$.)
\end{enumerate}
\item 
\begin{enumerate}
\item Let $c_1, \dots, c_l$ be distinct positive real numbers.  For any irrational numbers $\alpha_1, \dots, \alpha_l$ and for any bounded measurable functions $f_1, \dots, f_l$,
\begin{equation*}
\lim_{N \rightarrow \infty} \frac{1}{N} \sum_{n=1}^N \prod_{i=1}^l f_i (x + \lfloor n^{c_i} \rfloor \alpha_i)  = \prod_{i=1}^l \int f_i(t) \, dt 
\end{equation*}  
for almost every $x$.
\item If $\alpha_1, \alpha_2$ are irrational numbers such that $1, \alpha_1, \alpha_2$ are rationally independent, then for any bounded measurable functions $f_1, f_2$,
\begin{equation*}
\lim_{N \rightarrow \infty} \frac{1}{ N} \sum_{n=1}^N f_1 (x + \lfloor \sqrt{n} \rfloor \alpha_1) \, f_2 (x + \lfloor \sqrt{n} + \log n \rfloor \alpha_2)   = \prod_{i=1}^2 \int f_i(t) \, dt 
\end{equation*}   
On the other hand, if $1, \alpha_1, \alpha_2$ are rationally dependent, then the corresponding Ces\`aro  averages may fail to converge almost everywhere. 
Indeed, if $\alpha_1 = \alpha_2$ and $f_1(x) = e(-x), f_2(x) = e(x)$, then these averages fail to converge for every $x$.
Nevertheless, for any irrational $\alpha_1, \alpha_2$ and for any bounded measurable functions $f_1, f_2$,
\begin{equation*}
\lim_{N \rightarrow \infty} \frac{1}{\log N} \sum_{n=1}^N \frac{1}{n} f_1 (x + \lfloor \sqrt{n} \rfloor \alpha_1) \, f_2 (x + \lfloor \sqrt{n} + \log n \rfloor \alpha_2)   = \prod_{i=1}^2 \int f_i(t) \, dt 
\end{equation*}  
for almost every $x$. 
\end{enumerate}
\end{enumerate}

The structure of the paper is as follows.
In Sections 2-5, we study weighted ergodic averages along sequences arising from Hardy fields.
We first consider slowly growing functions in Section 2, then sparse sequences in Section 3, and finally functions of degree one in Section 4.
Section 5 is devoted to the proof of Theorem \ref{sparse2':intro} on almost everywhere convergence of logarithmic averages.
Finally, Section 6 presents pointwise joint ergodicity in the weighted setting as an application of the preceding results.

We will use the following notation throughout the paper.
\begin{itemize}
\item $\mathbf{U}$ denotes the union of all Hardy fields.
\item $\mathbf{E}$ denotes the intersection of all maximal Hardy fields.
\item For $k\in \setN$, $\lambda^k$ denotes the Lebesgue measure on $\setR^k$ or $\setT^k$.
\item For two functions $f$ and $g$ with $g\geq 0$, $f\ll g$ means there exist constants $c>0$ and $M > 0$ such that $|f(x)|\leq cg(x)$ for all $x\geq M$. If $f$ is also non-negative, then $g\gg f$ means $f\ll g$.
\item For two functions $f$ and $g$, $f\prec g$ means $\displaystyle\lim_{x\to \infty} \dfrac{f(x)}{g(x)}=0$, and $f\succ g$ means $\displaystyle\lim_{x\to \infty} \dfrac{f(x)}{g(x)}=\pm \infty.$
\item For a real-valued sequence $(a(n))$, a measure preserving system $(X, \mathcal{B}, \mu, T)$ and $f \in L^p(\mu)$, $\setA_N^{\mathsf{a},W}f(x)$ denotes the following:  \begin{equation*}
\setA_N^{\mathsf{a}, W} f(x) := \frac{1}{W(N)} \sum_{n=1}^N w(n) f(T^{\lfloor a(n) \rfloor} x).
\end{equation*}
\item For $t \in \mathbb{R}$, $e(t) := e^{2 \pi i t}$.\\
\item For a positive integer $N$, $[N]$ will denote the set $\{1,2,\cdots,N\}$.
\end{itemize}

\section{Slow growing sequences}
\label{sec:slow}

In this section, we prove \Cref{thm:sufficient:int} and \Cref{thm:necessary:int}.
We consider weighted ergodic averages along slowly growing sequences.
Throughout this section, we assume that
\begin{equation}\label{eq:1'}
a(x)\in\mathbf U,\qquad 1\prec a(x)\prec x.
\end{equation}
\begin{rem}\label{rem:range}
Observe that since $a(x)$ satisfies \eqref{eq:1'}, the set $\{\lfloor a(n) \rfloor: n\in \setN\}$ contains all but finitely many positive (respectively, negative respectively) integers if $a(x)$ is eventually positive (respectively negative).
\end{rem}

We begin by recalling Theorem~\ref{thm:sufficient:int}.
\begin{thm}[Theorem \ref{thm:sufficient:int}]
\label{thm:sufficient}
Let $a(x) \in \mathbf{U}$ satisfy $1 \prec a(x) \prec x$. 
Let $ W(x) \in \mathbf{E}$ be such that $w(x):= W'(x)$ is positive and non-increasing and $\lim_{x \rightarrow \infty} W(x) = \infty$.
Suppose that there exists $c > 0$ such that for all sufficiently large $x$,  
\begin{equation*}
W(x) \leq |a(x)|^c. 
\end{equation*}
Then the sequence $(\lfloor a(n) \rfloor)_{n \in \mathbb{N}}$ is pointwise $L^1$-good and ergodic for $W$-averages: 
for any ergodic probability measure preserving system $(X, \mathcal{B}, \mu, T)$ and for every $f \in L^1$, the corresponding weighted averages $\setA_N^{\mathsf{a}, W} f(x)$ converge to $\int f$ a.e.
\end{thm}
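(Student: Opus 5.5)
Since $a$ is slowly growing and monotone, the averages can be reorganized by the value $k=\lfloor a(n)\rfloor$, which turns them into weighted averages of the ordinary ergodic averages $f(T^k x)$. We then compare these with a standard weighted ergodic theorem. WLOG $a(x)\to+\infty$; the negative case follows by replacing $T$ with $T^{-1}$.

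\emph{Reorganization.} By \Cref{rem:range} and eventual monotonicity of $a$, for large $k$ the set $I_k=\{n:\lfloor a(n)\rfloor=k\}$ is an interval $[a^{-1}(k),a^{-1}(k+1))\cap\mathbb{N}$. Set $\omega_k=\sum_{n\in I_k}w(n)\approx W(a^{-1}(k+1))-W(a^{-1}(k))$. Then
\[
\setA_N^{\mathsf a,W}f(x)=\frac{1}{W(N)}\sum_{k\le a(N)}\omega_k f(T^kx)+(\text{boundary term}).
\]
The boundary term comes from the partial block containing $N$, and it is bounded by $\omega_{\lfloor a(N)\rfloor}|f(T^{\lfloor a(N)\rfloor}x)|/W(N)$.

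\emph{Key step.} Let $V(y)=W(a^{-1}(y))$, so $\omega_k\approx V(k+1)-V(k)=\int_k^{k+1}V'$. The hypothesis $W\le a^c$ gives $V(y)\le y^c$. Since $V$ lies in a Hardy field (composition with the inverse of $a$ is fine in $\mathbf U$ up to choice of field; I would check $V$ is Hardy with $W\in\mathbf E$), $V'$ is eventually monotone with $V'(y)\ll V(y)/y$. Moreover $\omega_k/V(k)\to 0$ and $\sum_{k\le K}\omega_k\sim V(K)$.

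I then use Abel summation with the Birkhoff sums $S_k f=\sum_{j<k}f(T^jx)$:
\[
\frac{1}{V(K)}\sum_{k\le K}\omega_kf(T^kx)=\frac{1}{V(K)}\Bigl(\omega_K S_{K+1}f-\sum_{k<K}(\omega_{k+1}-\omega_k)S_{k+1}f\Bigr).
\]
By the ergodic theorem, $S_kf=k\int f+o(k)$ a.e. It then suffices to show
\[
\sum_{k\le K}|\omega_{k+1}-\omega_k|\,k\ll V(K)\quad\text{and}\quad K\omega_K\ll V(K).
\]
Both follow if $\omega_k$ is eventually monotone and comparable to $V'(k)$. This holds since $V'$ is eventually monotone; by the mean value theorem $\omega_k$ lies between $V'(k)$ and $V'(k+1)$. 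Telescoping the monotone differences gives $\sum|\omega_{k+1}-\omega_k|k\ll K\omega_K+\sum\omega_k\ll V(K)$, using $V'(y)\ll V(y)/y$. The last bound comes from polynomial growth of $V$ in Hardy fields: $yV'/V$ tends to a finite limit because $\log V\ll\log y$. This is exactly where $W\le a^c$ is used; in the bad regime $W\ge\exp(a^c)$, $\omega_K$ is not $O(V(K)/K)$.

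Then the limit is $\int f$ a.e., for all $K$ at once. The boundary term is $O(\omega_K|f(T^Kx)|/V(K))\ll|f(T^Kx)|/K\to0$ a.e., because $f(T^Kx)/K\to0$ a.e. for $f\in L^1$ (a consequence of Birkhoff). The passage from $W(N)$ to $V(\lfloor a(N)\rfloor)$ holds since their ratio tends to 1, as $\omega_k=o(V(k))$.

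\emph{Ergodicity and the nonergodic case.} The above gives a.e. convergence to $\mathbb E(f\mid\mathcal I)$ for any system, which yields pointwise $L^1$-goodness; in the ergodic case the limit is $\int f$. Norm convergence to $\int f$ for $f\in L^2$ follows by dominated convergence when $f$ is bounded, then by density together with $\|\setA_N f\|_2\le C\|f\|_2$.

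\emph{Main obstacle.} The hardest part is the regularity of $\omega_k$: eventual monotonicity of $V'$ and the bound $yV'(y)\ll V(y)$. Both require placing $W\circ a^{-1}$ in a Hardy field. I would try first to use $W\in\mathbf E$: $\mathbf E$ is contained in every maximal Hardy field, so $W$ together with $a$ and $a^{-1}$ lie in a common Hardy field, which is closed under composition after extension. If that fails, I would work directly with $w(n)$ non-increasing and $a'$ eventually monotone to get the same bounds.
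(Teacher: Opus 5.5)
\paragraph{Gap in the key step.}
You apply Abel summation to the actual block weights $\omega_k=\sum_{n\in I_k}w(n)$ and claim they are eventually monotone because ``by the mean value theorem $\omega_k$ lies between $V'(k)$ and $V'(k+1)$''. The mean value theorem gives this only for the smooth quantity $\tilde\omega_k:=V(k+1)-V(k)=\int_{a^{-1}(k)}^{a^{-1}(k+1)}w$.

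The lattice sum is $\omega_k=\tilde\omega_k+e_k$, where $e_k$ is a rounding error with $|e_k|\le w(a^{-1}(k)-1)$, and this error destroys monotonicity. Take the admissible example $w\equiv 1$, $W(x)=x$, $a(x)=x^{3/5}$, so $W=a^{5/3}$. Then $V(y)=y^{5/3}$, and $\omega_k=\lceil (k+1)^{5/3}\rceil-\lceil k^{5/3}\rceil$ is an integer. It keeps jumping between $\lfloor\tilde\omega_k\rfloor$ and $\lceil\tilde\omega_k\rceil$ according to the fractional parts of $k^{5/3}$. Hence $|\omega_{k+1}-\omega_k|\ge 1$ for a positive proportion of $k$, and
\[
\sum_{k\le K}k\,|\omega_{k+1}-\omega_k|\asymp K^2,\quad\text{which is not}\ \ll V(K)=K^{5/3}.
\]
So the sufficient condition you reduce to is false, and the Abel-summation step fails as written. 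The rest of your outline is fine: the block reorganization, $yV'\ll V$ from $\log V\ll \log y$, the boundary term, the passage from $W(N)$ to $V(\lfloor a(N)\rfloor)$, and the $L^2$ statement.

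\paragraph{How to repair it, and the paper's route.}
Apply your Abel summation only to the smooth weights $\tilde\omega_k$; this is where eventual monotonicity of $V'$ and $yV'(y)\ll V(y)$ are used. Then show separately that $\frac{1}{V(K)}\sum_{k\le K}|e_k|\,|f|(T^kx)\to 0$ a.e.

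For this, note that the majorant $w(a^{-1}(k)-1)$ is non-increasing in $k$. Summation by parts against the Birkhoff bound $\sum_{j\le k}|f|(T^jx)\le M_x k$ bounds the sum by $M_x\sum_{k\le K}w(a^{-1}(k)-1)$. Substituting $y=a(x)$ gives $\sum_{k\le K}w(a^{-1}(k)-1)=O(1)+\int^{a^{-1}(K)}w(x-1)a'(x)\,dx=o(V(K))$, because $a'\to 0$ and $V(K)=W(a^{-1}(K))\to\infty$. This rounding-error estimate is exactly the core of the paper's proof, equation \eqref{eq:last-lem}.

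The paper also sidesteps what you call the main obstacle. It first uses Lemma \ref{lem:comp:weights}, applicable since $\log W\le c\log a$, to replace $W$ by $a$ itself. Then $V(y)=y$ and $\tilde\omega_k\equiv 1$, so the main term is literally the Ces\`aro average of $f(T^kx)$. No Hardy-field information about $W\circ a^{-1}$ is needed; only the rounding error must be controlled.

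Once repaired, your route handles general $W$ directly. The price is that you need $V'$ eventually monotone with $yV'\ll V$, i.e.\ you need $W\circ a^{-1}$ to lie in a Hardy field.
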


The proof of \Cref{thm:sufficient} relies on the following auxiliary lemma
(see Lemma 7.1 in Chapter 1 of \cite{KNbook}).
\begin{lem}
\label{lem:KN:comparison}
    Let $w_1(n)$ and $w_2(n)$ be positive sequence such that for $i=1, 2$, $\lim_{n \rightarrow \infty} W_i(n) = \infty$, where $W_i (n) = \sum_{k=1}^n w_i(k)$.
    Suppose that either 
    \[ \frac{w_2(n+1)}{w_2 (n)} \leq \frac{w_1(n+1)}{w_1(n)} \,\, \text{ for all } n \geq 1\]
    or
    \[ \frac{w_2(n+1)}{w_2 (n)} \geq \frac{w_1(n+1)}{w_1(n)} \, \, \text{ and } \,\, \frac{W_1(n)}{w_1 (n)} \leq H \frac{W_2 (n)}{w_2(n)} \,\, \text{ for some } H \text{ and } n \geq 1.  \]
    Then, for any sequence $(a_n)_{n \in \mathbb{N}}$, if 
    \[ \lim_{N \rightarrow \infty} \frac{1}{W_1(N)} \sum_{n=1}^N w_1(n) a_n =a,\]
    then 
        \[ \lim_{N \rightarrow \infty} \frac{1}{W_2(N)} \sum_{n=1}^N w_2(n) a_n =a.\]
\end{lem}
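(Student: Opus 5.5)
The plan is to prove the lemma by Abel summation, which reduces it to the Silverman--Toeplitz theorem on regular summation methods. Set $S_1(n)=\sum_{k=1}^n w_1(k)a_k$ with $S_1(0)=0$, and put $c_n = w_2(n)/w_1(n)>0$. By hypothesis $S_1(n)=W_1(n)(a+\varepsilon_n)$ with $\varepsilon_n\to 0$.

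The first step is summation by parts. Since $w_2(n)a_n = c_n\,(S_1(n)-S_1(n-1))$, we get
\[
\sum_{n=1}^N w_2(n)a_n = c_N S_1(N) + \sum_{n=1}^{N-1}(c_n-c_{n+1})S_1(n).
\]
Applying the same identity to the constant sequence $a_n\equiv 1$ gives $W_2(N)=c_NW_1(N)+\sum_{n<N}(c_n-c_{n+1})W_1(n)$. Subtracting $a$ times this from the previous display leaves
\[
\sum_{n=1}^N w_2(n)a_n - aW_2(N) = c_NW_1(N)\,\varepsilon_N + \sum_{n=1}^{N-1}(c_n-c_{n+1})W_1(n)\,\varepsilon_n .
\]
So it suffices to show that the triangular array with entries $t_{N,N}=c_NW_1(N)/W_2(N)$ and $t_{N,n}=(c_n-c_{n+1})W_1(n)/W_2(N)$ for $n<N$ sends null sequences to null sequences. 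By Toeplitz's theorem this holds provided two conditions are met: each fixed column tends to $0$, and the absolute row sums are uniformly bounded.

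The column condition comes from $W_2(N)\to\infty$. For fixed $n<N$ the entry $t_{N,n}$ has a fixed numerator over $W_2(N)\to\infty$. The diagonal entry $t_{N,N}$ is bounded (shown in each case below), and it multiplies $\varepsilon_N\to0$, so the diagonal term also vanishes.

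Row sums, first case: $w_2(n+1)/w_2(n)\le w_1(n+1)/w_1(n)$ is exactly $c_{n+1}\le c_n$. Then all entries are nonnegative and, by the identity for $W_2(N)$, each row sums to exactly $1$. In particular $t_{N,N}\le 1$.

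Row sums, second case: now $c_n$ is nondecreasing, so the off-diagonal entries are $\le 0$. The absolute row sum is
\[
\frac{c_NW_1(N)+\sum_{n<N}(c_{n+1}-c_n)W_1(n)}{W_2(N)} = \frac{2c_NW_1(N)-W_2(N)}{W_2(N)}.
\]
Since $c_NW_1(N) = w_2(N)\,W_1(N)/w_1(N)\le H\,W_2(N)$, this is at most $2H-1$, and also $t_{N,N}\le H$.

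The only real point is noticing that the second hypothesis is exactly the bound on the total variation of the weights; everything else is routine bookkeeping.
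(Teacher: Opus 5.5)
Your proof is correct and complete. The Abel-summation identity, the reduction to a triangular Toeplitz array acting on the null sequence $(\varepsilon_n)$, and both row-sum computations check out: the row sum is exactly $1$ when $c_n=w_2(n)/w_1(n)$ is non-increasing, and the absolute row sum is $2c_NW_1(N)/W_2(N)-1\le 2H-1$ when $c_n$ is non-decreasing. The paper gives no proof of its own and simply quotes the result from Kuipers--Niederreiter (Chapter 1, Lemma 7.1); your Abel-summation plus Silverman--Toeplitz argument is the classical proof of this comparison theorem for weighted means.
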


For later use, we record the following two consequences of Lemma \ref{lem:KN:comparison}.
Taking $w_1(n) =1$ in Lemma \ref{lem:KN:comparison}, we obtain the following lemma.
\begin{lem}\label{lem:HW-2}
Let $(a_n)_{n \in \mathbb{N}}$ and $(d_n)_{n \in \mathbb{N}}$ be two sequences of real numbers such that $d_n>0$ for all $n\in\setN$. 
Set $D_N=\sum_{n = 1}^{N} d_n$. Let us assume that $\lim\limits_{N\to \infty}D_N = \infty$ and  $\lim\limits_{N \rightarrow \infty} \frac{1}{N} \sum\limits_{n = 1}^{N} a_n =a$. 
Then, the following holds.
\begin{enumerate}[(a)]
    \item If $(d_n)$ is non-increasing, then 
    $\lim\limits_{N \rightarrow \infty} \displaystyle\frac{1}{D_N}\sum_{n = 1}^{N} d_n a_n = a.$\\
    \item If $(d_n)$ is non-decreasing and 
    \begin{equation*}
    \sup_N \dfrac{N d_N}{D_N}<\infty,
    \end{equation*}
    then $\lim\limits_{N \rightarrow \infty} \displaystyle\frac{1}{D_N}\sum_{n = 1}^{N} d_n a_n = a$.
\end{enumerate}
\end{lem}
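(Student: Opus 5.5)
The plan is to derive both parts directly from Lemma \ref{lem:KN:comparison}. We take $w_1(n)=1$ for all $n$, so that $W_1(n)=n$ and $W_1(n)\to\infty$, and $w_2(n)=d_n$, so that $W_2(n)=D_n$, which tends to infinity by assumption. The hypothesis $\frac{1}{N}\sum_{n\le N}a_n\to a$ is exactly the input $\frac{1}{W_1(N)}\sum_{n\le N} w_1(n)a_n\to a$ of Lemma \ref{lem:KN:comparison}. Its conclusion, $\frac{1}{W_2(N)}\sum_{n\le N}w_2(n)a_n\to a$, is precisely what we want. So it only remains to check that one of the two ratio conditions of the lemma holds in each case.

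For (a), since $(d_n)$ is non-increasing and positive, we have $\frac{w_2(n+1)}{w_2(n)}=\frac{d_{n+1}}{d_n}\le 1=\frac{w_1(n+1)}{w_1(n)}$ for all $n$. This is the first alternative, and we are done.

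For (b), since $(d_n)$ is non-decreasing, $\frac{d_{n+1}}{d_n}\ge 1=\frac{w_1(n+1)}{w_1(n)}$. The second requirement is $\frac{W_1(n)}{w_1(n)}=n\le H\,\frac{D_n}{d_n}$, that is, $\frac{n d_n}{D_n}\le H$ for all $n$. This holds with $H:=\sup_N \frac{N d_N}{D_N}<\infty$, so the second alternative applies.

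There is no real obstacle here. The only point needing care is matching the normalizations $W_1,W_2$ in Lemma \ref{lem:KN:comparison} with $N$ and $D_N$, which is immediate.

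If one prefers a self-contained argument instead of citing \cite{KNbook}, Abel summation gives both parts directly. Put $S_n=\sum_{k\le n}a_k = na+o(n)$. Then
\[
\sum_{n\le N} d_na_n = d_NS_N-\sum_{n<N}(d_{n+1}-d_n)S_n .
\]
The increments $d_{n+1}-d_n$ have constant sign, and they telescope to give $\sum_{n<N}(d_{n+1}-d_n)\,n = N d_N - D_N$. The error terms are therefore controlled by $o\bigl(\sum_{n<N}|d_{n+1}-d_n|\,n\bigr)+o(Nd_N)$. In case (a) this is $o(D_N)$, since $Nd_N\le D_N$ and $\sum_{n<N}|d_{n+1}-d_n|\,n = D_N - Nd_N \le D_N$. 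In case (b) it is $o(D_N)$ by the bound $Nd_N\ll D_N$.
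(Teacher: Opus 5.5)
Your proposal is correct and is exactly the paper's argument: the paper obtains this lemma by specializing Lemma \ref{lem:KN:comparison} to $w_1(n)=1$, and your check of the two ratio alternatives (with $H=\sup_N Nd_N/D_N$ in case (b)) is all that specialization requires. Your optional Abel-summation proof is also sound. One small caveat: the finitely many early indices where $|S_n-na|>\epsilon n$ contribute an $O(1)$ term rather than an $o(\cdot)$ of the stated sums, and this term is absorbed because $D_N\to\infty$.
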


The next lemma provides a convenient criterion for comparing weighted averages associated with weights arising from Hardy fields. 
Indeed, eventual monotonicity follows from standard properties of Hardy field functions, while L'Hôpital's rule shows that condition \eqref{eq:equiv-weight} implies the corresponding hypothesis of Lemma~\ref{lem:KN:comparison}.
\begin{lem}
\label{lem:comp:weights}
Let $\mathbf{H}$ be a Hardy field.
For $i=1,2$, let $W_i(x)$ belong to $\mathbf{H}$ and assume that $w_i(x):= W_i'(x)$ is positive and non-increasing and that $\lim\limits_{x \rightarrow \infty} W_i(x) = \infty$.
Suppose that there exists $C > 0$ such that 
\begin{equation}
\label{eq:equiv-weight}
\lim_{x \rightarrow \infty} \frac{\log W_1(x)}{\log W_2(x)} \leq C.
\end{equation}
Let $(a_n)_{n \in \mathbb{N}}$ be a complex-valued sequence.
If 
\begin{equation*}
\lim_{N \rightarrow \infty} \frac{1}{W_2(N)} \sum_{n=1}^N w_2(n) a_n = a,
\end{equation*}
then 
\begin{equation*}
\lim_{N \rightarrow \infty} \frac{1}{W_1(N)} \sum_{n=1}^N w_1(n) a_n = a.
\end{equation*}
\end{lem}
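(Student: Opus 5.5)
The plan is to deduce the statement from Lemma~\ref{lem:KN:comparison}, taking its ``source'' weight to be our $w_2$ and its ``target'' weight to be our $w_1$. The Hardy field structure supplies eventual monotonicity, and condition \eqref{eq:equiv-weight} supplies the growth hypothesis.

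\emph{Step 1: normalizations.} Since $w_i$ is positive and non-increasing, comparing sums with integrals gives $\bigl|\sum_{n=1}^N w_i(n)-W_i(N)\bigr| \le W_i(1)+w_i(1)=O(1)$. As $W_i(N)\to\infty$, normalizing by $W_i(N)$ or by the discrete sum $\sum_{n\le N}w_i(n)$ gives the same limits, as in the Remark following the definition. Likewise, changing finitely many values of $w_i(n)$ changes $\sum_{n\le N}w_i(n)a_n$ by $O(1)$ and the normalizer by $O(1)$, so the limits are unaffected. Hence it suffices to check the hypotheses of Lemma~\ref{lem:KN:comparison} for all $n\ge n_0$. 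We then redefine the first $n_0$ weights so that the relevant inequalities hold for every $n\ge1$; for instance, set $w_i(n):=w_i(n_0)$ for $n<n_0$ and enlarge $H$ if needed.

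\emph{Step 2: dichotomy via monotonicity.} Since $w_i=W_i'\in\mathbf H$ are positive, the function $h:=\log w_1-\log w_2 = \log(w_1/w_2)$ lies in a Hardy field (extend $\mathbf H$ by logarithms if necessary). Hence $h$ is eventually monotone. The target-ratio inequality $\frac{w_1(n+1)}{w_1(n)}\le \frac{w_2(n+1)}{w_2(n)}$ is equivalent to $h(n+1)\le h(n)$.
\begin{itemize}
\item If $h$ is eventually non-increasing, the first alternative of Lemma~\ref{lem:KN:comparison} holds, and we are done.
\item If $h$ is eventually non-decreasing, the reverse ratio inequality holds. It then remains to show that
\[
\frac{W_2(n)}{w_2(n)}\le H\,\frac{W_1(n)}{w_1(n)},
\qquad\text{that is,}\qquad
\sup_n \frac{w_1(n)W_2(n)}{w_2(n)W_1(n)}<\infty .
\]
\end{itemize}

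\emph{Step 3: the boundedness (main point).} Since $\log W_2(x)\to\infty$ and $\lim \log W_1/\log W_2$ exists in a Hardy field, L'Hôpital's rule applies. It gives
\[
\lim_{x\to\infty}\frac{\log W_1(x)}{\log W_2(x)}=\lim_{x\to\infty}\frac{w_1(x)/W_1(x)}{w_2(x)/W_2(x)}=\lim_{x\to\infty}\frac{w_1(x)W_2(x)}{w_2(x)W_1(x)},
\]
where the last limit exists because the quotient lies in a Hardy field. By \eqref{eq:equiv-weight} this limit is at most $C<\infty$. Hence the quotient is eventually bounded by $C+1$, and we may take $H=C+1$ after the finite modification of Step 1. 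The second alternative of Lemma~\ref{lem:KN:comparison} then applies, and the conclusion follows.

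The only delicate point is justifying L'Hôpital's rule and the existence of the limits. This needs $W_1,W_2$, their logarithms and the quotient to lie in a common Hardy field. That holds because $\mathbf H$ can be extended to a Hardy field closed under $\log$ of positive elements.
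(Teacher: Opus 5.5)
Your proof is correct and follows the same route as the paper, which only sketches the argument. You apply Lemma~\ref{lem:KN:comparison} with $w_2$ as the source weight and $w_1$ as the target, use eventual monotonicity in the Hardy field to choose between its two alternatives, and use L'H\^opital's rule on $\log W_1/\log W_2$ to get the bound $\frac{w_1W_2}{w_2W_1}\le H$. Your Step~1 (the finite modification of the weights and the passage between $W_i(N)$ and the discrete sums) correctly supplies bookkeeping the paper leaves implicit. The logarithmic extension of $\mathbf H$ is unnecessary, though: $w_1/w_2$ and $w_1W_2/(w_2W_1)$ already lie in $\mathbf H$, which gives both the monotonicity and the existence of the limit.
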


We also recall the following form of Abel's summation formula. 
\begin{lem}[Abel's summation formula]\label{lem:byparts}
For any two sequences $(g_n)$ and $(h_n)$
\begin{equation*}
\sum_{k=m}^n g_k (h_{k+1}-h_k)= (g_{n+1} h_{n+1}-g_m h_m)-\sum_{k=m}^n h_{k+1}(g_{k+1}-g_k).
\end{equation*}
\end{lem}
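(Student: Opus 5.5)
The identity is purely algebraic, so I will verify it by moving the sum on the right-hand side to the left and telescoping. No earlier results are needed, and I do not expect any real obstacle. The only thing to watch is the index bookkeeping at the endpoints $k=m$ and $k=n$.

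First I add $\sum_{k=m}^n h_{k+1}(g_{k+1}-g_k)$ to the left-hand side $\sum_{k=m}^n g_k(h_{k+1}-h_k)$. For each fixed $k$, the combined summand is
\[
g_k h_{k+1} - g_k h_k + h_{k+1} g_{k+1} - h_{k+1} g_k = g_{k+1}h_{k+1} - g_k h_k .
\]
The two cross terms $g_k h_{k+1}$ and $-h_{k+1}g_k$ cancel.

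Next I sum this over $k=m,\dots,n$. Writing $u_k := g_k h_k$, the sum is $\sum_{k=m}^n (u_{k+1}-u_k)$. This telescopes to $u_{n+1}-u_m = g_{n+1}h_{n+1}-g_m h_m$.

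Hence
\[
\sum_{k=m}^n g_k(h_{k+1}-h_k) + \sum_{k=m}^n h_{k+1}(g_{k+1}-g_k) = g_{n+1}h_{n+1}-g_m h_m ,
\]
and rearranging gives exactly the formula in Lemma \ref{lem:byparts}. Since only finitely many terms appear, the computation holds for arbitrary real or complex sequences $(g_n)$ and $(h_n)$.
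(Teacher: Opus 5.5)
Your proof is correct: the per-term identity $g_k(h_{k+1}-h_k)+h_{k+1}(g_{k+1}-g_k)=g_{k+1}h_{k+1}-g_kh_k$, followed by telescoping over $k=m,\dots,n$, is the standard verification. The paper gives no proof and simply recalls the formula as a standard fact, so your argument supplies exactly what it takes for granted.
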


We are now ready to prove the following auxiliary result, which is the main ingredient in the proof of Theorem~\ref{thm:sufficient}.
\begin{thm}
\label{thm:slow-weight}
Let $\mathbf{H}$ be a Hardy field and let $a(x) \in \mathbf{H}$ satisfy
\begin{equation*} 
\label{eq:cond:a}
\lim_{x \rightarrow \infty} a(x) = \infty \quad  \text{and} \quad \lim_{x \rightarrow \infty} \frac{a(x)}{x} = 0.
\end{equation*}
Suppose that $W(x) \in \mathbf{H}$ satisfies $w(x):= W'(x)$ is positive and non-increasing, $\lim\limits_{x \rightarrow \infty} W(x) = \infty$, 
and  there exists $c > 0$ such that for all sufficiently large $x$, 
\begin{equation*} W(x) \leq a(x)^c. \end{equation*}
If $(b_n)_{n \in \mathbb{N}}$ is a non-negative real sequence such that 
\begin{equation}
\label{eq:cond:b:lem}
\lim_{N \rightarrow \infty} \frac{1}{N} \sum_{n=1}^N b_n = b,
\end{equation}
then 
\begin{equation} 
\label{eq:conv:new}
\lim_{N \rightarrow \infty} \frac{1}{W(N)} \sum_{n=1}^N  w(n) b_{\lfloor a(n) \rfloor} = b 
\end{equation}
and 
\begin{equation} 
\label{eq:conv:new2}
\lim_{N \rightarrow \infty} \frac{1}{W(N)} \sum_{n=1}^N  w(n) b_{\lceil a(n) \rceil} = b. 
\end{equation}
\end{thm}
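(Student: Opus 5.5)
We regroup the sum according to the value $k=\lfloor a(n)\rfloor$; this turns the $W$-average into a weighted average of $(b_k)$ over $k$, to which Lemma \ref{lem:HW-2} applies. Since $a\in\mathbf H$ tends to infinity, it is eventually strictly increasing. Moreover $a'(x)\to 0$: indeed $a(x)/x\to 0$, and L'H\^opital's rule is legitimate in a Hardy field. Hence for large $k$ the set $I_k=\{n:\lfloor a(n)\rfloor=k\}=\{n: k\le a(n)<k+1\}$ is a nonempty interval of integers, roughly $[a^{-1}(k),a^{-1}(k+1))$. Put
\[
d_k=\sum_{n\in I_k} w(n)=W(a^{-1}(k+1))-W(a^{-1}(k))+O(w(a^{-1}(k))),
\]
where the error is at most one term of the sum because $w$ is monotone. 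Then
\[
\sum_{n\le N} w(n)b_{\lfloor a(n)\rfloor}=\sum_{k<K} d_k b_k+(\text{partial block}),
\qquad K=\lfloor a(N)\rfloor .
\]
Write $V(y)=W(a^{-1}(y))$, which again lies in a Hardy field. Since $a$ is defined on the integers, $w(a^{-1}(k))\le w(1)$ is bounded, and therefore $d_k=V(k+1)-V(k)+O(1)$.

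The key point is the hypothesis $W\le a^c$, which says exactly that $V(y)\le y^c$. Since $V\in\mathbf U$ is eventually monotone, its derivative $v=V'$ is eventually monotone as well, and $V(y)\ll y^c$ forces $v(y)\ll y^{c-1}$; this holds because for a Hardy function with $V\to\infty$ we have $yV'(y)/V(y)$ tending to a finite limit. I then apply Lemma \ref{lem:HW-2} to the weights $d_k\approx v(k)$.

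Case (a) is when $v$ is eventually non-increasing. Here we need $d_k>0$ with $\sum d_k=\infty$, which holds.

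Case (b) is when $v$ is eventually non-decreasing. Here we need $\sup_K K d_K/D_K<\infty$, that is, $Kv(K)\ll V(K)$, and this is exactly the Hardy-field bound on $yV'/V$ coming from polynomial growth. This verification is the main obstacle: we must exploit $V\le y^c$, not merely $V\to\infty$. Exponential growth of $V$, as in Theorem \ref{thm:necessary:int}, is precisely what breaks it.

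Along the way we also have to handle two errors, and in both $b_n\ge 0$ matters. First, the $O(1)$ discrepancies between $d_k$ and the Hardy increments. I would avoid them by working directly with the exact $d_k$ and comparing monotonicity only up to bounded perturbations, or more simply by replacing $d_k$ with $V(k+1)-V(k)$: the total error is $O(\sum_{k<K} b_k)=O(K)$, and $K/V(K)$ is not obviously $o(1)$. Hence I prefer to apply Lemma \ref{lem:HW-2} to the exact sequence $d_k$, checking its eventual monotonicity from the monotonicity of $w$ together with that of the lengths of the $I_k$ in the regime where those lengths are large. Second, the partial last block: its contribution is at most $d_K b_K$. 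Here $b_K=o(K)$ follows from Cesàro convergence, and $d_K/D_K\ll 1/K$ from the bound in case (b), or trivially in case (a), so this term is $o(D_K)$.

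Finally, $W(N)$ and $D_K$ differ by at most a single block, so
\[
\frac{1}{W(N)}\sum_{n\le N} w(n)b_{\lfloor a(n)\rfloor}=\frac{1}{D_K}\sum_{k<K} d_kb_k+o(1)\to b.
\]
The ceiling statement \eqref{eq:conv:new2} follows in the same way, with $I_k$ replaced by $\{n: k-1<a(n)\le k\}$, or by noting that $b_{\lceil a(n)\rceil}=b_{\lfloor a(n)\rfloor+1}$ except on the sparse set of $n$ where $a(n)$ is an integer, and that the shifted sequence $(b_{k+1})_k$ has the same Cesàro limit $b$.
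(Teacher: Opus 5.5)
Your outline is a reasonable alternative to the paper's: you regroup by blocks $I_k$, pass to weights governed by $V=W\circ a^{-1}$, and check the hypotheses of Lemma~\ref{lem:HW-2} via the Hardy-field bound $yV'(y)\ll V(y)$. However, the step you finally rely on fails. The exact block weights $d_k=\sum_{n\in I_k}w(n)$ are in general \emph{not} eventually monotone, so Lemma~\ref{lem:HW-2} cannot be applied to them. Rounding at the block endpoints produces fluctuations of order $w(a^{-1}(k))$, and these can exceed the variation of $V'$.

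Concretely, take $a(x)=W(x)=\sqrt{2x}$, so that $c=1$ and $V(y)=y$. Then $I_2=\{2,3,4\}$, $I_3=\{5,6,7\}$, $I_4=\{8,\ldots,12\}$, $I_5=\{13,\ldots,17\}$, and $d_2\approx 1.26$, $d_3\approx 0.87$, $d_4\approx 1.13$, $d_5\approx 0.92$. In general $d_k=1\pm\frac{1}{2k}+O(k^{-2})$, with the sign determined by the parity of $k$. Here the block lengths are large and even non-decreasing ($3,3,5,5,\ldots$), yet $d_k$ oscillates, so monotonicity of the lengths does not rescue the argument. Lemma~\ref{lem:HW-2} has no ``up to perturbations'' version available. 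Proving one would require exactly the error estimate you declined to make.

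That estimate is the missing idea, and it is easy once you stop bounding $w(a^{-1}(k))$ by $w(1)$. Since $V'(y)=w(a^{-1}(y))/a'(a^{-1}(y))$ and $a'\to 0$, you have
\[
w(a^{-1}(k))=a'(a^{-1}(k))\,V'(k)=o\bigl(V'(k)\bigr).
\]
The sequence $k\mapsto w(a^{-1}(k))$ is non-increasing. Summation by parts with $b_k\ge 0$ and $\sum_{j\le k}b_j\le Mk$ therefore gives
\[
\sum_{k<K}w(a^{-1}(k))\,b_k\le M\sum_{k<K}w(a^{-1}(k))=o(V(K)).
\]
So you may replace $d_k$ by $V(k+1)-V(k)$, which \emph{is} monotone, and then your cases (a) and (b) go through as you describe.

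The same weighted bound, applied to $b_k+b_{k+1}$, is what the ceiling version needs. Saying the exceptional set is ``sparse'' is not enough, because $(b_k)$ may be unbounded.

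For comparison, the paper uses the identical summation-by-parts estimate but avoids your case analysis. It first applies Lemma~\ref{lem:comp:weights}, which is where $\log W\le c\log a$ enters, to reduce to $W=a$. Then $V(y)=y$, every block has weight $1+O(w(a^{-1}(k)))$, and the main term is just the Ces\`aro average of $(b_k)$.
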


\begin{proof}
Let us first prove \eqref{eq:conv:new}.
Since changing finitely many values of $\lfloor a(n) \rfloor$ does not affect the limit in \eqref{eq:conv:new}, without loss of generality, we assume that $a(x) \geq 1$ for $x \geq 1$ and $a'(x)$ is positive and non-increasing for $x \geq 0$. 
By Lemma \ref{lem:comp:weights}, it is sufficient to prove \eqref{eq:conv:new} when $W(x) = a(x)$. 

Define two auxiliary functions $\gamma(x)$ and $\beta(x)$ such that 
\begin{itemize}
\item $\gamma(x)$ is the inverse function of $a(x)$, that is, $a\big(\gamma (x)\big)=x$ for all $x\in \setR_{\geq 0}$, 
\item  $\beta(x)=\int_{\gamma(x)}^{\gamma(x+1)} w(t) dt$.
\end{itemize}
Observe that $\beta(x) = W(\gamma (x+1) ) - W(\gamma(x)) = 1$, so it is non-increasing. 
Also, one can check that for each $k \in \mathbb{N}$
\begin{equation*}
\label{eq:estimate:beta}
 \beta(k) - w(a^{-1} (k)) \leq \sum_{\lfloor a(n) \rfloor =k} w(n) \leq \beta(k) + w(a^{-1} (k))
\end{equation*}
For each $N$, choose $K$ such that $K \leq a(N) < K+1$, so 
\begin{equation*}\label{eq:est1:seq}
 \sum_{k =1}^{ K} (\beta(k) - w(a^{-1} (k))) b_k 
\leq  \sum_{n=1}^{N} w(n) b_{\lfloor a(n) \rfloor}
\leq  \sum_{k=1}^{K+1} (\beta(k) + w(a^{-1} (k))) b_k,
\end{equation*}
and
\begin{equation}\label{eq:est2:seq}
 K = \sum_{k =1}^{ K} \beta(k) 
\leq  \sum_{n=1}^{N} w(n) 
\leq  \sum_{k=1}^{K+1} \beta(k) =K+1. 
\end{equation}
To finish proving \eqref{eq:conv:new} when $W(x) = a(x)$, it remains to show that 
\begin{equation}
\label{eq:last-lem}
 \lim_{K \rightarrow \infty} \frac{\sum_{k=1}^K w(a^{-1} (k)) b_k}{K} = 0.
\end{equation}
By \eqref{eq:cond:b:lem}, there exists $M$ such that $\sum\limits_{m=1}^n b_m \leq Mn$ for every $n \in \mathbb{N}$. 
Using summation by parts,  
\begin{align*}
\sum_{k=1}^K w(a^{-1} (k)) b_k 
&= \sum_{k=1}^{K-1} (w (a^{-1}(k)) - w (a^{-1}(k+1))) \cdot \sum_{m=1}^k b_m + w(a^{-1}(K))\sum_{m=1}^K b_m \\
&\leq \sum_{k=1}^{K-1} (w (a^{-1}(k)) - w (a^{-1}(k+1))) \cdot Mk + w(a^{-1}(K)) \cdot MK \\
&= M \sum_{k=1}^{K} w(a^{-1} (k)).
\end{align*}
Since $w(x)$ is decreasing and $a^{-1}(x) \geq x$, we have $w (a^{-1} (k)) \leq w (k)$. Therefore, using $K \leq a(N) < K+1$,
\[ \sum_{k=1}^K w (a^{-1} (k)) \leq \sum_{k=1}^K w (k) = \sum_{k=1}^{\lfloor a(N)\rfloor} w(k).\]
On the other hand \eqref{eq:est2:seq} gives $\sum_{k=1}^N w(n) \leq K+1$.
Finally,
\[  \frac{\sum_{k=1}^K w(a^{-1} (k))}{K} \leq \frac{K+1}{K} \frac{\sum_{k=1}^{\lfloor a(N) \rfloor} w(k)}{\sum_{k=1}^N w(n)}. \]
Furthermore, 
\[ \lim_{N \rightarrow \infty} \frac{\sum_{k=1}^{\lfloor a(N) \rfloor} w(k)}{\sum_{k=1}^N w(n)} = \lim_{N \rightarrow \infty} \frac{W(a(N))}{W(N)} = \lim_{N \rightarrow \infty} \frac{W'(a(N)) a'(N)}{W'(N)} = \lim_{N \rightarrow \infty} a'(a(N))  = 0. \]
Thus \eqref{eq:last-lem} follows.

The proof of \eqref{eq:conv:new2} is identical. Indeed, we have 
\begin{equation*}
 \beta(k) - w(a^{-1} (k)) \leq \sum_{\lceil a(n) \rceil =k} w(n) \leq \beta(k) + w(a^{-1} (k))
\end{equation*}
and the remainder of the proof is unchanged.
\end{proof}

\begin{proof}[Proof of Theorem~\ref{thm:sufficient}]
Let $(X, \mathcal{B}, \mu, T)$ be a probability measure preserving system. 
It is enough to show that for any $f \in L^1$ with $f \geq 0$, $\setA_N^{\mathsf{a}, W} f(x)$
converges almost everywhere. Moreover, if $T$ is ergodic, then the limit equals $\int f \,  d \mu$. 
Note that if $W(x) \in \mathbf{E}$ and $a(x) \in \mathbf{U}$, there exists a maximal Hardy field $\mathbf{H}$ containing both $W(x)$ and $a(x)$. 
Therefore, if $a(x)$ is eventually positive, by the pointwise ergodic theorem and \eqref{eq:conv:new} of Lemma \ref{thm:slow-weight}, we obtain the desired result.
Now suppose that $a(x)$ is eventually negative. Then
\[
T^{\lfloor a(n)\rfloor}
=(T^{-1})^{\lceil -a(n)\rceil}.
\]
Therefore, the desired conclusion follows by applying \eqref{eq:conv:new2} of Lemma~\ref{thm:slow-weight} to \(T^{-1}\).
\end{proof}

Next, we prove Theorem \ref{thm:necessary:int}.
\begin{thm}[Theorem \ref{thm:necessary:int}]
\label{thm:necessary}
Let $a(x) \in \mathbf{U}$ such that $1 \prec a(x) \prec x$. 
Let $W(x) \in \mathbf{E}$ be such that $w(x):= W'(x)$ is positive and non-increasing and $\lim_{x \rightarrow \infty} W(x) = \infty$. 
Suppose that there exists $c>0$ such that 
\begin{equation*}
W(x) \geq \exp (|a(x)|^{c})
\end{equation*}
for all sufficiently large $x$.
Then $(\lfloor a(n) \rfloor)_{n \in \mathbb{N}}$ is pointwise $L^{\infty}$-bad for $W$-averages: 
for every aperiodic measure preserving system $(X,\mathcal{B},\mu,T)$, there exists a function $f\in L^\infty$ such that the averages $\setA_N^{\mathsf{a}, W} f(x)$ fail to converge for almost every $x$. 
\end{thm}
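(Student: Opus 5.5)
By Lemma \ref{lem:comp:weights}, it suffices to treat one convenient weight. Let $\mathbf{H}$ be a maximal Hardy field containing $a(x)$ and $W(x)$. Fix a small $c'\in(0,\min(c,1))$ and set $W_1(x)=\exp(|a(x)|^{c'})\in\mathbf{H}$. Then $\log W_1(x)\le \log W(x)$ eventually. Lemma \ref{lem:comp:weights} (applied with $W_2=W$) shows that a.e. convergence of the $W$-averages of $f(T^{\lfloor a(n)\rfloor}x)$ would imply a.e. convergence of the $W_1$-averages. So it is enough to produce $f\in L^\infty$ whose $W_1$-averages diverge a.e. Before doing so, I would verify that $w_1=W_1'$ is eventually positive and non-increasing. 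Eventual monotonicity is automatic in a Hardy field, and the sign of $w_1'$ should be negative because $a\prec x$ and $c'<1$. If this fails in some edge case, I would shrink $c'$ further.

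As in the proof of Theorem \ref{thm:sufficient}, assume $a$ is eventually positive; the negative case follows by passing to $T^{-1}$ and $\lceil\cdot\rceil$. Let $\gamma=a^{-1}$. Exactly as in the proof of Theorem \ref{thm:slow-weight}, the total weight $\sum_{\lfloor a(n)\rfloor=k}w_1(n)$ equals $W_1(\gamma(k+1))-W_1(\gamma(k))$ up to an error $w_1(\gamma(k))$. Along $N_K=\lfloor\gamma(K)\rfloor$ this gives
\[
\setA^{\mathsf a,W_1}_{N_K}f(x)=\sum_{k\le K}\kappa_K(k)\,f(T^kx)+o(1),\qquad \kappa_K(k)\approx \frac{e^{k^{c'}}-e^{(k-1)^{c'}}}{e^{K^{c'}}}\approx c'K^{c'-1}e^{-(K-k)c'K^{c'-1}}.
\]
This is essentially an exponentially decaying kernel concentrated on the window $[K-L_K,K]$ with $L_K\asymp K^{1-c'}$. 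I would truncate at $L_K$ times a large constant. Since $L_K=o(K)$, the $W_1$-averages along $N_K$ behave like moving averages over windows $[K-L_K,K]$ that fail the cone condition of Bellow--Jones--Rosenblatt.

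The main step, and the main obstacle, is to show that this family of kernels is strongly sweeping out on every aperiodic system. I would first try a direct Rokhlin-tower construction. Choose a very sparse sequence $K_1<K_2<\cdots$ such that the windows $[K_j-CL_{K_j},K_j]$ are pairwise disjoint and $L_{K_j}/K_j\to 0$ fast. At stage $j$, take a Rokhlin tower of height much larger than $K_j$ with small error. Put into $E_j$ the levels of a ``comb'' arranged so that, for most points $x$ (all but measure $2^{-j}$), the window $T^{K_j-CL_{K_j}}x,\dots,T^{K_j}x$ lies almost entirely in $E_j$. Because $L_{K_j}\ll K_j$, the comb can have density only about $2^{-j}$, so $\mu(E_j)\le 2^{-j}$. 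Then set $E=\bigcup_j E_j$ and $f=\setone_E$.

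By Borel--Cantelli, for a.e. $x$ we get $\setA_{N_{K_j}}f(x)\ge 1-\varepsilon_j$ for infinitely many $j$. This forces $\limsup\ge 1$. The hardest part is ensuring that the later sets $E_{j'}$ do not destroy the earlier estimates, and choosing the construction so that the averages are also small infinitely often, so that $\liminf\le\mu(E)$ is small. For the second point I would use a second interleaved sequence $K'_j$ whose windows avoid $E$ for most $x$; alternatively, I would invoke the Akcoglu--del Junco/Bellow--Jones--Rosenblatt strong sweeping-out theorem for moving averages and dominate the exponential kernel by such averages. Either way, $\limsup\ne\liminf$ a.e., which proves that $(\lfloor a(n)\rfloor)$ is pointwise $L^\infty$-bad for $W_1$-averages and hence, by the first reduction, for $W$-averages.
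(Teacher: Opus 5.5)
Your preliminary reductions match the paper's and are fine. These are: passing to $W_1=\exp(|a|^{c'})$ via Lemma~\ref{lem:comp:weights}, reducing to eventually positive $a$, and replacing the $W_1$-averages along $N_K$ by $\sum_{k\le K}\kappa_K(k)f(T^kx)$ with $\kappa_K(k)\propto e^{k^{c'}}-e^{(k-1)^{c'}}$, which is the paper's passage to the $\beta$-weighted averages. Your reason for $w_1$ being non-increasing is wrong as stated; for $a=\sqrt{x}$, say, $W_1$ would be convex. What saves it is that $\exp(|a|^c)\le W(x)\ll x$ forces $a$ to be polylogarithmic, and then $c'<c$ gives $xw_1'(x)/w_1(x)\to-1$.

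The genuine gap is the sweeping-out step, which you leave open, and the direct construction you sketch cannot work as described. Fix a single offset $K_j$ with window $I_j=[K_j-CL_{K_j},K_j]$. Invariance of $\mu$ gives $\int\frac{1}{|I_j|}\sum_{k\in I_j}\setone_{E_j}(T^kx)\,d\mu=\mu(E_j)$. So that window cannot lie almost entirely in $E_j$ for all but a $2^{-j}$-fraction of points when $\mu(E_j)\le 2^{-j}$, and the Borel--Cantelli conclusion collapses. The missing idea is to take a supremum over a whole range of $K$. The windows must advance in steps much smaller than the period of the comb, so that every point has \emph{some} window inside a tooth, while the comb's density (tooth length over period) stays small because $L_K=o(K)$. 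Your remark that $L_K\ll K$ keeps the comb sparse is the right intuition, but it only helps after taking that supremum. This is exactly what the paper verifies:
\begin{itemize}
\item conditions (i)--(ii) of Proposition~\ref{prop:2}, with blocks $[N_i,N_{i+1}]$, $N_i=(L'+i)^{4/\tau}$, carrying almost all the weight and $N_r/M$ large;
\item the periodic set $A=\bigcup_k[kN_r-2M,kN_r]$;
\item the transference Lemma~\ref{lem:Rohlin} from \cite{SSO}, which turns this density estimate into $\limsup=1$ and $\liminf=0$ a.e.\ in every aperiodic system.
\end{itemize}

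Your $\liminf$ argument has a similar defect. Windows avoiding $E$ ``for most $x$'' give, via Markov/Fatou, smallness only on a set of measure about $1-\sqrt{\mu(E)}$, not almost everywhere. Conversely, later sets $E_{j'}$ cannot spoil the lower bounds, since enlarging $E$ only increases the averages.

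Your fallback could be made into a legitimate, citation-based alternative. The windows $[K-CL_K,K]$ with $L_K=K^{1-c'}/c'$ do fail the cone condition, so you could cite strong sweeping out for such moving averages. Domination alone is not enough, though; you need a two-sided comparison. Since $\kappa_K(k)\le(1+o(1))/L_K$ and $\kappa_K$ puts mass about $e^{-C}$ outside the window, two bounds follow:
\begin{itemize}
\item a window average $\ge1-\delta$ forces a kernel average $\ge 1-e^{-C}-C\delta-o(1)$;
\item a window average $\le\delta$ forces a kernel average $\le e^{-C}+C\delta+o(1)$.
\end{itemize}
For large $C$ this gives divergence a.e. As written, however, the key step is not established.
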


To prove Theorem~\ref{thm:necessary}, we first reduce to the case where $a(x)$ is eventually positive.
Assume that $a(x)$ is eventually negative.
Note that if $t\notin\mathbb Z$, then $\lfloor t\rfloor=-\lfloor -t\rfloor-1.$
Moreover, the assumption $1\prec a(x) \prec x$ implies that
\[
\#\{1\le n\le N:\, a(n)\in\mathbb Z\}\le |a(N)|
\]
for all sufficiently large $N$.
Therefore, for every $f\in L^\infty$,
\[
\left|
\frac{1}{W(N)}\sum_{n=1}^N
w(n) \, f(T^{\lfloor a(n)\rfloor}x)
-
\frac{1}{W(N)}\sum_{n=1}^N
w(n) \, f\circ T^{-1}\big((T^{-1})^{\lfloor -a(n)\rfloor}x\big)
\right|
\le
\frac{|a(N)|}{W(N)}\|f\|_{L^\infty}
\rightarrow 0.
\]
Hence, throughout the remainder of this section, we may assume that $a(x)$ is eventually positive.

We need the following proposition for proving \Cref{thm:necessary}. 
\begin{prop}\label{prop:2}
        Let \(( \beta(n) )_{n=1}^\infty\) be a sequence of positive real numbers such that \\
    $\lim_{N}\sum_{n=1}^N \beta(n)=\infty$. Suppose that for every $\epsilon\in (0,1)$, \( L \in \setN \), and \( C>0 \), there exist a positive integer \( r \) and integers \( L<N_1<N_2<\cdots<N_r \ \) such that the following conditions hold:
\begin{equation}\label{eq:prop2}
\begin{aligned}
         (i)& \    \dfrac{\sum_{n=N_i}^{N_{i}+1} \beta(n)}{\sum_{n=1}^{N_{i+1}} \beta(n)}\geq 1-\epsilon \text{ for every } i\in [r].\\
    (ii)& \dfrac{N_r}{M}>C ,\text{ where }M:=\max \{N_i-N_{i-1}: i\in [r]\}\text{ with }\ N_0:=0.\end{aligned}\end{equation}
Then the pair $(\beta(n), n)$ satisfies the {\em strong sweeping out} property. 
That is, in every aperiodic measure preserving system $(X,\mathcal{B}, \mu, T)$, for each $\epsilon>0$ there exists $E\in \mathcal{B}$ with $\mu(E)<\epsilon$ such that for almost every $x\in X$,
\begin{equation*}
\limsup_{N\to \infty} \dfrac{1}{\sum_{n=1}^N \beta(n)}\sum_{n=1}^N \beta(n) \setone_E(T^n x)=1 \text{ and } \liminf_{N\to \infty} \dfrac{1}{\sum_{n=1}^N \beta(n)}\sum_{n=1}^N \beta(n) \setone_E(T^n x)=0.
\end{equation*}
In particular, $\displaystyle \lim_{N\to \infty}\dfrac{1}{\sum_{n=1}^N \beta(n)}\sum_{n=1}^N \beta(n) \setone_E(T^n x)$ fails to exist almost everywhere.
\end{prop}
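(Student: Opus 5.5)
The plan is to reduce the statement to a combinatorial covering construction on $\mathbb{Z}$, transfer it to $X$ with the Rokhlin lemma, and then assemble the sets obtained at different scales into one set $E$. First I interpret condition (i) as saying that, for each $i$, almost all of the weight $\sum_{n=1}^{N_{i+1}}\beta(n)$ sits on the block $(N_i,N_{i+1}]$; I am reading the printed upper limit $N_i+1$ as a typo for $N_{i+1}$. Consequently, for any set $F$ and any $x$,
\[
\frac{1}{\sum_{n\le N_{i+1}}\beta(n)}\sum_{n=1}^{N_{i+1}}\beta(n)\setone_F(T^nx)\ \ge\ (1-\epsilon)\min_{N_i<n\le N_{i+1}}\setone_F(T^nx).
\]
The same holds with $\le \epsilon+\max_{N_i<n\le N_{i+1}}\setone_F(T^nx)$ in place of the lower bound. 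So it suffices to make the orbit segment $T^{(N_i,N_{i+1}]}x$ lie entirely inside $F$ for some $i$ (to push the average up), or entirely outside $F$ (to push it down).

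\textbf{Step 1: one scale, on $\mathbb{Z}$.} Fix $\epsilon$, $L$, $C$ and take $N_1<\dots<N_r$ as in the hypothesis. The windows $x+(N_{i-1},N_i]$, $i\in[r]$, tile the interval $(x,x+N_r]$ and each has length at most $M$. Let $F_0\subset\mathbb{Z}$ be the $N_r$-periodic set which, in each period, is an interval of length $2M$. Every interval of length $N_r$ contains a full copy of this length-$2M$ interval, and any interval of length $2M$ contains one whole window. Hence for every $x$ there is an $i$ with $x+(N_{i-1},N_i]\subset F_0$, while the density of $F_0$ is $2M/N_r<2/C$. Symmetrically, the complement contains intervals of length $N_r-2M\ge 2M$ once $C>4$, so for every $x$ some window lies entirely in $\mathbb{Z}\setminus F_0$.

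\textbf{Step 2: transfer to $X$.} Since $T$ is aperiodic, the Rokhlin lemma gives a tower of height $H\gg N_r/\epsilon$ covering all of $X$ except a set of measure $<\epsilon$. Copy $F_0$ level by level to obtain a set $F\subset X$ with $\mu(F)\le 2/C+\epsilon$. Outside a set of measure $O(\epsilon+N_r/H)$ (points not in the tower or in its top $N_r$ levels), every $x$ has, for some $i\in[r]$, its weighted average at time $N_i$ at least $1-\epsilon$. Likewise, for some other $i'\in[r]$, its average is at most $\epsilon$.

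\textbf{Step 3: infinitely many scales.} Choose sequences $\epsilon_k\downarrow0$ and $C_k\uparrow\infty$ with $\sum_k 2/C_k<\epsilon$. Apply Steps 1 and 2 inductively to produce sets $F_k$, taking $L_k$ larger than the final time $N^{(k-1)}_{r}$ of the previous scale. Put $E=\bigcup_k F_k$, so $\mu(E)<\epsilon$. For $\limsup=1$: since $E\supset F_k$, by Borel--Cantelli almost every $x$ lies in the good set of infinitely many scales, and at the corresponding times the average is at least $1-\epsilon_k$.

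For $\liminf=0$ I need that, at the good time $N_{i'}^{(k)}$ for the complement of $F_k$, the other sets $F_j$ contribute little. For $j>k$ I would ensure this by taking $\sum_{j>k}\mu(F_j)$ tiny and using the maximal inequality for the weighted averages. Those averages are dominated by the plain Ces\`aro maximal function, because the weights are positive and monotone after applying Lemma~\ref{lem:KN:comparison}. For $j<k$, the sets $F_j$ were fixed earlier, so I would choose scale $k$ after them and use $L_k$ large together with the pointwise ergodic theorem applied to $\setone_{F_j}$. If that does not suffice, I would instead define $F_k$ inside the complement of $\bigcup_{j<k}F_j$ restricted to a suitable Rokhlin tower.

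\textbf{Main obstacle.} I expect the main obstacle to be this last bookkeeping, namely keeping the sets from earlier scales from spoiling the windows of later scales. The natural fix, and the one I would try first, is the standard device of making each $F_k$ measurable with respect to a Rokhlin tower whose height is much larger than all earlier time scales. With that choice, near-independence of the different $F_j$ along each window should make the cross contributions at most $\sum_{j\neq k}\mu(F_j)+o(1)$ on most of $X$.
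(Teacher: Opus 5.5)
\textbf{How this compares with the paper.} Your Step 1 is exactly the paper's proof. The paper takes $A=\bigcup_k[kN_r-2M,kN_r]$ and checks that a proportion $1-4M/N_r$ of starting points $x$ has some window $[N_i,N_{i+1}]$ inside $A$. This is the density inequality \eqref{Rohlin1}. Everything in your Steps 2--3 (transfer to $X$, assembling scales, and both the $\limsup$ and $\liminf$ conclusions) is then delegated to \cite[Theorem 2.3]{SSO} via Lemma~\ref{lem:Rohlin}.

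One small correction to Step 1: it is not true that \emph{every} $x$ works. If the block straddles the end of $(x,x+N_r]$, neither piece need contain a whole window. This costs only $4/C$ and is harmless.

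Doing Steps 2--3 by hand is legitimate, and your $\limsup=1$ argument (union of scale sets, $\mu(\limsup_k G_k)\ge\limsup_k\mu(G_k)=1$) is fine. The $\liminf=0$ half, however, has a genuine gap.

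\textbf{The gap in the $\liminf$ argument.} First, there is no maximal inequality for these weighted averages.
\begin{itemize}
\item Domination by the Ces\`aro maximal function holds for \emph{non-increasing} weights. Here $\beta$ is arbitrary, and in the application it is increasing.
\item Lemma~\ref{lem:KN:comparison} transfers limits, not maximal bounds.
\item A weak-type bound would contradict the very sweeping-out you are proving.
\end{itemize}
For $j>k$ this is repairable by the finite-horizon bound $\mu\{x:\exists\, n\le R_k,\ T^nx\in\bigcup_{j>k}F_j\}\le R_k\sum_{j>k}\mu(F_j)$, choosing later $C_j$ huge.

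For $j<k$ the problem is fundamental. Birkhoff's theorem controls Ces\`aro averages of $\setone_{F_j}$, not the $\beta$-weighted average over a late window. No bookkeeping makes that contribution $o(1)$. Take the paper's weights $\beta(y)=e^{(y+1)^\tau}-e^{y^\tau}$. The weight in a window ending near $N$ is spread roughly uniformly over the last $\asymp N^{1-\tau}\gg N^{(1)}_r$ times. So the weighted average of $\setone_{F_1}$ there is about the density of $F_1$ in its tower whenever that orbit stretch lies mostly in the scale-$1$ tower, which by stationarity holds for most $x$ at each fixed $N$. Your own target bound $\sum_{j\neq k}\mu(F_j)+o(1)\ge\mu(F_1)$ already shows this: it yields at best $\liminf\lesssim\mu(E)$ on most of $X$, not $\liminf=0$ a.e. Forcing $F_k$ into the complement of earlier sets, or making towers nearly independent, does not change that lower bound.

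\textbf{What is missing and how to close it.} You need a mechanism that gives $\liminf=0$ without controlling cross-scale interference. One option is to cite \cite[Theorem 2.3]{SSO}, as the paper does.

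Alternatively, keep only the $\limsup$ half of Step 3, which gives, for every $\delta>0$, a set $E_0$ with $\mu(E_0)<\delta$ and $\limsup$ of the weighted averages of $\setone_{E_0}$ equal to $1$ a.e. Then finish with a Baire category argument in the complete measure algebra with metric $\mu(A\triangle B)$.
\begin{itemize}
\item Let $\mathcal{S}$ be the class of $E$ whose weighted averages have $\limsup=1$ a.e.
\item $\mathcal{S}$ is a $G_\delta$. For fixed $L\le R$, $m$, $j$, the condition that $\max_{L\le N\le R}$ of the averages exceeds $1-1/m$ on a set of measure $>1-1/j$ is open, since finitely many averages depend continuously on $E$ in $L^1$. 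Then $\mathcal{S}=\bigcap_{m,L,j}\bigcup_{R}(\cdot)$.
\item $\mathcal{S}$ is dense, because $B\cup E_0\in\mathcal{S}$ for every $B$.
\item The class $\mathcal{I}$ of $E$ with $\liminf=0$ a.e. is the image of $\mathcal{S}$ under the isometry $E\mapsto X\setminus E$. So $\mathcal{I}$ is also a dense $G_\delta$.
\end{itemize}
Hence $\mathcal{S}\cap\mathcal{I}$ is a dense $G_\delta$ and meets the open set $\{E:\mu(E)<\epsilon\}$, which gives the required $E$.
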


To prove Proposition~\ref{prop:2}, we first introduce the necessary notation.
Let us define the upper density of a Lebesgue measurable set $A\subset \setR$ as follows:
\begin{equation*}
\overline{d}(A)=\limsup_{L\to \infty} \dfrac{1}{2L} \lambda(A\cap [-L,L]).
\end{equation*}
Let $(\beta(n))_{n=1}^\infty$ be as in Proposition~\ref{prop:2}.
Define the operators $\mathbbm{B}^\beta_N$ on locally integrable functions on $\setR$ as follows:
\begin{equation*}
\mathbbm{B}_N^\beta f(x)=\frac{1}{\sum_{n=1}^N \beta(n)}\sum_{n=1}^N \beta(n) f(x+n)
\end{equation*}

The following lemma is the key ingredient in the proof of Proposition~\ref{prop:2}.
\begin{lem}{}\label{lem:Rohlin}
 Let \((\beta(n))_{n=1}^\infty\) be a sequence of positive real numbers such that \\
    $\displaystyle\lim_{N}\sum_{n=1}^N \beta(n)=\infty$. Suppose that given any $\epsilon\in (0,1)$, any number $C>0$ and any positive integer $L$, there exists an integer $R>L$ and a Lebesgue measurable set $A\subset \setR$ such that
\begin{equation}\label{Rohlin1}
\overline{d}\left(\left\{x\in \setR: \sup_{L\leq N \leq R} \mathbbm{B}_N^\beta \setone_{{A}} (x)>1-\epsilon\right\}\right)>C \cdot \overline{d}(A).
\end{equation}
Then the conclusion of Proposition~\ref{prop:2} holds.
\end{lem}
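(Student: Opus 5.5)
The plan is the standard transference argument for strong sweeping out, in the spirit of del Junco--Rosenblatt and Akcoglu et al. The Rohlin lemma moves the real-line construction of \eqref{Rohlin1} into an arbitrary aperiodic system. A Borel--Cantelli or iteration scheme then produces a single small set $E$ with $\limsup =1$ and $\liminf=0$ almost everywhere.

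\textbf{Step 1: a single-scale transference.} Fix $\epsilon\in(0,1)$, $C>0$ and $L$. Take $R>L$ and $A\subset\setR$ from \eqref{Rohlin1}. Since $\mathbbm{B}_N^\beta$ only involves integer shifts, restrict to $x\in[0,1)$ translates. Choose a large integer $J\gg R$ and a window $[-J,J]$ on which the density of $A$ is comparable to $\overline{d}(A)$. On the same window, the set $G=\{\sup_{L\le N\le R}\mathbbm{B}_N^\beta\setone_A>1-\epsilon\}$ should have density at least $C\,\overline{d}(A)/2$. This is possible along a subsequence of $J$ by definition of upper density, after a limsup/liminf adjustment. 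If that adjustment is delicate, I would first replace $A$ by a periodized version; this is a standard reduction.

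By Rohlin's lemma pick a tower $F,TF,\dots,T^{2J}F$ with total measure at least $1-\delta$. Fixing $s\in[0,1)$, define
\[
E_1=\bigcup_{j:\, j-J+s\in A} T^jF .
\]
Then $\mu(E_1)\lesssim \overline{d}(A)$. For $x\in T^jF$ with $j-J+s\in G$ and $j+R\le 2J$, we get $\sup_{L\le N\le R}\mathbb{B}_N^\beta\setone_{E_1}(x)>1-\epsilon$. Averaging over $s$ and choosing $C$ large gives $\mu(E_1)<\eta$ while
\[
\mu\{\sup_{L\le N\le R}\mathbb{B}^\beta_N\setone_{E_1}>1-\epsilon\}>1-\eta .
\]
The loss from the top $R$ levels of the tower is negligible because $J\gg R$.

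\textbf{Step 2: iteration.} Build $E=\bigcup_k E_k$ using a rapidly increasing sequence of scales $L_1<R_1<L_2<R_2<\cdots$, with $\mu(E_k)<\eta 2^{-k}$ and $\epsilon_k\to0$. Two competing requirements must be checked.

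First, each $E_k$ must be "seen" at scales $[L_k,R_k]$ with average close to $1$ on a set of measure at least $1-\eta_k$. This gives $\limsup=1$ a.e. by Borel--Cantelli, since adding other sets only increases averages of $\setone_E$.

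Second, $\liminf=0$ requires that at suitable scales $N'_k$ between $R_k$ and $L_{k+1}$ the averages of $\setone_E$ are small. The earlier sets $E_1,\dots,E_k$ have small measure, so by the maximal ergodic inequality for positive-weight averages (or simply Markov plus a fixed finite $N'$) their averages are small off a set of small measure. The later sets $E_{k+1},\dots$ are handled by taking them of total measure summable and small. A maximal inequality $\mu\{\sup_N \mathbb{B}^\beta_N\setone_B>\lambda\}$ is not available for general weights, so I would instead use the bound at the single time $N'_k$: $\int \mathbb{B}^\beta_{N'_k}\setone_B=\mu(B)$, plus Markov. This suffices, since only one time per $k$ is needed.

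\textbf{Main obstacle.} The delicate point is Step 1's passage from upper density on $\setR$ to measure in $X$. One has to simultaneously control the density of $A$ and that of the good set $G$ within one finite window. Upper density only gives a limsup, so choosing the window requires care. I would handle it by working directly with finite windows: cutting $A$ to $A\cap[-J,J]$ preserves $\mathbb{B}_N^\beta$ values on $[-J+R,J-R]$ for $N\le R$. It then suffices to pick $J$ where the ratio of $G$-density to $A$-density exceeds $C/2$, which follows from \eqref{Rohlin1} along a sequence realizing the limsup for $G$, since the $A$-density there is at most $2\overline{d}(A)$ eventually.
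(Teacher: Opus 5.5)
Your single-scale transference in Step~1 is sound, but it only transports the \emph{ratio} in \eqref{Rohlin1}. You obtain $E_1$ with $\mu(E_1)\lesssim\overline{d}(A)$ and a good set of measure $\gtrsim C\,\overline{d}(A)$. The claim that choosing $C$ large gives $\mu(E_1)<\eta$ \emph{and} a good set of measure $>1-\eta$ does not follow. Since $\overline{d}(G)\le 1$, a large $C$ only forces $\overline{d}(A)<1/C$, and nothing in \eqref{Rohlin1} stops $\overline{d}(G)$ from being about $1/2$, or tiny. Averaging over $s\in[0,1)$ is just the passage from $\setR$ to integer shifts; it amplifies nothing. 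The missing idea is an amplification step on the line. Pass to a periodic version of $A$ (as you suggest) and take the union $U$ of $m\approx\eta/\overline{d}(A)$ independent, uniformly random translates $A+t_i$. Then $\overline{d}(U)\le m\,\overline{d}(A)\approx\eta$. Since $\mathbbm{B}^\beta_N\setone_U\ge\mathbbm{B}^\beta_N\setone_{A+t_i}$, the good set of $U$ contains $\bigcup_i(G+t_i)$, whose complement has expected density $(1-\overline{d}(G))^m$, roughly $e^{-C\eta}$. This is why the criterion carries an arbitrary constant $C$ and is set on $\setR$. In the paper's only application, Proposition~\ref{prop:2}, the set $A$ happens to satisfy $\overline{d}(A)\le 2/C$ with a good set of density $\ge 1-4/C$, so your shortcut would work there. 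The lemma as stated, however, assumes only the ratio.

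The $\liminf$ half of Step~2 fails outright. Markov's inequality at a single time $N'_k$, applied to $E_1\cup\dots\cup E_k$, allows an exceptional set of measure up to $\mu(E_1\cup\dots\cup E_k)/\lambda\ge\mu(E_1)/\lambda$. This bound neither tends to $0$ nor is summable in $k$. All you can extract is $\mu\{\liminf_N\mathbbm{B}^\beta_N\setone_E>\lambda\}\le\mu(E)/\lambda$, i.e.\ the Fatou bound $\int\liminf_N\mathbbm{B}^\beta_N\setone_E\,d\mu\le\mu(E)$, not $\liminf=0$ almost everywhere. Because $\mu(E)>0$ is fixed, no estimate that uses only the size of $E$ can make the averages nearly $0$ at almost every point. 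Instead, $E$ must be structurally absent from most weighted orbit segments at infinitely many scales. One way is to keep $E$ disjoint from further sweeping sets $B_k$, so that $\mathbbm{B}^\beta_N\setone_E\le 1-\mathbbm{B}^\beta_N\setone_{B_k}$. That destroys the monotonicity you rely on for the $\limsup$. The effect of earlier sets at later scales, where as you note no maximal inequality is available, then has to be controlled separately.

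The paper does none of this by hand. It obtains the lemma in one line from \cite[Theorem 2.3]{SSO} with $\mathcal{W}=\setN$ and $\nu_n(k)=\setone_{\{1,\dots,n\}}(k)\beta(k)/\sum_{j=1}^n\beta(j)$. That general criterion delivers both $\limsup=1$ and $\liminf=0$ from the bare ratio hypothesis, so it covers both points missing from your sketch.
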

\begin{proof}
This lemma follows immediately from \cite[Theorem 2.3]{SSO} by taking $\mathcal{W}=\setN$ and $\nu_n (k)=\setone_{\{1,2,\cdots, n\}}(k) \cdot \frac{\beta(k)}{\sum_{k=1}^n \beta(k)}.$
\end{proof}

\begin{proof}[Proof of Proposition~\ref{prop:2}] 
Let $L>0$ and $\epsilon\in (0,1)$ and $C>0$ be arbitrary. 
Without loss of any generality, we may assume that $C>8$. 
By Lemma~\ref{lem:Rohlin}, it is sufficient to show that \eqref{Rohlin1} holds. By hypothesis, there are positive integers $L<N_1< N_2< \cdots< N_r$ satisfying \eqref{eq:prop2}. 
Consider the periodic set $A:=\bigcup_{k\in \setZ} [k\cdot N_r-2M,k\cdot N_r]\subset \setR$. 
Suppose that $x\in (k-1)\cdot N_r+ [0, N_r-4M]$ for some $k\in \setZ$. Let $i$ be the largest index such that $x+N_{i-1}<k\cdot N_r - 2M$. Then $i$ must satisfy $2\leq i\leq r-1$. Also, for each $n\in [N_{i},N_{i+1}]$, we have $x+ n\in [k\cdot N_r-2M,k\cdot N_r]\subset  A$ .
Note that \[\overline{d}(A)=\dfrac{2M}{N_r}\leq \frac{2}{C}.\]

By hypothesis (i), we have 
\begin{equation*}
\mathbbm{B}_{N_{i+1}}^\beta \setone_{A} (x) \geq  \displaystyle\dfrac{\sum_{n=N_i}^{N_{i+1}}\beta(n)}{\sum_{n=1}^{N_{i+1}}\beta(n)}\geq 1-\epsilon,
\end{equation*} 
and by hypothesis (ii), we obtain
\begin{align*}
\overline{d}\left(\left\{x\in \setR: \sup_{N_1\leq N \leq N_r} \mathbbm{B}_{N}^\beta \setone_{A} (x)\geq 1-\epsilon\right\}\right) 
&= \dfrac{N_r-4M}{N_r} = 1-4\frac{M}{N_r} \\ 
&\geq 1-\frac{4}{C} > \frac{1}{2} = \frac{C}{4}\cdot \frac{2}{C}\geq \frac{C}{4}\cdot \overline{d}(A).
\end{align*}
Since $C$ is arbitrary, \eqref{Rohlin1} follows.
\end{proof}

\begin{proof}[Proof of \Cref{thm:necessary}]
Suppose ${\log W(x)}\gg [a(x)]^\tau \text{ for some }\tau>0.$ 
By Lemma~\ref{lem:comp:weights}, it is sufficient to show that the conclusion holds for $W_1(x)= e^{a(x)^\tau}$ for some $\tau>0$. 
Applying the lemma once more, we may further assume that $\tau<\frac{1}{2}$. 
For convenience, we rename $W_1(x)$ as $W(x)$. 
Let $w (x)=W'(x)$, and let $\gamma(y)$ be the compositional inverse of $a(y)$.
Define 
$$\alpha(k)= \sum_{l \in \{ [\gamma (k),\gamma(k+1))\cap \setN \}} w (l)$$  and 
$$ \beta(y)=\int_{\gamma(y)}^{\gamma(y+1)} w(x) dx=e^{(y+1)^\tau}-e^{y^\tau}.$$
For $E\in\mathcal{B}$ and $x\in X$, define
\[
\mathbb{D}^{\alpha}_N \setone_E(x)
=
\dfrac{1}{\sum_{k=1}^{\lfloor a(N)\rfloor}\alpha(k)}
\sum_{k=1}^{\lfloor a(N)\rfloor}
\alpha(k)\setone_E(T^k x).
\]
By Remark~\ref{rem:range}, it is clear that $\lim_{N\to \infty}\mathbb{D}^{\alpha}_N  \setone_E(x)$ exists \emph{iff} $\lim_{N\to \infty}\mathbb{A}^{\mathsf{a},W}_N \setone_E(x)$ exists. So, it is sufficient to consider $(\mathbb{D}^\alpha_n)_{n \in \setN}$.

Furthermore, using \begin{equation*}\label{ques}
|\alpha(k)-\beta(k)| \leq w (a^{-1}(k)),
\end{equation*}
we obtain
\begin{align*}
&\dfrac{1}{\sum_{k=1}^{\lfloor a(N)\rfloor}\ \alpha(k)} \left|\sum_{k=1}^{\lfloor a(N)\rfloor } \alpha(k) \setone_E (T^{k}x)-\sum_{k=1}^{\lfloor a(N)\rfloor } \beta(k) \setone_E (T^{k}x)\right| \\
& \quad \quad \leq \dfrac{1}{\sum_{k=1}^{\lfloor a(N)\rfloor}\alpha(k)} \sum_{k=1}^{\lfloor a(N)\rfloor } |\alpha(k)- \beta(k)| 
\leq  \dfrac{\sum_{k=1}^{\lfloor a(N)\rfloor} w (a^{-1}(k))}{\sum_{k=1}^{\lfloor a(N)\rfloor}\alpha(k)} \\
& \quad \quad \leq \dfrac{\sum_{k=1}^{\lfloor a(N)\rfloor} w (k)}{\sum_{k=1}^{\lfloor a(N)\rfloor}\alpha(k)} \leq \frac{W (a(N) +1)}{W(N)} \rightarrow 0.
\end{align*}
Also, $$\lim_{N\to\infty}\dfrac{\sum_{k=1}^{\lfloor a(N)\rfloor}\beta(k)}{\sum_{k=1}^{\lfloor a(N)\rfloor}\alpha(k)}=1.$$
Thus, $\mathbb{D}^\alpha_N \setone_E(x)$ and $\dfrac{1}{\sum_{k=1}^N \beta(k)}\sum_{k=1}^N \beta(k) \setone_E (T^k x)$ have the same asymptotic behavior. Hence, it suffices to show that $(\beta(n),n)$ satisfies the strong sweeping out property. To establish this, we will apply Proposition~\ref{prop:2}. Let $L\in \setN$, $\epsilon\in (0,1)$ and $C>0$ be arbitrary. 
Let $r$ be a positive integer to be specified later.
For every $i\in [r]$, define $s=4/\tau \text{ and }T_i=(L+i)^{s}$.
Observe that 
\[\lim_{i\to \infty}\dfrac{\int_{T_i}^{T_{i+1}}\beta(x) dx}{\int_{1}^{T_{i+1}}\beta(x) dx}=\lim_{y\to \infty}\dfrac{\int_{y^s}^{(y+1)^s}\beta(x) dx}{\int_{1}^{(y+1)^s}\beta(x) dx}.\]

We apply L'H\^opital's rule to compute the limit of the expression on the right.
\begin{align*} \lim_{y\to \infty}\dfrac{\int_{y^s}^{(y+1)^s}\beta(x) dx}{\int_{1}^{(y+1)^s}\beta(x) dx}&= \lim_{y\to \infty} \dfrac{s\cdot \beta\big((y+1)^s\big) \cdot (y+1)^{s-1}-s\cdot \beta\big(y^s\big)\cdot  (y)^{s-1}}{s\cdot \beta\big((y+1)^s\big) \cdot (y+1)^{s-1}}\\
&\geq \lim_{y\to \infty} \left(\dfrac{y}{y+1}\right)^{s-1} \left(1-\dfrac{\beta\big(y^s\big)}{\beta\big((y+1)^s\big)}\right)\\
&= \lim_{y\to \infty} \left(\dfrac{y}{y+1}\right)^{s-1}\cdot \lim_{y} \left(1-\dfrac{\beta\big(y^s\big)}{\beta\big((y+1)^s\big)}\right)\\
&=\lim_{y\to \infty} \left(1-\dfrac{\beta\big(y^s\big)}{\beta\big((y+1)^s\big)}\right)= 1.
\end{align*}

Hence, there exists $I\in \setN$ such that
\begin{equation*}
\dfrac{\sum_{n=T_i}^{{T_{(i+1)}}}\beta(n)}{\sum_{n=1}^{{T_{(i+1)}}}\beta(n)} \geq 1-\epsilon \text{ for all } i\geq I.
\end{equation*}
Letting $N_k=T_{I+k}$ for $k=1,2,\cdots, r$, we see that condition $(i)$ is satisfied.

Let $L'=L+I$. We see that 
\begin{align*}
\dfrac{N_r}{\max_{i\in [r-1]}(N_{i+1}-N_i)}&=  \dfrac{(L'+r)^s}{(L'+r)^s-(L'+r-1)^s} \to \infty \text{ as } r\to \infty,\\
\text{ and }\dfrac{N_r}{N_1}&= \dfrac{(L'+r)^s}{(L'+1)^s} \to \infty \text{ as } r\to \infty.
\end{align*}
Thus, choosing $r$ large enough, we can ensure that $\dfrac{N_r}{M}>C$, and the condition $(ii)$ is also satisfied. 
This completes the proof of Theorem~\ref{thm:necessary}.
\end{proof}

\section{Sparse sequences}
\label{sec:sparse1}
In this section, we prove the following result.
\begin{thm}\label{thm:sparse2}
Let $W(x) \in \mathbf{E}$ be such that $w(x) = W'(x)$ is positive and non-increasing and $\lim_{x \rightarrow \infty} W(x) = \infty$. 
Let $a(x)\in \mathbf{U}$ be a subpolynomial function with the canonical decomposition $a(x)=p(x)+r(x)$, where  $d:=\deg (p) \geq 2$. 
Suppose $|r(x)|\gg \log^{\eta} x$, for some $\eta>1$. Set $s:=2^{d+1}$. 
Assume that
\begin{align}
\label{eq3.1:condition:eta-W}
(\log t)^{\eta-1}\succ  \left(\log W(t)\right)^{s-2+\epsilon} \text{ for some }\epsilon>0.
\end{align}

Then, $(\lfloor a(n) \rfloor)_{n \in \mathbb{N}}$ is pointwise $L^2$-good and ergodic for $W$-averages:
For any system $(X,\mathcal{B},\mu,T)$ and any $f\in L^2(\mu)$, 
\begin{equation}
\label{eq3.3}
\lim_{N\to \infty}\setA_N^{\mathsf{a}, W}f(x) \text{ exists for almost every} \ x\in X.
\end{equation}
Moreover, if the system is ergodic, then the limit is $\int_X f \, d \mu.$
\end{thm}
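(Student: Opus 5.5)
We follow the standard route for pointwise $L^2$ results along sparse sequences, as in the proof of Theorem 3.5 of \cite{BKQW}, adapted to $W$-averages. The argument has three ingredients: a norm-convergence/identification statement, an oscillation or maximal inequality along a lacunary subsequence of scales, and an interpolation from the lacunary scales to all $N$. For the last step we use that $w$ is non-increasing, so consecutive weighted sums differ little.

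\emph{Lacunary scales.} Choose scales $N_k$ so that $W(N_k)\approx \rho^k$ for a fixed $\rho>1$, i.e. lacunary in $W$. For $N_k\le N<N_{k+1}$ and $f\ge0$ we have
\[
\frac{W(N_k)}{W(N)}\,\setA_{N_k}f\le \setA_N f\le \frac{W(N_{k+1})}{W(N)}\,\setA_{N_{k+1}}f .
\]
So it suffices to prove a.e. convergence along $N_k$ and then let $\rho\downarrow1$ through a countable sequence. This is the usual Calderón-type transference.

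\emph{Convergence along $N_k$ via Fourier analysis.} By the spectral theorem and the standard square-function argument (Bourgain's method), a.e. convergence along $N_k$ for $f\in L^2$ reduces to estimating the exponential sums
\[
S_N(\theta)=\frac1{W(N)}\sum_{n\le N} w(n)\,e(\lfloor a(n)\rfloor\theta).
\]
Specifically, it reduces to showing $\sum_k \sup_{\theta\in\mathbb{T}\setminus U_k}|S_{N_k}(\theta)|^2<\infty$, where $U_k$ is a small neighbourhood of $0$, together with control near $\theta=0$.

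To do this, decompose $S_{N_k}$ dyadically in $n$: on $[M,2M]$ the weight $w$ is monotone, so Abel summation (Lemma \ref{lem:byparts}) reduces weighted sums to unweighted partial sums of $e(\lfloor a(n)\rfloor\theta)$ over $n\le M'$. Write $e(\lfloor a(n)\rfloor\theta)$ via the Fourier expansion of $e(-\{a(n)\}\theta)$. This yields sums of the form $\sum e((h+\theta)a(n))$, handled exactly as in \cite{BKQW}: apply van der Corput $d$ times to kill the polynomial part $p$. The remaining phase is governed by $r^{(d)}$, and with $|r|\gg\log^\eta x$ this gives a saving of $(\log M)^{-(\eta-1)/(s-2)}$ (up to $\epsilon$) for $\theta$ outside neighbourhoods of rationals of small height where $p$ has major arcs. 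This exponent $s-2$ is where $s=2^{d+1}$ enters.

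Now aggregate over the range of $n$ using the weights. The contribution of a dyadic block at $M$ carries weight roughly $\frac{W(2M)-W(M)}{W(N)}$. Summing the savings shows that $|S_{N_k}(\theta)|\ll (\log W(N_k))^{-1-\delta}$, that is $\ll k^{-1-\delta}$, on minor arcs. Here we use \eqref{eq3.1:condition:eta-W}: $(\log t)^{(\eta-1)/(s-2)}\succ(\log W)^{1+\epsilon'}$, which is square-summable in $k$. The main obstacle is the major arcs near rationals $q$ of small height. On these, the polynomial part does not equidistribute, and we must use that $r$ still makes the sum small once $|r|$ dominates $\log W$. Alternatively, we can show that the limit projection onto the rational spectrum behaves as for polynomial averages and apply Bourgain's polynomial pointwise theorem. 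I would first try the BKQW decomposition verbatim with $N$ replaced by $W(N)$, checking that only the count of scales changes (from $\log N$ to $\log W(N)$).

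\emph{Ergodicity and the limit.} Since $\log W(t)\ll(\log t)^{(\eta-1)/(s-2)}\prec r(t)$, the same exponential sum bounds give $S_N(\theta)\to0$ for every $\theta\ne0$ in $\mathbb{T}$. Rational $\theta$ are also handled by the Weyl-type argument, since $r$ is non-polynomial and $\succ\log W$. By the spectral theorem this yields norm convergence to the projection onto invariant functions. In the ergodic case that projection is $\int f\,d\mu$, which identifies the a.e. limit.
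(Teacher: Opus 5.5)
\textbf{Where the proposal falls short.} Your skeleton is the paper's: lacunary scales $W(N_i)=\rho^i$ with the sandwich argument (Lemma~\ref{lem:reduction2}), the spectral theorem, van der Corput with $u=2^{d+1}$, the integer-part conversion \cite[Theorem~5.2]{BKQW}, and Abel summation in the weights. Condition \eqref{eq3.1:condition:eta-W} plays the same role of making the savings square-summable, and your exponent $(\eta-1)/(s-2)$ is exactly the saving where the $d$-th and $(d+1)$-st derivative tests meet.

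The gap is the step you leave as ``control near $\theta=0$''. A bound $\sum_k\sup_{\theta\notin U_k}|S_{N_k}(\theta)|^2<\infty$ does not give a.e.\ convergence by itself. The square-function argument only shows $\setA^{\mathsf{a},W}_{N_k}f-\mathcal{C}_{N_k}f\to0$ a.e., where $\mathcal{C}_N$ is an operator \emph{already known} to converge a.e.\ and whose multiplier is uniformly close to $S_{N_k}$ on all of $\mathbb{T}$, with square-summable error.

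Some comparison of this kind, or a genuine oscillation/maximal inequality, is unavoidable. Indeed $S_{N_k}(\theta)=1+O(\delta)$ whenever $|\theta|\,a(N_k)\le\delta$, so $\sum_k|S_{N_k}(\theta)-\setone_{\{0\}}(\theta)|^2$ is not bounded uniformly as $\theta\to0$. You never specify $\mathcal{C}$. ``BKQW verbatim with $N$ replaced by $W(N)$'' would mean comparing with an $F'$-weighted Birkhoff average, and the paper explicitly remarks that this runs into difficulties in the weighted setting. Your last paragraph ($S_N\to\setone_{\{0\}}$ pointwise) yields only norm convergence and cannot replace this step.

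\textbf{The paper's comparison operator.} The paper takes $\mathcal{C}=\setB^{\mathsf{a},W}$, the $W$-average along $\lfloor p(n)+\log^s n\rfloor$: the same polynomial part, with remainder $\log^s$. It first reduces to $\log t\ll W(t)\ll t$ (Remark~\ref{rem:log}) and handles the case $|r|\gg\log^s$ directly. Then this operator converges a.e., to $\int f$ in the ergodic case, by \cite[Theorem~3.5]{BKQW} and Lemma~\ref{lem:comp:weights}, since $\log^s x\succ(\log x)^{s-1+\epsilon}$.

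With this choice the low frequencies become trivial:
\[
|\hat{\setA}^{\mathsf{a},W}_t(\beta)-\hat{\setB}^{\mathsf{a},W}_t(\beta)|\ll|\beta|\log^s t,
\]
which is negligible for $|\beta|\le\log^{-(s+1)}t$. For larger $|\beta|$, both transforms are separately $\ll(\log W(t))^{-1/2-\epsilon}$, since the same exponential-sum bounds apply with $\log^s$ in place of $r$. Hence
\[
\sum_i\sup_\beta|\hat{\setA}^{\mathsf{a},W}_{N_i}(\beta)-\hat{\setB}^{\mathsf{a},W}_{N_i}(\beta)|^2\ll\sum_i i^{-1-2\epsilon}<\infty,
\]
and the limit is identified at the same time.

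\textbf{No rational major arcs.} The ``major arcs at rationals of small height'' you call the main obstacle do not arise. For real phases $\gamma a(n)$, the $(d+1)$-st derivative test sees only $r^{(d+1)}$. It therefore covers every $\gamma$ with $(\log W)^{\frac{s}{2}(1+3\epsilon)}(\log t)^{1-\eta}\le|\gamma|\le t^{\epsilon}$, rational or not. The $d$-th derivative test covers $t^{-(d-1/2)}\le|\gamma|\le(\log W)^{-(1+3\epsilon)(s/2-2)}$, and the two ranges overlap precisely because of \eqref{eq3.1:condition:eta-W}. The only bad region is a neighbourhood of $0$. So Bourgain's polynomial theorem is beside the point; as your own last paragraph notes, the multiplier tends to $0$ at every nonzero rational anyway.
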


Note that if we take $w(n)=1$ for all $n$, then the expression in \eqref{eq3.3} reduces to the classical Ces\`aro averages and in this case we recover Theorem 3.5 of \cite{BKQW} which states the following:
\begin{cor}\label{Thm9.2:BKQW}
Let $a(x)\in \mathbf{U}$ be a subpolynomial function with the canonical decomposition $a(x)=p(x)+r(x)$, where  $d := \deg (p) \geq 2$. 
If for some $\epsilon>0$, $r(x)\succ (\log x)^{2^{d+1}-1+\epsilon}$, then $(\lfloor a(n) \rfloor)$ is pointwise $L^2$-good and ergodic.
\end{cor}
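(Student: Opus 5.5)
The corollary is the case $w\equiv 1$, $W(t)=t$ of Theorem~\ref{thm:sparse2}, so the plan is simply to check that its hypotheses hold in this specialization. Fix $d=\deg(p)\ge 2$ and $\epsilon>0$ with $r(x)\succ(\log x)^{2^{d+1}-1+\epsilon}$, and put $s=2^{d+1}$.

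\emph{Admissibility of the weight.} Take $W(x)=x$. It lies in $\mathbf{E}$ because $\mathbf{E}\supset\mathbf{L}$. Its derivative $w(x)=1$ is positive and non-increasing, and $W(x)\to\infty$.

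\emph{Growth of $r$ and choice of $\eta$.} Set $\eta:=s-1+\epsilon'$ for some $\epsilon'\in(0,\epsilon]$, to be fixed below. Then $\eta>1$ since $s\ge 8$. From $r\succ(\log x)^{s-1+\epsilon}$ we get $|r(x)|\ge(\log x)^{s-1+\epsilon}\ge(\log x)^{\eta}$ for large $x$, so $|r(x)|\gg\log^\eta x$.

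\emph{Condition \eqref{eq3.1:condition:eta-W}.} With $W(t)=t$ it reads $(\log t)^{\eta-1}\succ(\log t)^{s-2+\epsilon''}$ for some $\epsilon''>0$. Since $\eta-1=s-2+\epsilon'$, any $\epsilon''\in(0,\epsilon')$ works, and for concreteness one can take $\epsilon'=\epsilon$, $\epsilon''=\epsilon/2$.

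\emph{Conclusion.} With $w\equiv1$ the averages $\setA_N^{\mathsf{a},W}f$ are exactly the Cesàro averages. Theorem~\ref{thm:sparse2} therefore gives almost everywhere convergence for every $f\in L^2(\mu)$, with limit $\int f\,d\mu$ when the system is ergodic, which is pointwise $L^2$-goodness and ergodicity.

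The only point needing care is the bookkeeping of exponents: the "$-1$" in $s-1+\epsilon$ must be matched against the "$\eta-1$" in \eqref{eq3.1:condition:eta-W} so that the strict inequality survives. The choice $\eta=s-1+\epsilon$ gives a margin of $\epsilon$, and any smaller positive $\epsilon''$ absorbs it, so I expect no real obstacle.
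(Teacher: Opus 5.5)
Your proposal is correct and follows the paper's own route: the paper obtains this corollary by specializing Theorem~\ref{thm:sparse2} to $W(x)=x$, $w\equiv 1$, where \eqref{eq3.1:condition:eta-W} reduces to $\eta>2^{d+1}-1$, and your choice $\eta=2^{d+1}-1+\epsilon$ with a smaller auxiliary exponent handles that bookkeeping correctly. Keep in mind that this is only a consistency check, not an independent proof, because the paper's proof of Theorem~\ref{thm:sparse2} itself invokes \cite[Theorem 3.5]{BKQW} --- this very statement --- through Lemma~\ref{lem:reduction1}, both directly when $|r(x)|\gg\log^{s}x$ and for the comparison sequence $\lfloor p(n)+\log^{s}n\rfloor$.
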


\begin{cor}\label{cor:logcase}
Let $a(x)\in \mathbf{U}$ be a subpolynomial function with the canonical decomposition $a(x)=p(x)+r(x)$ and $\deg (p) \geq 2$. 
If $|r(x)|\gg \log^{\eta} x$, for some $\eta>1$, then $(\lfloor a(n)\rfloor)$ is pointwise $L^2$-good for $\log$-averages.
\end{cor}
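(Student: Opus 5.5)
This should follow directly from Theorem \ref{thm:sparse2} by taking $W(x)=\log x$, which lies in $\mathbf{E}$ since $\mathbf{L}\subset\mathbf{E}$. Its derivative $w(x)=1/x$ is positive and non-increasing, and $W(x)\to\infty$. So the standing assumptions on the weight in Theorem \ref{thm:sparse2} hold. The hypotheses $\deg(p)=d\ge 2$ and $|r(x)|\gg \log^{\eta}x$ with $\eta>1$ are taken over verbatim.

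The only thing left is condition \eqref{eq3.1:condition:eta-W}, which here reads
\begin{equation*}
(\log t)^{\eta-1}\succ (\log\log t)^{s-2+\epsilon}\quad\text{for some }\epsilon>0,
\end{equation*}
with $s=2^{d+1}$. This holds for every fixed $\epsilon>0$, say $\epsilon=1$. The reason is that $\eta-1>0$, and any positive power of $\log t$ dominates any fixed power of $\log\log t$. Concretely, substituting $u=\log\log t$, the ratio becomes $e^{(\eta-1)u}/u^{s-2+\epsilon}$, which tends to $\infty$.

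Theorem \ref{thm:sparse2} then says the log-averages $\frac{1}{\log N}\sum_{n\le N}\frac1n f(T^{\lfloor a(n)\rfloor}x)$ converge almost everywhere for every $f\in L^2(\mu)$, and to $\int f\,d\mu$ in the ergodic case. This is the assertion of the corollary. By the Remark following the definition, normalizing by $\log N$ rather than $\sum_{n\le N}1/n$ makes no difference.

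There is no real obstacle. The one point to watch is that the growth comparison in \eqref{eq3.1:condition:eta-W} really does need $\eta>1$ strictly: if $\eta=1$, the left-hand side would be constant and the condition would fail.
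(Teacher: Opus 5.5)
Your proposal is correct and follows the paper's route: the corollary is Theorem~\ref{thm:sparse2} specialized to $W(x)=\log x$, where condition~\eqref{eq3.1:condition:eta-W} becomes $(\log t)^{\eta-1}\succ(\log\log t)^{s-2+\epsilon}$, and this holds for every $\epsilon>0$ precisely because $\eta>1$. This choice of $W$ also satisfies that section's standing assumption $\log t\ll W(t)\ll t$ directly, so invoking Theorem~\ref{thm:sparse2} here involves no circularity with Remark~\ref{rem:log}.
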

\begin{rem}\label{rem:log}
The above corollary shows that any sequence $(\lfloor a(n) \rfloor )_{n \in \mathbb{N}}$ that satisfies the hypothesis of Theorem~\ref{thm:sparse2} is pointwise $L^2$-good for logarithmic averages. Let $1\leq p\leq \infty$. From Lemma~\ref{lem:comp:weights}, we know that if a sequence $(\lfloor a(n)\rfloor)$ is pointwise $L^p$-good for $W_1$-average and $W_2$ is a weaker averaging method than $W_1$, that is, $W_2\ll W_1$, then $(\lfloor a(n)\rfloor)$ is also pointwise $L^p$-good for $W_2$-average. Hence, Theorem~\ref{thm:sparse2} does not provide any additional information when one considers a weaker averaging method than logarithmic averages. 
\end{rem}

Theorem~\ref{thm:sparse2} is proved using Lemma~\ref{lem:reduction1}, Lemma~\ref{lem:reduction2} and Proposition~\ref{prop:exponential}. Proposition~\ref{prop:exponential} is proved in Subsection~\ref{sub:proposition}.  \emph{For the rest of this section, let us fix a measure preserving system $(X,\mathcal{B},\mu,T)$, and assume that $\log t \ll  W(t)\ll t$, which is enough in view of Remark~\ref{rem:log}.}

\subsection{Proof of Theorem~\ref{thm:sparse2}}\label{sub:mainargument} 
For convenience, we modify our notation and introduce some additional notation.

For a real-valued function $a(x)$, a function $f\in L^2(\mu)$ and positive real number $t$ 
\begin{equation*}\label{eq:notaion}
\begin{aligned}
&\setA^{\mathsf{a}}_tf(x):= \frac{1}{t} \sum_{n\leq t} f(T^{\lfloor a(n) \rfloor}x),\ \  \setA^{\mathsf{a},W}_tf(x):= \frac{1}{W (t)} \sum_{n\leq t} w(n) f(T^{\lfloor a(n) \rfloor}x),\\
&\setB^{\mathsf{a}}_tf(x):= \frac{1}{t} \sum_{n\leq t} f(T^{\lfloor p(n)+\log^s n\rfloor}x),\ \  \setB^{\mathsf{a},W}_tf(x):= \frac{1}{W(t)} \sum_{n\leq t} w(n) f(T^{\lfloor p(n)+\log^s n\rfloor}x).
\end{aligned}
\end{equation*}
The Fourier transform of the operator $\setA^\mathsf{a}_t$ is given by  
$$\displaystyle\hat{\setA}^\mathsf{a}_t (\beta)=\frac{1}{t}\sum_{n\leq t} e\left( \lfloor a(n) \rfloor\beta\right).$$ 
The Fourier transform for each of the operators $\setA^{\mathsf{a},W}_t, \setB^{\mathsf{a}}_t$ and $\setB^{\mathsf{a},W}_t$ also has an analogous expression.

\begin{rem} \begin{enumerate}
    \item Note that the operators $\setB^{\mathsf{a}}_t$ and $\setB_t^{\mathsf{a},W}$ depend only on the polynomial part $p(x)$ of $a(x)$. Lemma~\ref{lem:reduction1} explains why we deal with the operator $\setB_t^{\mathsf{a},W}$.
    \item Based on the method of \cite{BKQW}, it is natural to compare $\setA^{\mathsf{a},W}_t f(x)$ with \linebreak  $\frac{1}{W(t)} \sum_{n\leq a(t)} w(n)\cdot {F'}(n)\cdot f(T^{n}x)$ instead of $\setB^{\mathsf{a},W}_t f(x)$, where $F(x)$ is the compositional inverse of $a(x)$.  However, this choice leads to some difficulties which we are able to resolve by comparing $\setA^{\mathsf{a},W}_t f(x)$ with $\setB^{\mathsf{a},W}_t f(x)$.
\end{enumerate} 
\end{rem}

\begin{lem}\label{lem:reduction1} Let $a(x)$ and $r(x)$ be as in Theorem~\ref{thm:sparse2}, and $|r(x)| \gg  \log^ s x$. Then
for any function $f\in L^2(\mu), \displaystyle\lim_{t\to \infty}\setA^{\mathsf{a},W}_tf(x) $
 exists for almost every $x\in X$. Moreover, if the system is ergodic then the limit is $\int_X f d\mu.$
 In particular, the same conclusions hold for the operator $\setB_t^{\mathsf{a},W}$.
\end{lem}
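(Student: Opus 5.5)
The hypothesis $|r(x)| \gg \log^s x$ with $s = 2^{d+1}$ implies $r(x) \succ (\log x)^{2^{d+1}-1+\epsilon}$ for any $0<\epsilon<1$. So Corollary~\ref{Thm9.2:BKQW}, that is Theorem 3.5 of \cite{BKQW} in the unweighted case, applies directly. It gives that $(\lfloor a(n) \rfloor)$ is pointwise $L^2$-good and ergodic for ordinary Ces\`aro averages: for every $f\in L^2(\mu)$, $\setA^{\mathsf{a}}_N f(x)$ converges for almost every $x$, and in an ergodic system the limit is $\int_X f\,d\mu$. The plan is to transfer this Ces\`aro statement to the $W$-weighted averages pointwise, one $x$ at a time.

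Fix $x$ in the full-measure set where $\setA^{\mathsf{a}}_N f(x)$ converges, say to $L(x)$. Put $a_n := f(T^{\lfloor a(n)\rfloor}x)$ and $d_n := w(n)$. Since $w$ is positive and non-increasing, Lemma~\ref{lem:HW-2}(a) applies and gives
\[
\frac{1}{\sum_{n\le N} w(n)} \sum_{n=1}^N w(n)\, f(T^{\lfloor a(n)\rfloor}x) \longrightarrow L(x).
\]
Here $D_N = \sum_{n\le N} w(n) \to \infty$ because $W(N)\to\infty$ and $|\sum_{n\le N} w(n) - W(N)|$ is bounded (monotone $w$, integral comparison). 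By the remark following the definition, $\sum_{n\le N} w(n)/W(N)\to 1$, so the normalization $W(N)$ may replace $D_N$ and $\setA^{\mathsf{a},W}_N f(x)\to L(x)$. Passing from integer $N$ to real $t$ is harmless: $\setA^{\mathsf{a},W}_t = \setA^{\mathsf{a},W}_{\lfloor t\rfloor}\cdot W(\lfloor t\rfloor)/W(t)$, and this ratio tends to $1$ since $w$ is bounded and $W\to\infty$. In the ergodic case $L(x)=\int f\,d\mu$ a.e., which gives the second assertion. The standing assumption $\log t\ll W(t)\ll t$ is not needed.

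For the statement about $\setB^{\mathsf{a},W}_t$, apply the same argument to $b(x) := p(x)+\log^s x$. This function lies in a Hardy field containing $p$ (for instance the one generated by $\mathbf{L}$), is subpolynomial, and has canonical decomposition with polynomial part $p$ and remainder $\log^s x \gg \log^s x$. So $b$ satisfies the same hypotheses, and $\setB^{\mathsf{a},W}_t = \setA^{\mathsf{b},W}_t$.

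The only step needing care is confirming that the hypothesis really yields the strict gap $r\succ(\log x)^{2^{d+1}-1+\epsilon}$ required by Corollary~\ref{Thm9.2:BKQW}. It does, because $s = 2^{d+1} > 2^{d+1}-1+\epsilon$ for $\epsilon<1$. Beyond that, the remaining checks are the limit comparisons of Lemma~\ref{lem:HW-2} and of $\sum w$ against $W$, which are routine.
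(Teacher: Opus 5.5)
Your proposal is correct and is essentially the paper's argument. The paper likewise takes the Ces\`aro statement from \cite[Theorem 3.5]{BKQW} and transfers it to $W$-averages by a weight-comparison lemma: it uses Lemma~\ref{lem:comp:weights} with $W_2(x)=x$, while you use Lemma~\ref{lem:HW-2}(a), and both are special cases of Lemma~\ref{lem:KN:comparison}. Do cite the external \cite[Theorem 3.5]{BKQW} directly, as you implicitly do, and not Corollary~\ref{Thm9.2:BKQW} as derived in the paper: that corollary is obtained from Theorem~\ref{thm:sparse2}, whose proof relies on this very lemma.
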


\begin{proof}
Let $f\in L^2(\mu)$. By \cite[Theorem 3.5]{BKQW}, we know that $\displaystyle\lim_{t\to \infty}\setA^{\mathsf{a}}_tf(x)$ exists for almost every $x\in X$, and if the system is ergodic then the limit is $\int_X f \, d\mu$. The conclusion then follows from Lemma~\ref{lem:comp:weights}.
\end{proof}

We will need the following modification of the \emph{lacunary trick}.
 \begin{lem}\label{lem:reduction2}
 Let $\tilde{W}(x)$ be the compositional inverse of $W(x)$. Let $g,h \in L^1(\mu)$, $g\geq 0$.   For \( \rho \in \setR \) and \( i \in \setN \), set \( N_i
        = N_i(\rho):=
        \tilde{W}{(\rho^i)}.
        \) 
    Suppose that for every $\rho>1$, there exists a set $X(\rho)$ of full measure such that
        \[ 
            \lim_{i \to \infty }\setA^{\mathsf{a},W}_{N_i}g(x)= h(x) \text{ for every \( x\in X(\rho) \).}
        \]
         Then 
        \[ \lim_{t \to \infty }\setA^{\mathsf{a},W}_tg(x)= h(x) \text{ for almost every }x\in X.
        \] 
    \end{lem}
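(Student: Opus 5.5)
This is the standard lacunary-subsequence interpolation argument. The key input is that for the weights in question consecutive ratios of the normalizations are controlled: $W(N_{i+1})/W(N_i)=\rho$ exactly, since $W(N_i)=\rho^i$. I use $g\ge 0$ to sandwich intermediate averages between averages along the subsequence.

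\emph{Step 1: the sandwich.} Fix $\rho>1$. For $t$ large, choose $i$ with $N_i\le t<N_{i+1}$; since $W$ is increasing, $\rho^i\le W(t)<\rho^{i+1}$. Because $g\ge 0$ and $w>0$, the partial sum $S_t(x):=\sum_{n\le t}w(n)g(T^{\lfloor a(n)\rfloor}x)$ is non-decreasing in $t$. Hence
\[
\frac{S_{N_i}(x)}{W(N_{i+1})}\le \setA^{\mathsf{a},W}_t g(x)\le \frac{S_{N_{i+1}}(x)}{W(N_i)} .
\]
The left side equals $\rho^{-1}\setA^{\mathsf{a},W}_{N_i}g(x)$ and the right side equals $\rho\,\setA^{\mathsf{a},W}_{N_{i+1}}g(x)$.

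\emph{Step 2: the limit along the subsequence.} For $x\in X(\rho)$ both outer averages converge to $h(x)$, so
\[
\rho^{-1}h(x)\le\liminf_{t\to\infty}\setA^{\mathsf{a},W}_tg(x)\le\limsup_{t\to\infty}\setA^{\mathsf{a},W}_tg(x)\le\rho\, h(x).
\]
Note that $h\ge 0$ a.e. here, being a limit of non-negative quantities.

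\emph{Step 3: letting $\rho\downarrow 1$.} Intersect the full-measure sets $X(\rho_k)$ for a sequence $\rho_k\downarrow 1$, say $\rho_k=1+1/k$. This countable intersection still has full measure. Where $h(x)<\infty$ the liminf and limsup coincide with $h(x)$. Where $h(x)=\infty$ the lower bound forces divergence to $\infty$, which is consistent with $\lim=h$. Since $h$ arises as an a.e. limit of the averages, this set could be excluded if finiteness is needed.

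\emph{Points to check.} The only subtleties are that $N_i=\tilde W(\rho^i)$ need not be an integer, and that the sum $\sum_{n\le t}$ runs over $n\le t$ for real $t$. The notation $\setA^{\mathsf{a},W}_t$ was introduced exactly for real $t$, so monotonicity of $S_t$ in real $t$ still holds. I expect no real obstacle; the one place to be careful is the monotone sandwich, which uses $g\ge 0$ essentially, and that is why the lemma assumes it.
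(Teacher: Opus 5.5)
Your proof is correct and is essentially the paper's argument. Both use the monotone sandwich $\rho^{-1}\setA^{\mathsf{a},W}_{N_i}g \le \setA^{\mathsf{a},W}_t g \le \rho\,\setA^{\mathsf{a},W}_{N_{i+1}}g$, which comes from $W(N_i)=\rho^i$ and $g\ge 0$, and then intersect the full-measure sets $X(\rho_k)$ along a sequence $\rho_k\downarrow 1$. Your separate treatment of points where $h(x)=\infty$ is unnecessary, since $h\in L^1(\mu)$ is finite almost everywhere.
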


\begin{proof}
 Let $(\rho_k)_{k \in \mathbb{N}}$ be a sequence which decreases to $1$.  Consider the set $X'=\bigcap_{k=1}^{\infty} X(\rho_k)$ of full measure. For a fixed $\rho_k$, consider $N_i=N_i(\rho_k)$. Let \( N \) be such that \( N_i \leq N < N_{i+1} \).  Observe that for $x\in X'$,
        \begin{align*}
            \frac{1}{\rho_k} \cdot {\setA^{\mathsf{a},W}_{ N_i}} g(x) 
            \leq 
            {\setA^{\mathsf{a},W}_{ N}} g(x)
            \leq 
            \rho_k \cdot {\setA^{\mathsf{a},W}_{ N_{i+1}}} g(x). 
        \end{align*}
        This implies
        \begin{align*}
            \frac{1}{\rho_k} \cdot \lim_{i \to \infty} {\setA^{\mathsf{a},W}_{ N_i}} g(x)  
            \leq 
            \liminf_{N \to \infty}  {\setA^{\mathsf{a},W}_{ N}} g(x) 
            \leq 
            \limsup_{N \to \infty} {\setA^{\mathsf{a},W}_{ N}} g(x) 
            \leq 
            \rho_k \cdot \lim_{i \to \infty} {\setA^{\mathsf{a},W}_{ N_i}} g(x) . 
        \end{align*}
        By assumption, \( \lim_{i \to \infty} {\setA^{\mathsf{a},W}_{ N_i}} g( x) = h(x) \). Hence, we have
        \begin{equation*}
            \frac{h(x)}{\rho_k} \leq \liminf_{N}{\setA^{\mathsf{a},W}_{ N}} g(x) 
            \leq 
            \limsup_{N}{\setA^{\mathsf{a},W}_{ N}} g( x)
            \leq 
            \rho_k h(x).
        \end{equation*}
        Letting $k \to \infty$, the result follows.
\end{proof}

\begin{prop}{}\label{prop:exponential}
Let $W(x)$ and $a(x)$ be as in Theorem~\ref{thm:sparse2}. Further, assume that $a(x)$ has the canonical decomposition $a(x) = p(x) + r(x)$ with $d\geq 2$ and $\log^s x\gg|r(x)|\gg (\log x)^\eta$ for some $\eta>1$. Then there exist positive constants $\epsilon, C, t_0$ such that for all  $\beta$ with $0\leq  |\beta|\leq \frac{1}{2}$ and $t\geq t_0$ we have
\begin{equation}\label{eq:toshow22}
\left| \hat{\setA}^{\mathsf{a},W}_{t}(\beta)-\hat{\setB}^{\mathsf{a},W}_{t}(\beta)\right|\le \frac{C}{\left(\log W(t)\right)^{\frac{1}{2}+\epsilon}}.
\end{equation}
\end{prop}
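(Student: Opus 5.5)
We compare the two exponential sums term by term, following the strategy of \cite{BKQW}. Write $\phi(n) := \log^s n - r(n)$, so that $\lfloor p(n)+\log^s n\rfloor$ and $\lfloor a(n)\rfloor$ differ through the slowly varying function $\phi$. Then
\[
\hat{\setA}^{\mathsf{a},W}_t(\beta)-\hat{\setB}^{\mathsf{a},W}_t(\beta)=\frac{1}{W(t)}\sum_{n\le t} w(n)\Big(e(\lfloor a(n)\rfloor\beta)-e(\lfloor p(n)+\log^s n\rfloor\beta)\Big).
\]
We split according to the size of $|\beta|$ relative to a threshold $\delta(t)=(\log W(t))^{-\kappa}$, with $\kappa$ to be chosen.

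\textbf{Small $|\beta|$.} For $|\beta|\le \delta(t)$ we remove the floors using $e(\lfloor y\rfloor\beta)=e(y\beta)+O(|\beta|)$. This leaves the difference $e(a(n)\beta)-e((p(n)+\log^s n)\beta)$. We handle it by partial summation in $n$ over dyadic-type blocks adapted to the weight $w$. The phase difference $\beta\phi(n)$ is monotone, with derivative $\ll |\beta|\log^{s-1}n/n$. Abel summation (Lemma \ref{lem:byparts}) then reduces the difference to sums of $w(n)e(p(n)\beta+\beta r(n))$ weighted by increments of $e(\beta\phi(n))$. We bound these trivially, or by van der Corput when $\beta$ is not too small.

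\textbf{Large $|\beta|$.} For $\delta(t)<|\beta|\le 1/2$ we show that each of the two sums is individually $\ll(\log W(t))^{-1/2-\epsilon}$. First remove the floors in the standard way: expand $e(\lfloor y\rfloor\beta)=e(y\beta)e(-\{y\}\beta)$ and Fourier-expand the $1$-periodic function $e(-\{y\}\beta)$. Its coefficients decay like $|\beta|/(|m|\cdot|m-\beta|)$, up to a smoothing error controlled by a discrepancy estimate. This reduces the problem to weighted sums $\frac{1}{W(t)}\sum_{n\le t}w(n)e((\beta+m)a(n))$ and the analogous sums with $p(n)+\log^s n$.

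Since $\log t\ll W(t)\ll t$, we partition $[1,t]$ into blocks $[N,2N]$ and apply summation by parts to strip off $w$. It then suffices to prove, for $N$ in the relevant range, a cancellation estimate of the form
\[
\frac{1}{N}\Big|\sum_{N<n\le 2N}e(\gamma a(n))\Big|\ll(\log N)^{-\theta}
\]
uniformly for $\gamma$ with $\|\gamma\|$ bounded below by roughly $\delta(t)$. We get this from $d$ applications of van der Corput's inequality (Weyl differencing). After $d$ differencings the polynomial part becomes linear, and the remaining contribution is driven by $\gamma r^{(d)}$-type terms. The hypothesis $|r|\gg(\log x)^\eta$ with $\eta>1$ makes these terms non-degenerate, which is exactly where the exponent $s=2^{d+1}$ enters. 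The near-rational values of $\gamma$ (major arcs) are treated by splitting into residue classes modulo the denominator $q$ and using Taylor expansion on short intervals.

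\textbf{Main obstacle and choice of exponents.} The main difficulty is the large-$|\beta|$ case, in particular the major arcs. There the polynomial part gives no cancellation, so the saving must come entirely from $r(n)$ with $(\log n)^\eta\ll|r(n)|\ll\log^s n$. The saving obtained is only a power of $\log N$, roughly $(\log N)^{(\eta-1)/(s-2)}$ after the $2^{d}$-power losses from Weyl differencing. Condition \eqref{eq3.1:condition:eta-W}, namely $(\log t)^{\eta-1}\succ(\log W(t))^{s-2+\epsilon}$, is exactly what converts this into a saving of $(\log W(t))^{1/2+\epsilon}$. Blocks with $N$ so small that $\log N$ is not large compared to $\log W(t)$ contribute only $O(W(\text{small})/W(t))$, which is negligible. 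We would then fix $\kappa$ so that the two regimes match, giving \eqref{eq:toshow22}.
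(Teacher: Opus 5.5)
\paragraph{The gap: your small-$|\beta|$ regime does not close.}
You cut at $\delta(t)=(\log W(t))^{-\kappa}$, but the perturbation $\phi(n)=\log^s n-r(n)$ has size up to $\log^s t$ regardless of $W$. So for $|\beta|$ between $\log^{-s}t$ and $\delta(t)$, the factor $e(\beta\phi(n))$ in general winds around the circle about $|\beta|\log^s t$ times.

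When $W$ grows slowly, e.g.\ $W(t)=\log t$ (the case behind Corollary~\ref{cor:logcase}), one has $\delta(t)=(\log\log t)^{-\kappa}$, and no choice of $\kappa$ makes the trivial bound $|\beta|\log^s t$ small. Your Abel-summation fallback does not repair this. On a block $[N,2N]$ the increments of $e(\beta\phi(n))$ have total variation of order $|\beta|\log^{s-1}N$. The only cancellation available in the partial sums of $w(n)e(a(n)\beta)$ is a fixed power of $|\beta|$ or of $\log N$, and there is none at all for $n\lesssim|\beta|^{-1/d}$. That cannot absorb a factor $\log^{s-1}N$, so there is no $\kappa$ to ``match''.

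\paragraph{How the paper handles it.}
The paper cuts at $|\beta|=\log^{-(s+1)}t$, a scale dictated by $\phi$ rather than by $W$.
\begin{itemize}
\item Below the cut, apply $|e(x)-e(y)|\le 2\pi|x-y|$ directly to the floored phases. This gives $\ll|\beta|\log^s t\le 1/\log t\ll(\log W(t))^{-3/4}$, since $W(t)\le t$ (Lemma~\ref{lem:smallregime}).
\item Above the cut, $\hat{\setA}^{\mathsf{a},W}_t$ and $\hat{\setB}^{\mathsf{a},W}_t$ are each shown to be small separately.
\end{itemize}
This needs the large-$\beta$ estimate all the way down to $\log^{-(s+1)}t$. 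The paper gets it by discarding $n<r(t)$, where $W(r(t))\approx W(t)/(\log W(t))^{1/2+\epsilon}$, at cost $W(r)/W(t)$, and then checking $r(t)^{d-1/2}\ge\log^{s+1}t$ (Lemma~\ref{lem:largeregime}). This is essentially your ``small blocks are negligible'' remark. But it must be run for $|\beta|\ge\log^{-(s+1)}t$, not for $|\beta|\ge\delta(t)$.

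\paragraph{The large regime.}
Your ``main obstacle'' is not actually there, and your treatment of it is left unspecified. The paper needs no major arcs.
\begin{itemize}
\item Van der Corput's $l$-th derivative test (Lemma~\ref{lem:van der Corput}) with $l=d+1$ sees only $\phi^{(d+1)}=\beta r^{(d+1)}$. So $p$ disappears, and the bound $(|\beta|(\log t)^{\eta-1})^{-2/s}$ holds for every real $\beta$ in range, rational or not.
\item For smaller $|\beta|$ (down to $t^{-(d-1/2)}$), the test with $l=d$, driven by the constant $p^{(d)}=d!\,c_d$, gives the bound $|\beta|^{1/(s/2-2)}$.
\item These two ranges overlap exactly when \eqref{eq3.1:condition:eta-W} holds. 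Balancing them is where your exponent $(\eta-1)/(s-2)$ comes from, not from major-arc losses.
\end{itemize}
Also, the polynomial part becomes linear after $d-1$ differencings, not $d$, and the subsequent second-derivative test ignores it anyway.

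\paragraph{Removing the floors.}
The Fourier coefficients of $y\mapsto e(-\{y\}\beta)$ are $\frac{1-e(-\beta)}{2\pi i(m+\beta)}$, which decay only like $1/|m|$. The required smoothing is exactly Lemma~\ref{lem:integer part}. It needs the real-phase bound $(\log W(t))^{-(1+3\epsilon)}$ for all $|\gamma|\le t^{\epsilon}$, and it loses a square root, which is what produces the exponent $\tfrac12+\epsilon$.
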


The proof of Proposition~\ref{prop:exponential} is postponed until the end of this section.

\begin{proof}[Proof of Theorem~\ref{thm:sparse2}]
Let $f\in L^2(\mu)$. Without loss of any generality, we can assume that $f\geq 0$. If $|r(x)|\gg \log^s x$, then by Lemma~\ref{lem:reduction1}, we have the desired conclusion. So, let us assume that $\log^s x\gg|r(x)|\gg \log^{\eta} x$ for some $\eta>1$.
By Lemma~\ref{lem:reduction1}, it is enough to show that
\begin{equation*}
\lim_{t\to \infty}\left|\setA^{\mathsf{a},W}_tf(x)-\setB^{\mathsf{a},W}_tf(x)\right|= 0 \text{ for almost every } x\in X.
\end{equation*}

Let $\rho>1$ be arbitrary and $N_i=\tilde{W}{(\rho^i)}.$ By Lemma~\ref{lem:reduction2}, it will be sufficient to show that 
\begin{equation*}
\lim_{i\to \infty}\left|\setA^{\mathsf{a},W}_{N_i}f(x)-\setB^{\mathsf{a},W}_{N_i}f(x)\right|= 0 \text{ for almost every } x\in X.
\end{equation*}
This will follow if we can show that
\begin{equation}\label{eq:toshow0}
\int_{X}\sum_{i\in \setN} \left|\setA^{\mathsf{a},W}_{N_i}f(x)-\setB^{\mathsf{a},W}_{N_i}f(x)\right|^2 d\mu= \sum_{i\in \setN}\int_{X} \left|\setA^{\mathsf{a},W}_{N_i}f(x)-\setB^{\mathsf{a},W}_{N_i}f(x)\right|^2 d\mu <\infty.
\end{equation}
By the spectral theorem, there exists a spectral measure $\mu_f$ such that
\begin{equation}\label{eq:spectral1}
\sum_{i\in \setN}\int_{X} \left|\setA^{\mathsf{a},W}_{N_i}f(x)-\setB^{\mathsf{a},W}_{N_i}f(x)\right|^2 d\mu=\sum_{i\in \setN}\int_{\setT} \left| \hat{\setA}^{\mathsf{a},W}_{N_i}(\beta)-\hat{\setB}^{\mathsf{a},W}_{N_i}(\beta)\right|^2 d\mu_f.
\end{equation}
By Proposition~\ref{prop:exponential}, we have
\begin{equation*}\label{eq:toshow}
\left| \hat{\setA}^{\mathsf{a},W}_{t}(\beta)-\hat{\setB}^{\mathsf{a},W}_{t}(\beta)\right|\ll \frac{1}{\left(\log W(t)\right)^{\frac{1}{2}+\epsilon}} \text{ for all } \beta \text{ with } 0\leq |\beta|\leq \frac{1}{2}.
\end{equation*}
 Hence, there exists $C=C(\rho)>0$ such that
\begin{equation*}\label{eq:spectral}
\left| \hat{\setA}^{\mathsf{a},W}_{N_i}(\beta)-\hat{\setB}^{\mathsf{a},W}_{N_i}(\beta)\right|\ll \frac{C}{i^{\frac{1}{2}+\epsilon}} \text{ for all } \beta \text{ with } 0\leq |\beta|\leq \frac{1}{2}.
\end{equation*}

This implies that the right-hand side of \eqref{eq:spectral1} is finite. This establishes \eqref{eq:toshow0}, thereby completing the proof of the theorem.
\end{proof}

\subsection{Proof of Proposition~\ref{prop:exponential}}\label{sub:proposition} 
The proof of Proposition~\ref{prop:exponential} is divided into two parts:
for small values of $\beta$, we prove \eqref{eq:toshow22} in Lemma~\ref{lem:smallregime}, whereas for large values of $\beta$, we prove it in Lemma~\ref{lem:largeregime}.

\begin{lem}\label{lem:smallregime}
Let $a(x)$ and $W(x)$ be as in Proposition~\ref{prop:exponential}. 
Then there exists $C>0$ such that for $0\le |\beta|\le \frac{1}{\log^{s+1} t}$,
\begin{equation*}
\left| \hat{\setA}^{\mathsf{a},W}_{t}(\beta)-\hat{\setB}^{\mathsf{a},W}_{t}(\beta)\right|\leq \frac{C}{\left(\log W(t)\right)^{\frac{3}{4}}}.
\end{equation*} 
\end{lem}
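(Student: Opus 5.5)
The plan is to compare the two exponential sums term by term in the regime $|\beta|\le \log^{-(s+1)} t$. In this regime the phase $\lfloor y\rfloor \beta$ is almost insensitive to the size of $y$ once $y$ is polynomially large.

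\textbf{Step 1: remove the floors.} Write $e(\lfloor a(n)\rfloor\beta) = e(a(n)\beta) + O(|\beta|)$, since $|\lfloor y\rfloor - y|<1$, and do the same for $p(n)+\log^s n$. Since $\sum_{n\le t} w(n)/W(t)\to 1$, this costs $O(|\beta|) = O(\log^{-(s+1)}t)$. This is $\ll (\log W(t))^{-3/4}$ because $W(t)\ll t$ gives $\log W(t)\ll \log t$.

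\textbf{Step 2: estimate the difference of the phases.} We have
\[
|e(a(n)\beta)-e((p(n)+\log^s n)\beta)| \le 2\pi|\beta|\,|r(n)-\log^s n|.
\]
By hypothesis $|r(n)|\ll \log^s n$, so for $n\le t$ this is $\ll |\beta| \log^s t \le \log^{-1} t$. Summing against the weights $w(n)/W(t)$ gives a bound $\ll 1/\log t$.

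\textbf{Step 3: conclude.} Combining the two steps,
\[
\bigl|\hat{\setA}^{\mathsf{a},W}_t(\beta)-\hat{\setB}^{\mathsf{a},W}_t(\beta)\bigr| \ll \frac{1}{\log t}.
\]
Because $\log W(t)\ll \log t$ (from $W(t)\ll t$), we get $1/\log t \ll (\log W(t))^{-1} \le (\log W(t))^{-3/4}$ for large $t$. This is the claim.

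\textbf{Possible obstacle.} The one point to check is whether the bound in Step 2 needs $r(n)$ to be comparable to $\log^s n$ uniformly on all of $n\le t$, including small $n$. It does not: the small $n$ contribute at most $O(1/W(t))$ after normalization, and monotonicity of Hardy field functions gives $|r(n)|\le C\log^s t$ for all $n_0\le n\le t$. If the intended threshold were larger so that Step 2 failed, I would instead split the sum at $t^{1/2}$ and use partial summation with the slow variation of $e(r(n)\beta)$. However, with the stated range $|\beta|\le \log^{-(s+1)}t$ the crude Lipschitz estimate already suffices.
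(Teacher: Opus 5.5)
Your proposal is correct and follows the paper's proof. The paper applies $|e(x)-e(y)|\le 2\pi|x-y|$ termwise with $|\lfloor a(n)\rfloor-\lfloor p(n)+\log^s n\rfloor|\le \log^s n+|r(n)|+1\ll \log^s t$, then uses $W(t)\le t$ to get $|\beta|\log^s t\le 1/\log t\ll (\log W(t))^{-3/4}$; your separate floor-removal step simply splits this same Lipschitz estimate into two pieces.
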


\begin{proof}
Suppose $\beta$ satisfies the hypothesis of the lemma. Then
\begin{align}
\left| \hat{\setA}^{\mathsf{a},W}_{t}(\beta)-\hat{\setB}^{\mathsf{a},W}_{t}(\beta)\right|&= \left|\frac{1}{W(t)}\sum_{n\leq t} w(n) \left( e\left( \lfloor a(n) \rfloor\beta\right)-e\left( \lfloor p(n)+\log^s n\rfloor\beta\right)\right)\right| \notag\\
&\leq \frac{1}{W(t)}\sum_{n\leq t} w(n) \left|e\left( \lfloor a(n) \rfloor\beta\right)-e\left( \lfloor p(n)+\log^s n\rfloor\beta\right)\right|  \notag\\
&\leq \frac{1}{W(t)}\sum_{n\leq t} w(n) 2 \pi |\beta| (\log^s n + |r(n)| + 1)  \notag\\
&\leq \frac{5\pi |\beta| \log^{s}t}{W (t)}\sum_{n\leq t} w(n) \label{eq:31}\\
&\leq {5\pi |\beta|}{\log^s t}  \notag\\
&\ll \frac{1}{\left(\log W(t)\right)^{3/4}}\label{eq:32}.
\end{align}
We used the inequality $|e(x)-e(y)|\leq 2\pi |x-y|$ and the fact that $\eta\leq s$ to obtain \eqref{eq:31}, and $W(t)\leq t$ for large $t$ to obtain \eqref{eq:32}.
\end{proof}

To establish the estimate for large values of $\beta$, we will use the following lemmas.
\begin{lem} \label{lem:Lopital}
Let $r(x)\in \mathbf{U}$ with $1 \prec r(x) \prec x$. Let $a>1$ be a real number.
\begin{enumerate}
\item If $(\log x)^a \prec r(x)$, then for any positive integer $k$, 
\begin{equation*}\label{eq:germ}
\dfrac{(\log x)^{a-1}}{x^k} \prec r^{(k)}(x).
\end{equation*}
 \item If  $r(x)\prec (\log x)^a$,  then for any positive integer $k$
\begin{equation*}\label{eq:germ2}
r^{(k)}(x)\prec \dfrac{(\log x)^{a-1}}{x^k}.
\end{equation*}
\end{enumerate}
\end{lem}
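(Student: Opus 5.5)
The plan is induction on $k$, with L'H\^opital's rule supplying each step and standard Hardy field facts keeping every limit well defined. Put $g(x) := (\log x)^{a-1}$ and $g_k(x) := (\log x)^{a-1}x^{-k}$. Both $r$ and $g$ can be placed in a common Hardy field $\mathbf{H}$, for example the one generated by $r$ together with $\mathbf{L}$. Since Hardy fields are closed under differentiation and division, every ratio $r^{(k)}(x)/g_k(x)$ lies in $\mathbf{H}$ and so has a limit in $\mathbb{R}\cup\{\pm\infty\}$. This is what justifies all the L'H\^opital arguments below.

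\emph{Base case} $k=1$. In part (1), $r/(\log x)^a \to \pm\infty$, and the denominator tends to $\infty$. L'H\^opital's rule for $\infty/\infty$ (which needs only the denominator to diverge, together with existence of the limit of the ratio of derivatives, guaranteed here by the Hardy field) gives
\[
\lim \frac{r'(x)}{a(\log x)^{a-1}/x} = \lim \frac{r(x)}{(\log x)^a} = \pm\infty.
\]
This is exactly $g_1 \prec r'$. In part (2), $r/(\log x)^a \to 0$. If $r'/((\log x)^{a-1}/x)$ had a limit $L\neq 0$ (possibly infinite), L'H\^opital would give $r/(\log x)^a \to L/a \neq 0$, a contradiction. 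So the limit is $0$, i.e.\ $r' \prec g_1$.

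\emph{Inductive step}, from $k$ to $k+1$. Suppose $r^{(k)}/g_k$ tends to $\pm\infty$ (part (1)) or to $0$ (part (2)). Both $g_k$ and $r^{(k)}$ tend to $0$: for $r^{(k)}$ this holds in part (2) by the hypothesis, and in general for Hardy functions with $r\prec x$ and $k\ge1$. Apply L'H\^opital in its $0/0$ form. Differentiating $g_k$ gives
\[
g_k'(x) = x^{-k-1}\bigl((a-1)(\log x)^{a-2} - k(\log x)^{a-1}\bigr) \sim -k\,g_{k+1}(x).
\]
Since $r^{(k+1)}/g_k'$ has a limit in $\mathbf{H}$, L'H\^opital shows this limit equals $\lim r^{(k)}/g_k$, which is $\pm\infty$ in part (1) and $0$ in part (2). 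Because $g_k' \sim -k\,g_{k+1}$, the same dichotomy transfers to $r^{(k+1)}/g_{k+1}$. The converse direction of L'H\^opital is not needed here, because existence of the limit of the derivative ratio is automatic in a Hardy field.

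The main obstacle is rigor in applying L'H\^opital: each limit must exist, and the $0/0$ case requires $r^{(k)}\to0$. In part (1) I would get $r^{(k)}\to 0$ from $r\prec x$ as follows. We have $r'\prec 1$ by L'H\^opital again. Iterating the Hardy-field fact that $f\to c$ implies $xf'\to 0$ then shows $r^{(k)} \prec x^{1-k}$. An alternative that avoids this step entirely is to apply L'H\^opital to the reciprocal ratio $g_k/r^{(k)}$, which tends to $0$, in its $\infty$-denominator form, or simply to note that the $\infty/\infty$ form of L'H\^opital requires only the denominator to diverge. Everything else is routine.
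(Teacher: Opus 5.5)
Your proof is correct and is essentially the paper's argument: induction on $k$ with L'H\^opital's rule at each step, using $r\prec x$ to get $r^{(k)}\to 0$ so that the $0/0$ form applies. You are more careful than the paper on two points: you note that the Hardy field guarantees the derivative ratios have limits (which is needed because L'H\^opital is used in the ``converse'' direction), and you check that $\frac{d}{dx}\bigl((\log x)^{a-1}x^{-k}\bigr)\sim -k(\log x)^{a-1}x^{-k-1}$. The only slip is your closing ``alternative'': in the inductive step both $r^{(k)}$ and $(\log x)^{a-1}x^{-k}$ tend to $0$, so no denominator diverges and the $\infty$-denominator form is unavailable; the step $r^{(k)}\to 0$, which you do prove and which genuinely uses $r\prec x$, cannot be bypassed.
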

\begin{proof} We will prove (1). The proof of (2) is analogous and omitted. 
The proof is by induction. Let $\displaystyle\lim_{x\to \infty}\dfrac{r(x)}{(\log x)^a}=\pm\infty$. Then by L'Hôpital's rule, we have
\begin{align*}
\lim_{x\to \infty}\dfrac{r'(x)}{\frac{(\log x)^{a-1}}{x}}=\pm\infty.
\end{align*}
Suppose the conclusion is true for $k-1$ for some $k\geq 2$. So, 
\begin{equation*}
\lim_{x\to \infty}\dfrac{r^{(k-1)}(x)}{\frac{(\log x)^{a-1}}{x^{k-1}}}=\pm\infty.
\end{equation*}
Observe that since $r(x)\prec x$, by L'Hôpital's Rule, $\lim_{x\to \infty} r^{(k-1)}(x)=0$ for all $k\geq 2$. Hence, applying L'Hôpital's Rule again, we obtain
\begin{equation*}
\lim_{x\to \infty}\dfrac{r^{(k)}(x)}{\frac{(\log x)^{a-1}}{x^{k}}}=\pm\infty.
\end{equation*} 
This proves the desired conclusion.
\end{proof}

\begin{lem}\label{lem:van der Corput}[Van der Corput \cite{vdc}] 
Let $Y,X, l$ be integers. Suppose that $Y<X, l\geq 2$ and set $u=2^l$. Suppose that the real function $\phi(x)$ is $l$-times differentiable in the interval $[Y,X]$ and $|\phi^{(l)}(x)|\geq \rho$ in $[Y,X]$, where $\rho$ is a positive number. Also, let 
\[R=\frac{1}{X-Y}\left|\phi^{(l-1)}(Y)-\phi^{(l-1)}(X)\right|.\]
Then we have
\begin{equation*}
\left|\sum_{Y\leq n\leq X}e\left(\phi(n)\right)\right|\leq 21 (X-Y) \left( \left(\dfrac{R^2}{\rho}\right)^{1/(u-2)}+\left(\dfrac{1}{\rho(X-Y)^l}\right)^{2/u}+\left(\dfrac{R}{\rho(X-Y)}\right)^{2/u}  \right).
\end{equation*}
\end{lem}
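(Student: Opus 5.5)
This is van der Corput's $2^{l}$-th derivative test. The plan is induction on $l$ via the Weyl--van der Corput differencing inequality, with the second-derivative test as the base case.

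\textbf{Base case $l=2$} (so $u=4$). Here I would use the Kusmin--Landau / second-derivative estimate
\begin{equation*}
\Big|\sum_{Y\le n\le X} e(\phi(n))\Big| \ll (X-Y)\rho^{1/2}+\rho^{-1/2}.
\end{equation*}
Because $\phi''$ keeps a constant sign on $[Y,X]$, $\phi'$ is monotone. I split $[Y,X]$ into boundedly many subintervals, each an interval of length $\ll \delta/\rho$ where $\|\phi'\|<\delta$, or an interval where $\|\phi'\|\ge\delta$. The latter contribute $\ll 1/\delta$ each by Kusmin--Landau, and the number of such pieces is $\ll R(X-Y)+1$. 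Optimizing $\delta=\rho^{1/2}$ gives a bound $\ll (R(X-Y)+1)\big((X-Y)\rho^{1/2}+\rho^{-1/2}\big)$ times a constant. I then check that this is dominated by the three stated terms with exponents $1/(u-2)=1/2$ and $2/u=1/2$; in particular $(X-Y)(R^2/\rho)^{1/2}=(X-Y)R\rho^{-1/2}$ absorbs the $R$-dependent pieces. The main work here is bookkeeping to fit the shape and the constant $21$.

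\textbf{Inductive step} from $l-1$ to $l$. I apply the Weyl--van der Corput inequality with a parameter $1\le H\le X-Y$:
\begin{equation*}
\Big|\sum e(\phi(n))\Big|^2 \le \frac{2(X-Y)^2}{H}+\frac{4(X-Y)}{H}\sum_{h=1}^{H-1}\Big|\sum_{Y\le n\le X-h} e\big(\phi(n+h)-\phi(n)\big)\Big|.
\end{equation*}
The differenced phase $\psi_h(x)=\phi(x+h)-\phi(x)$ satisfies $|\psi_h^{(l-1)}|\ge h\rho$ by the mean value theorem, since $\phi^{(l)}$ has constant sign. Its associated quantity $R_h$ is $\le hR'$, where $R'$ is controlled by $\rho$ and $R$ through monotonicity of $\phi^{(l-1)}$. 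Applying the inductive hypothesis with $u'=u/2$ to each inner sum and summing over $h$, I choose $H$ to balance $(X-Y)^2/H$ against the main inductive term. The exponents should then transform as $1/(u'-2)\mapsto 1/(u-2)$ and $2/u'\mapsto 2/u$ after the square root.

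\textbf{Main obstacle.} The hardest part is the exponent and constant bookkeeping in the inductive step. I need to verify that the resulting three terms, together with the $(X-Y)H^{-1/2}$ term, collapse to the stated form with constant $21$, and I must handle the edge case $H>X-Y$ (where the bound is trivial because the right side exceeds $X-Y$). Since this is a classical result quoted from \cite{vdc}, I would ultimately cite it, but the outline above is how I would reconstruct it.
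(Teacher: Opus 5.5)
Your fallback of citing \cite{vdc} is exactly what the paper does: it gives no proof of this lemma, and quotes it in the form used in \cite{BKQW}. Your reconstruction follows the classical route (differencing plus induction on $l$), but as written it does not yield the stated three-term bound, for the two reasons below.

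\emph{Base case.} The estimate $\ll (X-Y)\rho^{1/2}+\rho^{-1/2}$ is false under a lower bound on $|\phi''|$ alone. Take $\phi(x)=Mx^2/2$ with $M$ an even integer and $\rho=1/(X-Y)$: every term equals $1$, so the sum is $X-Y+1$. Your product $(R(X-Y)+1)\big((X-Y)\rho^{1/2}+\rho^{-1/2}\big)$ also double counts. For $R=\rho=(X-Y)^{-1/2}$ it is of order $(X-Y)^{5/4}$, while the right-hand side of the lemma is of order $(X-Y)^{3/4}$, so the domination check you propose would fail. Your splitting actually gives $\ll (R(X-Y)+1)(\rho^{-1/2}+1)$. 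For $\rho<1$ this is $\ll (X-Y)R\rho^{-1/2}+\rho^{-1/2}$, which is the first two terms. For $\rho\ge 1$ the lemma is trivial, since $R\ge\rho$ by the mean value theorem.

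\emph{Inductive step.} Write $N=X-Y$ and $u'=u/2$. Sum the inductive bound over $h<H$, using $\rho_h=h\rho$ and $R_h\le hRN/(N-h)$ (from monotonicity of $\phi^{(l-1)}$). This gives
\[
\frac{|S|^2}{N^2}\ll \frac1H+\Big(\frac{HR^2}{\rho}\Big)^{\frac{1}{u'-2}}+N^{-\frac{2(l-1)}{u'}}(H\rho)^{-\frac{2}{u'}}+\Big(\frac{R}{\rho N}\Big)^{\frac{2}{u'}} .
\]
The third term is also decreasing in $H$, so balancing $1/H$ against the second term alone does not work. For example, take $l=3$ and $\phi'''\equiv\rho=N^{-5/2}$. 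Your choice $H\asymp\rho^{-1/3}=N^{5/6}$ leaves the third term equal to $N^{-1/6}$, which gives only $|S|\ll N^{11/12}$. The lemma asserts $\ll N^{7/8}$, and the choice $H=N$ does give that.

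The fix is to take $H=\min\{N,\lceil\max(H_1,H_3)\rceil\}$. Here $H_1=(\rho/R^2)^{1/(u'-1)}$ balances the first two terms, and $H_3$ equates the second and third.
\begin{enumerate}
\item At $H=N$ the third term is exactly $(\rho N^l)^{-2/u'}$; after the square root this is the middle term of the lemma.
\item At $H_3$ the common value is $\big(R^2/(\rho^2N^{l-1})\big)^{2/(3u'-4)}$. This is at most $(R/(\rho N))^{2/u'}$, the last term. For $l=3$ the two are equal; for $l\ge 4$ the inequality uses $R\ge\rho$ and $N\ge 1$.
\item When $H=N$ is forced, the same fact $R\ge\rho$ gives $1/N\le (R/(\rho N))^{2/u'}$.
\end{enumerate}
With this choice of $H$ and the systematic use of $R\ge\rho$, the induction closes, up to the value of the constant.
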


\begin{lem}\label{lem:largeregimereal} Let $a(x)$ and $W(x)$ be as in Proposition~\ref{prop:exponential}. There exist a sufficiently small $\epsilon > 0$ and a constant $C >0$ such that 
for $\frac{1}{t^{d-\frac{1}{2}}}\leq |\beta| \leq t^\epsilon$, we have
\begin{equation}\label{eq:toshow1}
\frac{1}{t}\left|\sum_{n\leq t} e\left( a(n)\beta\right)\right|\leq  \frac{C}{\left(\log W(t)\right)^{1+3\epsilon}}.
\end{equation}
\end{lem}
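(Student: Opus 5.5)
We will derive \eqref{eq:toshow1} from Van der Corput's inequality (Lemma~\ref{lem:van der Corput}), applied to $\phi(x)=\beta a(x)=\beta p(x)+\beta r(x)$ with $l=d+1$, so that $u=2^{d+1}=s$. Differentiating $d+1$ times removes the polynomial part, giving $\phi^{(d+1)}(x)=\beta r^{(d+1)}(x)$. Lemma~\ref{lem:Lopital} then bounds this derivative on both sides. Since $|r(x)|\gg(\log x)^\eta$, we may apply part (1) with any exponent $1<a'<\eta$, obtaining $|r^{(d+1)}(x)|\succ (\log x)^{a'-1}/x^{d+1}$. Since $|r(x)|\ll \log^s x$, part (2) with exponent $s+1$ gives $|r^{(d+1)}(x)|\prec (\log x)^{s}/x^{d+1}$.

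A single Van der Corput step over all of $[1,t]$ is too lossy, so we first split the range. Fix a small $\delta>0$. The initial segment $n\le t^{1-\delta}$ contributes $O(t^{-\delta})$ to the normalized sum, which is negligible because $W(t)\le t$. We cover the remaining range $[t^{1-\delta},t]$ by dyadic blocks $[X,2X]$. On each block, $r^{(d+1)}$ is monotone because $r$ lies in a Hardy field. This gives
\[
\rho \asymp |\beta|\,\frac{(\log X)^{a'-1}}{X^{d+1}}
\]
as a lower bound for $|\phi^{(d+1)}|$, and by the mean value theorem
\[
R\ll |\beta|\,\frac{(\log X)^{s}}{X^{d+1}}\cdot X \cdot\frac1X .
\]
The upper estimate is the only one needed for $R$.

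Next we insert these values into the three terms of Lemma~\ref{lem:van der Corput}, using $X\asymp t$ up to the factor $t^{\delta}$.

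1. The first term is $(R^2/\rho)^{1/(s-2)}\ll \big(|\beta| (\log t)^{2s-a'+1} t^{-(d+1)}\big)^{1/(s-2)}$. The upper bound $|\beta|\le t^\epsilon$ makes this a negative power of $t$, provided $\epsilon$ and $\delta$ are small compared with $d+1$.
2. The second term is $\big(1/(\rho X^{d+1})\big)^{2/s}\asymp \big(|\beta|(\log t)^{a'-1}\big)^{-2/s}$. The lower bound $|\beta|\ge t^{-(d-1/2)}$ is too weak to control this term. Once $|\beta|\ge t^{-c}$ for some $c<d+1$, however, the term is only poly-logarithmically small in the worst case. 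The small-$|\beta|$ end is also treated by Lemma~\ref{lem:smallregime} for $|\beta|\le \log^{-(s+1)}t$, so the genuinely dangerous $\beta$ are those of size a power of $\log t$.
3. The third term is $\big(R/(\rho X)\big)^{2/s}\ll \big((\log t)^{s-a'+1}/X\big)^{2/s}$, which is tiny.

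The main obstacle is the second term when $|\beta|$ is small but not tiny. There the available saving is only $(|\beta|(\log t)^{a'-1})^{-2/s}$, and for $|\beta|$ near $t^{-(d-1/2)}$ this is not even small. So we refine the decomposition by $\beta$.

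1. For $|\beta|\ge (\log t)^{-\kappa}$, with $\kappa$ chosen small, the bound from the second term is at least $(\log t)^{-2(a'-1-\kappa)/s}$. Hypothesis \eqref{eq3.1:condition:eta-W} says $(\log t)^{\eta-1}\succ (\log W(t))^{s-2+\epsilon}$, which turns this into a bound of the form $(\log W(t))^{-1-3\epsilon}$. Here the first Van der Corput term with exponent $1/(s-2)$ is the decisive one, and we match its exponent to \eqref{eq3.1:condition:eta-W}.
2. For $t^{-(d-1/2)}\le|\beta|\le (\log t)^{-\kappa}$, we instead apply Van der Corput with a lower order $l$, where the derivative of the polynomial part $\beta p^{(l)}$ dominates. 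With $l=d$, the derivative $\phi^{(d)}\approx d!\,c_d\beta$ is essentially constant, and the usual Weyl/van der Corput estimate gives a power saving in $t$ because $|\beta|\ge t^{-(d-1/2)}$.

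The hardest step is balancing the exponents in case 1 so that exactly the hypothesis \eqref{eq3.1:condition:eta-W} suffices. We would first try choosing $a'$ close to $\eta$ and then letting $\epsilon\to0$.
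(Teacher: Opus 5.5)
Your architecture matches the paper's. You split by the size of $|\beta|$ and apply Lemma~\ref{lem:van der Corput} with $l=d+1$ (so $u=s$ and only $\beta r^{(d+1)}$ survives) for larger $|\beta|$, and with $l=d$ (polynomial part dominant) for smaller $|\beta|$. Hypothesis \eqref{eq3.1:condition:eta-W} is then what makes the two ranges overlap. Your treatment of the $l=d+1$ range is essentially right: the first and third terms are negative powers of $t$, and the second term $(|\beta|(\log t)^{\eta-1})^{-2/s}$ is the one that matters.

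The gap is in the lower range. Van der Corput with $l=d$ does not give a power saving in $t$ on $t^{-(d-1/2)}\le|\beta|\le(\log t)^{-\kappa}$. There $u=2^d=s/2$ and $\rho\asymp R\asymp|\beta|$. So the second and third terms are indeed $O(t^{-c})$, which is where $|\beta|\ge t^{-(d-1/2)}$ enters. But the first term is $(R^2/\rho)^{1/(u-2)}\asymp|\beta|^{1/(2^d-2)}$, which is small only to the extent that $|\beta|$ is. No other argument can do better in general either. Take $a(x)=x^d+\log^2x$ and $\beta=p^{-d}$ with $p$ a prime of size about $(\log t)^{\kappa/d}$. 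Then $\sum_{m=1}^{p^d}e(m^d/p^d)=p^{d-1}$, and the normalized sum is only polylogarithmically small.

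Hence, with $\kappa$ small, your lower range yields only $(\log t)^{-\kappa/(2^d-2)}$. This is not $\ll(\log W(t))^{-1-3\epsilon}$ when $\log W(t)$ is close to $(\log t)^{(\eta-1)/(s-2)}$, which \eqref{eq3.1:condition:eta-W} permits. The cut must be tuned to $W$, as in the paper. The $l=d$ bound covers $|\beta|\le(\log W(t))^{-(1+3\epsilon)(2^d-2)}$. The $l=d+1$ bound covers $|\beta|\ge(\log W(t))^{2^d(1+3\epsilon)}/(\log t)^{\eta-1}$. The two ranges overlap exactly when $(\log t)^{\eta-1}\ge(\log W(t))^{(1+3\epsilon)(2^{d+1}-2)}$, and that is where the exponent $s-2=2^d+(2^d-2)$ in \eqref{eq3.1:condition:eta-W} comes from.

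So your remark that the first term with exponent $1/(s-2)$ is decisive misplaces the balance. For $l=d+1$ that term is a negative power of $t$, as you observed yourself. Also, you cannot delegate $|\beta|\le\log^{-(s+1)}t$ to Lemma~\ref{lem:smallregime}, which bounds a different quantity; the present lemma is needed on the whole range $|\beta|\ge t^{-(d-1/2)}$.
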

 In the proof of this lemma, the implicit constant depends only on $a(x), W(x)$ and $\epsilon$. 
\begin{proof}
Recall that $d=\text{deg}(p)\geq 2$ and $s=2^{d+1}$. Let $\epsilon \in (0,1)$ be a small number to be determined later.

\noindent\textbf{Case (I).} Let $\frac{1}{t^{d-\frac{1}{2}}}\leq |\beta|\leq \left(\frac{1}{\left(\log W(t)\right)^{1+3\epsilon}}\right)^{\frac{s}{2}-2}$.\\
In this case, we apply Lemma~\ref{lem:van der Corput} to the function $\phi(x)= a(x)\beta$ with $l=d, X=t$, and $Y=t^{3/4}$.
We have \(\phi^{(l)}(x)= \left(A + r^{(l)}(x)\right)\beta\), where $A$ is a nonzero constant depending only  on  $a(x)$. 
Since $r^{(l)} (x)\to 0$ as $x\to \infty$, for sufficiently large $t$, $|\phi^{(l)}(x)|\geq \frac{1}{2}|A||\beta|$ on $[t^{3/4},t].$ So, $\rho$ can be taken to be $\dfrac{1}{2}|A||\beta|$. 
By the mean value theorem, $R\leq 3 |A| |\beta|$. Hence $\frac{R^2}{\rho}\leq 36|A||\beta|$, $\frac{1}{\rho(X-Y)^l}\leq \frac{2^{d}}{|A||\beta| t^{d}}$ and $\frac{R}{\rho(X-Y)}\leq \frac{2}{t}$. 
Hence, for sufficiently large $t$, by Lemma~\ref{lem:van der Corput}, in the given range of $\beta$ we have 
\begin{equation*}
\left|\sum_{t^{3/4}\leq n\leq t} e\left( a(n)\beta\right)\right| \ll  (t-t^{3/4}) \frac{1}{\left(\log W(t)\right)^{1+3\epsilon}}.
\end{equation*}
This implies that for sufficiently large $t$, we have
\begin{equation*}\label{eq:case1}
\frac{1}{t}\left|\sum_{n\leq t} e\left( a(n)\beta\right)\right| \ll \frac{t^{3/4}}{t}+ \dfrac{(t-t^{3/4})}{t}\frac{1}{\left(\log W(t)\right)^{1+3\epsilon}} \ll \frac{1}{\left(\log W(t)\right)^{1+3\epsilon}}.
\end{equation*} 

\noindent\textbf{Case (II).} Let $\frac{\left(\log W(t)\right)^{\frac{s}{2}(1+3\epsilon)}}{(\log t)^{\eta-1}} \leq |\beta|\leq t^\epsilon$.\\
In this case, we apply Lemma~\ref{lem:van der Corput} to the function $\phi(x)= a(x)\beta$ with $l=d+1,  X=t ,Y=t^{1-\tau}$, where $\tau=\frac{1}{4(d+1)}$. Since $|r(x)|\gg \log^\eta x$, $r(x)\succ \log^{\eta'}x$ for some $\eta'>1$ (for simplicity we write $\eta'=\eta$). Hence, by Lemma~\ref{lem:Lopital}, we have
\begin{equation*}
|\phi^{(d+1)}(x)|= |r^{(d+1)}(x) \beta| \succ \left(\dfrac{(\log x)^{\eta-1}}{x^{d+1}}\right)|\beta|.
\end{equation*}
So, we can take $\rho= \dfrac{(\log t)^{\eta-1}}{t^{d+1}}|\beta|$. Using again the mean value theorem, Lemma~\ref{lem:Lopital} and the fact that $|r(x)|\ll \log^ s x$, for sufficiently large $t$, we have $R\leq \dfrac{C (\log t)^{s-1}}{t^{(d+1)(1-\tau)}}|\beta|$, for some $C>0$. 
\begin{equation*}
\frac{R^2}{\rho}\leq \frac{C^2 (\log t)^{2s-\eta-1}}{t^{(d+1)(1-2\tau)}}|\beta|, \frac{1}{\rho(X-Y)^l}\leq \frac{C}{|\beta| (\log t)^{\eta-1}}\text{ and }\frac{R}{\rho(X-Y)}\leq \frac{C (\log t)^{s-\eta}}{t^{1-(d+1)\tau}}.
\end{equation*}
In the given range, we have $\frac{1}{|\beta| (\log t)^{\eta-1}}\leq \frac{1}{\left(\log W( t)\right)^{\frac{s}{2}(1+3\epsilon)}}$.

Since $(d+1)(1-2\tau)={d+1}-\frac{1}{2}>1>\epsilon$, by Lemma~\ref{lem:van der Corput}, we have
\begin{equation*}
\left|\sum_{t^{1-\tau}\leq n\leq t} e\left( a(n)\beta\right)\right| \ll  (t-t^{(1-\tau)}) \frac{1}{\left(\log W( t)\right)^{1+3\epsilon}}.
\end{equation*}
As in Case (I), for sufficiently large $t$,
\begin{equation*}\label{eq:case2}
\frac{1}{t}\left|\sum_{n\leq t} e\left( a(n)\beta\right)\right| \ll \frac{1}{\left(\log W( t)\right)^{1+3\epsilon}}.
\end{equation*}
 Finally, by condition \eqref{eq3.1:condition:eta-W}, we may choose $\epsilon\in (0,1)$ sufficiently small so that $\frac{\left(\log W(t)\right)^{\frac{s}{2}(1+3\epsilon)}}{(\log t)^{\eta-1}}\leq \left(\frac{1}{\left(\log W(t)\right)^{1+3\epsilon}}\right)^{\frac{s}{2}-2}$ for all sufficiently large $t$. 
Hence, combining Case (I) and Case (II), the conclusion of the lemma follows.
\end{proof}

\begin{prop}\label{prop:integerpart} Let $a(x)$ and $W(x)$ be as in Proposition~\ref{prop:exponential}.
Then there exist positive constants $t_0, C$ and $\epsilon$ such that for all $t\geq t_0$ and $\beta$ with $\frac{1}{t^{d-\frac{1}{2}}}\leq |\beta| \leq \frac{1}{2}$, we have
\begin{equation}\label{eq:toshow2}
\frac{1}{t}\left|\sum_{n\leq t} e\left( \lfloor a(n)\rfloor \beta\right) \right|\leq \frac{C}{\left(\log W(t)\right)^{\frac{1}{2}+\epsilon}}.
\end{equation}
The constants $C$ and $ t_0$ depend only on $a(x), W(x)$ and $\epsilon$.
\end{prop}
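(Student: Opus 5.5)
The plan is to pass from $\lfloor a(n)\rfloor$ to $a(n)$ with the standard Fourier expansion of the fractional part, and then feed the resulting exponential sums into Lemma~\ref{lem:largeregimereal}. Write $e(\lfloor a(n)\rfloor\beta)=e(a(n)\beta)\,e(-\{a(n)\}\beta)$. The function $\psi_\beta(y)=e(-y\beta)$ on $[0,1)$ has a jump at the endpoints unless $\beta\in\setZ$. We approximate it by a trigonometric polynomial of degree $H$, using a Vaaler/Erd\H{o}s--Tur\'an type approximation, so that
\[
\psi_\beta(\{y\})=\sum_{|h|\le H} c_h(\beta)\, e(hy)+E_H(y),
\]
with $|c_h(\beta)|\ll \frac{1}{1+|h|}$ uniformly in $|\beta|\le\frac12$. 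The error satisfies $|E_H(y)|\ll \sum_{|h|\le H}\frac{1}{H}e(hy)$-type majorant terms plus $\frac1H$.

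Substituting this expansion gives
\[
\frac1t\Big|\sum_{n\le t}e(\lfloor a(n)\rfloor\beta)\Big|\ll \sum_{|h|\le H}\frac{1}{1+|h|}\,\frac1t\Big|\sum_{n\le t}e\big(a(n)(\beta+h)\big)\Big| + \frac1H+\frac1H\sum_{0<|h|\le H}\frac1t\Big|\sum_{n\le t}e(a(n)h)\Big|.
\]
The frequencies that appear are $\beta+h$ and $h$ with $|h|\le H$. We choose $H=t^{\epsilon}/2$, or more modestly $H=(\log W(t))^{1+\epsilon}$. With this choice, every frequency $\gamma\in\{\beta+h\}\cup\{h\ne0\}$ satisfies $\frac{1}{t^{d-1/2}}\le |\beta|\le|\gamma|\le t^{\epsilon}$ when $h=0$. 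It also satisfies $\frac12\le|\gamma|\le t^\epsilon$ when $h\neq0$, because $|\beta|\le\frac12$ keeps $|\beta+h|\ge\frac12$. So Lemma~\ref{lem:largeregimereal} applies to each sum and bounds it by $C(\log W(t))^{-1-3\epsilon}$.

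Summing over $h$ with the harmonic weights costs a factor $\log H$, and the last sum contributes at most the same bound. This yields a total of
\[
\ll \frac{\log H}{(\log W(t))^{1+3\epsilon}}+\frac1H.
\]
With $H=(\log W(t))^{1+\epsilon}$ we get $\log H\ll\log\log W(t)$, which is at most $(\log W(t))^{\epsilon}$. The total is then $\ll (\log W(t))^{-1-2\epsilon}+(\log W(t))^{-1-\epsilon}$, which is far better than the claimed $(\log W(t))^{-1/2-\epsilon}$.

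The main obstacle is the approximation step. We need coefficient bounds $c_h(\beta)$ uniform in $\beta$, and the error term has to be handled by a nonnegative majorant; a pointwise bound is not available because of the discontinuity. I would first try the Vaaler polynomial for the sawtooth function $\{y\}-\frac12$ together with the identity $e(-\{y\}\beta)$ written via its Fourier series $c_h(\beta)=\frac{1-e(-\beta)}{2\pi i(h+\beta)}$. Here $|h+\beta|\ge\frac12$ for $h\neq0$, which gives the $\frac1{1+|h|}$ decay. Failing that, I would use a cruder smoothing, convolving with a bump of width $\delta$. That gives error $\ll\delta$ plus the measure of $n$ with $\{a(n)\}$ within $\delta$ of $0$. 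This measure is controlled by the Erd\H{o}s--Tur\'an inequality using the same exponential sums at integer frequencies, and it still comfortably yields the exponent $\frac12+\epsilon$. The constants depend only on $a$, $W$, and $\epsilon$ through Lemma~\ref{lem:largeregimereal}, and $t_0$ is taken large enough that its hypotheses hold.
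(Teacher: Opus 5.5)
Your argument is correct, but it takes a different route from the paper. The paper never touches the fractional parts. It invokes Lemma~\ref{lem:integer part} (Theorem~5.2 of \cite{BKQW}) as a black box with $\phi(n)=a(n)$, $Q=t^{d-\frac12}$, $r=(\log W(t))^{1+3\epsilon}$ and $S=t^{\epsilon}$. It checks $r^2\le S-1$ for large $t$, takes the hypothesis from Lemma~\ref{lem:largeregimereal}, and reads off $100\sqrt{\log r/r}\ll(\log W(t))^{-\frac12-\epsilon}$.

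You instead open that black box and do the transference by hand. To make your sketch precise, write
\[
e(-\{y\}\beta)=g_\beta(y)-(1-e(-\beta))\bigl(\{y\}-\tfrac12\bigr).
\]
The remainder $g_\beta$ is continuous and periodic, with $\hat g_\beta(h)=\frac{-(1-e(-\beta))\beta}{2\pi i\,h(h+\beta)}\ll|\beta|^2/h^2$. So its truncation error is $O(1/H)$ uniformly in $y$, and Vaaler's majorant is needed only for the sawtooth. This produces exactly the frequencies $\beta+h$ and $h$ ($1\le|h|\le H$) that you list, all lying in the range of Lemma~\ref{lem:largeregimereal}. The result is $\ll(1+\log H)/r+1/H$ with no square-root loss, hence the stronger bound $(\log W(t))^{-1-\epsilon}$.

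What each route buys: the paper's proof is one line given the citation, and the same lemma is reused in Section~\ref{sec:sparse2} and in Lemma~\ref{lem:7}. The square-root loss is harmless there, since the lacunary summation in the proof of Theorem~\ref{thm:sparse2} only needs an exponent exceeding $\frac12$. Your route is self-contained and sharper.

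One correction: only your ``modest'' choice $H=(\log W(t))^{1+\epsilon}$ works. With $H\asymp t^{\epsilon}$, the harmonic sum over $h$ costs $\asymp\epsilon\log t$. That already falls short of $(\log W(t))^{-\frac12-\epsilon}$ when $W(t)=t$ and $\epsilon$ is small, and it is unbounded when $W(t)=\log t$.
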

\begin{rem}
Note that the exponential sums that appear in \eqref{eq:toshow1} and \eqref{eq:toshow2} are different, the latter involves with the integer part of $a(n)$. The following lemma enables us to obtain an estimate of the type \eqref{eq:toshow2} from an estimate of the type \eqref{eq:toshow1}.
\end{rem}

\begin{lem}\label{lem:integer part}{\cite[Theorem~5.2]{BKQW}}
Suppose that $(\phi(n))$ is a sequence of real numbers, the numbers $t, Q, r, S$ are all greater than $10$ and $r^2\leq S-1$. If 
\begin{equation*}
\frac{1}{ t}\left|\sum_{n\leq t} e(\phi(n)\alpha)\right| \leq  \frac{1}{r} \text{ for each }\frac{1}{Q}\leq |\alpha| \leq S, 
\end{equation*}
then for every $\alpha$ with $\frac{1}{Q}\leq |\alpha|\leq \frac{1}{2}$, we have
\begin{equation*}
\frac{1}{t}\left|\sum_{n\leq t} e(\lfloor \phi(n)\rfloor\alpha)\right|\leq 100\sqrt{\dfrac{\log r}{r}}.
\end{equation*}
\end{lem}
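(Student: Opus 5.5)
The plan is to write $e(\lfloor \phi(n)\rfloor\alpha)=e(\phi(n)\alpha)\,g(\{\phi(n)\})$ with $g(y):=e(-y\alpha)$ on $[0,1)$. Then I approximate the $1$-periodic function $g$, which has a single jump at integers, by a trigonometric polynomial of degree $H$. Each resulting term is an exponential sum $\sum_{n\le t} e(\phi(n)(\alpha+h))$ with $|h|\le H$, and the hypothesis controls all of these provided the frequencies $\alpha+h$ lie in the range $\frac1Q\le|\alpha+h|\le S$.

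\textbf{Step 1: trigonometric approximation.} Write $g(y)=e(-y\alpha)=\sum_{h\in\setZ}c_h e(hy)$ with
\[c_h=\int_0^1 e(-y(\alpha+h))\,dy,\qquad |c_h|\ll \frac{1}{1+|h|}\]
(using $|\alpha|\le\frac12$). Write $\psi(y)=\{y\}-\frac12$. Then $g(y)=e(-\alpha/2)\,e(-\alpha\psi(y))$, so its only non-smooth behaviour comes from the sawtooth $\psi$. For $\psi$ I use Vaaler's (or Erd\H{o}s--Tur\'an type) majorant/minorant: there is a trigonometric polynomial $\psi_H$ of degree $H$ with coefficients $\ll 1/|h|$ such that
\[|\psi(y)-\psi_H(y)|\le \frac{1}{2H+2}\sum_{|h|\le H}\Bigl(1-\frac{|h|}{H+1}\Bigr)e(hy),\]
a nonnegative Fej\'er kernel of mean $\frac1{H+1}$. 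Since $y\mapsto e(-\alpha y)$ is Lipschitz with constant $2\pi|\alpha|\le\pi$, we get
\[g(y)=e(-\alpha/2)\,e(-\alpha\psi_H(y))+O\bigl(\text{Fej\'er kernel}/H\bigr).\]
It is cleaner, though, to work directly with the truncated Fourier series $g_H=\sum_{|h|\le H}c_he(hy)$ of $g$. Its error is controlled by the same kernel, because the discontinuity of $g$ is a jump of size $|1-e(-\alpha)|\le 2\pi|\alpha|$ times a sawtooth, plus a smooth part whose Fourier tail is $O(1/H)$. I will commit to this version: $g=g_H+E_H$ with $|E_H(y)|\ll \frac1H+K_H(y)$, where $K_H$ is a nonnegative trigonometric polynomial of degree $H$ whose coefficients are all $\ll 1/H$.

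\textbf{Step 2: apply the hypothesis.} Substituting,
\[\frac1t\Bigl|\sum_{n\le t}e(\lfloor\phi(n)\rfloor\alpha)\Bigr|\le \sum_{|h|\le H}|c_h|\,\frac1t\Bigl|\sum_{n\le t}e(\phi(n)(\alpha+h))\Bigr|+\frac Ct\sum_{n\le t}\Bigl(\frac1H+K_H(\phi(n))\Bigr).\]
For $h=0$ the frequency is $\alpha$, with $\frac1Q\le|\alpha|\le\frac12$. For $h\ne0$ we have $\frac12\le|\alpha+h|\le H+\frac12$, so all frequencies lie in $[\frac1Q,S]$ as soon as $H\le S-1$. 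Hence the first sum is $\ll \frac1r\sum_{|h|\le H}\frac1{1+|h|}\ll\frac{\log H}{r}$.

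In the second sum I expand $K_H$. Its constant term contributes $O(1/H)$. Each nonconstant term is a coefficient of size $\ll 1/H$ times $\frac1t\bigl|\sum_n e(h\phi(n))\bigr|$ with $1\le|h|\le H\le S$, and $|h|\ge1\ge\frac1Q$, so each such sum is $\le\frac1r$. Together these terms contribute $\ll \frac{H}{H}\cdot\frac1r=\frac1r$.

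\textbf{Step 3: choose $H$.} Taking $H=r$ is allowed, since $r\le r^2\le S-1$. This gives the total bound $\ll \frac{\log r}{r}+\frac1r+\frac1r$, which is at most $100\sqrt{\log r/r}$ for $r>10$ once the constants are tracked. The extra slack $r^2\le S-1$ also permits $H=r^2$ or a cruder smoothing, which is presumably why the stated bound is only $\sqrt{\log r/r}$.

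\textbf{Main obstacle.} The main obstacle is Step 1. One needs an approximation of $g$, which is not periodic-continuous because it jumps at integers, with an error that is dominated \emph{pointwise} by a nonnegative trigonometric polynomial. Pointwise domination is required because we only control exponential sums, not the distribution of $\{\phi(n)\}$. If Vaaler's construction proves awkward to adapt to the factor $e(-y\alpha)$, the fallback is more elementary. Split $g$ into a smooth $1$-periodic function plus $(1-e(-\alpha))\,\psi$-type sawtooth. Then replace the sawtooth by a piecewise-linear continuous function that differs from it only on an interval of length $\delta$ around $0$. Bound the mass of $\{\phi(n)\}$ in that interval via Erd\H{o}s--Tur\'an using the $e(h\phi(n))$ sums. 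Optimizing $\delta$ against $H$ plausibly yields exactly the $\sqrt{\log r/r}$ rate in the statement.
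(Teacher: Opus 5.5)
The paper gives no proof of this lemma; it quotes \cite[Theorem~5.2]{BKQW}, so your argument has to stand on its own. The architecture is sound, but there is one concrete error, in the step you yourself flag as the obstacle: the version of Step~1 you commit to is false.

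\textbf{Why the committed Step~1 fails.} Write $S_H$ for the $H$-th Fourier partial sum. For the plain truncation $g_H=S_Hg=\sum_{|h|\le H}c_he(hy)$, there can be no bound $|g-g_H|\ll\frac1H+K_H$ with $K_H\ge0$ of degree $H$ and coefficients $\ll\frac1H$, even for $\alpha=\frac12$. Integrating over $[0,1]$ would give $\|g-g_H\|_{L^1}\ll\frac1H$. But write $g=G-(1-e(-\alpha))\psi$ with $G$ continuous. Then $g-S_Hg$ equals $-(1-e(-\alpha))(\psi-S_H\psi)$ up to a uniformly $O(1/H)$ term, and for $0<y<1$
\[
\psi(y)-S_H\psi(y)=-\frac{\cos\bigl(2\pi(H+\tfrac12)y\bigr)}{2\pi(H+1)\sin\pi y}+O\Bigl(\frac{1}{H^2\sin^2\pi y}\Bigr).
\]
This has $L^1$ norm $\asymp\frac{\log H}{H}$, and for $\alpha=\frac12$ the factor $|1-e(-\alpha)|=2$ does not help. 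Pointwise, the picture is the same: the Fej\'er kernel vanishes at $y=\frac1{H+1}$, while by the Gibbs phenomenon the partial-sum error there does not tend to $0$. The Fej\'er-kernel domination you quote is a property of Vaaler's polynomial $\psi_H^*$, not of the partial sum $S_H\psi$. Your justification of the ``cleaner'' version silently swaps one for the other.

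\textbf{The repair.} It is already in your remark about the jump: do not truncate $g$ itself. Set $G:=g+(1-e(-\alpha))\psi$. For $h\ne0$,
\[
\hat G(h)=-\frac{\alpha(1-e(-\alpha))}{2\pi i\,h(h+\alpha)},
\]
so $\|G-S_HG\|_\infty\le\sum_{|h|>H}\frac{\alpha^2}{|h||h+\alpha|}\le\frac1H$. Let $\psi_H^*$ be Vaaler's polynomial, with coefficients at most $\frac1{2\pi|h|}$ and $|\psi-\psi_H^*|\le\frac{1}{2H+2}\Delta_H$, where $\Delta_H(y)=\sum_{|h|\le H}\bigl(1-\frac{|h|}{H+1}\bigr)e(hy)\ge0$. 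Take
\[
g_H:=S_HG-(1-e(-\alpha))\,\psi_H^* .
\]
Then $|g-g_H|\le\frac1H+\frac{\pi}{2H+2}\Delta_H$. Moreover $|\hat g_H(0)|\le1$ and $|\hat g_H(h)|\le\frac1{2h^2}+\frac1{2|h|}$ for $h\ne0$, which is all that your Steps~2--3 use.

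\textbf{The resulting bound.} Take $H=\lfloor r\rfloor$. Then $H+\frac12\le r^2+1\le S$, and $|\alpha+h|\ge\frac12>\frac1Q$ for $h\ne0$, so every frequency lies in the hypothesis range. Your Steps~2--3 then give explicitly
\[
\frac1t\Bigl|\sum_{n\le t}e(\lfloor\phi(n)\rfloor\alpha)\Bigr|\le\frac{3.7+\log r}{r}+\frac1H+\frac{\pi}{2H+2}\Bigl(1+\frac{2H}{r}\Bigr)\le\frac{10+\log r}{r}\le3\sqrt{\frac{\log r}{r}}
\]
for $r>10$.

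So the corrected argument proves more than the quoted statement. It gives the rate $O(\log r/r)$ instead of $O(\sqrt{\log r/r})$, and needs only $S\ge r+1$ rather than $r^2\le S-1$. This costs nothing, since the paper's applications only use the weaker form.
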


\begin{proof}[Proof of Proposition~\ref{prop:integerpart}] We apply Lemma~\ref{lem:integer part} with $\phi(n)= a(n)$, $Q= t^{d-\frac{1}{2}}$, $r= \left(\log W(t)\right)^{1+3\epsilon}$ and $S= t^{\epsilon}$. Clearly, for sufficiently large $t$ (depending only on $a(x), W(x)$ and $\epsilon$) we have $r^2\leq S-1$. By Lemma~\ref{lem:largeregimereal}, our choice of parameters satisfies the hypothesis of Lemma~\ref{lem:integer part}. Hence, we obtain the desired estimate  \eqref{eq:toshow2}.
\end{proof}

Now, we will obtain suitable estimates for weighted exponential sums using the established bounds for Cesàro exponential sums. For that we will use the following lemmas.

Let us introduce the following notation. \[\Delta_{m,n}:=w(m) - w(n) \ \ \ \ m,n\in \setN.\]
\begin{lem}
Let $W(t) \in \mathbf{E}$ be such that $w(t) = W'(t)$ and $\log t \ll W(t) \ll t$. Then 
\begin{align}
\dfrac{w(t)}{W(t)}\ll \dfrac{1}{t} \label{eq:W1}\\
k\cdot |\Delta_{k,k+1}|\ll w(k) \label{eq:W2}
\end{align}
\end{lem}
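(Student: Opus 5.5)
Both estimates come from structural facts about Hardy field functions: eventual monotonicity, the existence of limits of ratios, and L'H\^opital's rule. Since $W\in\mathbf{E}$, the functions $W$, $w=W'$, $w'$ and $tw'(t)$ all lie in a common Hardy field. In particular $w$ is eventually monotone and every ratio such as $tw(t)/W(t)$ or $tw'(t)/w(t)$ has a limit in $[-\infty,\infty]$. The plan is to identify these limits using the hypothesis $\log t\ll W(t)\ll t$.

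For \eqref{eq:W1}, set $L:=\lim_{t\to\infty} \frac{t\,w(t)}{W(t)}$, which exists in $[0,\infty]$ because $w>0$. The goal is $L<\infty$. Suppose instead that $L=\infty$. Then $(\log W)'=w/W\geq M/t$ for every $M$ and all large $t$. Integrating gives $W(t)\gg t^{M}$, which contradicts $W(t)\ll t$ as soon as $M>1$. So $L<\infty$, and hence $w(t)/W(t)\ll 1/t$. (One could also note that $W(t)\geq\int_{t/2}^t w\geq \tfrac t2 w(t)$ because $w$ is non-increasing. This gives \eqref{eq:W1} directly, and I would present it that way since it is simpler.)

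For \eqref{eq:W2}, the mean value theorem gives $|\Delta_{k,k+1}|=|w'(\xi)|$ for some $\xi\in(k,k+1)$. Since $w'$ is eventually monotone and $w$ is non-increasing, $|w'|$ is eventually monotone. Hence $|w'(\xi)|$ lies between $|w'(k)|$ and $|w'(k+1)|$, and it suffices to show $t|w'(t)|\ll w(t)$ together with $w(k+1)\asymp w(k)$.

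Set $\ell:=\lim_{t\to\infty} \frac{t\,w'(t)}{w(t)}\in[-\infty,0]$. The main obstacle is ruling out $\ell=-\infty$. Suppose $\ell=-\infty$. Then $(\log w)'\leq -M/t$ for every $M$ and all large $t$, so $w(t)\ll t^{-M}$ for every $M$. In particular $w$ is integrable, so $W$ stays bounded, contradicting $W(t)\to\infty$. This step is where the lower bound on the growth of $W$ is used; only $W\to\infty$ is needed, and $\log t\ll W$ is more than enough. Therefore $\ell$ is finite, so $|w'(t)|\ll w(t)/t$.

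Moreover $(\log w)'=O(1/t)$ gives $\log w(k)-\log w(k+1)=O(1/k)$, so $w(k+1)\asymp w(k)$. Combining this with $|w'(t)|\ll w(t)/t$ yields $k|\Delta_{k,k+1}|\ll k\cdot w(k)/k=w(k)$, which is \eqref{eq:W2}.
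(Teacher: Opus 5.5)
Your proof is correct, but it uses the growth hypotheses differently from the paper.

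For \eqref{eq:W1}, the paper applies L'H\^opital's rule to $\log W(t)/\log t$ and uses $W\ll t$ to conclude that $tW'(t)/W(t)$ has a finite limit. Your direct bound $W(t)\ge W(t/2)+\int_{t/2}^t w\ge \frac t2\,w(t)$ (for large $t$) needs only that $w$ is non-increasing and $W\to\infty$.

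For \eqref{eq:W2}, the paper writes $w=\phi/t$ and uses $\log t\ll W$ to get $\phi\gg 1$. It then splits into two cases:
\begin{itemize}
\item if $\phi\to C\neq 0$, L'H\^opital gives $w'(t)\sim -C/t^2$;
\item if $\phi\to\infty$, L'H\^opital applied to $tw(t)/W(t)$, together with \eqref{eq:W1}, shows that $tw'(t)/w(t)$ has a finite limit.
\end{itemize}
You avoid both the case split and the dependence on \eqref{eq:W1}. You take $\ell=\lim tw'(t)/w(t)$ directly and rule out $\ell=-\infty$, since that would make $w$ decay faster than any power, hence be integrable, contradicting $W\to\infty$.

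What your route gains is that it shows the hypothesis $\log t\ll W\ll t$ is superfluous. The lemma holds for any $W$ in a Hardy field containing $t$ with $w$ positive and non-increasing and $W\to\infty$; indeed $W\ll t$ already follows from the monotonicity of $w$. The paper's route instead links \eqref{eq:W2} back to \eqref{eq:W1} through the identity $(tw)'/W'=1+tw'/w$.

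One simplification is available: since $w$ is non-increasing and $\xi>k$, you can bound $|w'(\xi)|\ll w(\xi)/\xi\le w(k)/k$ directly. Then neither the eventual monotonicity of $|w'|$ nor $w(k+1)\asymp w(k)$ is needed.
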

\begin{proof}
Since $\displaystyle\lim_{t\to \infty}\dfrac{\log (W(t))}{\log t}=C<\infty$, by L'Hôpital's rule, $\displaystyle\lim_{t\to \infty}\dfrac{ tW'(t)}{ W(t)}=C<\infty$, which establishes \eqref{eq:W1}.

 To see \eqref{eq:W2}, note that $w(t)$ is of the form $w(t)=\dfrac{\phi(t)}{t}$ with $\phi(t)\gg 1$.
 If $\phi(t)\to C$ for some $C\in \setR\setminus \{0\}$, then by  L'Hôpital's rule
 \begin{align*}
C= \lim_{t\to \infty} \dfrac{w(t)}{\frac{1}{t}}=
\lim_{t\to \infty} \dfrac{w'(t)}{-\frac{1}{t^2}}.
 \end{align*}
Hence for sufficiently large $k$,
 \begin{align*}
k\cdot |\Delta_{k,k+1}|\leq  k\cdot \sup_{t\in [k,k+1]} |w'(t)| \leq \dfrac{2|C|k}{k^2}\leq 3 w(k).
 \end{align*} 
 If $\lim_{t\to \infty} \phi(t)=\infty$, then by  L'Hôpital's rule we have
 \begin{align*}
 \lim_{t\to \infty}\dfrac{t w(t)}{W(t)}=\lim_{t\to \infty} \dfrac{t w'(t)+w(t)}{w(t)}
 \end{align*}
 By \eqref{eq:W1}, the limit on the left-hand side is finite, and hence  $t w'(t)\ll w(t)$, as desired.
Since $w(t)$ is eventually monotone, by the mean value theorem, this implies \eqref{eq:W2}.
\end{proof}

\begin{lem}\label{lem:largeregime} Let $a(x)$ and $W(x)$ be as in Proposition~\ref{prop:exponential}.
Then there exist constants $t_1, C>0$ such that for all $t\geq t_1$ and $\beta$ with $\frac{1}{\log^{s+1} t}\leq |\beta| \le \frac{1}{2}$ we have
\begin{equation*}
\left| \hat{\setA}^{\mathsf{a},W}_{t}(\beta)\right|\leq \frac{5C}{\left(\log W(t)\right)^{\frac{1}{2}+\epsilon}}.
\end{equation*}
The constants $C$ and $ t_1$ depend only on $a(x), W(x)$ and $\epsilon$.
\end{lem}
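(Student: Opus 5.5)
Write $S(u):=\sum_{n\le u} e(\lfloor a(n)\rfloor\beta)$. The plan is to transfer the Ces\`aro bound of Proposition~\ref{prop:integerpart} to the weighted sum $\hat{\setA}^{\mathsf{a},W}_t(\beta)$ by Abel summation (Lemma~\ref{lem:byparts}), using the two weight estimates \eqref{eq:W1} and \eqref{eq:W2}.

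\textbf{Step 1 (summation by parts).} Summing by parts gives
\begin{equation*}
\sum_{n\le t} w(n)e(\lfloor a(n)\rfloor\beta)= w(\lfloor t\rfloor)S(t)+\sum_{k< t}\Delta_{k,k+1}S(k).
\end{equation*}
Dividing by $W(t)$, the boundary term is bounded by $\frac{w(t)t}{W(t)}\cdot\frac{|S(t)|}{t}$. By \eqref{eq:W1} this is $\ll |S(t)|/t$.

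\textbf{Step 2 (splitting the sum).} I split the sum over $k$ at a threshold $k_0(t)$ and treat the two ranges separately.

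For $k\le k_0$ I use the trivial bound $|S(k)|\le k$ together with \eqref{eq:W2}. This gives
\begin{equation*}
\frac{1}{W(t)}\sum_{k\le k_0}|\Delta_{k,k+1}|\,k\ll \frac{W(k_0)}{W(t)}.
\end{equation*}

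For $k_0<k<t$ I need the hypothesis of Proposition~\ref{prop:integerpart} at scale $k$, namely $|\beta|\ge k^{-(d-1/2)}$. Since $|\beta|\ge \log^{-(s+1)}t$, it suffices that $k^{d-1/2}\ge \log^{s+1}t$. So I choose $k_0=(\log t)^{s+2}$, say. Then Proposition~\ref{prop:integerpart} gives $|S(k)|/k\le C(\log W(k))^{-1/2-\epsilon}$. Combining this with \eqref{eq:W2}, the tail is at most
\begin{equation*}
\frac{C}{W(t)}\sum_{k_0<k<t} w(k)\,(\log W(k))^{-1/2-\epsilon}.
\end{equation*}

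\textbf{Step 3 (handling the tail).} The tail is the main obstacle, because $\log W(k)$ is small for small $k$, whereas I want the final bound in terms of $\log W(t)$. I split the tail range further at $k_1$ with $W(k_1)=W(t)^{1/2}$.

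For $k\ge k_1$ we have $\log W(k)\ge\frac12\log W(t)$, which gives a contribution $\ll(\log W(t))^{-1/2-\epsilon}$.

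For $k_0<k<k_1$, bounding $(\log W(k))^{-1/2-\epsilon}$ by a constant gives a contribution $\ll W(k_1)/W(t)=W(t)^{-1/2}$. This is negligible because $W(t)\gg\log t$.

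\textbf{Step 4 (the small range).} It remains to see that the first range is harmless, i.e.\ that $W(k_0)/W(t)$ is small. We have $W(k_0)\le k_0\ll(\log t)^{s+2}$, which is small relative to $W(t)$ only if $W$ grows faster than a power of $\log t$. If $W$ is close to $\log t$, I instead use $W(k_0)\ll \log k_0\ll \log\log t$ (taking $W\ll t$ and $w$ decreasing, so $W(k_0)\ll$ the log-average value), and $\log\log t/\log t$ is certainly $\ll(\log W(t))^{-1/2-\epsilon}$.

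More uniformly, since $\log W(t)\asymp$ at most $\log t$, one checks that $W(k_0)/W(t)\ll (\log W(t))^{-1}$ by comparing via L'Hôpital, as in Lemma~\ref{lem:comp:weights}. I expect this comparison to need some care, with a possible case split according to whether $tw(t)$ tends to a constant or to infinity, mirroring the proof of \eqref{eq:W2}.

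\textbf{Step 5 (conclusion).} Summing the boundary term (at most $C(\log W(t))^{-1/2-\epsilon}$) and the tail pieces gives a bound of $5C(\log W(t))^{-1/2-\epsilon}$ for $t\ge t_1$, after shrinking $\epsilon$ if necessary.
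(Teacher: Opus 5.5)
Your Steps 1--3 are sound and follow the same scheme as the paper. The scheme is: Abel summation, \eqref{eq:W1} for the boundary term, and \eqref{eq:W2} combined with Proposition~\ref{prop:integerpart} above a cut-off, with the trivial bound below it. The only structural difference is where you cut.

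The paper takes $r$ with $W(r)\approx W(t)/(\log W(t))^{1/2+\epsilon}$. This makes the head $W(r)/W(t)$ exactly the right size and gives $\log W(k)\ge\frac12\log W(t)$ on the whole tail, so your second split at $k_1$ is not needed. The paper must then check that $r\ge(\log t)^{s+2}$, so that Proposition~\ref{prop:integerpart} applies for $|\beta|\ge\log^{-(s+1)}t$.

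You cut at $k_0=(\log t)^{s+2}$ instead. The Proposition then applies automatically, but you must show $W(k_0)/W(t)\ll(\log W(t))^{-1/2-\epsilon}$. Both checks are the same inequality, $W(t)\gg(\log W(t))^{1/2+\epsilon}\,W((\log t)^{s+2})$, and it is the only step with real content.

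\textbf{The gap is in Step 4: as written it does not prove this inequality.}

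Your two regimes are $W$ growing faster than $(\log t)^{s+2}$, and $W\ll\log t$. They leave out the intermediate range, for example $W(t)=(\log t)^2$.

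The parenthetical justification is also false. The facts $W\ll t$ and $w$ non-increasing do not give $W(k_0)\ll\log k_0$. For $W(t)=\sqrt t$ one has $W(k_0)=(\log t)^{(s+2)/2}$.

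The closing appeal to L'H\^opital with a case split on $tw(t)$ is only a hope, not an argument. (Your stronger claim $W(k_0)/W(t)\ll(\log W(t))^{-1}$ is in fact true, but you have not shown it, and it is not needed.)

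\textbf{The fix is the paper's dichotomy.} Fix $\eta_2\in(0,1)$ with $(s+2)\eta_2<\frac14$, and take $\epsilon<\frac14$. Since $W$ and $t^{\eta_2}$ both lie in $\mathbf{E}$, either $W(t)\ll t^{\eta_2}$ or $W(t)\gg t^{\eta_2}$.

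In the first case, $W(k_0)\ll k_0^{\eta_2}\le(\log t)^{1/4}$ and $(\log W(t))^{1/2+\epsilon}\ll(\log t)^{1/2+\epsilon}$. Hence $W(k_0)(\log W(t))^{1/2+\epsilon}\ll(\log t)^{3/4+\epsilon}\prec\log t\ll W(t)$.

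In the second case, $W(k_0)\ll k_0=(\log t)^{s+2}$, because $W(x)\ll x$. Hence $W(k_0)(\log W(t))^{1/2+\epsilon}\ll(\log t)^{s+3}\prec t^{\eta_2}\ll W(t)$.

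With this inserted, your head term is $\ll(\log W(t))^{-1/2-\epsilon}$ and the argument is complete. Any absolute constants from \eqref{eq:W1}, \eqref{eq:W2} and the factor $2^{1/2+\epsilon}$ are simply absorbed into $C$; no shrinking of $\epsilon$ is needed in Step 5.
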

\begin{proof}
We want to show that there exists a constant $C>0$ such that for sufficiently large $t$, in the given range of $\beta$, we have
\begin{equation*}\label{eq:toshow3}
\frac{1}{W(t)}\left|\sum_{n\leq t} w(n)e\left( \lfloor a(n)\rfloor \beta\right) \right|\leq \frac{5C}{\left(\log W(t)\right)^{\frac{1}{2}+\epsilon}}.
\end{equation*}
To establish this, we will use Proposition~\ref{prop:integerpart}. Let $t_0$ be as in Proposition~\ref{prop:integerpart} and $b(n)=\lfloor a(n)\rfloor$. Let $t$ be a large positive number. Assume that $r:=r(t)$ is a positive number (which will be specified later) such that $r$ is much smaller than $t$ but much larger than $t_0$.
By applying Lemma~\ref{lem:byparts} with $g_k= \Delta_{k,t}$, $h_k= \sum_{l=1}^{k-1} e(b(l)\beta)$, $m=r$ and $n=t-1$, we get
\begin{equation*}
\begin{aligned}
\sum_{k=r}^{t-1} \Delta_{k,t} e\bigl(b(k)\beta\bigr)
= -\Delta_{r,t} \sum_{l=1}^{r-1} e\bigl(b(l)\beta\bigr)  - \sum_{k=r}^{t-1} 
   \left(\sum_{l \le k} e\bigl(b(l)\beta\bigr)\right)
   \Delta_{k+1,k}.
\end{aligned}
\end{equation*}
Rewriting the above equation and using the fact that $\Delta_{t,t}=0$, we get
\begin{equation*}
\begin{aligned}
\sum_{k=1}^{t} \Delta_{k,t} e\left(b(k)\beta\right)
&=\sum_{k=r}^{t-1} \Delta_{k,t} e\left(b(k)\beta\right) + \sum_{k=1}^{r-1} \Delta_{k,t} e\left(b(k)\beta\right) \\
&= \Delta_{t,r}\sum_{l=1}^{r-1} e(b(l)\beta)+ \sum_{k=r}^{t-1} \left(\sum_{l\leq k} e(b(l)\beta)\right) \Delta_{k,k+1}+\sum_{k=1}^{r-1} \Delta_{k,t} e\left(b(k)\beta\right)\\
&= \sum_{k=r}^{t-1} \left(\sum_{l\leq k} e(b(l)\beta)\right) \Delta_{k,k+1}+\sum_{k=1}^{r-1} \Delta_{k,r} e\left(b(k)\beta\right)
\end{aligned}
\end{equation*}
Hence, we get
\begin{equation}
\begin{aligned}\label{eq:33}
\frac{1}{W(t)}\sum_{k\leq t} w(k)e\left( b(k) \beta\right) &= \frac{1}{W(t)} \sum_{k\leq {t}} w(t)  e( b(k) \beta)+\frac{1}{W(t)}\sum_{k=r}^{t-1}\Delta_{k,k+1}\left(\sum_{l=1}^{k}  e( b(l) \beta)\right)\\
\quad&\ \ \ \ \ \ \ \ \ \ \ \ \ \ \ \ \ \ \ \ \ \ \ \ \ \ \ \ \ \ \ \ + \frac{1}{W(t)} \sum_{k=1}^{r-1} \Delta_{k,r} e( b(k) \beta)\\
&= (1)+(2)+(3) \text{ (say) .}
\end{aligned}
\end{equation}
Let $\beta\in \setR$ be such that $\frac{1}{r^{d-\frac{1}{2}}}\leq |\beta|\leq \frac{1}{2}$.\\ Then  by Proposition~\ref{prop:integerpart} there are constants $C>0$ and $t_0$ (depending only on $a(x), W(x)$ and $\epsilon$) such that for all $k\geq r\geq t_0$ we have 
\begin{equation}\label{eq:lessthanW}
\left|\frac{1}{k}\sum_{n=1}^k  e( b(n) \beta)\right|\leq \frac{C}{\left(\log W(k)\right)^{\frac{1}{2}+\epsilon}} \leq \frac{C}{\left(\log W(r)\right)^{\frac{1}{2}+\epsilon}},
\end{equation}
Using this along with \eqref{eq:W1}, we get
\begin{equation}\label{eq:34}
|(1)|= \frac{w(t)}{W(t)} \left|\sum_{k\leq {t}}  e( b(k) \beta)\right| \ll \frac{1}{t} \left|\sum_{k\leq {t}}  e( b(k) \beta)\right| \leq    \frac{C}{\left(\log W(r)\right)^{\frac{1}{2}+\epsilon}}
\end{equation}
Similarly, \eqref{eq:W2} and \eqref{eq:lessthanW} give
\begin{equation}
\begin{aligned}\label{eq:35}
|(2)|\leq  \frac{1}{W(t)}\sum_{k=r}^{t-1}k\cdot |\Delta_{k,k+1}|\cdot \left|\frac{1}{k}\sum_{l=1}^k  e( b(l) \beta)\right| &\leq \frac{1}{W(t)}\sum_{k=r}^{t-1} w(k)\cdot \frac{C}{\left(\log W(r)\right)^{\frac{1}{2}+\epsilon}}\\
&\leq \frac{C}{\left(\log W(r)\right)^{\frac{1}{2}+\epsilon}}.
\end{aligned}
\end{equation}

\begin{equation*}\label{eq:36}
|(3)| \leq \frac{1}{W(t)} \sum_{k\leq r-1} w(k)\leq  \frac{ W(r)}{W(t)}.
\end{equation*}

Now, we define $r$ as follows
\[r=r(t):=\max\left\{u\in \setN:  W(u)\leq \left(\dfrac{W(t)}{\left(\log W(t)\right)^{\frac{1}{2}+\epsilon}}\right)\right\}.\]
Define $t_2$ to be a positive number such that $t\geq t_2$ implies $r\geq t_0$.
Then by the above discussion, for any $t\geq t_2$, \eqref{eq:34} and \eqref{eq:35} both hold. Observing that $\log W(r) \geq \frac{1}{2} \log W(t)$ and using \eqref{eq:33}, for any $\beta$ with $\frac{1}{r^{d-\frac{1}{2}}}\leq |\beta|\leq \frac{1}{2}$ we have 
\begin{equation*}
\left|\frac{1}{W(t)}\sum_{k\leq t} w(k)e\left( b(k) \beta\right)\right|\leq \frac{2C}{\left(\log W(t)\right)^{\frac{1}{2}+\epsilon}}. 
\end{equation*}
Now we show that for sufficiently large $t$, $\frac{1}{r^{d-\frac{1}{2}}}\leq \frac{1}{\log^{s+1} t}$. This will follow if we can show that, for any fixed positive number $\eta_1$, $(r+1)\gg \log^ {\eta_1} t$. Since $W$ is an increasing function, we need to show that $W(r+1)\gg W(\log^{\eta_1} t)$. 
By definition of $r$, this will follow if we can show that 
\begin{align*}
W(t)\gg \left(\log W(t)\right)^{\frac{1}{2}+\epsilon}\cdot W(\log ^{\eta_1} t).
\end{align*}
To see this let ${\eta_2}$ be a real number such that ${\eta_2}\in (0,1)$ and ${\eta_1}{\eta_2}<\frac{1}{4}$.\\
If $\log t \ll W(t)\ll t^{\eta_2}$, then we have
\begin{align*}
W(t)\gg \log t\gg (\log t)^{\frac{1}{2}+\epsilon}\cdot \log^{{\eta_1}{\eta_2}} t \gg \left(\log W(t)\right)^{\frac{1}{2}+\epsilon}\cdot W(\log ^{\eta_1} t).
\end{align*}
If $t^{\eta_2} \ll W(t)\ll t$, then we have
\begin{align*}
W(t)\gg t^{\eta_2} \gg (\log t)^{\frac{1}{2}+\epsilon}\cdot \log^{{\eta_1}} t \gg \left(\log W(t)\right)^{\frac{1}{2}+\epsilon}\cdot W(\log ^{\eta_1} t).
\end{align*}
Let $t_3$ be such that $t\geq t_3$ implies $\frac{1}{r^{d-\frac{1}{2}}}\leq \dfrac{1}{\log^{s+1} t}$.
Finally, taking $t_1:=\max\{t_2,t_3\}$, the conclusion of the lemma follows.
\end{proof}

\begin{proof}[Proof of Proposition~\ref{prop:exponential}]
The proof now follows from Lemma~\ref{lem:smallregime} and Lemma~\ref{lem:largeregime}. 
\end{proof}

\section{Ces\`aro averages along subpolynomials of degree $1$}\label{deg:1}
This section is dedicated to proving the following theorem.
\begin{thm}[Theorem \ref{thm:deg1:int}] \label{thm:deg1}
Let $a(x)\in \mathbf{U}$ have the canonical decomposition $a(x)=cx+r(x)$ with $c\neq 0$.
    If either $r(x) \succ \log^ {2+\epsilon} x$ for some $\epsilon>0$, or $c=\frac{1}{m}$ for some $m\in \setZ$,  then $(\lfloor a(n) \rfloor)$ is pointwise $L^2$-good.
\end{thm}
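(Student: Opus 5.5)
The two hypotheses call for different arguments. When $c=1/m$ the floor function essentially passes through the linear part, and a combinatorial reduction to the ordinary pointwise ergodic theorem suffices. When $r(x)\succ\log^{2+\epsilon}x$ I would use the lacunary/spectral scheme of Section~\ref{sec:sparse1}.

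\emph{Case $c=1/m$.} Write $n=mk+j$ with $0\le j<m$. Then $\lfloor a(mk+j)\rfloor = k+\lfloor j/m + r(mk+j)\rfloor = k+\lfloor s_j(k)\rfloor$, where $s_j(k):=j/m+r(mk+j)$ satisfies $s_j'\to 0$. It therefore suffices to treat, for each fixed $j$, the averages $\frac1K\sum_{k\le K} f(T^{k+\lfloor s_j(k)\rfloor}x)$. Since $s_j$ is eventually monotone with derivative tending to $0$, the map $k\mapsto k+\lfloor s_j(k)\rfloor$ is eventually nondecreasing. Its increments lie in $\{0,1\}$ or $\{1,2\}$, and the number of exceptional steps up to $K$ is at most $|s_j(K)|+O(1)=o(K)$. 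Consequently every integer is hit at most twice, and all integers in $[1,K+s_j(K)]$ except $o(K)$ of them are hit exactly once. For bounded $f$, the average is then $\frac1K\sum_{n\le K+s_j(K)} f(T^nx)+o(1)$, which converges a.e.\ by Birkhoff's theorem. For general $f\in L^2$ (indeed $L^1$), I would truncate: $f=f\mathbbm 1_{|f|\le M}+g$. The $g$-part is bounded by $\frac{2}{K}\sum_{n\le 2K}|g|(T^nx)\to 4\int|g|$ a.e., which is small. This case causes no difficulty.

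\emph{Case $r\succ\log^{2+\epsilon}x$.} Assume $c>0$, and let $\gamma$ be the compositional inverse of $a$. I would compare $\setA^{\mathsf a}_t f$ with the weighted averages
\[
\mathbb D_t f(x):=\frac1t\sum_{k\le a(t)}\gamma'(k)\, f(T^kx).
\]
Here $\gamma'(k)=1/a'(\gamma(k))\to 1/c$ is slowly varying, so $\mathbb D_tf$ converges a.e.\ by the ergodic theorem and Lemma~\ref{lem:HW-2}. By the lacunary trick (Lemma~\ref{lem:reduction2} with $W(t)=t$, i.e.\ $N_i=\rho^i$) and the spectral theorem, as in the proof of Theorem~\ref{thm:sparse2}, it suffices to prove
\[
\sup_{|\beta|\le 1/2}\bigl|\hat{\setA}^{\mathsf a}_t(\beta)-\hat{\mathbb D}_t(\beta)\bigr|\ll (\log t)^{-\frac12-\epsilon'}.
\]
I would verify this by splitting the range of $\beta$ into three regimes.
\begin{itemize}
\item[(i)] For $|\beta|$ smaller than a suitable negative power of $t$, I would count $\#\{n:\lfloor a(n)\rfloor=k\}=\gamma(k+1)-\gamma(k)+O(1)$. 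Summation by parts then makes the two transforms agree up to $O(|\beta|\,a(t)/t)+O(t^{-\delta})$.
\item[(ii)] For larger $\beta$, $\hat{\mathbb D}_t(\beta)$ is small by partial summation against the geometric sum $\sum_k e(k\beta)$.
\item[(iii)] In the same range, I would bound $\hat{\setA}^{\mathsf a}_t(\beta)$ via Lemma~\ref{lem:integer part}. This reduces the task to showing $\frac1t\bigl|\sum_{n\le t}e(a(n)\alpha)\bigr|\le(\log t)^{-1-3\epsilon'}$ for $t^{-1/2}\le|\alpha|\le t^{\epsilon'}$.
\end{itemize}

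The main obstacle is the real exponential-sum bound in (iii), because the phase $\alpha cn+\alpha r(n)$ has first derivative $c\alpha+\alpha r'(n)$, which may lie close to an integer. When $\|c\alpha\|$ is not too small (say $\ge(\log t)^{-2}$), I would apply Kusmin--Landau on dyadic blocks $[t/2^{j+1},t/2^j]$ with $t/2^j\ge t^{1-\delta}$: the derivative is monotone and stays away from integers, giving $O(1/\|\cdot\|)$ per block. When $c\alpha$ is within $(\log t)^{-2}$ of an integer $h$, the sum reduces to $\sum_n e\bigl((c\alpha-h)n+\alpha r(n)\bigr)$. Here the derivative $\alpha r'(x)\gg(\log x)^{1+\epsilon}/x$ is monotone (Lemma~\ref{lem:Lopital}). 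Van der Corput's first-derivative test, or partial integration, then bounds each dyadic block of length $X$ by about $1/(\alpha\, r'(X))\ll X/(\log X)^{1+\epsilon}$. Summing the blocks gives $(\log t)^{-1-\epsilon}$, and this is exactly where the exponent $2+\epsilon$ enters. I expect the transitional range, where $\|c\alpha\|$ and $|\alpha r'(x)|$ are comparable and may cancel, to need the most care. There I would use the second-derivative test with $|\alpha r''(x)|\gg(\log x)^{1+\epsilon}/x^2$.
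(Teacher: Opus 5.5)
\textbf{The case $c=1/m$.} Your argument is correct. It is also more self-contained than the paper's, which first invokes the other case to reduce to $r\prec\log^4x$ and then applies a Borel--Cantelli lemma to the exceptional indices. Two small fixes are needed. In a non-ergodic system the truncation bound is $4\,\mathbb{E}(|g|\mid\mathcal I)$ rather than $4\int|g|$; this still tends to $0$ a.e.\ as the truncation level grows. And $m<0$ is handled by passing to $T^{-1}$.

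\textbf{The case $r\succ\log^{2+\epsilon}x$: the gap is in step (iii).} Your architecture is the paper's: the comparison operator $\frac1t\sum_{k\le a(t)}\gamma'(k)f(T^kx)$, the lacunary trick, the spectral theorem, geometric sums for $\hat{\mathbb D}_t$, and Lemma~\ref{lem:integer part} for $\hat{\setA}^{\mathsf a}_t$. Step (iii) needs the uniform bound $\frac1t\bigl|\sum_{n\le t}e(a(n)\alpha)\bigr|\le(\log t)^{-1-3\epsilon'}$ on $t^{-1/2}\le|\alpha|\le t^{\epsilon'}$, and this bound is false for $a(x)=cx+(\log x)^{\eta}$ with $2<\eta<3$.

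To see this, take $c>0$, $x_0=3t/4$ and $\alpha=1/(c+r'(x_0))$, so that $c\alpha=1-\alpha r'(x_0)$. Then $e(a(n)\alpha)=e\bigl(\alpha(r(n)-r'(x_0)n)\bigr)$. For $n$ beyond a constant, this phase has monotone derivative $\alpha(r'(x)-r'(x_0))$ of modulus $<\frac12$, vanishing at $x_0$, and second derivative $\asymp(\log t)^{\eta-1}/t^2$ there. The sum-to-integral lemma plus stationary phase give $\bigl|\sum_{n\le t}e(a(n)\alpha)\bigr|\asymp t(\log t)^{-(\eta-1)/2}$.

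This is exactly your ``transitional range''. The second-derivative test you propose there returns $(\log t)^{-(\eta-1)/2}$, which is sharp. Since Lemma~\ref{lem:integer part} loses a square root, this route yields at best $|\hat{\setA}^{\mathsf a}_t(\beta)|\ll(\log t)^{-(\eta-1)/4+o(1)}$. That falls short of the $(\log t)^{-1/2-\tau}$ needed for summability along $N_i=\rho^i$ unless $\eta>3$. The paper's Subcase (b) hits the same wall: its bound $(\log t)^{-(\eta-1-\gamma)/2}$ has exponent above $1$ only when $\eta>3$. So the $2+\epsilon$ threshold does not come from the first-derivative computation you point to.

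\textbf{What is missing} is a treatment of the floor function without the square-root loss. Write $e(\lfloor x\rfloor\beta)=e(-\beta/2)\,e(x\beta)\,e(-\psi(x)\beta)$ with $\psi(x)=\{x\}-\frac12$. The periodic function $e(-\psi\beta)+2i\sin(\pi\beta)\,\psi$ is continuous and piecewise smooth, so its Fourier coefficients are $O(h^{-2})$. The remaining sawtooth term is handled by Vaaler's trigonometric polynomial of degree $H$ together with its nonnegative majorant of the error. For $t^{-1/2}\le|\beta|\le\frac12$ this gives
\[
|\hat{\setA}^{\mathsf a}_t(\beta)|\ll(1+\log H)\,M_t+H^{-1},\qquad M_t:=\sup_{t^{-1/2}\le|\alpha|\le H+1}\Bigl|\frac1t\sum_{n\le t}e(a(n)\alpha)\Bigr|.
\]
Take $H=\log t$ and write $r\succ(\log x)^{\eta}$ with $\eta>2$. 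It then suffices to have $M_t\ll(\log t)^{-(\eta-1)/2}$. This follows from Kusmin--Landau when $|\alpha|$ is below a small constant (the derivative $\alpha(c+r'(x))$ stays between $c|\alpha|/2$ and $\frac12$). Otherwise it follows from the second-derivative test on dyadic blocks of $[t^{1-\delta},t]$, using $|r''(x)|\gg(\log x)^{\eta-1}/x^2$. The result is $(\log t)^{-1/2-\tau}$ for every $\tau<(\eta-2)/2$, and this is exactly where $\eta>2$ genuinely enters.
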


\begin{rem}
\begin{enumerate}[(i)]\item If $r(x)=0$, then $(a(n))=(cn)$ is pointwise $L^1$-good for any measure preserving flow, and hence, by \cite[Theorem 1]{BJW},  $(\lfloor a(n) \rfloor)$ is pointwise $L^2$-good.
    \item The case $c=0$ is taken care of in Theorem~\ref{thm:sufficient}.
    \item If $1\prec r(x)\prec \log x$ and $1/c \notin \mathbb{Z}$, then by \cite[Theorem 3.2]{BKQW}, $\lfloor a(n) \rfloor$ is bad for mean convergence, and hence it is pointwise $L^2$-bad. 
\end{enumerate}
\end{rem}

To prove this theorem,  we require some modifications of \cite[Theorem 7.1]{BKQW} because this theorem does not apply to the degree $1$ case. Let $F(x)$ be the compositional inverse of $a(x)$.
We will compare 
$$\setA^{\mathsf{a}}_tf(x):= \displaystyle\frac{1}{t} \sum_{n\leq t} f(T^{\lfloor a(n) \rfloor}x) \text{ with }\setB_tf(x):= \displaystyle\frac{1}{t} \sum_{n\leq a(t)} F'(n) f(T^n x).$$
Note that $\lim_{t \rightarrow \infty} \setB_t f(x)$ exists for almost every $x$. 
This can be justified as follows:\\
If $F'(n)$ is non-increasing, then Lemma~\ref{lem:HW-2} (a) gives the desired result. 
If  $F'(n)$ is non-decreasing, then
\[ \lim_{N \rightarrow \infty}  \dfrac{N F'(N)}{\sum_{n=1}^N F'(n)} = \lim_{y \rightarrow \infty} \frac{y  F'(y)}{F(y)} =  \lim_{y \rightarrow \infty} \frac{\log F(y)}{\log y} = \lim_{x \rightarrow \infty} \frac{\log F(a(x))}{\log a(x)} = \lim_{x \rightarrow \infty} \frac{\log x}{\log a(x)} =1. \]
Hence,
\begin{align*}
\sup_N \dfrac{N F'(N)}{\sum_{n=1}^N F'(n)} <\infty.
\end{align*} Then, invoking Lemma~\ref{lem:HW-2} (b), we obtain the desired conclusion.

\begin{rem}
This is one instance where extra care is needed in the degree one case, unlike in the case of degree two or higher, where $F'(n)$ always decreases to $0$. See the first line of page 88 in \cite{BKQW}.
\end{rem}

As in the proof of Theorem~\ref{thm:sparse2}, we consider the Fourier transforms of $\setA^{\mathsf{a}}_t f$ and $\setB_tf$:
$$\displaystyle\hat{\setA}^{\mathsf{a}}_t (\beta) := \dfrac{1}{t}\sum_{n\leq t} e( \lfloor a(n)\rfloor \beta ) \quad \text{and} 
\quad \displaystyle\hat{\setB}_t (\beta) := \dfrac{1}{t}\sum_{n\leq a(t)} F'(n) e( n \beta).$$
To prove this theorem, we will use the following proposition, the proof of which is given at the end of this section. 

\begin{prop}{}\label{prop:expsum:deg1}
Let $a(x)\in \mathbf{U}$ have the canonical decomposition $a(x)=cx+r(x)$ with $c\neq 0$. Suppose that  $r(x) \succ \log^ {2+\epsilon} x$. 
Then there exists $\tau>0$ such that for $0 \leq  |\beta| \leq {1/2}$
\begin{align*}
\left|\dfrac{1}{t}\sum_{n\leq t} e( \lfloor a(n)\rfloor \beta )-\dfrac{1}{t}\sum_{n\leq a(t)} F'(n) e( n \beta)\right|
\ll  \dfrac{1}{(\log t)^{1/2+\tau}}.
\end{align*}
\end{prop}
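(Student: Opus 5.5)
I would follow the scheme of Proposition~\ref{prop:exponential}: split $\beta$ into a small regime $|\beta|\le (\log t)^{-K}$ (for a suitable $K$, e.g.\ $K=3$) and a large regime $(\log t)^{-K}\le|\beta|\le 1/2$. In the small regime the two sums are compared directly. In the large regime each sum is bounded separately by $(\log t)^{-1/2-\tau}$.

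\emph{Small $\beta$.} I will write $\sum_{n\le t} e(\lfloor a(n)\rfloor\beta)=\sum_{m} \#\{n\le t:\lfloor a(n)\rfloor=m\}\, e(m\beta)$. Since $a$ is eventually monotone, $\#\{n:\lfloor a(n)\rfloor=m\}=F(m+1)-F(m)+O(1)$, and the mean value theorem gives $F(m+1)-F(m)=F'(m)+O(|F''(\xi)|)$. The error $\sum_{m\le a(t)} |F''|$ is controlled, because $F'$ is eventually monotone and tends to $1/c$, so the sum telescopes to $O(1)$. The $O(1)$-per-term discrepancy coming from the floor function is the real issue: summed naively it is $O(t)$. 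To handle it I will instead use $|e(\lfloor a(n)\rfloor \beta)-e(a(n)\beta)|\le 2\pi|\beta|$, and then compare $\frac1t\sum_{n\le t}e(a(n)\beta)$ with $\frac1t\int_0^{a(t)}F'(y)e(y\beta)\,dy$ via the substitution $y=a(x)$ and Euler--Maclaurin. This gives an error $O(|\beta|+1/t)$, which is $\ll(\log t)^{-K}$.

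\emph{Large $\beta$, the $\setB_t$ term.} The weight $F'$ is eventually monotone with bounded variation. Summation by parts with the geometric sum $\sum e(n\beta)\ll 1/|\beta|$ then gives $|\hat\setB_t(\beta)|\ll 1/(t|\beta|)$, which is negligible for $|\beta|\ge(\log t)^{-K}$.

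\emph{Large $\beta$, the $\hat\setA_t$ term.} Here I invoke Lemma~\ref{lem:integer part} with $\phi=a$, $Q=(\log t)^K$, $S=t^{\epsilon}$ and $r=(\log t)^{1+3\tau}$. This reduces the task to showing $\frac1t|\sum_{n\le t}e(a(n)\alpha)|\le(\log t)^{-1-3\tau}$ for all $(\log t)^{-K}\le|\alpha|\le t^\epsilon$. This is the main obstacle, since $a$ is essentially linear and $\alpha$ can be close to an integer multiple of $1/c$, where the linear part gives no cancellation. I would proceed as follows.

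\begin{itemize}
\item First I would use van der Corput's second-derivative test (the $l=2$ case of Lemma~\ref{lem:van der Corput}) on dyadic blocks $[X,2X]$ with $t^{1/2}\le X\le t$. There $\phi''=\alpha r''(x)$, and Lemma~\ref{lem:Lopital} gives $|r''(x)|\succ(\log x)^{1+\epsilon}/x^2$. This yields a bound of roughly $X\,(|\alpha|(\log X)^{1+\epsilon})^{-1/2}$ plus lower-order terms.
\item When $\|\alpha c\|$ is not small, I would first apply the Kusmin--Landau first-derivative test: $\phi'=\alpha c+\alpha r'$ with $\alpha r'$ tiny, so the phase derivative stays away from integers and the block sum is $O(1/\|\alpha c\|)$.
\item In the remaining case, $\alpha c$ is near an integer and $|\alpha|\gg 1$. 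Then the second-derivative bound gives $(|\alpha|\log^{1+\epsilon}X)^{-1/2}\le(\log t)^{-1/2-\epsilon/2}$.
\item For small $|\alpha|\in[(\log t)^{-K},1]$, the first-derivative test handles the sum unless $\alpha c$ is near $0$; but then $|\alpha|\ge(\log t)^{-K}$ keeps $\alpha c$ away from $0$, so the first-derivative test applies.
\end{itemize}

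Balancing the cases near integers: a second-derivative bound of only $(\log t)^{-1/2-\epsilon/2}$ in Cesàro form is weaker than the required $r^{-1}=(\log t)^{-1-3\tau}$. So I expect to need a sharper treatment via the van der Corput B-process (Poisson summation), which converts the near-resonant sum into a short sum of length about $|\alpha| X|r'(X)|$. This is where the threshold $\log^{2+\epsilon}$ should enter: it gives $|r'(X)|X\gg(\log X)^{1+\epsilon}$. I would then verify the bound $(\log t)^{-1-3\tau}$ and adjust $\tau$ small in terms of $\epsilon$.
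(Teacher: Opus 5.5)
\noindent Your first steps essentially match the paper's proof (Lemmas~\ref{lem:6} and~\ref{lem:7}) and are fine. These are: removing the floor and comparing with $\int F'(y)e(y\beta)\,dy$ for small $\beta$; summation by parts for $\hat{\setB}_t$; and reducing $\hat{\setA}_t$ via Lemma~\ref{lem:integer part} to a Ces\`aro bound at level $(\log t)^{-1-3\tau}$.

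The gap is the last step. That Ces\`aro bound is false when $r$ grows like $(\log x)^{\eta}$ with $2<\eta<3$, so neither the B-process nor any other estimate can supply it. Take $c>0$, $r(x)=(\log x)^{5/2}$ and $\alpha_t=1/(c+r'(t/2))$, which tends to $1/c$ and so lies in your range. Since $\alpha_t c=1-\alpha_t r'(t/2)$, we have $e(\alpha_t a(n))=e(\psi(n))$ with $\psi(x)=\alpha_t\bigl(r(x)-r'(t/2)x\bigr)$. On $[t/4,t]$ the derivative $\psi'$ is monotone, $o(1)$, and vanishes at $t/2$, while $|\psi''|\asymp(\log t)^{3/2}t^{-2}$. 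Replacing the sum by the integral (legitimate since $|\psi'|\le 1/2$) and applying stationary phase gives $|\sum_{t/4<n\le t}e(\psi(n))|\asymp t(\log t)^{-3/4}$. The first-derivative test bounds the range $n\le t/4$ by $O(t(\log t)^{-3/2})$. Hence $\frac1t|\sum_{n\le t}e(\alpha_t a(n))|\asymp(\log t)^{-3/4}$, far above $(\log t)^{-1-3\tau}$.

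This is also why the B-process cannot help. On a dyadic block the dual sum has length $\asymp|\alpha|\,r'(X)=o(1)$, not $|\alpha|X|r'(X)|$. So it consists of exactly this one stationary-phase term, and the second-derivative bound $(|\alpha|(\log X)^{1+\epsilon})^{-1/2}$ is sharp. With such input, Lemma~\ref{lem:integer part} can only be run with its parameter of size about $(\log t)^{3/4}$. That yields roughly $(\log t)^{-3/8}$ for the floor sum, not the required $(\log t)^{-1/2-\tau}$.

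\noindent The paper's Subcase~(b) of Lemma~\ref{lem:7} has the same hole. Its displayed bound $(\log t)^{-(\eta-1-\gamma)/2}$ meets \eqref{eq:tau} only if $\eta>3+\gamma$, not merely $\eta>2$. So you diagnosed the obstacle correctly, and the paper's text does not supply the missing step. Under $r\succ(\log x)^{3+\epsilon}$ your argument (and the paper's) does go through with the plain second-derivative test.

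Under the stated hypothesis, you need a treatment of the floor that avoids the square-root loss built into Lemma~\ref{lem:integer part}. One way is as follows. Write $e(\lfloor a(n)\rfloor\beta)=e(\beta a(n))\,e(-\beta\{a(n)\})$ and expand
$e(-\beta\{y\})=\sum_{|h|\le H}\frac{1-e(-\beta)}{2\pi i(h+\beta)}e(hy)+O\bigl(|\beta|\min(1,(H\|y\|)^{-1})+|\beta|H^{-1}\bigr)$ with $H=t^{\delta}$. The second-derivative test gives $\frac1t\bigl|\sum_{n\le t}e((h+\beta)a(n))\bigr|\ll(|h+\beta|(\log t)^{1+\epsilon})^{-1/2}$ plus terms with a power saving in $t$. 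Because $\sum_h|h+\beta|^{-3/2}$ converges, the main terms add up to $O\bigl((\log t)^{-(1+\epsilon-\gamma)/2}\bigr)$ for a small fixed $\gamma>0$, rather than its square root. For $h=0$ with $|\beta|\le(\log t)^{-\gamma}$, use the first-derivative test instead. The truncation error involves only integer frequencies and is bounded the same way. I expect this to close the gap, but you should check the details.
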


\begin{proof}[Proof of Theorem~\ref{thm:deg1}]
Without loss of generality, we assume that $c>0$. (The proof for $c< 0$ is analogous and omitted.)

\noindent\textbf{Case (I).} Let $r(x)\succ (\log x)^{2+\epsilon}$ for some $\epsilon>0$.\\
Following the argument of Theorem~\ref{thm:sparse2}, Theorem~\ref{thm:deg1} will follow from Proposition \ref{prop:expsum:deg1}. 

\noindent\textbf{Case (II).} Suppose that $c=\frac{1}{m}$ for some $m \in \mathbb{N}$. \\
By Case (I), it suffices to consider the case $r(x) \prec (\log x)^{4}$.
Clearly, it will be sufficient to show that for each $l= 0, 1, \dots, m-1$ and for any $f\in L^1$,
\[\frac{1}{N} \sum_{n\in [N] } f(T^{\lfloor a(mn +l) \rfloor}x) \text{ converges a.e. as } N\to \infty. \]
Consider the set $A:=\left\{\lfloor a(mn+l) \rfloor: n\in \setN\right\}$. Write
\begin{equation*}
\dfrac{1}{N}\sum_{n\in [N]\cap A} f(T^n x)=\dfrac{1}{N}\sum_{n\in[N]} f(T^n x) -\dfrac{1}{N}\sum_{n\in [N]\cap A^c} f(T^n x).
\end{equation*}
By Birkhoff's pointwise ergodic theorem, $\displaystyle\lim_{N\to\infty}\dfrac{1}{N}\sum_{n\in[N]} f(T^n x)$ exists for a.e. $x$. 
Hence, it will be sufficient to show that
$\displaystyle\lim_{N\to \infty}\dfrac{1}{N}\sum_{n\in [N]\cap A^c} f(T^n x)= 0$ for a.e. $x$.
Observe that \[\dfrac{ \left| \left\{\lfloor a(mn+l) \rfloor: n\leq N\right\}\triangle \{1,2,\cdots, N\}\right|}{N} \leq \dfrac{2(\log \left((m+1)N)\right)^4}{N}\ll \dfrac{1}{N^{1/2}},\]
where $\triangle$ means symmetric difference. 
So, the proof now follows from the following lemma.

\begin{lem}
Let $(a_n)$ be a sequence of positive integers. Then for any $\epsilon>0$, any measure preserving system $(X,\mathcal{B},\mu,T)$, $f \in L^1(\mu)$ and for almost every $x\in X$
\begin{equation*}
\lim_{N \rightarrow \infty} \dfrac{1}{N^{1+\epsilon}}\sum_{n=1}^N f(T^{a_n} x)=0.
\end{equation*}
\end{lem}
\begin{proof}
Let $f\in L^1(\mu)$. Without loss of generality, we can assume that $f\geq 0$. For a positive integer $N$, let us define
\begin{equation*}
T_N f(x)=\dfrac{1}{N^{1+\epsilon}}\sum_{n=1}^N f(T^{a_n} x).
\end{equation*}
 Let $\tau>0$ be arbitrary. Consider the set 
\begin{equation*}
E_n:=\left\{x: T_n f(x)>\tau\right\}. 
\end{equation*}
By the Markov's inequality, for every $\sigma>1$ we have
\begin{equation*}
\sum_{n=1}^\infty \mu\left(E_{\lfloor \sigma^n\rfloor}\right)\leq \sum_{n=1}^\infty \dfrac{||f||_{L^1}}{\tau (\sigma^{n}-1)^\epsilon}<\infty.
\end{equation*}
Hence, by the Borel-Cantelli lemma, for almost every $x\in X$, we have 
$$\limsup_{n\to \infty}T_{\lfloor \sigma^n \rfloor} f(x)\leq \tau.$$ 
Since $\tau>0$ is arbitrary, for almost every $x\in X$, we have  
$$\limsup_{n\to \infty}T_{\lfloor \sigma^n \rfloor} f(x)=0.$$
 Finally, applying an argument similar to that in Lemma~\ref{lem:reduction2}, we obtain the desired conclusion.
\end{proof} \end{proof} 

\begin{rem}
Let $1\leq p<\infty$. If a set $B\subset \setN$ has zero density, it does not necessarily imply that for every $f\in L^p$, $\frac{1}{N}\sum_{n\in [N]\cap B} f(T^n x)\to 0$ for a.e. $x$. See \cite[Theorem B]{QW} for such a counterexample.
\end{rem}

It remains to prove Proposition \ref{prop:expsum:deg1}. 
We need several lemmas for proving this proposition. A proof of the following two lemmas can be found in \cite[Chapter 2]{GK}. 
\begin{lem}[Kusmin-Landau]\label{Lemma:KL}
If $f$ is continuously differentiable, $f'$ is monotonic, and $||f'||\geq \kappa>0$ on an interval $I$. Then there exists an absolute constant $C>0$ such that
\begin{align*}
\left| \sum_{n\in I} e(f(n)) \right| \leq \frac{C}{\kappa}.
\end{align*}
\end{lem}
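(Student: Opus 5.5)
The Kusmin--Landau inequality is classical, and I would prove it by partial summation after removing the nearest integer from $f'$. The first reduction is this. Since $f'$ is continuous and monotonic on $I$ and $\|f'\|\ge\kappa>0$ there, the range $f'(I)$ is an interval. It contains no integer, because at an integer $\|f'\|$ would vanish. So there is an integer $k$ with $f'(I)\subset[k+\kappa,k+1-\kappa]$. Replacing $f(x)$ by $f(x)-kx$ changes neither $e(f(n))$ for integers $n$ nor the hypotheses. Hence I may assume $\kappa\le f'(x)\le 1-\kappa$ on $I$, and by symmetry (or by replacing $f$ with $-f$ and conjugating) that $f'$ is non-decreasing. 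Write $I\cap\mathbb{Z}=\{a,a+1,\dots,b\}$.

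Next I would set $\theta_n=f(n+1)-f(n)$. By the mean value theorem $\theta_n=f'(\xi_n)$ for some $\xi_n\in(n,n+1)$. Therefore $\theta_n\in[\kappa,1-\kappa]$, and $\theta_n$ is non-decreasing in $n$ because $f'$ is monotone. Now put $g_n=\frac{1}{1-e(\theta_n)}$. This gives the identity
\[ e(f(n)) = g_n\bigl(e(f(n))-e(f(n+1))\bigr). \]
Apply Abel's summation formula (Lemma~\ref{lem:byparts}) with $h_n=-e(f(n))$ to $\sum_{n=a}^{b-1}e(f(n))$. The result is bounded by
\[ |g_a|+|g_{b-1}|+\sum_{n=a}^{b-2}|g_{n+1}-g_n| , \]
and the single term $e(f(b))$ adds at most $1$.

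The key point, and the only real obstacle, is to control the total variation of $g_n$ by $O(1/\kappa)$. For this I would use the explicit form
\[ \frac{1}{1-e(\theta)}=\frac12+\frac{i}{2}\cot(\pi\theta). \]
Its real part is constant, and $\cot(\pi\theta)$ is monotone on $(0,1)$. Since $\theta_n$ is monotone, $\sum_n|g_{n+1}-g_n|$ telescopes to $\frac12|\cot(\pi\theta_{b-1})-\cot(\pi\theta_a)|$. Each cotangent value satisfies $|\cot(\pi\theta)|\le\frac{1}{\pi\min(\theta,1-\theta)}\le\frac{1}{\pi\kappa}$. Likewise $|g_a|,|g_{b-1}|\le\frac12+\frac{1}{2\pi\kappa}$. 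Summing gives $|\sum_{n\in I}e(f(n))|\le 1+\frac{C'}{\kappa}$. Since $\kappa\le\frac12$, the constant term $1$ is absorbed into $C/\kappa$ with an absolute constant $C$ (for instance $C=2$ works), which completes the proof.
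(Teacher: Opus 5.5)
Your proof is correct, and it is the classical Kusmin--Landau argument: you remove the nearest integer from $f'$, write $e(f(n))=\frac{e(f(n))-e(f(n+1))}{1-e(\theta_n)}$, sum by parts, and telescope the monotone cotangent. This is the proof in \cite[Chapter~2]{GK}, which the paper cites rather than proving the lemma itself. One harmless arithmetic slip: your bounds add up to $2+\frac{2}{\pi\kappa}$ rather than $1+\frac{C'}{\kappa}$, but since $\kappa\le\frac12$ this is still at most $\frac{2}{\kappa}$, so $C=2$ stands.
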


The following lemma is a special case of Lemma~\ref{lem:van der Corput}. 
\begin{lem}[Van der Corput]\label{lem:vdc2}
Suppose that $f$ is a real valued function that is twice continuously differentiable on an interval  $I$. Suppose also that there is some $\kappa>0$ and some $A\geq 1$ such that 
\begin{align*}
\kappa \leq |f''(x)|\leq A \kappa,
\end{align*}
on $I$, then there exists an absolute constant $C>0$ such that
\begin{equation*}
\left| \sum_{n\in I} e(f(n)) \right| \leq C \left(A |I| \kappa^{1/2}+\kappa^{-1/2}\right).
\end{equation*}
\end{lem}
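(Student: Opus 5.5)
We will obtain this as the case $l=2$ of Lemma~\ref{lem:van der Corput}, as the paper indicates. Let $Y<X$ be the smallest and largest integers in $I$, so that $\sum_{n\in I}e(f(n))=\sum_{Y\le n\le X}e(f(n))$ and $X-Y\le |I|$. If $I$ contains at most one integer the sum has modulus at most $1$. We adopt the usual convention that $|I|\ge 1$, which excludes a degenerate one-point interval with huge $\kappa$. In that degenerate case the inequality can genuinely fail, since the right-hand side can be arbitrarily small while the sum equals $1$ in modulus. Under the convention, the arithmetic--geometric mean inequality gives $A|I|\kappa^{1/2}+\kappa^{-1/2}\ge 2\sqrt{A|I|}\ge 2$, so this case is trivial. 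Hence assume $Y<X$ are integers.

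Apply Lemma~\ref{lem:van der Corput} with $\phi=f$, $l=2$, $u=4$ and $\rho=\kappa$, which is admissible since $|f''|\ge\kappa$ on $[Y,X]$. By the mean value theorem, $R=|f'(X)-f'(Y)|/(X-Y)=|f''(\xi)|\le A\kappa$ for some $\xi\in(Y,X)$. The three terms of Lemma~\ref{lem:van der Corput} are then bounded as follows:
\begin{align*}
(X-Y)\Big(\frac{R^2}{\rho}\Big)^{1/2} &\le (X-Y)\,A\kappa^{1/2}\le A|I|\kappa^{1/2},\\
(X-Y)\Big(\frac{1}{\rho(X-Y)^2}\Big)^{1/2} &= \kappa^{-1/2},\\
(X-Y)\Big(\frac{R}{\rho(X-Y)}\Big)^{1/2} &\le (A(X-Y))^{1/2}=\big(A|I|\kappa^{1/2}\cdot\kappa^{-1/2}\big)^{1/2}\le \tfrac12\big(A|I|\kappa^{1/2}+\kappa^{-1/2}\big).
\end{align*}
For the third line we first use $X-Y\le|I|$ and then the arithmetic--geometric mean inequality.

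Summing the three bounds and multiplying by the factor $21$ gives the claim with the absolute constant $C=21\cdot\tfrac52$. The only points needing care are the reduction to integer endpoints and the degenerate short-interval case; both are elementary. The main step is recognizing that the mixed third term is controlled by the geometric mean of the two target terms.
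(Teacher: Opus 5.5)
Your proposal is correct and follows exactly the route the paper indicates: take $l=2$ (so $u=4$) in the general van der Corput lemma with $\rho=\kappa$ and $R\le A\kappa$ from the mean value theorem, and absorb the mixed term $(A(X-Y))^{1/2}$ into $\tfrac12\bigl(A|I|\kappa^{1/2}+\kappa^{-1/2}\bigr)$ by AM--GM. Your caveat that the literal statement needs $|I|\ge 1$ (or at least two integers in $I$) is also correct, and it is harmless in the paper's application on $[t^{7/8},t]$; note that the three bounds actually sum to $\tfrac32\bigl(A|I|\kappa^{1/2}+\kappa^{-1/2}\bigr)$, so $C=21\cdot\tfrac32$ already works and your $21\cdot\tfrac52$ is valid but not sharp.
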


\begin{lem}[Proposition 2.2 in \cite{BKS}]
\label{lem:est-lopital-slow1}
If $r(x) \in \mathbf{U}$ satisfies $\log x \prec r(x) \prec x$, then \[ \left| \frac{x r'(x)}{r(x)} \right| \ll 1 \quad \text{and} \quad \left|\frac{x r''(x)}{r'(x)} \right| \ll 1.\]
In particular, $|r''(x)| \ll \frac{|r(x)|}{x^2}$.
\end{lem}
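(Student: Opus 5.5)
The plan is to derive both bounds from one principle. In a Hardy field, a ratio of logarithms has a limit in $[-\infty,\infty]$, and the limits of the corresponding logarithmic derivatives can then be read off by L'H\^opital's rule. First, fix a Hardy field $\mathbf{H}$ containing $r(x)$ and $\log x$. Enlarge it, if necessary, so that it also contains $\log|r(x)|$ and $\log|r'(x)|$. This is possible because every Hardy field extends to one closed under $\log|\cdot|$ of eventually nonzero elements. All functions below then have limits at $+\infty$, and $r$, $r'$, $r''$ are eventually of constant sign. In particular $r'$ is eventually nonzero, since otherwise $r$ would be eventually constant, contradicting $r(x)\to\pm\infty$.

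For the first bound, I would consider $\log|r(x)|/\log x$. Since $\log x\prec r(x)\prec x$, we have $|r(x)|\to\infty$ and $|r(x)|\le x$ eventually. So $\log|r(x)|\to\infty$, and the ratio has a limit $L\in[0,1]$. Both numerator and denominator tend to $\infty$, and the ratio of their derivatives, $x r'(x)/r(x)$, has a limit because it lies in $\mathbf{H}$. L'H\^opital's rule therefore gives
\[
\lim_{x\to\infty}\frac{x r'(x)}{r(x)}=L\in[0,1],
\]
and hence $|x r'/r|\ll 1$.

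For the second bound, I would run the same argument with $r'$ in place of $r$. From $r(x)\prec x$ and L'H\^opital we get $r'(x)\to 0$, so $\log|r'(x)|\to-\infty$. Then $\lambda:=\lim_{x\to\infty}\log|r'(x)|/\log x$ exists in $[-\infty,0]$.

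The main point, and the only real obstacle, is to show that $\lambda$ is finite. Suppose instead that $\lambda<-1$, which includes the case $\lambda=-\infty$. Then $|r'(x)|\le x^{-1-\delta}$ eventually for some $\delta>0$. So $r'$ is integrable near infinity and $r$ is bounded, contradicting $r(x)\to\pm\infty$. Hence $\lambda\in[-1,0]$. L'H\^opital's rule applied to $\log|r'(x)|/\log x$, whose derivative ratio is $x r''(x)/r'(x)\in\mathbf{H}$, then gives $x r''(x)/r'(x)\to\lambda$. So $|x r''/r'|\ll 1$.

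Finally, multiplying the two bounds gives
\[
|r''(x)|\ll \frac{|r'(x)|}{x}\ll\frac{|r(x)|}{x^2}.
\]
This argument uses only $r\to\pm\infty$ and $r\prec x$. The lower hypothesis $\log x\prec r$ is not essential; it merely guarantees the growth used above.
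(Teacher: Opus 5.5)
Your proof is correct. The paper does not prove this lemma itself; it imports it from \cite{BKS} (Proposition 2.2 there), so there is no in-paper argument to match, and your write-up serves as a self-contained replacement for the citation.

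Your route differs in flavor from the L'H\^opital computations the paper does carry out in Lemma~\ref{lem:Lopital} and Lemma~\ref{lem:est-lopital-slow2}. Those compare $r$ with explicit benchmarks such as $(\log x)^a$ and then differentiate. You instead compare $\log|r|$ and $\log|r'|$ with $\log x$, so L'H\^opital converts exponent information directly into the logarithmic derivatives $xr'/r$ and $xr''/r'$. The one non-formal step is your integrability argument excluding $\lambda<-1$, and it is sound: if $|r'(x)|\le x^{-1-\delta}$ eventually, then $r$ stays bounded, contradicting $|r|\to\infty$.

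Your approach gives more than the statement. The limits exist, with $xr'/r\to L\in[0,1]$ and $xr''/r'\to\lambda\in[-1,0]$. As you note, only $1\prec r\prec x$ is used, so the hypothesis $\log x\prec r$ can be dropped.

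One optional simplification: you do not need to adjoin $\log|r|$ and $\log|r'|$ to the Hardy field. Once $r$ and $x$ lie in a common Hardy field (closed under differentiation), $xr'/r$ and $xr''/r'$ already have limits in $[-\infty,\infty]$. L'H\^opital, which only needs the denominator $\log x\to\infty$, then transfers these limits to $\log|r|/\log x$ and $\log|r'|/\log x$. After that, your bounds $0\le\log|r|\le\log x$, $\log|r'|<0$, and the integrability argument pin the limits down as before.
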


Applying L'Hôpital's rule as in Lemma \ref{lem:Lopital}, we also have the following estimates.
\begin{lem}
\label{lem:est-lopital-slow2}
Let $r(x) \in \mathbf{U}$. 
\begin{enumerate}[(a)]
\item If $\log^a x \ll |r(x)| \prec x$ for some $a >0$, then $|r''(x)| \gg \frac{\log^{a-1} x}{x^2}$.
\item If $ x^b \ll |r(x)| \prec x$ for some $b>0$, then $|r''(x)| \gg x^{b-2}$.
\item If  $|r(x)| \ll x^b$ for some $0< b <1$, then $|r''(x)| \ll x^{b-2}$.
\end{enumerate}
\end{lem}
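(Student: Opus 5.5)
Each part is obtained from the growth hypothesis on $r$ by L'H\^opital's rule, applied twice, exactly as in Lemma~\ref{lem:Lopital}. The only difference is that here we want a lower or upper bound on $r''$ itself rather than a $\prec$-comparison. Since $r\in\mathbf U$, the functions $r$, $r'$, $r''$ lie in a common Hardy field. They are therefore eventually of constant sign and monotone, and every limit of a quotient of two of them, or of one of them against a function from $\mathbf{L}$, exists in $[-\infty,\infty]$. This existence is what allows us to run L'H\^opital's rule in both directions.

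For (a), we compare $r$ with $g(x)=\log^{a}x$. From $|r|\prec x$ and L'H\^opital we get $r'(x)\to0$. We also have $|r|\to\infty$: otherwise $r$ tends to a finite limit while $|r|\gg\log^a x\to\infty$, which is impossible. The limit $L_0=\lim |r(x)|/\log^a x$ exists in $(0,\infty]$, and it is positive by hypothesis. Both numerator and denominator tend to infinity, so L'H\^opital gives
\[
\lim \frac{|r'(x)|}{a\log^{a-1}x/x}=L_0 .
\]
Hence $|r'(x)|\gg \log^{a-1}x/x$. Next, $r'$ and $\log^{a-1}x/x$ both tend to $0$, and the limit $\lim r''(x)/(\log^{a-1}x/x)'$ exists. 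Applying L'H\^opital to the ratio $r'(x)/(\log^{a-1}x/x)$ then yields
\[
\lim \frac{|r''(x)|}{|(\log^{a-1}x/x)'|}=\lim\frac{|r'(x)|}{\log^{a-1}x/x}>0 .
\]
The two limits agree because L'H\^opital's rule says the original limit equals the derivative-quotient limit whenever the latter exists, and here it exists. It remains to compute $(\log^{a-1}x/x)'=-\log^{a-1}x/x^2+(a-1)\log^{a-2}x/x^2$, which has size $\asymp \log^{a-1}x/x^2$ for all large $x$. This gives $|r''(x)|\gg \log^{a-1}x/x^2$.

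Part (b) follows the same scheme with the comparison function $x^b$. If $b<1$, differentiating twice gives $|r'|\gg x^{b-1}$ and $|r''|\gg x^{b-2}$, using $(x^{b-1})'=(b-1)x^{b-2}\neq0$. If $b\ge1$, the hypothesis $x^b\ll|r|\prec x$ is vacuous, so (b) holds trivially.

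Part (c) is the mirror argument with upper bounds. From $|r|\ll x^b$ we get $\lim |r|/x^b<\infty$. If $r$ is bounded, the claim still follows; this case can be handled by comparing with $x^{b'}$ for some smaller $b'>0$, or directly by L'H\^opital. Otherwise both sides of the quotient tend to infinity, and L'H\^opital gives $\lim|r'|/x^{b-1}<\infty$. Both $r'$ and $x^{b-1}$ tend to $0$, so a second application of L'H\^opital gives $|r''|\ll x^{b-2}$.

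The only delicate point is justifying the two-sided use of L'H\^opital's rule, that is, knowing that the derivative-quotient limits exist. This is guaranteed because $r^{(k)}$ and the comparison functions lie in a common Hardy field. That common field exists because $\mathbf{L}\subset\mathbf E$ and every maximal Hardy field contains $\mathbf E$, so a maximal Hardy field containing $r$ also contains the comparison functions.
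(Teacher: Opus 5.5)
Your proposal is correct and takes the paper's route: the paper just says these estimates follow by applying L'H\^opital's rule twice as in Lemma~\ref{lem:Lopital}, with the Hardy-field structure guaranteeing that the derivative-quotient limits exist, and that is exactly what you do (you also handle the comparison derivatives $(\log^{a-1}x/x)'$ and $(x^{b-1})'$ more carefully than the paper). One small tightening: in (c) the bounded case needs no separate treatment, because the $\infty/\infty$ form of L'H\^opital's rule only requires the denominator $x^b$ to diverge, so $\lim r(x)/x^b=\lim r'(x)/(bx^{b-1})$ holds whether or not $r$ is bounded.
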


\begin{lem}\label{lem:6}
Let $a(x)\in \mathbf{U}$ have the canonical decomposition $a(x)=cx+r(x)$ with $c\neq 0$. Suppose that  $r(x) \succ \log^ {2+\epsilon} x$.
If $0\le|\beta|\leq \dfrac{1}{t^{1/2}}$, then 
\begin{align*}
\left|\dfrac{1}{t}\sum_{n\leq t} e( \lfloor a(n)\rfloor \beta )-\dfrac{1}{t}\sum_{n\leq a(t)} F'(n) e( n \beta)\right|
\ll  \dfrac{1}{\log t}.
\end{align*}
\end{lem}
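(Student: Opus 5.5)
We compare both sums to the unweighted sum $\frac{1}{t}\sum_{n\le t} e(a(n)\beta)$ and exploit the fact that for $|\beta|\le t^{-1/2}$ the phases vary slowly. Assume $c>0$; the case $c<0$ is symmetric.

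\textbf{Step 1: remove the floor.} Since $|e(\lfloor a(n)\rfloor\beta)-e(a(n)\beta)|\le 2\pi|\beta|$, we get
\[
\Bigl|\frac{1}{t}\sum_{n\le t}\bigl(e(\lfloor a(n)\rfloor\beta)-e(a(n)\beta)\bigr)\Bigr|\le 2\pi t^{-1/2},
\]
which is far smaller than $1/\log t$.

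\textbf{Step 2: change variables in the $\setB$-sum.} Write $\sum_{n\le a(t)}F'(n)e(n\beta)$ as $\int_{a(1)}^{a(t)}F'(y)e(y\beta)\,dy$ plus a sum-to-integral error. Substituting $y=a(x)$ gives $\int_1^t e(a(x)\beta)\,dx$, because $F'(a(x))a'(x)=1$. The discretization error on each unit interval is at most
\[
\sup|(F'(y)e(y\beta))'| \le |F''(y)| + 2\pi|\beta|F'(y).
\]
Since $F'\to 1/c$ and $F''(y)=-a''/(a')^3$ at $x=F(y)$, we have $|F''(y)|\ll |r''(x)|\ll |r(x)|/x^2$ by Lemma~\ref{lem:est-lopital-slow1}. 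Summing over $y\le a(t)$, the total error is $\ll \sum_x |r(x)|/x^2+|\beta|t\ll 1+t^{1/2}$. After dividing by $t$, this is $\ll t^{-1/2}$.

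\textbf{Step 3: compare $\sum_{n\le t}e(a(n)\beta)$ with $\int_1^t e(a(x)\beta)\,dx$.} By Euler--Maclaurin (or the mean value theorem on unit intervals), the difference is bounded by the total variation $\int_1^t |\beta a'(x)|\,dx\ll |\beta|\,a(t)\ll t^{1/2}$. For the smaller range $|\beta|\ll 1/t$ this is $O(1)$. For the full range $|\beta|\le t^{-1/2}$ I would instead apply trapezoidal Euler--Maclaurin, whose error is $\ll \int_1^t|(e(a(x)\beta))''|\,dx\ll \int_1^t (\beta^2a'^2+|\beta a''|)\,dx\ll \beta^2 t+|\beta|$. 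This is $O(1)$ for $|\beta|\le t^{-1/2}$, so after dividing by $t$ the contribution is $O(1/t)$.

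Combining Steps 1--3 gives the bound $O(t^{-1/2})\ll 1/\log t$, which is stronger than claimed. The only delicate point is the bookkeeping of endpoint terms, since $a(1)$ may be less than $1$, together with the rounding at $y=a(t)$. These contribute $O(1)$ terms, which are $O(1/t)$ after normalization.

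The main obstacle I expect is Step 2's control of $F''$. If $r$ grows like a power $x^b$ with $b<1$, then $|F''(y)|\ll y^{b-2}$ by Lemma~\ref{lem:est-lopital-slow2}(c), which is still summable, so the argument should be uniform. The hypothesis $r\succ\log^{2+\epsilon}x$ seems unnecessary in this range except to place $r$ in the setting of Lemma~\ref{lem:est-lopital-slow1}. The weaker target $1/\log t$ leaves room for cruder estimates if some step fails.
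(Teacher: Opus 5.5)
Your proposal is correct and follows essentially the paper's route: drop the floor at cost $O(|\beta|)$, substitute $y=a(x)$ so that $\int F'(y)e(y\beta)\,dy=\int e(a(x)\beta)\,dx$, and bound both sum-to-integral errors by $O(1+|\beta|t)$, which gives $O(t^{-1/2})$ after dividing by $t$. Your first total-variation bound in Step~3 already suffices, so the trapezoidal refinement is unnecessary; and in Step~2 the claim $\sum_{x\le t}|r(x)|/x^2\ll 1$ is false in general (e.g.\ $r(x)=x/\log x$ gives $\log\log t$), though $|r(x)|\le x$ still gives $\ll\log t$, which is harmless --- alternatively use, as the paper does, the eventual monotonicity of $F'$ to telescope $\sum|F'(n+1)-F'(n)|=O(1)$.
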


\begin{proof}  
Observe that in the given range of $\beta$
$$\left| e( \lfloor a(n)\rfloor \beta )- e(  a(n) \beta )\right| \leq 2\pi |\beta|\ll \frac{1}{t^{1/2}}\leq \frac{1}{\log t}.$$ 
{ Thus, 
$$\left| \frac{1}{t} \sum_{n \leq t} e( \lfloor a(n)\rfloor \beta )-  \frac{1}{t} \sum_{n \leq t} e(  a(n) \beta )\right| \ll \frac{1}{\log t}.$$ 
}
Hence, it will be sufficient to show that
\begin{align}
\label{eq:8.11}
\left|\dfrac{1}{t}\sum_{n\leq t} e(  a(n) \beta )-\dfrac{1}{t}\sum_{n\leq a(t)} F'(n) e( n \beta)\right|
\ll  \dfrac{1}{\log t}.
\end{align}

Since $F(x)$ is the compositional inverse of $a(x)$, by change of variable we have
\begin{equation}\label{eq:8.12}
\int_{0}^t e( a(x)\beta)dx= \int_{a(0)}^{a(t)} F'(x) e( x\beta) dx.
\end{equation}
For any $x\in [n,n+1]$,
$$|F'(x) e(x\beta)-F'(n) e(n \beta)|\leq |F'(x)- F'(n)|+|F'(n)||e( x \beta)- e( n\beta)|.$$ 
So, for some $C>0$, we have
\begin{equation}\label{eq:8.13}
\begin{aligned}
\left| \int_{a(0)}^{a(t)} F'(x)\,e(x\beta)\,dx 
- \sum_{n\le a(t)} F'(n)\,e(n\beta) \right|
&\le C +\sum_{n\le a(t)} |F'(n+1)-F'(n)|
    + C|\beta| \sum_{n\le a(t)} |F'(n)| \\
&\le C + |F'(1)|+|F'(a(t))|+ C|\beta|\,F(a(t))\\
&\ll  (1 + |\beta|t),
\end{aligned} 
\end{equation}
where in the last line we used the fact that $\lim_{x\to\infty}\frac{a(x)}{x}$ is finite, so $\lim_{x\to\infty}\frac{F(x)}{x}$ is also finite, and hence $F'$ is bounded.\\
Similarly, for any $x\in [n,n+1]$, $|e(a(x)\beta)-e(a(n)\beta)|\leq 2\pi |a'(n+1)| |\beta|$. 
So, 
\begin{align}\label{eq:8.14}
\left|\int _{0}^t e(a(x)\beta) dx- \sum_{n\leq t}e(a(n)\beta)\right|\leq C a(t+1) |\beta| \ll  t |\beta|.
\end{align}
Combining \eqref{eq:8.12}, \eqref{eq:8.13} and \eqref{eq:8.14}, \eqref{eq:8.11} follows.
\end{proof}

\begin{lem}\label{lem:7} Let $a(x)\in \mathbf{U}$ have the canonical decomposition $a(x)=cx+r(x)$ with $c\neq 0$. Suppose that  $r(x) \succ \log^ {2+\epsilon} x$.
Then there exists $\tau>0$ such that for every $\beta$ with $\dfrac{1}{t^{1/2}}\leq |\beta|\leq \frac{1}{2}$, we have
\begin{align} &\left|\dfrac{1}{t}\sum_{n\leq a(t)} F'(n)e(n \beta)\right|\leq \dfrac{C}{(\log t)^{1/2+\tau}}\label{eq:degree1bound2} \text{ and }\\
&\left|\dfrac{1}{t}\sum_{n\leq t} e(\lfloor a(n) \rfloor \beta)\right| \leq \dfrac{C}{(\log t)^{1/2+\tau}}\label{eq:deg1bound1}.
\end{align}
\end{lem}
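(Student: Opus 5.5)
For \eqref{eq:degree1bound2} I will use Abel summation to pass from the weighted sum to unweighted exponential sums. Writing $S(u)=\sum_{n\le u} e(n\beta)$, we have $|S(u)|\le \frac{1}{2\|\beta\|}\ll |\beta|^{-1}\le t^{1/2}$ uniformly in $u$, since $|\beta|\le 1/2$. By \Cref{lem:byparts},
\[
\sum_{n\le a(t)}F'(n)e(n\beta)=F'(\lfloor a(t)\rfloor+1)S(a(t))-\sum_{n\le a(t)}S(n)\bigl(F'(n+1)-F'(n)\bigr).
\]
Because $F'$ is eventually monotone and bounded (as in the proof of \Cref{lem:6}), the total variation $\sum_n|F'(n+1)-F'(n)|$ is $O(1)$. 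The whole sum is therefore $\ll t^{1/2}$, so after dividing by $t$ we get $t^{-1/2}$, which is far smaller than $(\log t)^{-1/2-\tau}$. This part should be routine.

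For \eqref{eq:deg1bound1} I will follow the strategy of \Cref{prop:integerpart}. First I will prove an estimate for the real exponential sum $\frac1t\bigl|\sum_{n\le t}e(a(n)\alpha)\bigr|\le (\log t)^{-1-3\tau}$ on the range $t^{-1/2}\le|\alpha|\le S=t^{\tau}$. Then I will apply \Cref{lem:integer part} with $r=(\log t)^{1+3\tau}$ and $Q=t^{1/2}$, which gives the bound $\sqrt{\log r/r}\ll(\log t)^{-1/2-\tau}$. To estimate the real sum, I will cut $[1,t]$ into the initial part $[1,t^{1-\delta}]$, which is trivially small, and then into dyadic blocks $I=[X,2X]$ with $X\ge t^{1-\delta}$. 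On each block the phase is $\phi(x)=\alpha(cx+r(x))$, so $\phi'(x)=\alpha c+\alpha r'(x)$ and $\phi''=\alpha r''$. By \Cref{lem:est-lopital-slow1} and \Cref{lem:est-lopital-slow2}(a) with $a=2+\epsilon$,
\[
\kappa:=|\alpha|\frac{(\log X)^{1+\epsilon}}{X^2}\ll|\phi''|\ll|\alpha|\frac{|r(X)|}{X^2}.
\]
The ratio $A\asymp|r(X)|/(\log X)^{1+\epsilon}$ is only a power of $\log$ when $r$ is close to $\log^{2+\epsilon}$. If $r\gg x^{b}$, I will instead use parts (b) and (c) of \Cref{lem:est-lopital-slow2}, and there the bound is power-saving.

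I will split according to the size of $\|\phi'\|$ on $I$. Since $\alpha r'(x)\to0$ for $|\alpha|\le t^{\tau}$ (note $r'(x)\ll r(x)/x$), $\phi'$ lies within $o(1)$ of $\alpha c$. If $\|\phi'\|\ge\kappa_0:=(\log t)^{-1-3\tau}\cdot(\text{small})$ on $I$, then \Cref{Lemma:KL} bounds the block sum by $\kappa_0^{-1}$, which is negligible compared with $X$. Otherwise $\alpha c$ is near an integer. In that case I subtract that integer from $\phi$, which changes nothing, and apply \Cref{lem:vdc2}. This gives the block bound $A|I|\kappa^{1/2}+\kappa^{-1/2}\ll X\cdot\frac{|r(X)|}{(\log X)^{1+\epsilon}}\bigl(|\alpha|^{1/2}(\log X)^{(1+\epsilon)/2}X^{-1}\bigr)+X|\alpha|^{-1/2}(\log X)^{-(1+\epsilon)/2}$.

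The second term is $\ll X(\log t)^{-(1+\epsilon)/2}$ when $|\alpha|\gg1$. When $|\alpha|$ is small, the near-integer case forces $\alpha c$ to be close to $0$, and then the real concern is the middle range of $\alpha$ between $t^{-1/2}$ and $1$.

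\emph{The main obstacle} is exactly this middle range, small $\alpha$ with $\alpha c$ near $0$, where $\phi'=\alpha a'$ and Kusmin--Landau applies directly with $\kappa\asymp|\alpha|$. That gives a block bound of $|\alpha|^{-1}\le t^{1/2}$, which is fine. The genuinely delicate region is therefore $\alpha$ near nonzero integers divided by $c$, where $\|\alpha c\|$ is tiny. There the van der Corput bound only saves $(\log X)^{-(1+\epsilon)/2}$, while we need $(\log t)^{-1-3\tau}$ on the real sum. This mismatch is why \Cref{lem:integer part} is needed with the $\sqrt{\cdot}$ loss, and why the hypothesis $\log^{2+\epsilon}$ is what makes the exponents close.

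I expect to need a sharper second-derivative estimate here. The natural choices are exponent pairs, or \Cref{lem:van der Corput} with $l=2$ applied to suitably shortened intervals of length $X/(\log X)^{\theta}$. I will choose $\theta$ to balance the $A$ loss against $\kappa^{-1/2}$, and tune $\tau$ small enough that the final exponent exceeds $1/2$.
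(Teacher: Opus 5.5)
Your argument for \eqref{eq:degree1bound2} is correct and is the paper's: Abel summation, the bound $|\sum e(n\beta)|\ll|\beta|^{-1}\le t^{1/2}$, and bounded variation of the eventually monotone, bounded $F'$.

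For \eqref{eq:deg1bound1} you follow the paper's route: bound $\frac1t|\sum_{n\le t}e(\alpha a(n))|$ by $(\log t)^{-1-3\tau}$ on $t^{-1/2}\le|\alpha|\le t^{\tau}$, then apply Lemma~\ref{lem:integer part}. The gap is exactly where you flag it. It cannot be closed by exponent pairs or shorter intervals, because that real-sum bound is false when $r(x)\asymp(\log x)^{\eta}$ with $2<\eta<3$.

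To see this, take $\alpha=1/(c+r'(x_0))$ with $x_0=3t/4$, so that $\alpha c=1-\alpha r'(x_0)$. Then $\sum_{n\le t}e(\alpha a(n))=\sum_{n\le t}e(h(n))$ with $h(x)=\alpha(r(x)-r'(x_0)x)$. Here $h'$ is monotone and $|h'|\le 1/2$ for large $x$, so the sum equals $\int e(h)+O(1)$. This integral has a nondegenerate stationary point at $x_0$ with $|h''(x_0)|\asymp(\log t)^{\eta-1}/t^2$. Hence the sum has size $\asymp t(\log t)^{-(\eta-1)/2}$.

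So the saving $(\log X)^{-(1+\epsilon)/2}|\alpha|^{-1/2}$ you obtained from Lemma~\ref{lem:vdc2} is sharp. Pushed through Lemma~\ref{lem:integer part}, whose output is $\sqrt{\log r/r}$, it gives only about $(\log t)^{-(1+\epsilon)/4}$. Your remark that the square-root loss ``is why'' $\log^{2+\epsilon}$ suffices has it backwards: that loss is what forces $\eta>3$ on this route. The threshold $\eta>2$ is exactly $(\eta-1)/2>1/2$ for the van der Corput saving itself. The paper's Subcase (b) has the same hole: it derives $(\log t)^{-(\eta-1-\gamma)/2}$ and asserts that $\eta>2$ yields \eqref{eq:tau}, but exponent $1+\tau$ would need $\eta>3+\gamma$.

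The missing idea is to bypass Lemma~\ref{lem:integer part} and use the decay of the real sums in the frequency.

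\textbf{Step 1 (real sums).} Let $R(\alpha):=\frac1t|\sum_{n\le t}e(\alpha a(n))|$. Uniformly for $t^{-1/2}\le|\alpha|\le t^{\delta}$, Kusmin--Landau (when $|\alpha c|$ is small) and the second-derivative test on dyadic blocks (otherwise) give
\[
R(\alpha)\ll t^{-\delta'}+(\log t)^{-(1+\epsilon)/2}\min(1,|\alpha|^{-1/2}).
\]

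\textbf{Step 2 (decomposition).} Write $e(\lfloor y\rfloor\beta)=e(\beta y)G(\{y\})$ with $G(u)=e(-\beta u)$. Split $G=(e(-\beta)-1)\psi+G_0$, where $\psi(u)=\{u\}-\frac12$ and $G_0$ is continuous and piecewise smooth on the circle, so $\hat G_0(k)\ll k^{-2}$. Replace $\psi$ by Vaaler's trigonometric polynomial of degree $K$, whose coefficients are $\ll 1/|k|$.

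\textbf{Step 3 (main terms).} These are bounded by sums of the form $\sum_k|\beta+k|^{-1}R(\beta+k)$, plus a trivially small tail for $|k|>t^{\delta}$. They converge thanks to the extra factor $|\beta+k|^{-1/2}$, and give $\ll(\log t)^{-(1+\epsilon)/2}$.

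\textbf{Step 4 (Vaaler error).} The error term is $\ll\frac1K+\frac1K\sum_{1\le|k|\le K}R(k)\ll\frac1K+(\log t)^{-(1+\epsilon)/2}$.

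Taking $K=\log t$ proves \eqref{eq:deg1bound1} for every $\tau<\epsilon/2$, which is all the lacunary argument needs.
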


\begin{proof} To establish \eqref{eq:degree1bound2}, by summation by parts, we get

\begin{align}
\left|\sum_{n\leq a(t)} F'(n)e(n \beta)\right|&= \left|\sum_{k=1}^{a(t)} (F'(k)-F'(k-1))\cdot \sum_{n=k}^{a(t)} e(n\beta) \right|\notag\\
&\leq \left| \sum_{k=1}^{a(t)} (F'(k)-F'(k-1))\cdot \dfrac{1}{|e(\beta)-1|}\right| \notag\\
&\leq {F'(a(t))}\cdot {t^{1/2}} \label{eq:gp}\\&\leq {C} {t^{1/2}} \label{eq:gp1}.
\end{align}
To obtain \eqref{eq:gp}, we used $\frac{1}{t^{1/2}}\leq |\beta| \leq \frac{1}{2}$ so $|e(\beta)-1|\geq \frac{1}{t^{1/2}}$, and to obtain \eqref{eq:gp1}, we used $F(t)\ll t$, so $F'(t)\ll 1$.
\eqref{eq:degree1bound2} now follows by dividing each expression by $t$.

By Lemma~\ref{lem:integer part}, \eqref{eq:deg1bound1} will follow if we can show the following:\\ There exists $\tau>0$ such that for every $\beta$ with $\dfrac{1}{t^{1/2}}\leq |\beta|\leq (\log t)^3$,
\begin{align}\label{eq:tau}
\left|\dfrac{1}{t}\sum_{n\leq t} e( a(n)  \beta)\right|  \leq \dfrac{C}{(\log t)^{1+\tau}}.
\end{align}

To establish this, we will consider two subcases.\\
\textbf{Subcase (a).} Let $\dfrac{1}{t^{1/2}}\leq |\beta|\leq \dfrac{1}{\log^{\gamma} t}$ for $\gamma\in (0,1)$ to be chosen later.\\
Let $f(x)= a(x)\beta$. Then  in the interval $[t^{7/8}, t]$, $||f'(x)||\geq \frac{c}{2t^{1/2}}$ for some $c>0$. Hence, by Lemma~\ref{Lemma:KL}, there exists $C>0$ such that
\begin{equation*}
\left|\dfrac{1}{t}\sum_{n\leq t} e( a(n) \beta)\right|
 \leq \left|\dfrac{1}{t}\sum_{n\leq t^{7/8}} e( a(n)  \beta)\right| + \left|\dfrac{1}{t}\sum_{t^{7/8} < n\leq t} e( a(n)  \beta)\right|
\leq C\left(\frac{t^{7/8}}{t} + \dfrac{1}{t^{1/2}}\right).
\end{equation*}

\noindent\textbf{Subcase (b).} Let $\dfrac{1}{\log^{\gamma} t}\le |\beta|\leq (\log t)^3$.\\
First, suppose that $\log ^{\eta} x\prec r(x)\ll x^{1/2}$ for $\eta > 2$.
Let $f(x)= a(x)\beta$. Then $|f''(x)|=|r''(x)\beta|$. On the interval $[t^{7/8}, t]$, by Lemma \ref{lem:est-lopital-slow2},  we have 
\[\dfrac{(\log t)^{\eta-1} |\beta|}{t^2}\ll |r''(x)\beta|= |f''(x)|\leq |r''(t^{7/8})\beta| \ll \dfrac{|\beta|}{t^{\frac{21}{16}}}. \]
So we can take $\kappa= \dfrac{(\log t)^{\eta-1} |\beta|}{t^2}$ and $A=t^{11/16}$ and use Lemma~\ref{lem:vdc2} to obtain
\begin{align*}
\left|\dfrac{1}{t}\sum_{n\leq t} e( a(n) \beta)\right|&=\left|\dfrac{1}{t}\sum_{n\leq t^{7/8}} e( a(n) \beta)\right|+\left|\dfrac{1}{t}\sum_{t^{7/8}\leq n\leq t} e( a(n) \beta)\right|\\ &\ll \dfrac{t^{7/8}}{t}+ \dfrac{t^{11/16}(\log t)^{(\eta-1)/2}|\beta|^{1/2}}{t}+\dfrac{1}{(\log t)^{\frac{(\eta-1)}{2}}|\beta|^{1/2}}
\\ &\ll \dfrac{1}{(\log t)^{\frac{(\eta-1-\gamma)}{2}}}.
\end{align*}
Since $\eta>2$, we can choose $\gamma>0$ small enough so that \eqref{eq:tau} is satisfied for some $\tau>0$.

Now, let $x^{1/2}\prec  r(x)\prec x$.
Let $f(x)= a(x)\beta$, so that $|f''(x)|=|r''(x)\beta|$. Then, by Lemma \ref{lem:est-lopital-slow1} and Lemma \ref{lem:est-lopital-slow2}, we have
\begin{align*}
\label{eq:question}
\dfrac{ x^{1/2} |\beta|}{x^2}\ll |r''(x)\beta|= |f''(x)|\ll \dfrac{ |r(x)\beta|}{x^{2}}\ll \dfrac{ |x\beta|}{x^{2}}.
\end{align*}
Hence, in the interval $[t^{\frac{7}{8}}, t]$, we have 
\[\dfrac{ t^{1/2} |\beta|}{t^2}\ll |r''(x)\beta|= |f''(x)| \ll \dfrac{ t^{\frac{7}{8}}|\beta|}{t^{\frac{7}{4}}}.\]
Applying Lemma~\ref{lem:vdc2}, with $\kappa= \dfrac{t^{1/2} |\beta|}{t^2}$ and $A=t^{5/8}$,  we get
\begin{align*}
\left|\sum_{{1}\leq n\leq t} e( a(n) \beta)\right|&\leq t^{7/8}+\left|\sum_{t^{7/8}\leq n\leq t} e( a(n) \beta)\right| \\
&\ll t^{7/8}+ \dfrac{(t-t^{7/8})t^{5/8}\cdot t^{1/4}|\beta|^{1/2}}{t}+\dfrac{t}{ t^{1/4}|\beta|^{1/2}}.
\end{align*}
\eqref{eq:tau} now follows.
\end{proof}
\begin{proof}[Proof of \Cref{prop:expsum:deg1}]
The proof of this proposition follows immediately from \Cref{lem:6} and \Cref{lem:7}.
\end{proof}

\section{Log averages along subpolynomials}
\label{sec:sparse2} 
In this section, we prove the following result. 
\begin{thm}[Theorem \ref{sparse2':intro}]
\label{thm:sparse2'}
Suppose that $a(x) \in \mathbf{U}$ has the canonical decomposition $a(x) = p(x) + r(x)$, where $|r(x)|\gg (\log x)^\eta$, for some $\eta>1$. Then for any system $(X,\mathcal{B},\mu,T)$ and any $f\in L^2(\mu)$, 
\begin{equation}
\label{eq5.1}
\lim_{N\to \infty}\frac{1}{\log N} \sum_{n\leq N} \frac{1}{n} f(T^{\lfloor a(n) \rfloor}x) \text{ exists for almost every} \ x\in X.
\end{equation}
Moreover, if the system is ergodic then the limit in \eqref{eq5.1} is $\int_X f d\mu.$
\end{thm}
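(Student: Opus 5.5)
The plan is to split according to $d := \deg(p)$ and, in each case, reduce to a result already established in the paper, using Lemma~\ref{lem:comp:weights} to pass from Ces\`aro to logarithmic averages whenever the Ces\`aro statement is available. Throughout, $W(x)=\log x$ and $w(x)=1/x$. The ergodicity statement will come either from the cited theorems directly, or from Theorem~\ref{thm4.10:BKS2} and the norm-good results of \cite{BKS2} together with the identification of the a.e. limit with the $L^2$ limit.

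\textbf{Case $d\ge 2$.} This is exactly Theorem~\ref{thm:sparse2} with $W(t)=\log t$. Condition \eqref{eq3.1:condition:eta-W} reads $(\log t)^{\eta-1}\succ (\log\log t)^{s-2+\epsilon}$, which holds for every $\eta>1$. Hence this case is immediate; it is Corollary~\ref{cor:logcase}, and the ergodic limit is also given by Theorem~\ref{thm:sparse2}.

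\textbf{Case $d=1$.} Write $a(x)=cx+r(x)$. If $r(x)\succ\log^{2+\epsilon}x$, Theorem~\ref{thm:deg1} gives pointwise Ces\`aro convergence, and Lemma~\ref{lem:comp:weights} transfers it to log-averages. The remaining range is $(\log x)^\eta\ll|r(x)|\ll \log^{2+\epsilon}x$, and this is where the main work lies. I would rerun the proof of Theorem~\ref{thm:sparse2} with $\setB^{\mathsf a,W}_t$ built from $cx+\log^{s'}x$ for some large $s'$; by the previous sentence, pointwise convergence for $\setB^{\mathsf a,W}_t$ is already known. By Lemma~\ref{lem:reduction2} and the spectral theorem along $N_i=\tilde W(\rho^i)=e^{\rho^i}$, it suffices to prove
\[
|\hat{\setA}^{\mathsf a,W}_t(\beta)-\hat{\setB}^{\mathsf a,W}_t(\beta)|\ll (\log\log t)^{-1/2-\epsilon}.
\]
For $|\beta|\le(\log t)^{-s'-1}$ this is the trivial estimate of Lemma~\ref{lem:smallregime}. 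For larger $\beta$, I would bound the Ces\`aro sums $\frac1t\sum_{n\le t}e(a(n)\beta)$ by $(\log\log t)^{-1-3\epsilon}$ for $t^{-1/2}\le|\beta|\le t^{\epsilon}$. For small $\beta$, away from integers, Kusmin--Landau (Lemma~\ref{Lemma:KL}) gives this. Otherwise, van der Corput with $l=2$ (Lemma~\ref{lem:vdc2}) uses $|r''(x)|\gg(\log x)^{\eta-1}/x^2$ from Lemma~\ref{lem:est-lopital-slow2}, giving a saving of $((\log t)^{\eta-1}|\beta|)^{-1/2}$. This saving beats any power of $\log\log t$ unless $\beta$ is within $(\log t)^{-(\eta-1)}(\log\log t)^{O(1)}$ of an integer multiple related to $c$; in that range, $\|a'\beta\|$ is controlled, and I would use Kusmin--Landau on $\|f'\|$ instead.

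The genuinely delicate point is $\beta$ near rationals $k/c$: there $e(\lfloor a(n)\rfloor\beta)$ involves $\lfloor cn+r(n)\rfloor$, and only the tiny $r''$ provides cancellation. I expect this to be the main obstacle. The fallback is to use Lemma~\ref{lem:integer part} with $r=(\log\log t)^{1+3\epsilon}$, whose requirement $r^2\le S-1$ is very mild. Because only a $\log\log$ saving is needed, the weak curvature $(\log t)^{\eta-1}$ with $\eta>1$ should suffice. I would then pass from Ces\`aro to weighted sums via the Abel summation argument of Lemma~\ref{lem:largeregime}, choosing $r(t)$ so that $\log r(t)\asymp\log t/(\log\log t)^{1/2+\epsilon}$.

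\textbf{Case $d=0$.} Here $a=r$. If $r\prec x$, then since $\log W=\log\log x\le$ and $W(x)=\log x\le |r(x)|^{1/\eta}$, Theorem~\ref{thm:sufficient} applies directly and even gives $L^1$. If $x^{k-1}\prec r\prec x^k$ with $k\ge 2$, Theorem B of \cite{BW1996} gives Ces\`aro pointwise $L^p$-goodness, which transfers to log-averages by Lemma~\ref{lem:comp:weights}. Finally, if $r(x)\sim \alpha x^k$ with no polynomial part, then by canonicity $r$ is non-polynomial, so this overlap does not occur. The ergodic limit in all cases follows from Theorem~\ref{thm4.10:BKS2} or \cite[Theorem 4.10]{BKS2}, since $\log\log x\prec a(x)$.
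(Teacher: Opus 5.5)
Your proposal follows the paper's proof. The case split on $\deg p$ is the same: Corollary~\ref{cor:logcase} for $d\ge 2$; Theorem~B of \cite{BW1996} together with Theorem~\ref{thm:sufficient} for $d=0$; Theorem~\ref{thm:deg1} with Lemma~\ref{lem:comp:weights} for $d=1$ and $r\succ\log^{2+\epsilon}x$. In the remaining range you use the same Fourier comparison with a model sequence $cx+\log^{s'}x$ (the paper takes $s'=3$). The ingredients also match: a trivial bound for tiny $\beta$, Kusmin--Landau plus the second-derivative van der Corput test for the real sums, the integer-part transference lemma (Theorem~5.2 of \cite{BKQW}), and the Abel-summation step with the same choice of cutoff.

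One correction: the ``delicate point near $k/c$'' you flag is not an obstacle.
\begin{itemize}
\item The second-derivative saving $((\log t)^{\eta-1}|\beta|)^{-1/2}$ depends only on $|\beta|$, so it degenerates only as $\beta\to 0$, not near multiples related to $c$.
\item The paper simply splits at $|\beta|=1/\log\log t$. Below that threshold it uses Kusmin--Landau, where $\|f'\|\ge |c\beta|/2$ because $|c\beta|$ is small. Above it, van der Corput alone gives $\ll(\log t)^{-(\eta-1)/2}(\log\log t)^{1/2}$, with no special treatment of $\beta\approx k/c$.
\item Your proposed use of Kusmin--Landau near $k/c$, $k\neq 0$, would actually fail, since $f'(x)=(c+r'(x))\beta$ can pass through the integer $k$.
\item The integer-part lemma is not a fallback for this range. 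It is the step needed in every range to pass from $e(a(n)\beta)$ to $e(\lfloor a(n)\rfloor\beta)$.
\end{itemize}

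For the ergodic limit, the paper identifies it through the model averages, whose a.e.\ limit is already $\int f\,d\mu$. You instead use mean ergodicity from \cite{BKS2} and identify the a.e.\ limit with the $L^2$ limit; either works.
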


\begin{rem}
\label{rem:thm:sparse2}
Note that the case $\deg(p)=0$ in Theorem \ref{thm:sparse2'} follows from Theorem B in \cite{BW1996} together with Theorem \ref{thm:sufficient}. Indeed, when $\deg(p)=0$, if $r(x)\succ x^\epsilon$ for some $\epsilon>0$, then $(\lfloor a(n)\rfloor)$ is pointwise $L^p$-good for Ces\`aro averages by Theorem B in \cite{BW1996}. On the other hand, if
\[
(\log x)^\eta \ll |r(x)| \prec x,
\]
then $(\lfloor a(n)\rfloor)$ is pointwise $L^1$-good for logarithmic averages by Theorem \ref{thm:sufficient}.
Note also that  the case $\deg(p) \geq 2$ in Theorem \ref{thm:sparse2'} follows from Corollary~\ref{cor:logcase}.
\end{rem}

By Remark \ref{rem:thm:sparse2} and Theorem \ref{thm:deg1},  it is sufficient to prove Theorem \ref{thm:sparse2'} in the case $\deg(p) = 1 $ and $|r(x)| \ll \log^3 x$.
 Hence, from now on we assume that $\deg(p) =1$  and $|r(x)| \ll \log^3 x$.
The proof of this theorem follows a strategy similar to that used in the proof of Theorem~\ref{thm:sparse2}, so we will skip some of the details.  
Let us fix a measure preserving system $(X,\mathcal{B},\mu,T)$ and a function $f\in L^2(\mu)$. For a real-valued function $a(x)$ and a positive real number $t$, we use the following logarithmic-averaging notation.
\begin{align*}
&\setA^{\mathsf{a}}_tf(x):= \frac{1}{t} \sum_{n\leq t} f(T^{\lfloor a(n) \rfloor}x),\ \  \setA^{\mathsf{a},L}_tf(x):= \frac{1}{\log t} \sum_{n\leq t} \frac{1}{n} f(T^{\lfloor a(n) \rfloor}x),\\
&\setB^{\mathsf{a}}_tf(x):= \frac{1}{t} \sum_{n\leq t} f(T^{\lfloor p(n)+\log^3 n\rfloor}x),\ \  \setB^{\mathsf{a},L}_tf(x):= \frac{1}{\log t} \sum_{n\leq t} \frac{1}{n} f(T^{\lfloor p(n)+\log^3 n\rfloor}x).
\end{align*}
 As before, the Fourier transform of the operator $\setA^{\mathsf{a}}_t$ is given by  $\displaystyle\hat{\setA}^{\mathsf{a}}_t (\beta)=\frac{1}{t}\sum_{n\leq t} e\left( \lfloor a(n) \rfloor\beta\right)$, and the Fourier transforms of the operators $\setA^{\mathsf{a},L}_t, \setB^{\mathsf{a}}_t$ and $\setB^{\mathsf{a},L}_t$ are given by analogous expressions.

By Theorem \ref{thm:deg1}, $\lim_{t \rightarrow \infty} \setB^{\mathsf{a}}_t f(x)$ exists for almost every $x$ and the limit is $\int_X f \, d \mu$ if the system is ergodic. Hence, by Lemma~\ref{lem:comp:weights},  $\lim_{t \rightarrow \infty} \setB^{\mathsf{a},L}_tf(x)$ exists for almost every $x$ and the limit is $\int_X f \, d \mu$ if the system is ergodic.

Following the argument in the proof of Theorem~\ref{thm:sparse2}, it is enough to show the following result.

\begin{prop}{}\label{prop:exponential'}
Suppose $a(x)$ has the canonical decomposition $a(x) = p(x) + r(x)$, where $\deg (p) = 1$ and $\log^3 x\gg|r(x)|\gg (\log x)^\eta$ for some $\eta>1$.
Then there exist positive constants $t_0$ and $C$ such that for all $t\geq t_0$ and  for all  $\beta$ with $0\leq  |\beta|\leq \frac{1}{2}$, we have
\begin{equation}\label{eq:toshow22'}
\left| \hat{\setA}^{\mathsf{a},L}_{t}(\beta)-\hat{\setB}^{\mathsf{a},L}_{t}(\beta)\right|\leq \frac{C}{\log\log t}.
\end{equation}
\end{prop}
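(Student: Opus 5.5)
The plan follows the structure of the proof of Proposition~\ref{prop:exponential}: split $\beta$ into a small regime and a large regime. In the small regime we compare $\hat{\setA}^{\mathsf{a},L}_t$ and $\hat{\setB}^{\mathsf{a},L}_t$ term by term. In the large regime we bound each of them separately, transferring Ces\`aro exponential-sum bounds to logarithmic weights by partial summation. Since the target is only $C/\log\log t$, the log-weighted normalisation gives a lot of room.

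\textbf{Small regime.} Take $0\le|\beta|\le (\log t)^{-4}$. Using $|e(x)-e(y)|\le 2\pi|x-y|$ and $|\lfloor a(n)\rfloor-\lfloor p(n)+\log^3 n\rfloor|\le |r(n)|+\log^3 n+1\ll \log^3 t$ for $n\le t$, each term differs by $\ll |\beta|\log^3 t$. Averaging with weights $1/n$ and normalising by $\log t$ gives
\[
\left|\hat{\setA}^{\mathsf{a},L}_t(\beta)-\hat{\setB}^{\mathsf{a},L}_t(\beta)\right|\ll (\log t)^{-1}\le (\log\log t)^{-1}.
\]

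\textbf{Large regime.} Take $(\log t)^{-4}\le|\beta|\le 1/2$. It suffices to show that both $|\hat{\setA}^{\mathsf{a},L}_t(\beta)|$ and $|\hat{\setB}^{\mathsf{a},L}_t(\beta)|$ are $\ll 1/\log\log t$. Let $b(n)$ denote either $\lfloor a(n)\rfloor$ or $\lfloor p(n)+\log^3n\rfloor$ and set $S(k)=\sum_{l\le k}e(b(l)\beta)$. Abel summation (Lemma~\ref{lem:byparts}) gives
\[
\sum_{n\le t}\frac{e(b(n)\beta)}{n}=\frac{S(t)}{t}+\sum_{k<t}\frac{S(k)}{k(k+1)}.
\]
Split the last sum at $k=r:=\exp\big((\log t)/\log\log t\big)$. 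The part $k\le r$ is trivially at most $\log r=\log t/\log\log t$, so after dividing by $\log t$ it contributes $\ll 1/\log\log t$. For $r\le k\le t$, the condition $|\beta|\ge(\log t)^{-4}$ gives $|\beta|\ge (\log k)^{-4(1+o(1))}\ge k^{-1/2}$. We therefore need a Ces\`aro bound $|S(k)|/k\ll (\log k)^{-\delta}$ valid for all $k^{-1/2}\le|\beta|\le 1/2$ and some $\delta>0$. Such a bound gives a contribution $\ll\log t\,(\log r)^{-\delta}$, which is $o(\log t)$ after normalisation. Since $\log r\gg \log t/\log\log t$, this decay is amply sufficient.

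\textbf{The Ces\`aro bound.} This is the main step, and it is essentially Lemma~\ref{lem:7}, but under the weaker hypothesis $r(x)\gg(\log x)^\eta$ with $1<\eta\le 3$, so only the reduced target $(\log k)^{-\delta}$ is required. By Lemma~\ref{lem:integer part} with $r$ a small power of $\log k$ and $S=\log^3 k$, it suffices to bound $\frac1k|\sum_{n\le k}e(a(n)\beta)|\ll(\log k)^{-\delta'}$ for $k^{-1/2}\le|\beta|\le\log^3k$, and likewise with $a(n)$ replaced by $p(n)+\log^3 n$.

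We split this range of $\beta$ as in Lemma~\ref{lem:7}.
\begin{itemize}
\item For $k^{-1/2}\le|\beta|\le(\log k)^{-\gamma}$, apply Kusmin--Landau (Lemma~\ref{Lemma:KL}) on $[k^{7/8},k]$. Here $\|f'\|=\|(c+r'(x))\beta\|$ stays away from $0$ because $|c\beta|\le|c|(\log k)^{-\gamma}$ is small while $r'(x)\beta\to0$.
\item For $(\log k)^{-\gamma}\le|\beta|\le\log^3 k$, apply van der Corput (Lemma~\ref{lem:vdc2}) with $f''=r''\beta$. By Lemma~\ref{lem:est-lopital-slow2}(a), $|r''(x)|\gg(\log x)^{\eta-1}/x^2$, and on $[k^{7/8},k]$ also $|r''|\ll (\log k)^{2}/k^{7/4}$. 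This yields the bound $\ll (\log k)^{-(\eta-1-\gamma)/2}+k^{-1/16+o(1)}$, which is a negative power of $\log k$ once $\gamma<\eta-1$.
\end{itemize}

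The main obstacle is that in this second range the van der Corput upper-to-lower ratio $A$ for $f''$ is about $k^{1/4}$. The term $A|I|\kappa^{1/2}$ must therefore be checked carefully: it is roughly $k^{1/4}\cdot k\cdot(\log k)^{O(1)}/k$, which is not $o(k)$. To fix this, I would subdivide $[k^{7/8},k]$ into dyadic blocks $[X,2X]$, on each of which $A\ll1$ by the Hardy-field regularity of $r''$ (Lemma~\ref{lem:est-lopital-slow1}). Summing the resulting block bounds $X\kappa_X^{1/2}+\kappa_X^{-1/2}$, with $\kappa_X\asymp(\log X)^{\eta-1}|\beta|/X^2$, gives $\ll k(\log k)^{-(\eta-1-\gamma)/2}$. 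Combining the small and large regimes yields \eqref{eq:toshow22'}.
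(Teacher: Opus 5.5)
Your proposal is correct and follows essentially the paper's route. You use a Lipschitz comparison for small $\beta$. For large $\beta$ you bound $\hat{\setA}^{\mathsf{a},L}_t$ and $\hat{\setB}^{\mathsf{a},L}_t$ separately, combining Kusmin--Landau and van der Corput Ces\`aro estimates with the floor-transfer Lemma~\ref{lem:integer part} and Abel summation to the weights $1/n$, as in Lemma~\ref{lem:largeregime}. The only difference is in the parameters. The paper settles for a $(\log\log t)^{-3}$ Ces\`aro saving over $t^{-1/2}\le|\beta|\le t^{\epsilon}$, using Lemma~\ref{lem:van der Corput} with $l=2$ on $[t^{1-\epsilon},t]$. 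You instead extract a power-of-$\log$ saving over $|\beta|\le\log^3 k$, in the style of Lemma~\ref{lem:7}.

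One correction: the ``obstacle'' you flag does not exist. The term $A|I|\kappa^{1/2}\approx k^{1/4}(\log k)^{O(1)}$ is certainly $o(k)$; compare the corresponding term in the proof of Lemma~\ref{lem:7}. So your dyadic subdivision is harmless but unnecessary.
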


The proposition is proved in two parts: 
for small values of $\beta$, the desired estimate, i.e. \eqref{eq:toshow22'}, is established in Lemma~\ref{lem:smallregime'}, whereas for large values of $\beta$, we prove the advertised estimate in Lemma~\ref{lem:largeregime'}.

\begin{lem}\label{lem:smallregime'}
Let $a(x)$ be as in Proposition~\ref{prop:exponential'}. Then there exist constants $t_0, C>0$ such that for all $t\geq t_0$ and $\beta$ with $0\leq |\beta|\leq \dfrac{1}{\log^{5} t}$, we have
\begin{equation*}
\left| \hat{\setA}^{\mathsf{a},L}_{t}(\beta)-\hat{\setB}^{\mathsf{a},L}_{t}(\beta)\right| \le \frac{C}{\log\log t}.
\end{equation*}
\end{lem}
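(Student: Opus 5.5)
This is the analogue of Lemma~\ref{lem:smallregime}, and I will prove it the same way, by a direct Lipschitz estimate term by term. No cancellation in the exponential sums is needed. For $|\beta|\le 1/\log^5 t$ and $n\le t$, I write the difference of the two Fourier transforms as
\[
\hat{\setA}^{\mathsf{a},L}_{t}(\beta)-\hat{\setB}^{\mathsf{a},L}_{t}(\beta)=\frac{1}{\log t}\sum_{n\le t}\frac1n\Bigl(e(\lfloor a(n)\rfloor\beta)-e(\lfloor p(n)+\log^3 n\rfloor\beta)\Bigr).
\]
I then apply the triangle inequality and $|e(x)-e(y)|\le 2\pi|x-y|$.

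The key step is to bound the difference of the arguments. Since $a(n)=p(n)+r(n)$, the polynomial parts cancel, and
\[
\bigl|\lfloor a(n)\rfloor-\lfloor p(n)+\log^3 n\rfloor\bigr|\le |r(n)|+\log^3 n+1.
\]
By hypothesis $|r(x)|\ll \log^3 x$, so for $n\le t$ and $t$ large this is at most $C_1\log^3 t$. Here $\log^3 n\le \log^3 t$, and since $r$ is eventually monotone in a Hardy field, $|r(n)|\le C\log^3 t$ once $n$ exceeds a fixed threshold. The finitely many small $n$ contribute $O(1/\log t)$ after normalization.

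Summing, the difference is at most
\[
\frac{2\pi C_1|\beta|\log^3 t}{\log t}\sum_{n\le t}\frac1n\le 4\pi C_1|\beta|\log^3 t\le \frac{4\pi C_1}{\log^2 t},
\]
and this is much smaller than $C/\log\log t$ for large $t$. I expect no real obstacle. The only care needed is the uniform bound on $|r(n)|$ for $n\le t$, which is handled by the eventual monotonicity of $|r|$ and by discarding finitely many initial terms into the constant. The threshold $1/\log^5 t$ is deliberately generous, and it would also leave room if one preferred the cruder estimate $\log^4 t$ in place of $\log^3 t$.
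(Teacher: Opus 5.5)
Your proposal is correct and is essentially the paper's proof. It uses the triangle inequality, the Lipschitz bound $|e(x)-e(y)|\le 2\pi|x-y|$, the hypothesis $|r(x)|\ll\log^3 x$ to bound the argument difference by $O(\log^3 t)$, and $\sum_{n\le t}1/n\le 2\log t$, giving a bound of order $|\beta|\log^3 t\le 1/\log^2 t\ll 1/\log\log t$. Your explicit handling of the $+1$ from the floor functions and of the finitely many initial terms is slightly more careful than the paper's, which absorbs these silently.
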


\begin{proof}
Suppose $\beta$ satisfies the hypothesis of the lemma. Then
\begin{align}
\left| \hat{\setA}^{\mathsf{a},L}_{t}(\beta)-\hat{\setB}^{\mathsf{a},L}_{t}(\beta)\right|&= \left|\frac{1}{\log t}\sum_{n\leq t} \frac{1}{n} \left( e\left( \lfloor a(n) \rfloor\beta\right)-e\left( \lfloor p(n)+\log^3 n\rfloor\beta\right) \right)\right|\notag\\
&\leq \frac{1}{\log t}\sum_{n\leq t} \frac{1}{n} \left|e\left( \lfloor a(n) \rfloor\beta\right)-e\left( \lfloor p(n)+\log^3 n\rfloor\beta\right)\right|\notag\\
&\leq \frac{4\pi |\beta|}{\log t}\sum_{n\leq t} \frac{\log^3 n}{n} \label{eq:31'}\\
&\ll \frac{8\pi |\beta|}{\log t} \cdot {\log^{4} t}\label{eq:32'}\\
&\ll \frac{1}{\log\log t}\notag.
\end{align}
We used the inequality $|e(x)-e(y)|\leq 2\pi |x-y|$ and the fact that $\eta\leq 3$ to obtain \eqref{eq:31'}, and $\displaystyle\sum_{n\leq t}\frac{1}{n}\leq 2 \log t$ for large $t$ to obtain \eqref{eq:32'}.
\end{proof}

To establish the estimate for large values of $\beta$, we will use Lemma \ref{lem:Lopital} and Lemma \ref{lem:van der Corput}.

\begin{lem}\label{lem:largeregimereal'} 
Let $a(x)$  be as in Proposition~\ref{prop:exponential'} and $\epsilon\in (0,\frac{1}{2})$. 
Then for any $\beta$ with ${t^{-\frac{1}{2}}}\leq |\beta| \leq t^\epsilon$, we have
\begin{equation*}\label{eq:toshow1'}
\frac{1}{t}\left|\sum_{n\leq t} e\left( a(n)\beta\right)\right| \ll  \frac{1}{(\log\log t)^3}.
\end{equation*}
The implicit constant here depends only on $a(x)$ and $\epsilon$.
\end{lem}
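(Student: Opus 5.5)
Here $a(x)=cx+b+r(x)$ with $c\neq 0$ and $(\log x)^\eta\ll|r(x)|\ll\log^3 x$, $\eta>1$. I will imitate Lemma~\ref{lem:largeregimereal} and split the $\beta$-range into two cases. In both cases I discard the initial segment $n\le t^{1-\tau}$ (trivially $O(t^{-\tau})$) and apply van der Corput to $\phi(x)=a(x)\beta$ on $[Y,X]=[t^{1-\tau},t]$.

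\textbf{Case (I): first derivative.} Take $t^{-1/2}\le|\beta|\le(\log t)^{-\gamma}$ for a small $\gamma\in(0,1)$, or more generally $\|c\beta\|$ not too small. Here $\phi'(x)=(c+r'(x))\beta$ and $r'(x)\ll \log^2 x/x\to 0$, so on $[t^{1-\tau},t]$ the derivative $\phi'$ is monotone and stays within $o(|\beta|)$ of $c\beta$. If $\|c\beta\|\ge \kappa$ with $\kappa=t^{-1/2}/2$-ish, Kusmin--Landau (Lemma~\ref{Lemma:KL}) gives $\ll 1/\kappa\ll t^{1/2}$, which is negligible after dividing by $t$.

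\textbf{Case (II): second derivative.} Now suppose $\|c\beta\|$ is small; this includes the range $(\log t)^{-\gamma}\le|\beta|\le t^\epsilon$ where $c\beta$ is near an integer. Since $r$ is not linear, I use the second derivative, which is $\phi''=r''\beta$. By Lemma~\ref{lem:Lopital}(1) (with exponent $\eta'\in(1,\eta)$) and part (2) (with exponent $3+\delta$), on $[t^{1-\tau},t]$ one has
\[
\frac{(\log t)^{\eta'-1}|\beta|}{t^2}\ll|\phi''(x)|\ll\frac{(\log t)^{3}|\beta|}{t^{2(1-\tau)}} .
\]
Apply Lemma~\ref{lem:vdc2} with $\kappa=(\log t)^{\eta'-1}|\beta|t^{-2}$ and $A\approx t^{2\tau}(\log t)^{4}$. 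The error is
\[
A\,t\,\kappa^{1/2}\approx t^{2\tau}(\log t)^{C}|\beta|^{1/2},
\]
which is tiny relative to $t$ provided $|\beta|\le t^{\epsilon}$ and $2\tau+\epsilon/2<1$. The main term is $\kappa^{-1/2}=t\,(\log t)^{-(\eta'-1)/2}|\beta|^{-1/2}$. Dividing by $t$ gives $(\log t)^{-(\eta'-1)/2}|\beta|^{-1/2}$, which is $\ll(\log t)^{-(\eta'-1-\gamma)/2}$ once $|\beta|\ge(\log t)^{-\gamma}$. This is certainly $\ll(\log\log t)^{-3}$, since any negative power of $\log t$ beats $(\log\log t)^{-3}$. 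That is exactly why the weaker $\log\log$ target is attainable with only $\eta>1$.

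\textbf{Main obstacle: the intermediate range.} The hard part is $t^{-1/2}\le|\beta|\le(\log t)^{-\gamma}$ with $\|c\beta\|$ also small. If $|c|<2$, then for $|\beta|\le(\log t)^{-\gamma}$ we have $\|c\beta\|=|c\beta|\ge |c|t^{-1/2}$, so Case (I) applies directly. For general $c$ the case split should be by $\|c\beta\|$ rather than $|\beta|$:
\begin{itemize}
\item If $\|c\beta\|\ge t^{-1/2}$, Kusmin--Landau applies, with the perturbation $r'\beta\ll |\beta|\log^2t/t^{1-\tau}$ absorbed since $|\beta|\le t^{\epsilon}$.
\item If $\|c\beta\|<t^{-1/2}$, then $|\beta|\ge 1/(2|c|)$ unless $\beta$ is tiny. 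So the second-derivative bound applies with $|\beta|\gg1$, giving $(\log t)^{-(\eta'-1)/2}$.
\end{itemize}
I expect the care to lie in verifying that $\phi'$ stays monotone and bounded away from integers uniformly on the interval, i.e.\ that $|r'(x)\beta|\le\|c\beta\|/2$ there. This needs $t^{-1/2}\gg t^{\epsilon}\log^2t/t^{1-\tau}$, which holds for $\epsilon,\tau$ small.
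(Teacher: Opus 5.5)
Your proof is correct and follows essentially the paper's route: Kusmin--Landau (Lemma~\ref{Lemma:KL}) when $|\beta|$ is small, and a second-derivative van der Corput bound on $[t^{1-\tau},t]$ once $|\beta|$ exceeds a threshold. The paper splits at $1/\log\log t$ and uses Lemma~\ref{lem:van der Corput} with $l=2$, whereas you split at $(\log t)^{-\gamma}$ and use Lemma~\ref{lem:vdc2}; this changes nothing essential. The ``intermediate range'' obstacle is illusory, and your first two-case split is already complete: since $c$ is fixed, for large $t$ every $\beta$ in Case (I) satisfies $|c\beta|\le|c|(\log t)^{-\gamma}<1/2$, so $\|c\beta\|=|c\beta|\ge|c|t^{-1/2}$ automatically.
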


\begin{proof}
\textbf{Case (a).} Let $\frac{1}{\sqrt{t}}\leq |\beta|\leq \frac{1}{\log \log t}$.\\
Suppose $f(x)=(cx+r(x))\beta$. So, we have $f'(x)$ is monotone and for sufficiently large $x$, $||f'(x)||>\frac{|c\beta|}{2}\geq \frac{|c|}{2\sqrt t}$. Hence, by Lemma~\ref{Lemma:KL},
\begin{equation*}
\label{eq:case1':sec6} 
\left|\frac{1}{t}\sum_{n=1}^t e\left(a(n)\beta\right)\right|\ll \dfrac{1}{\sqrt{t}}\ll \dfrac{1}{(\log\log t)^3}.
\end{equation*}
\textbf{Case (b).} Let $\frac{1}{\log \log t}\leq |\beta|\leq t^\epsilon$.\\
For this case, we want to apply Lemma~\ref{lem:van der Corput} on the function $\phi(x)= a(x)\beta$ with $l=2, Y=t^{1-\epsilon}, X=t$. Since $|r(x)|\gg \log^\eta x$, $r(x)\succ \log^{\eta'}x$ for some $\eta'>1$ (for simplicity we write $\eta'=\eta$). Hence by Lemma~\ref{lem:Lopital}, we have
\begin{equation*}
|\phi^{(2)}(x)|= |r^{(2)}(x) \beta| \gg \left(\dfrac{(\log x)^{\eta-1}}{x^{2}}\right)|\beta|.
\end{equation*}
So, we can take $\rho= \dfrac{(\log t)^{\eta-1}}{t^{2}}|\beta|$. Again using the mean value theorem, Lemma~\ref{lem:Lopital} and the fact that $|r(x)|\ll \log^4 x$, for large $t$,  we have $R\leq \dfrac{C (\log t)^{3}}{t^{2(1-\epsilon)}}|\beta|$ for some $C > 0$. 
Thus, for large $t$, we have
\begin{equation*}
\frac{R^2}{\rho}\ll \frac{ (\log t)^{7-\eta}}{t^{2(1-2\epsilon)}}|\beta|, \frac{1}{\rho(X-Y)^2}\ll \frac{1}{|\beta| (\log t)^{\eta-1}}\text{ and }\frac{R}{\rho(X-Y)}\ll \frac{ (\log t)^{4-\eta}}{t^{1-2\epsilon}}.
\end{equation*}
Since $\eta-1>0$, for large $t$ we have $\frac{1}{|\beta| (\log t)^{\eta-1}}\ll \frac{1}{(\log\log t)^{6}}$.
Hence, by Lemma~\ref{lem:van der Corput}, in the given range we have
\begin{equation*}
\left|\sum_{t^{1-\epsilon}\leq n\leq t} e\left( a(n)\beta\right)\right| \ll  (t-t^{(1-\epsilon)}) \frac{21}{(\log \log t)^3}.
\end{equation*}
Thus,\begin{equation*}\label{eq:case2':sec6}
\frac{1}{t}\left|\sum_{n\leq t} e\left( a(n)\beta\right)\right| \ll \frac{1}{(\log \log t)^3}.
\end{equation*}
Combining Case (a) and Case (b), the conclusion of the lemma follows.
\end{proof}

Now applying Lemma~\ref{lem:integer part} with $\phi(n)= a(n)$, $Q= t^{\frac{1}{2}}$, $r= (\log \log t)^3$ and $S= t^{\epsilon}$, we have the following result.
\begin{prop}\label{prop:integerpart'} Let $a(x)$ and $r(x)$ be as in Proposition~\ref{prop:exponential'}.
Then there exist constants $t_0, C>0$ such that for all $t\geq t_0$ and $\beta$ with $t^{-\frac{1}{2}} \leq |\beta| \leq \frac{1}{2}$, we have
\begin{equation*}\label{eq:toshow2'}
\frac{1}{t}\left|\sum_{n\leq t} e\left( \lfloor a(n)\rfloor \beta\right) \right|\ll \frac{C}{\log\log t}. 
\end{equation*}
\end{prop}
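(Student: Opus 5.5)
The plan is to mirror the proof of Proposition~\ref{prop:integerpart}: apply Lemma~\ref{lem:integer part} (Theorem~5.2 of \cite{BKQW}) with the parameters already indicated, namely $\phi(n)=a(n)$, $Q=t^{1/2}$, $r=(\log\log t)^3$ and $S=t^{\epsilon}$ for a fixed $\epsilon\in(0,\frac12)$. The hypothesis of Lemma~\ref{lem:integer part} is a bound
\[
\frac1t\Bigl|\sum_{n\le t}e(a(n)\alpha)\Bigr|\le \frac1r
\quad\text{for all } \frac1Q\le|\alpha|\le S,
\]
and this is exactly what Lemma~\ref{lem:largeregimereal'} delivers on the range $t^{-1/2}\le|\alpha|\le t^{\epsilon}$, up to its implicit constant.

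First I remove that constant. Lemma~\ref{lem:largeregimereal'} gives $\frac1t|\sum_{n\le t}e(a(n)\alpha)|\le C_1(\log\log t)^{-3}$. I therefore take $r=(\log\log t)^{3}/C_1$, or equivalently $r=(\log\log t)^{2}$ once $t$ is large enough that $C_1\le \log\log t$. Either choice keeps $\log r\asymp\log\log\log t$.

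Second, I check the side conditions of Lemma~\ref{lem:integer part} for all $t\ge t_0$:
\begin{itemize}
\item $t,Q,r,S>10$;
\item $r^2\le S-1$, which holds since $(\log\log t)^6\le t^{\epsilon}-1$ for large $t$.
\end{itemize}

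The conclusion of Lemma~\ref{lem:integer part} then holds for all $t^{-1/2}\le|\beta|\le\frac12$, giving
\[
\frac1t\Bigl|\sum_{n\le t}e(\lfloor a(n)\rfloor\beta)\Bigr|\le 100\sqrt{\frac{\log r}{r}}\ll \frac{\sqrt{\log\log\log t}}{(\log\log t)^{3/2}}\le \frac{1}{\log\log t}
\]
for $t$ large. This is the claimed estimate, with $t_0$ and $C$ depending only on $a(x)$ and $\epsilon$.

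The only real content is that the exponential-sum input from Lemma~\ref{lem:largeregimereal'} must cover the full range $[Q^{-1},S]$ required by Lemma~\ref{lem:integer part}. The range is fine, since Lemma~\ref{lem:largeregimereal'} is stated precisely on $t^{-1/2}\le|\beta|\le t^{\epsilon}$. The remaining subtlety is the mismatch between the $\frac1r$ form and the $\ll$ form of the bound, and enlarging $r$'s denominator as above handles it. The slack in the exponent, $3/2>1$, absorbs the $\sqrt{\log r}$ loss. I therefore expect no genuine obstacle beyond this bookkeeping of constants and of how large $t_0$ must be.
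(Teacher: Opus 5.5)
Your proposal is correct and is exactly the paper's argument: apply Lemma~\ref{lem:integer part} with $\phi(n)=a(n)$, $Q=t^{1/2}$, $S=t^{\epsilon}$ and $r\approx(\log\log t)^3$, using Lemma~\ref{lem:largeregimereal'} as the input. Absorbing that lemma's implicit constant into $r$ is a detail the paper leaves implicit. One small correction: the alternative $r=(\log\log t)^2$ is not equivalent, since it only gives $100\sqrt{\log r/r}\asymp\sqrt{\log\log\log t}/\log\log t$, which is not $\ll 1/\log\log t$. Keep $r=(\log\log t)^3/C_1$, which is what your final display uses; any $r=(\log\log t)^{\gamma}$ with $2<\gamma<3$ and $t$ large also works.
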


Then, we use the same argument as in the proof of Lemma~\ref{lem:largeregime} with $W(t) = \log t$ to derive the following lemma.
\begin{lem}\label{lem:largeregime'} 
Let $a(x)$ and $r(x)$ be as in Proposition~\ref{prop:exponential'}.
Then for every $\beta$ satisfying $ \frac{1}{\log^{5} t}\leq |\beta| \le\frac{1}{2}$, we have
\begin{equation*}
\left| \hat{\setA}^{\mathsf{a},L}_{t}(\beta)\right|\ll \frac{1}{ \log\log t}.
\end{equation*}
\end{lem}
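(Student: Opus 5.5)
The plan is to repeat the proof of Lemma~\ref{lem:largeregime} with $W(t)=\log t$ and $w(n)=1/n$, using Proposition~\ref{prop:integerpart'} in place of Proposition~\ref{prop:integerpart}. Put $b(n)=\lfloor a(n)\rfloor$. Apply Abel summation (Lemma~\ref{lem:byparts}) with a cutoff $r=r(t)$, to be chosen below, exactly as in \eqref{eq:33}. This splits $\frac{1}{\log t}\sum_{k\le t}\frac1k e(b(k)\beta)$ into three terms:
\[
(1)=\frac{1}{t\log t}\sum_{k\le t}e(b(k)\beta),\qquad
(2)=\frac{1}{\log t}\sum_{k=r}^{t-1}\Big(\frac1k-\frac1{k+1}\Big)\sum_{l\le k}e(b(l)\beta),\qquad
(3)=\frac{1}{\log t}\sum_{k<r}\Big(\frac1k-\frac1r\Big)e(b(k)\beta).
\]
The weight estimates \eqref{eq:W1} and \eqref{eq:W2} are trivial here: $w(t)/W(t)=1/(t\log t)\le 1/t$, and $k\,|1/k-1/(k+1)|\le 1/k=w(k)$.

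Next, use Proposition~\ref{prop:integerpart'}. For every $k\ge r\ge t_0$ and every $\beta$ with $r^{-1/2}\le|\beta|\le\frac12$ (so also $k^{-1/2}\le|\beta|$), we get $\frac1k|\sum_{l\le k}e(b(l)\beta)|\ll 1/\log\log k\le 1/\log\log r$. Hence $|(1)|\ll 1/\log\log r$ and
\[
|(2)|\le\frac{1}{\log t}\sum_{k=r}^{t-1}\frac1k\cdot\frac{C}{\log\log r}\ll\frac{1}{\log\log r}.
\]
For the third term, $|(3)|\le\frac{1}{\log t}\sum_{k<r}\frac1k\ll\frac{\log r}{\log t}$.

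The main step is choosing $r$ so that three requirements hold at once: $\log\log r\gg\log\log t$, $\log r/\log t\ll1/\log\log t$, and $r^{-1/2}\le(\log t)^{-5}$, so that the whole range $|\beta|\ge(\log t)^{-5}$ is covered. Mimicking the definition in Lemma~\ref{lem:largeregime}, take $\log r=\log t/\log\log t$, i.e.\ $r=t^{1/\log\log t}$. Then $\log r/\log t=1/\log\log t$. Also $\log\log r=\log\log t-\log\log\log t\ge\frac12\log\log t$ for large $t$. Finally $r^{1/2}=\exp\big(\log t/(2\log\log t)\big)$ exceeds $(\log t)^5$ for large $t$. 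All three requirements therefore hold.

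Combining the three bounds gives $|\hat{\setA}^{\mathsf a,L}_t(\beta)|\ll1/\log\log t$ uniformly for $(\log t)^{-5}\le|\beta|\le\frac12$ once $t\ge t_1$ (with $t_1$ chosen so that $r\ge t_0$). This is the claim. The only delicate point I expect is the choice of $r$ above, since the logarithmic weight leaves very little room: a polynomial cutoff $r=t^{\delta}$ would make $(3)$ of constant size. So $r$ must be $t^{o(1)}$ while still growing faster than any power of $\log t$, and $t^{1/\log\log t}$ is the natural choice.
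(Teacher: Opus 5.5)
Your proposal is correct and follows the paper's route: the paper proves this lemma by rerunning the argument of Lemma~\ref{lem:largeregime} with $W(t)=\log t$, with Proposition~\ref{prop:integerpart'} in place of Proposition~\ref{prop:integerpart}, and that is what you do. Your explicit cutoff $r=t^{1/\log\log t}$, i.e.\ $W(r)\approx W(t)/\log W(t)$, is the right adaptation. Copying the cutoff from Lemma~\ref{lem:largeregime} literally, $W(r)\approx W(t)/(\log W(t))^{1/2+\epsilon}$, would bound the third term only by $(\log\log t)^{-1/2-\epsilon}$, not by the stated $1/\log\log t$.
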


\begin{proof}[Proof of Proposition~\ref{prop:exponential'}]
The proof now follows from Lemma~\ref{lem:smallregime'} and Lemma~\ref{lem:largeregime'}. 
\end{proof}

\section{Applications to joint ergodicity}
\label{sec:JE}
In this section, we prove Proposition \ref{thm:je1:int}, Theorem \ref{thm:multiconv:int} and Theorem \ref{thm:multiconv2:int}.

\begin{prop}[Proposition \ref{thm:je1:int}]
\label{thm:je1}
Let $a(t) \in \mathbf{U}$ be such that $1 \prec a(t) \prec t$. 
Let $W(t) \in \mathbf{E}$ be such that $w(t):= W'(t)$ is positive and non-increasing and $\lim\limits_{t \rightarrow \infty} W(t) = \infty$.
Suppose that there exists $c > 0$ such that for all large enough $t$,  
\begin{equation*}
W(t) \leq |a(t)|^c. 
\end{equation*} 
If $T_1, \dots, T_k$ are pointwise jointly ergodic, then for any bounded measurable functions $f_1, \dots, f_k$ on $X$, 
\begin{equation}
\label{eq:thm:je1}
\lim_{N \rightarrow \infty} \frac{1}{W(N)} \sum_{n=1}^N w(n) \prod_{i=1}^k f_i (T_i^{\lfloor a(n) \rfloor} x) = \prod_{i=1}^k \int f_i \, d \mu_i \quad \text{for a.e. } x.
\end{equation}
\end{prop}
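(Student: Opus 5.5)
The plan is to reduce the statement to Theorem~\ref{thm:slow-weight}, applied pointwise to a suitable sequence $(b_n)$, in the same way that Theorem~\ref{thm:sufficient} was deduced from it. One difference is that Theorem~\ref{thm:slow-weight} requires $b_n\ge 0$, whereas the products $\prod_i f_i$ may be complex or signed.

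First we normalize the functions. By linearity in each $f_i$ separately, we may split every bounded $f_i$ into its real and imaginary parts and then into positive and negative parts. Each of the resulting pieces is nonnegative and bounded. Expanding the product gives a finite sum of products of nonnegative bounded functions, and both sides of \eqref{eq:thm:je1} are multilinear. It therefore suffices to treat $f_i\ge 0$.

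Next we fix a generic point and apply Theorem~\ref{thm:slow-weight}.
\begin{itemize}
\item Pointwise joint ergodicity gives a full-measure set $X_0$ on which
\[
b_n(x):=\prod_{i=1}^k f_i(T_i^n x)
\]
satisfies $\frac1N\sum_{n\le N} b_n(x)\to b:=\prod_i\int f_i\,d\mu_i$. Here $b_n(x)\ge 0$.
\item For eventually positive $a$, we pick a maximal Hardy field $\mathbf H$ containing $a$ and $W$; this is possible since $W\in\mathbf E$. The hypotheses of Theorem~\ref{thm:slow-weight} then hold.
\item Applying \eqref{eq:conv:new} to $(b_n(x))$ for each $x\in X_0$ yields \eqref{eq:thm:je1}.
\end{itemize}
One technicality is that $\lfloor a(n)\rfloor$ may be $\le 0$ for finitely many $n$. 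These terms contribute $O(1/W(N))\to0$ because the $f_i$ are bounded.

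The main obstacle is the case where $a$ is eventually negative. There $T_i^{\lfloor a(n)\rfloor}$ involves negative powers, and the transformations $T_i$ need not be invertible; the Gauss map and $T_b$ are examples. For invertible $T_i$, I would first try the route used in the proof of Theorem~\ref{thm:sufficient}: write $T_i^{\lfloor a(n)\rfloor}=(T_i^{-1})^{\lceil -a(n)\rceil}$ and use \eqref{eq:conv:new2}. However, that route needs pointwise joint ergodicity of $T_1^{-1},\dots,T_k^{-1}$, which is not given in the hypotheses. So I would either restrict to eventually positive $a$, which is implicit since negative iterates are meaningless for noninvertible maps, or assume the $T_i$ invertible with the inverses jointly ergodic. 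I expect the intended statement takes $a$ eventually positive, and I will state that reduction explicitly.
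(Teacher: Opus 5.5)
For eventually positive $a$, your argument is exactly the paper's proof. You reduce to nonnegative bounded $f_i$ by multilinearity, take a maximal Hardy field containing $a$ and $W$, and apply \eqref{eq:conv:new} of Theorem~\ref{thm:slow-weight} pointwise to $b_n(x)=\prod_i f_i(T_i^nx)$ on the full-measure set given by joint ergodicity.

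For eventually negative $a$, the paper does what you declined to do. It writes $T_i^{\lfloor a(n)\rfloor}=(T_i^{-1})^{\lceil -a(n)\rceil}$ and ``applies the preceding argument to $T^{-1}$ and $-a$''. Your objection is correct. That step silently requires the $T_i$ to be invertible and $T_1^{-1},\dots,T_k^{-1}$ to be pointwise jointly ergodic, and neither is among the hypotheses.

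The gap is genuine: the statement itself can fail in that case. Let $X=\{0,1\}^{\mathbb Z}$ with the Bernoulli $(\tfrac12,\tfrac12)$ measure $\mu$, let $T_1=\sigma$ with $(\sigma x)_j=x_{j+1}$, and let $(T_2x)_j=x_{\pi(j)}$. Here $\pi$ is the bijection of $\mathbb Z$ with exactly two orbits:
\begin{itemize}
\item[(a)] $\cdots\mapsto-2\mapsto-1\mapsto0\mapsto1\mapsto4\mapsto9\mapsto16\mapsto\cdots$;
\item[(b)] $\cdots\mapsto s_5\mapsto s_3\mapsto s_1\mapsto s_2\mapsto s_4\mapsto s_6\mapsto\cdots$, where $s_1<s_2<\cdots$ are the positive non-squares.
\end{itemize}

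\emph{Forward direction.} Since $\pi^n(j)\ge 2n-O_j(1)$ as $n\to+\infty$, for cylinder functions $f,g$ the coordinate windows of $f(T_1^nx)$ and $g(T_2^nx)$ are eventually disjoint. Windows belonging to different indices overlap for only $O(N)$ pairs $(n,m)\in[N]^2$. A second-moment bound along $N=k^2$ then gives the joint averages for cylinder functions. The approximation argument of Theorem~\ref{multi-conv:thm:cond} extends this to all bounded measurable functions; it applies because $\sigma$ is ergodic and $T_2$ is mixing, $\pi$ having no finite orbits. Hence $T_1,T_2$ are pointwise jointly ergodic.

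\emph{Backward direction.} We have $\pi^{-m}(-1)=-1-m$. So for $f=g=(-1)^{x_{-1}}$ we get $f(T_1^{-m}x)\,g(T_2^{-m}x)=1$ for all $m\ge 0$ and all $x$. Take $a(t)=-\log t$ and $W(t)=\log t$, which satisfy every hypothesis with $c=1$. The weighted averages in the proposition are then $\frac{1}{\log N}\sum_{n\le N}\frac1n\to1$, not $\int f\,d\mu\int g\,d\mu=0$.

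So your repair is the right one: restrict to eventually positive $a$, or add pointwise joint ergodicity of the inverses as a hypothesis. It loses nothing for the paper's application to $T_b,T_G,T_\beta$, which are not invertible anyway.
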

\begin{proof}
Suppose first that $a(t)$ is eventually nonnegative. By Theorem \ref{thm:slow-weight}, \eqref{eq:thm:je1} holds for any nonnegative bounded measurable functions $f_1, \dots, f_k$. By linearity, it follows that \eqref{eq:thm:je1} holds for arbitrary bounded measurable functions $f_1, \dots, f_k$.

If $a(t)$ is eventually nonpositive, then
$$
T^{\lfloor a(n) \rfloor}=(T^{-1})^{\lceil -a(n) \rceil}.
$$
Applying the preceding argument to $T^{-1}$ and $-a(t)$, we obtain the desired result.
\end{proof}

Before presenting the proofs of Theorem \ref{thm:multiconv:int} and Theorem \ref{thm:multiconv2:int}, we briefly review some basic notions and results on weighted uniform distribution in $\mathbb{R}^k$.  Throughout this section, $\mathbb{T}^k := \mathbb{R}^k/\mathbb{Z}^k$ is equipped with the Lebesgue measure $\lambda^k$.

Let $(w(n))_{n \in \mathbb{N}}$ be a non-increasing, positive sequence such that $\lim_{N \rightarrow \infty} W(N) = \infty$, where $W(N) := \sum_{n=1}^N w(n)$.
 We say that a sequence $(x_n)_{n \in \mathbb{N}}$, where $x_n = (x_{1,n}, \dots, x_{k,n})$, is $w(n)$-uniformly distributed $\bmod \, 1$ in $\mathbb{R}^{k}$ if for every Riemann integrable function $F$ on $\mathbb{T}^k$, 
\begin{equation}
\label{eq:w-uniform:def}
\lim_{N \rightarrow \infty} \frac{1}{W(N)} \sum_{n=1}^N w(n) F( \{x_{1, n}\}, \dots, \{x_{k,n}\}) = \int_{\mathbb{T}^k} F \, d \lambda^{k}.
\end{equation}
It is well known that to establish $w(n)$-uniform distribution it suffices to check \eqref{eq:w-uniform:def} for continuous functions. In fact, by the weighted version of Weyl's criterion (see, for example, Theorem 6 in \cite{Tsuji}), in order to verify $w(n)$-uniform distribution, it suffices to check \eqref{eq:w-uniform:def} for exponential functions of the form $e (\langle h, x_n \rangle )$, where 
$\langle h, x_n \rangle = \sum\limits_{j=1}^k h_j x_{j,n}$ for every nonzero vector $h= (h_1, \dots, h_k) \in \mathbb{Z}^k$.

The next theorem provides a characterization of weighted uniform distribution for sequences arising from subpolynomial functions in a Hardy field. 
\begin{thm}[cf. Theorem 4.2 in \cite{BKS2}]
\label{thm:w-ud:mixedindex} 
Let $u_1(t), \dots, u_k(t)$ be subpolynomial functions in a Hardy field.
Let $W(t) \in \mathbf{E}$ be such that $\lim_{t \rightarrow \infty} W(t) = \infty$ and $w(t) := W'(t)$ is positive and non-increasing.
The following are equivalent.
\begin{enumerate}
\item $(u_1(n), \dots, u_k(n))_{n \in \mathbb{N}}$ is $w(n)$-uniformly distributed $\bmod \, 1$ in $\mathbb{R}^{k}$.
\item For any $u(t) \in \text{span}_{\mathbb{Z}}^* \{ u_1(t), \dots, u_k(t) \}$,
\[ \lim_{t \rightarrow \infty} \frac{|u(t) -q(t)|}{\log W(t)} = \infty \quad \text{for any } q(t) \in \mathbb{Q}[t].\] 
\end{enumerate}
\end{thm}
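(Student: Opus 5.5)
By the weighted Weyl criterion (Theorem 6 in \cite{Tsuji}), $(u_1(n),\dots,u_k(n))$ is $w(n)$-uniformly distributed mod $1$ if and only if, for every nonzero $h\in\mathbb{Z}^k$,
\[
\frac{1}{W(N)}\sum_{n=1}^N w(n)\, e\big(u(n)\big)\to 0, \qquad u=\sum_j h_ju_j .
\]
Every such $u$ lies in $\text{span}_{\mathbb{Z}}^*\{u_1,\dots,u_k\}$, is subpolynomial, and belongs to the same Hardy field. The statement therefore reduces to a one-dimensional claim. For a single subpolynomial $u$ in a Hardy field, $(u(n))$ is $w(n)$-u.d. if and only if $|u(t)-q(t)|/\log W(t)\to\infty$ for every $q\in\mathbb{Q}[t]$. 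This one-dimensional claim is exactly the content of \cite[Theorem 4.2]{BKS2}, which we would cite. Below we outline how it goes.

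\emph{Sufficiency.} Fix $u$ and write $u=p+r$ in its canonical decomposition.

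First suppose $p$ has an irrational non-constant coefficient, or $r$ grows at least like some $t^\delta$. Then $(u(n))$ is Ces\`aro u.d. by Boshernitzan's theorem. Lemma \ref{lem:HW-2}(a), applied with $d_n=w(n)$ non-increasing, transfers Ces\`aro convergence to $W$-averages.

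Otherwise $u=q+r$ with $q\in\mathbb{Q}[t]$ up to a constant, and $r$ is slow, meaning $r\prec t^\delta$ for all $\delta$. Let $D$ be a common denominator of the coefficients of $q$. Split $n$ into residue classes mod $D$; on each class $e(q(n))$ is constant. It then suffices to show $e(h r(n))$ averages to $0$ along each progression in the $W$-sense, for every nonzero $h$. The hypothesis gives $\log W\prec |r|$, and $|r|$ grows slowly. The plan is to compare $\frac{1}{W(N)}\sum w(n)e(r(n))$ with $\frac{1}{W(N)}\int_1^N w(t)e(r(t))\,dt$. After the substitution $s=r(t)$ this integral equals $\int e(s)\,\frac{w(r^{-1}(s))}{r'(r^{-1}(s))}\,ds$. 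The amplitude is eventually monotone because it is a Hardy field function. Integrating by parts bounds the integral by $O$ of its value at the endpoint, namely $w(N)/|r'(N)|$. Normalizing by $W(N)$, we must show $w(N)/(W(N)|r'(N)|)\to0$. This is $(\log W)'(N)\prec r'(N)$, which follows from $\log W\prec r$ by L'H\^opital, valid since both functions are in a common Hardy field. The discretization error is controlled by $\sum w(n)|r'(n)|\ll\int w|r'|$, and this is $o(W(N))$ because $r'\to0$.

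\emph{Necessity.} Suppose $|u-q|\ll\log W$ for some $q\in\mathbb{Q}[t]$. Set $v=D(u-q)$, and let $h=D\cdot(\text{original coefficients})$; $h$ is still nonzero. Then $e(v(n))=e(Du(n))$ along integers, so it suffices to show the $W$-average of $e(v(n))$ does not tend to $0$. In the new variable $s=\log W(t)$, the weight $w(t)\,dt$ becomes $e^s\,ds$. The function $v$ has bounded growth in $s$ and therefore varies by $O(1)$ over the last $O(1)$ units of $s$. Those units carry a positive proportion of the total $W$-mass. Hence the average stays close to $e(v(N))\cdot c$ for some $c\neq 0$, so it does not tend to $0$. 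If $v$ is bounded this is immediate, since $v$ then converges. For $v$ unbounded, if the average tends to $0$ along all $N$, choosing $N$ along which $v(N)$ mod $1$ converges gives a contradiction.

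\emph{Main obstacle.} The delicate step is the slow case of sufficiency: making the van der Corput/integration-by-parts comparison between sums and integrals uniform along arithmetic progressions when $r$ is only barely faster than $\log W$. We expect to rely on \cite{BKS2} for these details.
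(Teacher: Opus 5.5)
Your proposal is correct and matches the paper, which gives no argument of its own for this statement and simply imports it from \cite[Theorem 4.2]{BKS2}. Your reduction via the weighted Weyl criterion to a single $u\in\text{span}_{\mathbb{Z}}^{*}\{u_1,\dots,u_k\}$ is sound, and so is your sufficiency sketch: Boshernitzan plus Lemma~\ref{lem:HW-2}(a) in the non-slow case, and residue classes plus the substitution $s=r(t)$ in the slow case.

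One step of your optional necessity sketch does not follow as written. Knowing only that $v$ varies by $O(1)$ over the last $O(1)$ units of $s=\log W$ does not rule out cancellation. What actually produces a nonzero constant is the Hardy-field L'H\^opital fact $dv/ds=v'W/w\to\kappa:=\lim v/\log W\in\mathbb{R}$. This gives $\frac{1}{W(N)}\sum_{n\le N}w(n)e(v(n))=\frac{e(v(N))}{1+2\pi i\kappa}+o(1)$, whose modulus tends to $(1+4\pi^2\kappa^2)^{-1/2}>0$.
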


The following lemma is a consequence of the previous result.
\begin{lem}
\label{lem:w-ud}
Let $\alpha_1, \dots, \alpha_k$ be irrational numbers. 
Let $u_1(t), \dots, u_k(t)$ be subpolynomial functions in a Hardy field.
Let $W(t) \in \mathbf{E}$ be such that $\lim_{t \rightarrow \infty} W(t) = \infty$ and $w(t) := W'(t)$ is positive and non-increasing.
Assume that for any $u \in \text{span}_{\mathbb{Z}}^* \{u_1, \alpha_1 u_1, \dots, u_k, \alpha_k u_k\}$,
$$\lim\limits_{t \rightarrow \infty} \frac{|u(t) -q(t)|}{ \log W(t)}  =  \infty \quad \text{for any } q(t) \in \mathbb{Q} [t].$$
Then the sequence $(\alpha_1 [u_1(n)], \dots, \alpha_k [u_k(n)])$
is $w(n)$-uniformly distributed $\bmod \, 1$ in $\mathbb{R}^{k}$. 
\end{lem}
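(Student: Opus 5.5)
We reduce the statement to the weighted Weyl criterion: it suffices to show that for every nonzero $h=(h_1,\dots,h_k)\in\mathbb{Z}^k$,
\[
\lim_{N\to\infty}\frac{1}{W(N)}\sum_{n=1}^N w(n)\, e\Big(\sum_{j=1}^k h_j\alpha_j \lfloor u_j(n)\rfloor\Big)=0 .
\]
The standard device is to write $\lfloor u_j(n)\rfloor = u_j(n)-\{u_j(n)\}$. Then
\[
\sum_j h_j\alpha_j\lfloor u_j(n)\rfloor=\sum_j h_j\alpha_j u_j(n)-\sum_j h_j\alpha_j\{u_j(n)\},
\]
which reduces the problem to a statement about the $2k$-dimensional sequence $x_n=(u_1(n),\alpha_1u_1(n),\dots,u_k(n),\alpha_ku_k(n))$. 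The hypothesis says exactly that every nonzero integer combination of these $2k$ functions stays away from rational polynomials at scale $\log W(t)$. Theorem \ref{thm:w-ud:mixedindex}, applied to these $2k$ functions, therefore gives that $(x_n)$ is $w(n)$-uniformly distributed mod $1$ in $\mathbb{R}^{2k}$.

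Next, express the summand as $F(\{x_n\})$ for a Riemann integrable $F$ on $\mathbb{T}^{2k}$. Writing $y_j=\{u_j(n)\}$ and $z_j=\{\alpha_ju_j(n)\}$, we have $\alpha_j\lfloor u_j(n)\rfloor \equiv z_j-\alpha_j y_j \pmod 1$, because $\alpha_j u_j(n)-z_j\in\mathbb{Z}$. Hence the summand equals
\[
F(y,z)=e\Big(\sum_j h_j(z_j-\alpha_j y_j)\Big),\qquad y,z\in[0,1)^k .
\]
This $F$ is continuous on $[0,1)^{2k}$ and its only discontinuities on $\mathbb{T}^{2k}$ lie on the boundary hyperplanes $y_j=0$, which have measure zero, so $F$ is Riemann integrable. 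Weighted uniform distribution then yields
\[
\frac{1}{W(N)}\sum_{n\le N} w(n)F(\{x_n\})\to\int_{\mathbb{T}^{2k}}F\,d\lambda^{2k}.
\]

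Finally, compute the integral: it factors as $\prod_j\int_0^1 e(h_jz_j)\,dz_j\int_0^1 e(-h_j\alpha_jy_j)\,dy_j$. Choose $j$ with $h_j\neq 0$; then $\int_0^1 e(h_jz_j)\,dz_j=0$, so the whole integral vanishes. This gives the Weyl criterion and completes the argument.

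The step that needs the most care is the passage from continuous test functions to the discontinuous $F$, that is, justifying that weighted uniform distribution applies to Riemann integrable functions. This is standard for nonnegative weights: approximate $F$ above and below by continuous functions whose integrals differ by at most $\varepsilon$, real and imaginary parts separately.
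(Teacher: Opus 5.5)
Your proof is correct and takes essentially the same route as the paper. Both apply Theorem~\ref{thm:w-ud:mixedindex} to the $2k$ functions $u_j,\alpha_j u_j$. Both then rewrite $e\bigl(\sum_j h_j\alpha_j\lfloor u_j(n)\rfloor\bigr)$ as $F$ evaluated along that sequence, with $F$ built from the factors $e(h_j(x_j-\alpha_j\{y_j\}))$, and note that $\int F=0$ because some $h_j\neq 0$. Your explicit check that $F$ is Riemann integrable, with discontinuities only on the null set $\{y_j=0\}$, supplies a detail the paper leaves implicit.
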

\begin{proof}
For $(h_1, \dots, h_k) \in \mathbb{Z}^k \setminus \{(0, \dots , 0)\}$, let $$F(x_1, \dots, x_k, y_1, \dots, y_k) = \prod\limits_{j=1}^k e (h_j (x_j - \alpha_j \{y_j\})).$$
Note that 
$$\int_{\mathbb{T}^{2k}} F(x_1, \dots, x_k, y_1, \dots, y_k) =  \prod_{j=1}^k \int_{\mathbb{T}^2} e (h_j (x_j - \alpha_j \{y_j\})) dx_j dy_j= 0.$$
By Theorem \ref{thm:w-ud:mixedindex} , $(\alpha_1 u_1(n), \dots, \alpha_k u_k(n), u_1(n), \dots, u_k(n))_{n \in \mathbb{N}}$ is $w(n)$-uniformly distributed $\bmod \, 1$ in $\mathbb{R}^{2k}$.
Hence, for any nonzero $(h_1, \dots, h_k) \in \mathbb{Z}^k$,
\begin{align*}
\lim_{N \rightarrow \infty} \frac{1}{W(N)}  &\sum_{n=1}^N w(n) e \left(\sum_{j=1}^k h_j \alpha_j [u_j(n)] \right) \\
&= \lim_{N \rightarrow \infty} \frac{1}{W(N)}  \sum_{n=1}^N w(n) e \left( \sum_{j=1}^k h_j \alpha_j u_j (n) - h_j \alpha_j \{u_j(n)\} \right) \\
\quad \quad &=\lim_{N \rightarrow \infty} \frac{1}{W(N)}  \sum_{n=1}^N w(n) F (\alpha_1 u_1(n), \dots, \alpha_k u_k(n), u_1(n), \dots, u_k(n) )\\
\quad \quad &= \int_{\mathbb{T}^{2k}} F(x_1, \dots, x_k, y_1, \dots, y_k)= 0. 
\end{align*}
Therefore, the sequence $(\alpha_1 [u_1(n)], \dots, \alpha_k [u_k(n)])$ is $w(n)$-uniformly distributed $\bmod \, 1$ in $\mathbb{R}^{k}$.
\end{proof}

The next result will be useful in the proofs of Theorems \ref{thm:multiconv:int} and \ref{thm:multiconv2:int}.
\begin{thm}
\label{multi-conv:thm:cond} 
Let $X$ be a compact metric space equipped with the Borel $\sigma$-algebra $\mathcal{B}$ and let $\mu$  be a probability measure on $(X, \mathcal{B})$.
Let $(S_n^{(1)})_{n \in \mathbb{N}}, \dots, (S_n^{(k)})_{n \in \mathbb{N}} $ be sequences of measure preserving transformations on $(X, \mathcal{B}, \mu)$. 
Suppose that 
\begin{enumerate}
\item For each $i = 1, 2, \dots, k$, for any $f \in L^{\infty}(\mu)$, 
\begin{equation}
\label{eq:a.e.conv:infty}
\lim_{N \rightarrow \infty} \frac{1}{W(N)} \sum_{n=1}^N w(n) f(S_n^{(i)} x) = \int f \, d \mu
\end{equation}
for $\mu$-almost every $x$.
\item For any continuous functions $g_1, \dots, g_k$ on $X$,
\begin{equation}
\label{eq:a.e.conv:cont}
 \lim_{N \rightarrow \infty} \frac{1}{W(N)} \sum_{n=1}^N w(n) \prod_{i=1}^k g_i (S_n^{(i)} x) =  \prod_{i=1}^k \int g_i \, d \mu
\end{equation}
for $\mu$-almost every $x$.
\end{enumerate}
Then, for any functions $f_1, f_2, \dots, f_k \in L^{\infty}(\mu)$, 
\begin{equation}
\label{rem1:eq} 
\lim_{N \rightarrow \infty} \frac{1}{W(N)} \sum_{n=1}^{N} w(n) \prod_{i=1}^k f_i (S_n^{(i)} x)  = \prod_{i=1}^k \int f_i \, d \mu,
\end{equation}
 for $\mu$-almost every $x$.
\end{thm}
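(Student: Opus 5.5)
The plan is a standard approximation argument: replace each $f_i$ by a continuous function close to it in $L^1(\mu)$, and control the error pointwise using hypothesis (1) applied to $|f_i - g_i|$. First, by linearity and boundedness, we may assume $\|f_i\|_\infty \le 1$ for all $i$. Fix $\epsilon>0$. Since $X$ is a compact metric space and $\mu$ is a Borel probability measure, continuous functions are dense in $L^1(\mu)$. Truncating, we can choose continuous $g_i$ with $\|g_i\|_\infty \le 1$ and $\|f_i - g_i\|_{L^1(\mu)} < \epsilon$. We pick a countable family of such approximations, with $\epsilon = 1/m$, $m \in \mathbb{N}$, so that all the a.e. statements hold simultaneously off a single null set.

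Second, we use the telescoping identity
\[
\prod_{i=1}^k a_i - \prod_{i=1}^k b_i = \sum_{j=1}^k \Big(\prod_{i<j} b_i\Big)(a_j - b_j)\Big(\prod_{i>j} a_i\Big),
\]
valid for $|a_i|,|b_i|\le 1$. It gives the pointwise bound
\[
\Big|\frac{1}{W(N)}\sum_{n=1}^N w(n)\Big(\prod_i f_i(S_n^{(i)}x) - \prod_i g_i(S_n^{(i)}x)\Big)\Big| \le \sum_{j=1}^k \frac{1}{W(N)}\sum_{n=1}^N w(n)\, |f_j - g_j|(S_n^{(j)}x).
\]
Applying hypothesis (1) to the bounded functions $|f_j-g_j|$, the right-hand side converges a.e. to $\sum_j \|f_j - g_j\|_1 < k\epsilon$. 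Hypothesis (2) gives a.e. convergence of the $g$-averages to $\prod_i \int g_i\,d\mu$. Moreover, $|\prod_i \int g_i - \prod_i \int f_i| \le \sum_i \|f_i-g_i\|_1 < k\epsilon$ by the same telescoping. Hence for a.e. $x$, both the $\limsup$ and the $\liminf$ of the $f$-averages lie within $2k\epsilon$ of $\prod_i\int f_i\,d\mu$.

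Finally, we intersect the countably many full-measure sets obtained for $\epsilon = 1/m$ and let $m\to\infty$, which yields \eqref{rem1:eq}. There is no serious obstacle here. The only points needing care are choosing the continuous approximants uniformly bounded by $1$, so that the telescoping bound applies, and noting that hypothesis (1) is used with the limit $\int f\,d\mu$. That second point is what makes the error term small, and not merely convergent.
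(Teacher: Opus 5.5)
Your proposal is correct and follows essentially the same argument as the paper. You approximate each $f_i$ by continuous $g_{i,m}$ with $\|g_{i,m}\|_{L^\infty}\le 1$ and $\|g_{i,m}-f_i\|_{L^1}\le 1/m$, then split the error into three terms: one controlled by hypothesis (1) applied to $|f_i-g_{i,m}|$ via the telescoping identity, one by hypothesis (2), and one by the same telescoping bound on the integrals; finally you intersect the countably many full-measure sets and let $m\to\infty$. Your remark about truncating the approximants so that the telescoping bound applies is a detail the paper leaves implicit.
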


\begin{proof}
Without loss of generality, we assume that $\| f_i \|_{L^{\infty}} \leq 1$ for all $1 \leq i \leq k$. 
Since the set $C(X)$ of continuous functions on $X$ is dense in $L^1(X)$, for each $m \in \mathbb{N}$, there exist $g_{i,m} \in C(X)$ for $1 \leq i \leq k$ such that 
\begin{equation}
\label{eq:approxi:quan}
\|g_{i,m} \|_{L^{\infty}} \leq 1 \text{ and } \| g_{i,m} - f_i \|_{L^1} \leq \frac{1}{m}.
\end{equation}
By \eqref{eq:a.e.conv:infty} and \eqref{eq:a.e.conv:cont}, there exists a full measure set $X_0 \subset X$ such that if $x \in X_0$, then for $1 \leq i \leq k$ and $m \in \mathbb{N}$,
\begin{equation}
\label{eq:a.e.conv:infty2}
\lim_{N \rightarrow \infty} \frac{1}{W(N)} \sum_{n=1}^N w(n) |f_i - g_{i,m}| (S_n^{(i)} x) = \int |f_i - g_{i,m}| \, d \mu
\end{equation}
and 
\begin{equation}
\label{eq:a.e.conv:cont2}
 \lim_{N \rightarrow \infty} \frac{1}{W(N)} \sum_{n=1}^N w(n) \prod_{i=1}^k g_{i,m} (S_n^{(i)} x) =  \prod_{i=1}^k \int g_{i,m} \, d \mu.
\end{equation}
Write 
\begin{align*}
\frac{1}{W(N)} \sum_{n=1}^{N} w(n) \prod_{i=1}^k f_i (S_n^{(i)} x)  -  \prod_{i=1}^k \int f_i \, d \mu 
= A^{(1)}_{m,N}(x)+A^{(2)}_{m,N}(x)+E(m),
\end{align*}
where
\begin{align*}
A^{(1)}_{m,N}(x)&:=\frac{1}{W(N)} \sum_{n=1}^{N} w(n) \left( \prod_{i=1}^k f_i (S_n^{(i)} x)  - \prod_{i=1}^k g_{i,m} (S_n^{(i)} x) \right),\\
 A^{(2)}_{m,N}(x)&:=\frac{1}{W(N)} \sum_{n=1}^{N} w(n) \prod_{i=1}^k g_{i,m} (S_n^{(i)} x) -  \prod_{i=1}^k \int g_{i,m} \, d \mu \ \  \text{ and } \\
E(m)&:= \prod_{i=1}^k \int g_{i,m} \, d \mu  -  \prod_{i=1}^k \int f_i \, d \mu. 
\end{align*}
Applying the identity
\begin{equation}
\label{identity:2.4}
    \prod_{i=1}^k a_i - \prod_{i=1}^k b_i = 
   (a_1 - b_1) b_2 \cdots b_k + a_1 (a_2 - b_2) b_3 \cdots b_k + \cdots + a_1 \cdots a_{k-1} (a_k - b_k),
\end{equation}
we get
$$ \left|A^{(1)}_{m,N}(x) \right| \leq 
\sum_{i=1}^k \frac{1}{W(N)} \sum_{n=1}^N w(n) |f_i -g_{i,m}| (S_n^{(i)} x). $$
Thus, by \eqref{eq:approxi:quan} and \eqref{eq:a.e.conv:infty2}, for $x \in X_0$,
\begin{equation}
\label{eq1:conclusion:multi-conv:thm}
\limsup_{N \rightarrow \infty}  \left| A^{(1)}_{m,N} (x)\right| \leq  \frac{k}{m}.
\end{equation}
By \eqref{eq:a.e.conv:cont2}, for each $x \in X_0$,  
\begin{equation}
\label{eq2:conclusion:multi-conv:thm}
  A^{(2)}_{m,N}(x)  \rightarrow 0 \text{ as } N\to \infty.
\end{equation}
Applying the identity \eqref{identity:2.4} again, we obtain
\begin{equation}
\label{eq3:conclusion:multi-conv:thm}
\left|  E(m) \right| \leq \sum_{i=1}^k \int |g_{i,m} - f_i| \leq \frac{k}{m}.
\end{equation}
Combining \eqref{eq1:conclusion:multi-conv:thm} - \eqref{eq3:conclusion:multi-conv:thm}, for every $x\in X_0$ we obtain 
\[ \limsup_{N \rightarrow \infty} \left|  \frac{1}{W(N)} \sum_{n=1}^{N} w(n) \prod_{i=1}^k f_i (S_n^{(i)} x)  -  \prod_{i=1}^k \int f_i \, d \mu  \right| \leq \frac{2k}{m}.\]
Since $m$ is arbitrary, \eqref{rem1:eq}  follows.
\end{proof}

We now restate Theorem \ref{thm:multiconv:int} and give its proof.
\begin{thm}[Theorem \ref{thm:multiconv:int}]
\label{thm:multiconv}
Let $W(t) \in \mathbf{E}$ be such that $w(t):= W'(t)$ is positive and non-increasing and $\lim_{t \rightarrow \infty} W(t) = \infty$.
Let $(p_n)_{n \in \mathbb{N}}$ be the sequence of prime numbers.
Let $a_1(t), \dots, a_l(t), b_1(t), \dots, b_m(t) \in \mathbf{U}$, and let $\alpha_1, \dots, \alpha_l, \beta_1, \dots, \beta_m \in \mathbb{R}$ be irrational numbers.
Define
$$\mathcal{F} = \{ c_i a_i(t): c_i = 1 \text{ or } \alpha_i, \, 1 \leq i \leq l\} \cup \{ d_j b_j(t \log t): d_j = 1 \text{ or } \beta_j, \, 1 \leq j \leq m \}.$$
Assume that 
\begin{enumerate}[(i)]
\item for all $i=1, 2, \dots, l$, $1 \prec a_i(t) \prec t$, 
\item for all $j=1, 2, \dots, m$, $1 \prec b_j(t) \prec \log t$, 
\item $a_1(t), \dots, a_l(t), b_1(t \log t), \dots, b_m(t \log t) \in \mathbf{H}$ for some Hardy field $\mathbf{H}$ and 
$$\log W(t) \ll \log a_i(t) \text{ and } \log W(t) \ll \log b_j(t) \text{ for all } 1 \leq i \leq l, 1 \leq j \leq m,$$ 
\item for any $u(t) \in \text{span}_{\mathbb{Z}}^* \, \mathcal{F}$,
\begin{equation}
\label{eq:ud:cond}
\lim_{t \rightarrow \infty}  \frac{|u(t)|}{\log W(t)} = \infty. 
\end{equation}
\end{enumerate}

Then for any bounded measurable functions $f_1, \dots, f_l, g_1, \dots, g_m$ on $\mathbb{T} \, (:= \mathbb{R} / \mathbb{Z})$, 
\begin{equation*}
\label{eq3.4}
\frac{1}{W(N)} \sum_{n=1}^N w(n) \prod_{i=1}^l f_i (x + \lfloor a_i(n) \rfloor \alpha_i) \prod_{j=1}^m g_j( x+ \lfloor b_j(p_n) \rfloor \beta_j) \rightarrow \prod_{i=1}^l \int f_i \, d\lambda \prod_{j=1}^m \int g_j \, d \lambda
\end{equation*}  
for $\lambda$-almost every $x$.
\end{thm}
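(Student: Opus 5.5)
We apply Theorem~\ref{multi-conv:thm:cond} on $X=\mathbb{T}$ with $\mu=\lambda$, $k=l+m$, and the rotations
$$S_n^{(i)}x = x+\lfloor a_i(n)\rfloor\alpha_i \ (1\le i\le l), \qquad S_n^{(l+j)}x = x+\lfloor b_j(p_n)\rfloor\beta_j \ (1\le j\le m).$$
This reduces the theorem to verifying two hypotheses: the single-function a.e.\ convergence \eqref{eq:a.e.conv:infty} for each $i$, and the multiple convergence \eqref{eq:a.e.conv:cont} for continuous functions.

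\textbf{Step 1: hypothesis (1) for the $a_i$.} For $1\le i\le l$, condition (iii) gives $\log W(t)\ll \log|a_i(t)|$, which is the bound $W(t)\le |a_i(t)|^c$. Irrational rotation by $\alpha_i$ is ergodic, so Theorem~\ref{thm:sufficient} yields \eqref{eq:a.e.conv:infty} for $f\in L^\infty\subset L^1$.

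\textbf{Step 2: hypothesis (1) for the $b_j$.} This is the case I expect to be the main obstacle, since the averages run along primes. The plan is to remove the primes as follows.
\begin{itemize}
\item By the prime number theorem with an error term and the slow growth $b_j\prec\log t$, we have $b_j(p_n)=b_j(n\log n)+o(1)$. More precisely, $|b_j'(x)|\ll \log x/x$-type bounds give $|b_j(p_n)-b_j(n\log n)|\ll |b_j'(n\log n)|\cdot n\log\log n\to0$.
\item Hence $\lfloor b_j(p_n)\rfloor=\lfloor \tilde b_j(n)\rfloor$, where $\tilde b_j(t):=b_j(t\log t)\in\mathbf H$, except for those $n$ at which $\tilde b_j(n)$ lies within $o(1)$ of an integer.
\item Since $\tilde b_j$ is slowly increasing with $\tilde b_j(n)\to\infty$, each crossing of an integer contributes only a bounded weighted mass. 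Following the argument used for the negative case in Section~\ref{sec:slow}, the exceptional set has weighted density $\ll \tilde b_j(N)\cdot\sup(\text{window weight})/W(N)\to0$. An alternative is to bound each exceptional block by $\beta(k)$-type quantities as in the proof of Theorem~\ref{thm:slow-weight}.
\item Finally, $1\prec\tilde b_j(t)\prec t$, and condition (iii) gives $W\le|\tilde b_j|^{c'}$ (after the standard comparison $\log b_j(t\log t)\asymp\log b_j(t)$ for slow $b_j$). Theorem~\ref{thm:sufficient} then applies to $\tilde b_j$.
\end{itemize}
For bounded $f$, the error from the exceptional set is at most $\|f\|_\infty$ times its weighted density, which tends to $0$.

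\textbf{Step 3: hypothesis (2).} For continuous $g$'s, approximate each $g$ uniformly by trigonometric polynomials; it then suffices to treat characters, and by linearity products of characters. For $h\in\mathbb{Z}^{l+m}\setminus\{0\}$, the product of characters evaluated along the orbit equals
$$e(\langle h,\cdot\rangle x)\, e\Big(\sum_i h_i\alpha_i\lfloor a_i(n)\rfloor+\sum_j h_{l+j}\beta_j\lfloor b_j(p_n)\rfloor\Big).$$
Replace $\lfloor b_j(p_n)\rfloor$ by $\lfloor \tilde b_j(n)\rfloor$ as in Step 2. Then condition (iv), applied via Lemma~\ref{lem:w-ud} with $u$'s taken from $\mathcal F$, shows that $(\alpha_i\lfloor a_i(n)\rfloor,\beta_j\lfloor\tilde b_j(n)\rfloor)$ is $w(n)$-uniformly distributed mod 1. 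Two remarks on this step:
\begin{itemize}
\item All $u$ in $\mathcal F$ are $\prec t$, so subtracting a nonconstant rational polynomial $q$ only increases $|u-q|$, while constant $q$ are negligible by (iv). The hypothesis of Lemma~\ref{lem:w-ud} therefore follows from (iv).
\item The weighted average of the exponential therefore tends to $0$, for every $x$.
\end{itemize}
Hence \eqref{eq:a.e.conv:cont} holds, even everywhere.

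Combining Steps 1–3, Theorem~\ref{multi-conv:thm:cond} gives the conclusion. The main technical care lies in Step 2: controlling $p_n$ versus $n\log n$ and the weighted measure of the integer-crossing windows.
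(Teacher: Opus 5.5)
\textbf{There is a gap in Step~2.} Your architecture is the paper's: Theorem~\ref{multi-conv:thm:cond} with these $S_n^{(i)}$, Theorem~\ref{thm:sufficient} for hypothesis (1), and Lemma~\ref{lem:w-ud} for hypothesis (2), using that every $u\in\text{span}_{\mathbb{Z}}^{*}\,\mathcal{F}$ is $\prec t$. Both parts rely on replacing $\lfloor b_j(p_n)\rfloor$ by $\lfloor \tilde b_j(n)\rfloor$, where $\tilde b_j(t)=b_j(t\log t)$. The paper makes that replacement by citing Lemma~\ref{lem:comp:slow:prime}. You try to prove it in-line, and the argument as written fails in two places.

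\emph{The derivative bound.} The bound $|b_j'(x)|\ll \log x/x$ only gives $|b_j(p_n)-\tilde b_j(n)|\ll\log\log n$, which does not tend to $0$. What you need is $b_j'(x)=o(1/x)$. This does follow from $b_j\prec\log t$ by L'H\^opital, and it gives $\delta_n:=|b_j(p_n)-\tilde b_j(n)|=o(\log\log n/\log n)\to 0$.

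\emph{The density estimate.} The bound ``weighted density $\ll\tilde b_j(N)\cdot\sup(\text{window weight})/W(N)\to0$'' is false under the theorem's hypotheses. The Section~\ref{sec:slow} argument you borrow worked only because there $W\geq\exp(|a|^c)$. Here (iii) goes the other way: it bounds $W$ \emph{above} by a power of $\tilde b_j$, so the number of integer crossings up to $N$, roughly $\tilde b_j(N)$, can dwarf $W(N)$. For example, $W(t)=\log\log t$ and $b_j(t)=(\log t)^{1/2}$ satisfy (ii)--(iv), yet $\tilde b_j(N)/W(N)\to\infty$. So any per-crossing bound that is uniform in $k$, multiplied by the number of crossings, cannot be $o(W(N))$. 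What is actually needed is that the exceptional part of the $k$-th block is a \emph{vanishing fraction} of that block's mass, summed over blocks.

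\textbf{Fix via hypothesis (iv).} Take $u=\tilde b_j\in\mathcal{F}$. Since $\tilde b_j\prec t$ and $\log W\ll\log t$, Theorem~\ref{thm:w-ud:mixedindex} shows that $(\tilde b_j(n))_n$ is $w(n)$-uniformly distributed mod~$1$.
\begin{enumerate}
\item If $\lfloor b_j(p_n)\rfloor\neq\lfloor\tilde b_j(n)\rfloor$, an integer lies between $\tilde b_j(n)$ and $b_j(p_n)$, so $\|\tilde b_j(n)\|\le\delta_n$.
\item Since $\delta_n\to0$, for each $\epsilon>0$ the exceptional set is, up to finitely many $n$, contained in $\{n:\|\tilde b_j(n)\|\le\epsilon\}$.
\item That set has $w$-density $2\epsilon$, so the exceptional set has $w$-density $0$.
\end{enumerate}
This is exactly the content of Lemma~\ref{lem:comp:slow:prime}, and it explains that lemma's hypothesis $u/\log W\to\infty$.

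\textbf{Alternative fix via your second suggestion.} By (iii) and Lemma~\ref{lem:comp:weights}, it suffices to prove density zero for the weights $\tilde b_j'(n)$. With these weights each integer block has mass $\approx 1$. Once $\delta_n<\epsilon$, the exceptional window in block $k$ has mass at most $2\epsilon+\tilde b_j'(\tilde b_j^{-1}(k-\epsilon))=2\epsilon+o(1)$.

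With either fix, Step~2 and the replacement in Step~3 go through, and the rest of your argument is fine.
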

Before proving the theorem, we first state an auxiliary lemma that will be used to handle the prime sequences appearing in the theorem. It shows that if $u(t)$ is a slowly growing function in a Hardy field, then $\lfloor u(p_n) \rfloor$ and $\lfloor u(n\log n) \rfloor$ differ only on a set of indices whose contribution to the weighted average is asymptotically negligible.
\begin{lem}[cf. Lemma 4.8 in \cite{BKS2}]
\label{lem:comp:slow:prime}
Suppose $u(t) \in {\mathbf U}$ satisfies
\begin{equation*}
\label{eq:lem3.8}
(1) \lim\limits_{t \rightarrow \infty} \frac{u(t)}{\log t} < \infty \text{ and } (2) \lim\limits_{t \rightarrow \infty} \frac{u(t)}{ \log W(t)} = \infty.
\end{equation*}
Define $D := \{ n \in \mathbb{N} : \lfloor u(n \log n) \rfloor \ne \lfloor u(p_n) \rfloor \}.$
Then, 
\[ \lim_{N \rightarrow \infty} \frac{1}{W(N)} \sum_{n=1}^N w(n) \setone_{D} (n) = 0.\]
\end{lem}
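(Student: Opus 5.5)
Since $w$ is non-increasing, I will reduce the weighted statement to a counting bound for $D$. Let $D(x):=\#(D\cap[1,x])$. By Abel summation (Lemma~\ref{lem:byparts}),
\[
\sum_{n\le N} w(n)\setone_D(n)=w(N)D(N)+\sum_{n<N}(w(n)-w(n+1))D(n).
\]
Hence it is enough to show that $D(x)/x\to0$ with a rate good enough that $\sum_{n\le N} w(n)\setone_D(n)=o(W(N))$. More precisely, I would aim for
\[
D(x)\ll \frac{x\,u'(x\log x)\cdot x\log x}{x}\approx x\,u'(x\log x)\log x ,
\]
or any similar bound with density $\delta(n)\to0$. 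Then $\sum_n w(n)\setone_D(n)$ behaves like $\int w(t)\delta(t)\,dt$, which is $o(W(N))$ because $\delta\to0$ and $W\to\infty$. A cleaner alternative is to compare directly with Theorem~\ref{thm:slow-weight}-style bookkeeping over level sets of $\lfloor u\rfloor$.

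\textbf{Counting step.} I would use $p_n=n\log n+O(n\log\log n)$ (Cipolla/Rosser). For $u$ eventually monotone with $u'(t)\ll 1/t$ (this follows from condition (1) by L'Hôpital, since $u\prec\log t$ or $u\asymp \log t$ forces $t u'(t)\ll1$), the mean value theorem gives
\[
|u(p_n)-u(n\log n)|\le \sup_{\xi}|u'(\xi)|\cdot|p_n-n\log n|\ll \frac{n\log\log n}{n\log n}=\frac{\log\log n}{\log n}=:\varepsilon_n\to0 .
\]
So $n\in D$ forces $u(n\log n)$ to lie within $\varepsilon_n$ of an integer $k$. Such $n$ lie in a window around $\gamma(k)$, where $\gamma$ is the inverse of $t\mapsto u(t\log t)$. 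In $n$-space that window has length about $\varepsilon_{\gamma(k)}/v'(\gamma(k))$, where $v(t)=u(t\log t)$.

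\textbf{Weighted sum over windows.} The windows near level $k$ carry weight about $w(\gamma(k))\,\varepsilon\,/v'(\gamma(k))$. The full level set $\{n:\lfloor v(n)\rfloor=k\}$ carries weight about $\int_{\gamma(k)}^{\gamma(k+1)}w\approx w(\gamma(k))/v'(\gamma(k))$. Therefore the weight of $D$ up to $N$ is at most $\varepsilon$ times the total weight (plus an error) from levels with $\gamma(k)\ge n_0$, plus a $O(1)$ contribution from levels with $\gamma(k)<n_0$. This requires that $v'$ does not vary much across a level, which holds because $v\prec\log t$ is slowly varying.

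\textbf{Main obstacle: small windows.} The difficulty is when the window is shorter than $1$, i.e. $\varepsilon/v'<1$, which can happen for levels $k$ of moderate size. Then each level contributes up to $\approx w(\gamma(k))$ rather than an $\varepsilon$-fraction of its weight. Summing over $k\le v(N)$ gives at most $\sum_{k\le v(N)}w(\gamma(k))\le\sum_{m\le v(N)}w(m)\approx W(v(N))$, since $\gamma(k)\ge k$ and $w$ is non-increasing. This is exactly where condition (2) enters. Because $v(N)\asymp u(N)$ dominates $\log W(N)$ and $W$ is Hardy, I would argue $W(v(N))/W(N)\to0$, using L'Hôpital as at the end of the proof of Theorem~\ref{thm:slow-weight}; this bound is immediate when $W\ll$ a power, and it is the delicate part in general. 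Combining the pieces gives
\[
\limsup_N \frac{1}{W(N)}\sum_{n\le N}w(n)\setone_D(n)\le C\varepsilon \quad\text{for every }\varepsilon,
\]
which proves the lemma.
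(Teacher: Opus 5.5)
The paper does not prove this lemma (it is imported from \cite{BKS2}), so your sketch has to stand on its own, and its central step does not. Your reduction is fine: by your Abel-summation identity, or Lemma~\ref{lem:HW-2}(a), zero natural density of $D$ already gives zero $w$-density. The bound $|u(p_n)-u(n\log n)|\ll\varepsilon_n=\log\log n/\log n$ is also fine.

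The gap is your claim that the near-integer set $E=\{n:\|v(n)\|\le A\varepsilon_n\}$, with $v(t)=u(t\log t)$, carries only an $O(\varepsilon)$-fraction of the weight of each level. You justify this by saying ``$v'$ does not vary much across a level because $v$ is slowly varying'', and that is false. For $u=\log\log t$ one has $\gamma(k)\approx e^{e^k}$, and $v'$ changes by a factor of order $\exp(e^{k+1}-e^k)$ across a level. Moreover $E\supset[\gamma(k),e^{ck}\gamma(k)]$ for some $c>0$, so $E$ has upper Ces\`aro density $1$. Your reasoning does not use the weight, so with $w\equiv1$ it would wrongly give $E$ density $0$.

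What the comparison actually needs is that $\Phi'(s)$, where $\Phi=W\circ\gamma$, is essentially constant on $[k-1,k+1]$. Note that $\Phi'(k)=w(\gamma(k))/v'(\gamma(k))$ is exactly the quantity in your heuristics. This is precisely what hypothesis (2) provides: it says $\log\Phi(s)=o(s)$, and Hardy-field regularity (L'H\^opital) then gives $\Phi''/\Phi'\to0$. So (2) has to be used in this step, not where you placed it.

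Conversely, the ``small windows'' case you treat as the main obstacle never occurs. Since $v'(t)\ll1/t$, each window has length $\gg n\varepsilon_n\to\infty$. The only per-level error is a rounding term of at most $w(\gamma(k-1))=\Phi'(k-1)/\gamma'(k-1)$. This is negligible because $\gamma'(s)=1/v'(\gamma(s))\gg\gamma(s)\to\infty$. So you do not need the bound $W(v(N))=o(W(N))$, which you have not actually derived from (2); that hypothesis compares $\log W$ with $u$ and only gets weaker as $W$ grows more slowly. Your first displayed target is also wrong: $x\,u'(x\log x)\log x=O(1)$, while $D$ is infinite.

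A cleaner repair avoids the lossy bound $u'\ll1/t$ altogether. With $\gamma_u=u^{-1}$, $n\in D$ if and only if $n\log n<\gamma_u(m)\le p_n$ for some integer $m$. Since $p_n<n(\log n+\log\log n)$, the $n$ attached to a given $m$ form an interval of length $\ll b\log\log b/\log b+1$, where $b\log b=\gamma_u(m)$. The bound $tu'(t)\le A$ gives $\gamma_u(m+1)\ge e^{1/A}\gamma_u(m)$, so these lengths sum geometrically to $D(x)\ll x\log\log x/\log x+\log x=o(x)$. Your opening reduction then finishes the proof, and this route does not even use (2).
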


\begin{proof}[Proof of Theorem \ref{thm:multiconv}.]
For $i = 1, 2, \dots, l+m$, 
define $T_i: \mathbb{T} \rightarrow \mathbb{T}$ by
\[T_i  x = 
\begin{cases}
x + \alpha_i,     & \text{ if } 1 \leq i \leq l, \\
x + \beta_{i-l},  & \text{ if } l+1 \leq i \leq l+m.
\end{cases}
\]
Let $S_n^{(i)} = T_i^{\lfloor a_i(n) \rfloor }$ for $ 1 \leq i \leq l $ and $S_n^{(i)} = T_{i}^{\lfloor b_{i-l} (p_n) \rfloor }$ for $ l+1 \leq i \leq l +m$.  

By Theorem \ref{thm:sufficient}, conditions (i) and (iii) imply that for $1 \leq i \leq l$, for any $f \in L^{\infty}(\lambda)$, 
\begin{equation*}
\lim_{N \rightarrow \infty} \frac{1}{W(N)} \sum_{n=1}^N w(n) f(S_n^{(i)} x) = \int f \, d \lambda
\end{equation*}
for $\lambda$-almost every $x$.
Note also that conditions (ii) and (iii) imply that $\log W(t) \ll \log b_j (t \log t)$ for $1 \leq j \leq m$, and it follows from Theorem \ref{thm:sufficient} that
for any $f \in L^{\infty}(\lambda)$, 
\begin{equation*}
\lim_{N \rightarrow \infty} \frac{1}{W(N)} \sum_{n=1}^N w(n) f(T_{j+l}^{\lfloor b_j (n \log n) \rfloor} x) = \int f \, d \lambda.
\end{equation*}
By Lemma \ref{lem:comp:slow:prime},  for $l+1 \leq i \leq l+m$, for any $f \in L^{\infty}(\lambda)$, 
\begin{equation*}
\lim_{N \rightarrow \infty} \frac{1}{W(N)} \sum_{n=1}^N w(n) f(S_n^{(i)} x) = \int f \, d \lambda
\end{equation*}
for $\lambda$-almost every $x$.
So, $(S_n^{(1)})_{n \in \mathbb{N}}, \dots, (S_n^{(l+m)})_{n \in \mathbb{N}}$ satisfy  condition (1) of Theorem \ref{multi-conv:thm:cond}. 

Therefore, it remains to verify that $(S_n^{(1)})_{n \in \mathbb{N}}, \dots, (S_n^{(l+m)})_{n \in \mathbb{N}}$ satisfy condition (2) of Theorem \ref{multi-conv:thm:cond}.
We will show that for any continuous $1$-periodic functions $f_1, \dots, f_l,$ $ g_1, \dots, g_m$ and for all $x$,
\begin{equation}
\label{eq:udcheck:thm}
\lim_{N \rightarrow \infty} \frac{1}{W(N)} \sum_{n=1}^N w(n) \prod_{i=1}^l f_i (x + \lfloor a_i(n) \rfloor \alpha_i) \prod_{j=1}^m g_j( x+ \lfloor b_j(p_n) \rfloor \beta_j) = \prod_{i=1}^l \int f_i \, d \lambda \prod_{j=1}^m \int g_j \, d \lambda.
\end{equation} 

Since $a_i(t) \prec t$ and $b_j(t\log t) \prec \log(t\log t) \prec t$, every $u(t) \in \operatorname{span}_{\mathbb{Z}}^* \mathcal{F}$ satisfies $u(t)\prec t$. Hence, condition (iv) implies that
$$
\lim_{t\to\infty}\frac{|u(t)-q(t)|}{\log W(t)}=\infty
\quad\text{ for every }q(t)\in\mathbb{Q}[t].
$$
By Lemma \ref{lem:w-ud}, the sequence   
\[ ( \alpha_1 \lfloor a_1(n) \rfloor, \dots, \alpha_l \lfloor a_l(n) \rfloor,  \beta_1 \lfloor b_1(n \log n) \rfloor,  \dots, \beta_m \lfloor b_m(n \log n) \rfloor)  \]
is $w(n)$-uniformly distributed $\bmod \, 1$ in $\mathbb{R}^{l+ m}$. 
Thus, for any continuous $1$-periodic functions $f_1, \dots, f_l, g_1, \dots, g_m$ and for all $x$,
\begin{equation*}
\lim_{N \rightarrow \infty} \frac{1}{W(N)} \sum_{n=1}^N w(n) \prod_{i=1}^l f_i (x + \lfloor a_i(n) \rfloor \alpha_i) \prod_{j=1}^m g_j( x+ \lfloor b_j(n \log n) \rfloor \beta_j) = \prod_{i=1}^l \int f_i \, d \lambda \prod_{j=1}^m \int g_j \, d \lambda.
\end{equation*}  
Applying Lemma \ref{lem:comp:slow:prime} to each $b_j$, the set of indices $n$ for which
$$
\lfloor b_j(p_n)\rfloor\neq \lfloor b_j(n\log n)\rfloor
$$
for at least one $1\leq j\leq m$ has weighted density zero. Since the functions $g_1,\dots,g_m$ are bounded, \eqref{eq:udcheck:thm} follows.
This completes the proof.
\end{proof}

We conclude this section by providing a proof of Theorem \ref{thm:multiconv2:int}.
\begin{thm}[Theorem \ref{thm:multiconv2:int}]
\label{thm:multiconv2}
Let $a_1(t), \dots, a_l(t)$ be subpolynomial functions in a Hardy field. 
Let $\alpha_1, \dots, \alpha_l$ be irrational numbers.
Define 
$$\mathcal{F} = \{ a_1(t), \dots, a_l(t),  \alpha_1 a_1(t), \dots, \alpha_l a_l(t) \}.$$
For $i= 1, 2, \dots, l$, let $a_i(t) = p_i(t) +r_i(t)$ be the canonical decomposition. 
\begin{enumerate}[(1)]
\item Assume that for some $\eta > 0$, $|r_i(t)| \gg t^{\eta}$ for all $i$, and for any $u(t) \in \text{span}_{\mathbb{Z}}^* \, \mathcal{F}$,
\begin{equation}
\label{eq:ud:je:last}
\lim_{t \rightarrow \infty}  \frac{|u(t) - q(t)|}{\log t} = \infty \quad \text{for any } q(t) \in \mathbb{Q}[t]. 
\end{equation}
Then, for any bounded measurable functions $f_1, \dots, f_l$ on $\mathbb{T}$, 
\begin{equation*}
\frac{1}{N} \sum_{n=1}^N \prod_{i=1}^l f_i (x + \lfloor a_i(n) \rfloor \alpha_i) \rightarrow \prod_{i=1}^l \int f_i \, d \lambda
\end{equation*}  
for $\lambda$-almost every $x$.
\item Assume that for some $\eta > 1$, $|r_i(t)| \gg (\log t)^{\eta}$ for all $i$, and  for any $u(t) \in \text{span}_{\mathbb{Z}}^* \, \mathcal{F} $,
\begin{equation*}
\lim_{t \rightarrow \infty}  \frac{|u(t) -q(t)|}{ \log \log t} = \infty \quad \text{for any } q(t) \in \mathbb{Q}[t]. 
\end{equation*}
Then, for any bounded measurable functions $f_1, \dots, f_l$ on $\mathbb{T}$, 
\begin{equation*}
\frac{1}{\log N} \sum_{n=1}^N \frac{1}{n} \prod_{i=1}^l f_i (x + \lfloor a_i(n) \rfloor \alpha_i) \rightarrow \prod_{i=1}^l \int f_i \, d \lambda 
\end{equation*}  
for $\lambda$-almost every $x$.
\end{enumerate}
\end{thm}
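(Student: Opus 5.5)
We will apply Theorem \ref{multi-conv:thm:cond} on $X=\mathbb{T}$ with $\mu=\lambda$, taking $S_n^{(i)}=T_i^{\lfloor a_i(n)\rfloor}$ where $T_i x = x+\alpha_i$. For part (1) we take $W(t)=t$ and $w\equiv 1$. For part (2) we take $W(t)=\log t$ and $w(t)=1/t$. We must then verify hypotheses (1) and (2) of that theorem.

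\textbf{Hypothesis (1).} We need single-sequence pointwise convergence to $\int f\,d\lambda$ for $f\in L^\infty$. In part (1), each $r_i\gg t^{\eta}$, so Theorem \ref{sparse2:intro} gives pointwise $L^2$-goodness of $(\lfloor a_i(n)\rfloor)$, and $L^\infty\subset L^2$. In part (2), Theorem \ref{thm:sparse2'} gives pointwise $L^2$-goodness for $\log$-averages. In both cases the limit is identified as $\int f\,d\lambda$, since the irrational rotation $T_i$ is ergodic. For part (1) this needs the ergodicity statement, i.e. the Remark after Theorem \ref{sparse2:intro} via Theorem 4.10 of \cite{BKS2}. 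For part (2), Theorem \ref{thm:sparse2'} already identifies the limit as $\int f\,d\mu$ for ergodic systems.

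A subtlety is that Theorem \ref{sparse2:intro} requires $r(x)\succ x^{\epsilon}$, whereas we are given $|r_i|\gg t^\eta$. Replacing $\eta$ by $\eta/2$ yields $\succ$. The same device, as done in Section \ref{sec:sparse2}, handles the condition $\gg(\log t)^\eta$ with $\eta>1$.

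\textbf{Hypothesis (2).} For continuous $g_1,\dots,g_l$ on $\mathbb{T}$ we want, for every $x$,
\[
\frac{1}{W(N)}\sum_{n=1}^N w(n)\prod_i g_i(x+\lfloor a_i(n)\rfloor\alpha_i)\to\prod_i\int g_i\,d\lambda .
\]
By uniform approximation we may assume each $g_i$ is a trigonometric polynomial. It then suffices to show that $(\alpha_1\lfloor a_1(n)\rfloor,\dots,\alpha_l\lfloor a_l(n)\rfloor)$ is $w(n)$-uniformly distributed mod $1$ in $\mathbb{R}^l$, because the shift by $x$ only multiplies each character by a unimodular constant. This equidistribution is exactly Lemma \ref{lem:w-ud} applied with $u_i=a_i$.

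The hypothesis of Lemma \ref{lem:w-ud} asks that $|u-q|/\log W(t)\to\infty$ for $u\in\operatorname{span}^*_{\mathbb{Z}}\{a_i,\alpha_ia_i\}$. In part (1), $\log W(t)=\log t$, so this is precisely \eqref{eq:ud:je:last}. In part (2), $\log W(t)=\log\log t$, which matches the stated assumption.

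\textbf{Conclusion and main obstacle.} Theorem \ref{multi-conv:thm:cond} then yields the claimed a.e. convergence for all bounded measurable $f_i$. The main point requiring care is hypothesis (1). The single-sequence results must give a limit equal to $\int f$ for the specific ergodic rotation, and not merely convergence. In part (2) they must also cover all degrees of $p_i$, including $\deg p_i=0,1$ and $\ge2$; Theorem \ref{thm:sparse2'} handles all of these through Remark \ref{rem:thm:sparse2}. Everything else is a direct invocation of the lemmas above.
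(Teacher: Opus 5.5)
Your proposal is correct and follows the paper's own proof essentially step for step. You apply Theorem \ref{multi-conv:thm:cond} on $\mathbb{T}$ with $S_n^{(i)}=T_i^{\lfloor a_i(n)\rfloor}$, and you obtain hypothesis (1) from Theorem \ref{sparse2:intro} (resp.\ Theorem \ref{thm:sparse2'}) combined with ergodicity of the irrational rotations. You obtain hypothesis (2) from Lemma \ref{lem:w-ud} with $W(t)=t$ (resp.\ $W(t)=\log t$, so that $\log W(t)=\log\log t$ matches the assumption); your explicit treatment of part (2), which the paper omits as analogous, is accurate.
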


\begin{proof}
Let us prove (1). (The proof for (2) is analogous and omitted.)
For $i =1, 2, \dots, l$, define $T_i: \mathbb{T} \rightarrow \mathbb{T}$ by 
\[T_i x = x + \alpha_i.\]
Since $|r_i(t)| \gg t^{\eta}$ for some $\eta > 0$, it follows from Theorem \ref{sparse2:intro} that for each $1 \leq i \leq l$ and for any $f \in L^{\infty}$ 
\begin{equation*}
\lim_{N \rightarrow \infty} \frac{1}{N} \sum_{n=1}^N  f (T_i^{\lfloor a_i(n) \rfloor} x) = \int f \, d \lambda 
\end{equation*}
for $\lambda$-almost every $x$. 
By \eqref{eq:ud:je:last}, $( \alpha_1 \lfloor a_1(n) \rfloor, \dots, \alpha_l \lfloor a_l(n) \rfloor)_{n \in \mathbb{N}}$ is uniformly distributed $\bmod \, 1$, so for any continuous functions $g_1, \dots, g_l$ on $\mathbb{T} \, (:= \mathbb{R} / \mathbb{Z})$ and for all $x \in \mathbb{T}$
\begin{equation*}
\frac{1}{N} \sum_{n=1}^N \prod_{i=1}^l g_i (x + \lfloor a_i(n) \rfloor \alpha_i) \rightarrow \prod_{i=1}^l \int g_i(t) \, dt 
\end{equation*}  
The conclusion now follows from Theorem \ref{multi-conv:thm:cond}. 
\end{proof}

\section*{Acknowledgments}
Sovanlal Mondal would like to thank The Ohio State University, as most of the work presented in this paper was completed while he was affiliated with the university. Younghwan Son is supported by the Basic Science Research Institute Fund (NRF grant no. RS-2021-NR060139) and by the National Research Foundation of Korea (NRF) grant funded by the Korea government (MSIT) (RS-2025-24523340).

\printbibliography
\end{document}